\numberwithin{equation}{section}
\newtheorem{prop}{Proposition}
\newtheorem*{theorem*}{Theorem 1.A}
\newtheorem*{proof of theorem 1*}{Proof of Theorem 1}
\newtheorem{thm}{Theorem}
\newtheorem*{thmm}{Theorem}
\newtheorem*{ack}{Acknowledgments}
\newtheorem{lem}{Lemma}
\newtheorem*{lem*}{Lemma}
\newtheorem*{VDCL}{Van der Corput's Lemma}
\newtheorem*{VL}{Vaaler's Lemma}
\newtheorem*{HI}{Hilbert’s inequality}
\newtheorem{cor}{Corollary}
\newtheorem*{mydef}{Definition}
\numberwithin{lem}{section}
\numberwithin{prop}{section}
\numberwithin{cor}{section}
\newcommand{\RomanNumeralCaps}[1]
    {\MakeUppercase{\romannumeral #1}}
\title{\textbf{\mdseries{On an Analogue Of the Gauss Circle Problem For the Heisenberg Groups}}}
\author{\upshape{YOAV A. GATH}}
\date{}
\begin{document}

\maketitle
\begin{abstract}
We consider the problem of estimating the error term $\mathcal{E}_{q}(x)=\big|\mathbb{Z}^{2q+1}\cap\delta_{x}\mathcal{B}\big|-\textit{vol}\big(\mathcal{B}\big)x^{2q+2}$ which occurs in the counting of lattice points in Heisenberg dilates of the Cygan-Kor{\'a}nyi ball:
\begin{equation*}
\begin{split}
&\mathcal{B}=\big\{(v,w)\in\mathbb{R}^{2q}\times\mathbb{R}:\mathcal{N}(v,w)\leq1\big\}\quad;\quad\delta_{x}\big(v,w\big)=\big(xv,x^{2}w\big)
\end{split}
\end{equation*}
where $\mathcal{N}(v,w)=\big(|v|_{2}^{4}+w^{2}\big)^{1/4}$ is the Cygan-Kor{\'a}nyi norm, and $|\cdot|_{2}$ denotes the Euclidean norm. This lattice point counting problem arises naturally in the context of the Heisenberg groups, and may be viewed as a non-commutative analogue of the classical lattice point counting problem for Euclidean balls. In a previous paper, we have shown that the exponent in the upper bound $|\mathcal{E}_{1}(x)|\ll x^{2}\log{x}$ obtained by Garg, Nevo \& Taylor is best possible, thereby solving the problem for $q=1$. In the higher dimensional case, the behavior of the error term is of an entirely different nature, and is closely related both in shape and form to the error term in the Gauss circle problem as soon as $q\geq3$, while $q=2$ marks somewhat of an intermediate point.\\ In the present paper, we shall prove three type of results regarding the order of magnitude of $\mathcal{E}_{q}(x)$, which are valid for any $q\geq3$. An upper bound estimate of the form $|\mathcal{E}_{q}(x)|\ll x^{2q-2/3}$ ; A sharp second moment estimate, which shows that $\mathcal{E}_{q}(x)$ has order of magnitude $x^{2q-1}$ in mean-square ; And an $\Omega$-estimate of the form $\mathcal{E}_{q}(x)=\Omega\big(x^{2q-1}\big(\log{x}\big)^{1/4}\big(\log{\log{x}}\big)^{1/8}\big)$. Consequently, we obtain the lower bound $\kappa_{q}=\sup\big\{\alpha>0:\big|\mathcal{E}_{q}(x)\big|\ll x^{2q+2-\alpha}\big\}\geq\frac{8}{3}$ for $q\geq3$, and conjecture that $\kappa_{q}=3$. .
\end{abstract}
\maketitle
 
\tableofcontents
\section{Introduction, notation and statement of results}
\subsection{Lattice points in Euclidean balls - Motivation and general principles}
A classical problem in analytic number theory concerns the counting of lattice points in $n$-dimensional balls. To motivate the set up for the lattice point counting problem which we shall consider in the present paper, we shall state the problem in the following form. We view the Abelian group $\mathbb{E}_{n}=\big(\mathbb{R}^{n},+\big)$ as an homogeneous group, where the dilation group $\big\{\uplambda_{x}\big\}_{x\in\mathbb{R}_{+}}$ is given by the familiar Euclidean dilations $\uplambda_{x}u=xu$. Let $|\cdot|_{2}$ denote the Euclidean norm, which is the \textbf{canonical} sub-additive homogeneous group norm on $\mathbb{E}_{n}$, and write $\mathscr{O}=\big\{u\in\mathbb{R}^{n}:|u|_{2}\leq1\big\}$ for the unit norm ball. It is well known that the number of lattice points in the Euclidean dilated body $\uplambda_{x}\mathscr{O}$ is asymptotic to $\textit{vol}\big(\mathscr{O}\big)x^{n}$, where $\textit{vol}(\cdot)$ is the $n$-dimensional volume. We thus define the error term to be
\begin{equation*}
\mathcal{E}^{\mathbb{E}}_{n}(x)=\big|\mathbb{Z}^{n}\cap\uplambda_{x}\mathscr{O}\big|-\textit{vol}\big(\mathscr{O}\big)x^{n}
\end{equation*}
with the aim of obtaining upper/lower bounds for:
\begin{equation*}
\text{ }\qquad\varkappa_{n}=\sup\big\{\alpha>0:\big|\mathcal{E}^{\mathbb{E}}_{n}(x)\big|\ll x^{n-\alpha}\big\}\,.
\end{equation*}
For $n\geq4$ one has $\varkappa_{n}=2$, which follows from classical results on representation of integers by quadratic forms, and so the problem is settled in the higher dimensional case. The determination of $\varkappa_{n}$ for $n=2,3$ are amongst the most famous open problems in analytic number theory, where a solution to this problem, in either case, would constitute a landmark achievement. For $n=3$ one has $\varkappa_{3}\geq\frac{27}{16}$ due to Heath-Brown \cite{heath1999lattice}, and it is conjectured that $\varkappa_{3}=2$.\\\\
Of most relevance to us is the case $n=2$, the so called Gauss circle problem, and so we shall elaborate more. Gauss gave the first result $\varkappa_{2}\geq1$. This lower bound has been improved many times over. $\varkappa_{2}\geq\frac{4}{3}$ Vorono\"{i} \cite{voronoi1904fonction} and Sierpi\'{n}ski \cite{sierpinski1906pewnem} ; $\varkappa_{2}\geq\frac{67}{50}$ Van der Corput \cite{van1923neue} ; $\varkappa_{2}\geq\frac{580}{429}$ Kolesnik \cite{kolesnik1985method} ; $\varkappa_{2}\geq\frac{15}{11}$ Iwaniec and Mozzochi \cite{iwaniec1988divisor} ; $\varkappa_{2}\geq\frac{285}{208}$ Huxley \cite{huxley2003exponential} ; $\varkappa_{2}\geq\frac{1131}{824}$ Bourgain and Watt \cite{bourgain2017mean}. It is conjectured that $\varkappa_{2}=\frac{3}{2}$, which is supported by the second moment estimate \cite{lau2009mean}:
\begin{equation}\label{eq:1.1}
\begin{split}
&\frac{1}{X}\bigintssss\limits_{ X}^{2X}\Big(\mathcal{E}^{\mathbb{E}}_{2}(x)\Big)^{2}\textit{d}\sim C_{2}X\qquad\textit{as }X\to\infty\quad;\qquad C_{2}=\frac{3}{4\pi^{2}}\sum_{m=1}^{\infty}\frac{r^{2}_{2}(m)}{m^{3/2}}\,.
\end{split}
\end{equation}
Moreover, $\mathcal{E}^{\mathbb{E}}_{2}(x)$ can be abnormally large at times \cite{soundararajan2003omega}:
\begin{equation}\label{eq:1.2}
\mathcal{E}^{\mathbb{E}}_{2}(x)=\Omega\Big(x^{1/2}\big(\log{x}\big)^{1/4}F(x)\Big)\qquad;\qquad F(x)=\big(\log{\log{x}}\big)^{\frac{3}{4}(2^{1/3}-1)}\big(\log{\log{\log{x}}}\big)^{-5/8}\,.
\end{equation}
We now proceed to describe the lattice point counting problem that we shall consider in the present paper, where we shall obtain analogues results for the three type of estimates listed above. A pointwise bound corresponding to that of Vorono\"{i} and Sierpi\'{n}ski ; A second moment estimate with a power saving corresponding to \eqref{eq:1.1} : And an $\Omega$-estimate analogues to \eqref{eq:1.2} with $F(x)$ replaced by $\big(\log{\log{x}}\big)^{1/8}$.
\subsection{The lattice point counting problem on the Heisenberg groups}
Let $q\geq1$ be an integer. We endow the space $\mathbb{R}^{2q+1}\equiv\mathbb{R}^{2q}\times\mathbb{R}$ with the following homogeneous structure. The group law is defined by:
\begin{equation*}
\begin{split}
&\big(v,w\big)\ast\big(v',w'\big)=\big(v+v',w+w'+2\langle\text{J}v,v'\rangle\big)\quad;\quad\text{J}=\begin{pmatrix}0&I_{q}\\-I_{q}& 0\end{pmatrix}
\end{split}
\end{equation*}
where $\langle\,,\, \rangle$ stands for standard inner product on $\mathbb{R}^{q}$. One can check that the identity element is $0\in\mathbb{R}^{2q+1}$, and that $\big(v,w\big)^{-1}=\big(-v,-w\big)$. We shall write $\mathds{H}_{q}=\big(\mathbb{R}^{2q+1},\ast\big)$, and refer to this group as the $q$-th Heisenberg group. Note that $\mathbb{H}_{q}$ is a 2-step nilpotent group with a 1-dimensional center:
\begin{equation*}
\big[\big(v,w\big),\big(v',w'\big)\big]=\big(0,0,4\langle\text{J}v,v'\rangle\big)\,.
\end{equation*}
The Heisenberg dilations are given by:
\begin{equation*}
\delta_{x}\big(v,w\big)=\big(xv,x^{2}w\big)\quad;\quad x\in\mathbb{R}_{+}\,.
\end{equation*}
It is easily checked that:
\begin{equation*}
\begin{split}
&\delta_{x}\big((v,w)\ast(v',w')\big)=\delta_{x}\big(v,w\big)\ast\delta_{x}\big(v',w'\big)\\
&\delta_{x'}\big(\delta_{x}(v,w)\big)=\delta_{x'x}\big(v,w\big)
\end{split}
\end{equation*}
hence $\big\{  \delta_{x}:x\in\mathbb{R}_{+}\big\}$ forms a group of automorphisms of $\mathbb{H}_{q}$. We shall refer to this group as the dilation group. Define the family of Heisenberg norms:
\begin{equation*}
\mathcal{N}_{\alpha}\big(v,w\big)=\Big(|v|_{2}^{\alpha}+|w|^{\alpha/2}\Big)^{1/\alpha}\quad;\quad\alpha>0\,,\quad|\cdot|_{2}=\text{Euclidean norm}
\end{equation*}
and set:
\begin{equation*}
\mathcal{B}^{\alpha}=\big\{(v,w)\in\mathbb{R}^{2q}\times\mathbb{R}:\mathcal{N}_{\alpha}(v,w)\leq1\big\}\,.
\end{equation*}
Note that
\begin{equation*}
\mathcal{N}_{\alpha}\circ\delta_{x}=x\mathcal{N}_{\alpha}
\end{equation*}
Hence:
\begin{equation*}
\big|\mathbb{Z}^{2q+1}\cap\delta_{x}\mathcal{B}^{\alpha}\big|=\big|\big\{(z,z')\in\mathbb{Z}^{2q}\times\mathbb{Z}:\mathcal{N}_{\alpha}(z,z')\leq x\big\}\big|\,.
\end{equation*}
\text{ }\\
This natural family includes the \textbf{canonical} Cygan-Kor{\'a}nyi norm, corresponding to $\alpha=4$. This norm was considered
by Cygan \cite{cygan1978wiener}, and Kor{\'a}nyi \cite{koranyi1985geometric}. Cygan \cite{cygan1981subadditivity} has shown that this norm is sub-additive, in the sense that:
\begin{equation*}
\mathcal{N}_{4}\big((v,w)\ast(v',w')\big)\leq \mathcal{N}_{4}(v,w)+\mathcal{N}_{4}(v',w')\,.
\end{equation*}
In fact, $\mathcal{N}_{\alpha}$ is sub-additive iff $\alpha\geq4$ which was proved by Popa \cite{popa2016heisenberg}. Consequently, $\mathcal{N}_{4}$ defines a left invariant homogeneous distance on $\mathbb{H}_{q}$. In addition, the Cygan-Kor{\'a}nyi norm appears in the expression defining the fundamental
solution of a natural sublaplacian on $\mathbb{H}_{q}$ and in other natural kernels, see \cite{stein2016harmonic} and \cite{cowling2010unitary}. It is for this canonical norm that we shall consider the lattice point counting problem on the $q$-th Heisenberg group, which is analogous to the problem in the Abelian case where one considers the canonical Euclidean norm.\\\\
For notational simplicity, we shall drop the subscript and write $\mathcal{N}=\mathcal{N}_{4}$, $\mathcal{B}=\mathcal{B}^{4}$. As we are going to consider this counting problem on $\mathbb{H}_{q}$, we shall indicate the dependence with respect to the parameter $q$.
\begin{mydef}
Let $q\geq1$ be an integer, $x>0$. Define:
\begin{equation}\label{eq:1.3}
\mathcal{E}_{q}(x)=\big|\mathbb{Z}^{2q+1}\cap\delta_{x}\mathcal{B}\big|-\textit{vol}\big(\mathcal{B}\big)x^{2q+2}
\end{equation}
and set:
\begin{equation}\label{eq:1.4}
\kappa_{q}=\sup\big\{\alpha>0:\big|\mathcal{E}_{q}(x)\big|\ll x^{2q+2-\alpha}\big\}
\end{equation}
\text{ }\\
where $\mathcal{B}=\big\{(v,w)\in\mathbb{R}^{2q}\times\mathbb{R}:\mathcal{N}(v,w)\leq1\big\}$, $\mathcal{N}$ denotes the Cygan-Kor{\'a}nyi norm and $\textit{vol}(\cdot)$ is the $(2q+1)$-dimensional volume.
\end{mydef}
\text{ }\\
Let us remark that unlike the problem for Euclidean balls, the Gaussian curvature of the enclosing surface $\partial\mathcal{B}$ vanishes at both the points of intersection of $\mathcal{B}$ with the $w$-axis, namely the north and south poles. In fact, all of the $2q$ principal curvatures vanish at these two points. This inherent difficulty has been dealt with by Garg, Nevo \& Taylor \cite{garg2015lattice}, in which various upper bound estimates have been established for \eqref{eq:1.3} in the case of general Heisenberg norm balls. Amongst their many results, they were able to establish the lower bound $\kappa_{q}\geq2$ for all $q\geq1$. More precisely, they prove:
\begin{thmm}[Garg, Nevo \& Taylor]
Let $q\geq1$ be an integer. Then:  
\begin{equation*}
\begin{split}
\big|\mathcal{E}_{q}(x)\big|\ll\left\{
        \begin{array}{ll}
            x^{2}\log{x}\text{ }&;\text{ }q=1\\\\
             x^{4}(\log{x})^{2/3}\text{ }&;\text{ }q=2\\\\
             x^{2q}&;\text{ }q\geq3\,.
        \end{array}
    \right.
\end{split}
\end{equation*}
\end{thmm}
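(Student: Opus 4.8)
The plan is to reduce the $(2q+1)$-dimensional count to a one-parameter family of Euclidean lattice-point counts by slicing along the central coordinate. Writing $n\in\mathbb{Z}^{2q+1}$ as $(z,k)$ with $z\in\mathbb{Z}^{2q}$, $k\in\mathbb{Z}$, the condition $\mathcal{N}(z,k)\leq x$ reads $|z|_{2}\leq R_{k}$ with $R_{k}=(x^{4}-k^{2})^{1/4}$, so that
\begin{equation*}
\big|\mathbb{Z}^{2q+1}\cap\delta_{x}\mathcal{B}\big|=\sum_{|k|\leq x^{2}}N_{2q}(R_{k}),\qquad N_{2q}(R)=\big|\mathbb{Z}^{2q}\cap\{|z|_{2}\leq R\}\big|.
\end{equation*}
Writing $N_{2q}(R)=\omega_{2q}R^{2q}+P_{2q}(R)$, where $P_{2q}$ is the Euclidean discrepancy in dimension $2q$, splits $\mathcal{E}_{q}(x)$ into a main-term defect $\omega_{2q}\big(\sum_{|k|\leq x^{2}}R_{k}^{2q}-\int_{-x^{2}}^{x^{2}}(x^{4}-w^{2})^{q/2}dw\big)$ and a discrepancy sum $\sum_{|k|\leq x^{2}}P_{2q}(R_{k})$. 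First I would dispose of the main-term defect by Poisson summation (equivalently Euler--Maclaurin): the Fourier coefficients of $w\mapsto(x^{4}-w^{2})^{q/2}$ are governed by the endpoint vanishing at $w=\pm x^{2}$ and decay like $x^{q}|m|^{-(q+2)/2}$, so the defect is $\ll x^{q}$, negligible against every target bound. Everything therefore rests on the discrepancy sum.

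For $q\geq 3$ one has $2q\geq 6\geq 5$, and here the classical bound $P_{n}(R)\ll R^{n-2}$ (valid without loss for $n\geq 5$, and the content of $\varkappa_{n}=2$ quoted above) is available and, crucially, is summable: since $R_{k}^{2q-2}=(x^{4}-k^{2})^{(q-1)/2}$,
\begin{equation*}
\sum_{|k|\leq x^{2}}P_{2q}(R_{k})\ll\sum_{|k|\leq x^{2}}(x^{4}-k^{2})^{(q-1)/2}\asymp\int_{-x^{2}}^{x^{2}}(x^{4}-w^{2})^{(q-1)/2}dw\asymp x^{2q}.
\end{equation*}
Thus for $q\geq 3$ the triangle inequality applied slice-by-slice already yields $\mathcal{E}_{q}(x)\ll x^{2q}$, with no cancellation required; the estimate is driven by the thick equatorial slices.

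The cases $q=1,2$ are the main obstacle, because the termwise estimate is lossy: the pointwise bounds $P_{2}(R)\ll R^{2/3}$ and $P_{4}(R)\ll R^{2}\log R$ give only $\sum_{|k|\leq x^2}R_{k}^{2/3}\asymp x^{8/3}$ and $x^{4}\log x$, both weaker than the asserted $x^{2}\log x$ and $x^{4}(\log x)^{2/3}$. The remedy is to \emph{retain}, rather than discard, the oscillation of $P_{2q}(R_{k})$ as $k$ varies. I would expand each slice-discrepancy by its truncated Hardy--Vorono\"{i} series -- equivalently, in the dual slicing by $s=|z|_{2}^{2}$, expand the one-dimensional interval count through the Fourier series of the sawtooth, reducing $\mathcal{E}_{q}(x)$ to $-2\sum_{s}r_{2q}(s)\,\psi(\sqrt{x^{4}-s^{2}})$ and then to exponential sums of the shape $\sum_{s}r_{2q}(s)\,e\big(j\sqrt{x^{4}-s^{2}}\big)$, with $e(t)=e^{2\pi i t}$. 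These would then be estimated by van der Corput / exponent-pair methods together with the arithmetic of $r_{2q}$. The conceptual picture behind all of this is that, via Poisson summation on the full lattice, the error is controlled by $\widehat{\mathbf{1}_{\mathcal{B}}}$, whose decay is dictated by the curvature of $\partial\mathcal{B}$: at generic points the Gaussian curvature is non-degenerate and stationary phase reproduces the clean $x^{2q}$ behaviour, whereas at the two poles all principal curvatures vanish and the boundary has quartic contact $w\approx 1-\tfrac{1}{2}|v|_{2}^{4}$ with its tangent plane. It is exactly this degenerate polar contribution that produces the logarithmic excess -- a full power $\log x$ when $q=1$, and, after a more delicate averaging of the degenerate oscillatory integral (respectively a sharper treatment of the divisor-type sums underlying $P_{4}$), the factor $(\log x)^{2/3}$ when $q=2$. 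Carrying out these estimates uniformly, and matching the polar against the generic contribution, is where I expect the real difficulty to lie.
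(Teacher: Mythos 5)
This is a quoted theorem: the paper offers no proof of it, citing Garg, Nevo and Taylor, whose method (as described in \S1.4) is to smooth $\chi_{\mathcal{B}}$ by convolution with a Heisenberg-dilated bump, apply Euclidean Poisson summation on all of $\mathbb{Z}^{2q+1}$, and establish decay estimates for $\widehat{\chi_{\mathcal{B}}}$ near the degenerate poles. Your route is genuinely different and, for $q\geq3$, complete and correct: slicing along the central coordinate gives $\big|\mathbb{Z}^{2q+1}\cap\delta_{x}\mathcal{B}\big|=\sum_{|k|\leq x^{2}}N_{2q}\big((x^{4}-k^{2})^{1/4}\big)$, the main-term defect is harmless (by stationary phase it is $\ll x^{q+1}$, not quite the $x^{q}$ you state, but this is immaterial), and the classical bound $P_{n}(R)\ll R^{n-2}$ for $n\geq5$ sums to $\ll x^{2q}$. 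This is essentially the starting point of the present paper's own \S2 (compare \eqref{eq:2.50}, where the same double sum $\sum_{m^{2}+n^{2}\leq x^{4}}r_{2q}(m)$ appears before the hyperbola-method splitting), and it buys elementarity: no spectral analysis of $\partial\mathcal{B}$ is needed, precisely because for $q\geq3$ the equatorial slices dominate and no cancellation is required. What the Fourier-analytic approach of Garg, Nevo and Taylor buys in exchange is uniformity across all $q\geq1$, including the two low-dimensional cases.

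The genuine gap is in those two cases. For $q=1$ and $q=2$ your termwise estimates give $x^{8/3}$ and $x^{4}\log x$ respectively, and the passage to $x^{2}\log x$ and $x^{4}(\log x)^{2/3}$ is only a program: you correctly identify that one must exploit cancellation in $\sum_{s}r_{2q}(s)\psi\big(\sqrt{x^{4}-s^{2}}\big)$ and that the polar degeneracy (quartic contact of $\partial\mathcal{B}$ with its tangent plane at the poles) is the source of the logarithms, but no estimate of the resulting exponential sums $\sum_{s}r_{2q}(s)\,\textit{\large{e}}\big(j\sqrt{x^{4}-s^{2}}\big)$ is carried out, and it is exactly there that the exponents $1$ and $2/3$ on the logarithm must be produced. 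For $q=1$ the deficit is a full power $x^{2/3}$, not a refinement of a log, so ``retain the oscillation'' is a statement of the problem rather than a solution; for $q=2$ the improvement from $\log x$ to $(\log x)^{2/3}$ in Garg, Nevo and Taylor comes out of their quantitative Fourier decay estimates and is not recovered by anything you have written down. As a proof of the theorem as stated, the proposal therefore establishes only the $q\geq3$ clause.
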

\text{ }\\
The case of $q=1$ is particularly interesting, as the author was able to match the lower bound for $\kappa_{1}$ with the corresponding upper bound $\kappa_{1}\leq2$. In fact, we obtained the following more precise result \cite{gath2017best}:
\begin{thmm}[Gath]
Let $\mathcal{E}_{1}(x)$ be defined as above. Then:
\begin{equation*}
\limsup_{x\to\infty}\frac{\mathcal{E}_{1}(x)}{x^{2}}=\infty\qquad;\qquad\liminf_{x\to\infty}\frac{\mathcal{E}_{1}(x)}{x^{2}}=-\infty
\end{equation*}
and in particular:
\begin{equation*}
\kappa_{1}=\sup\big\{\alpha>0:\big|\mathcal{E}_{1}(x)\big|\ll x^{4-\alpha}\big\}=2\,.
\end{equation*}
\end{thmm}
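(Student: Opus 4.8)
The plan is to slice the body by its central ($w$-) coordinate and reduce the count to a one-parameter family of planar circle problems. For $q=1$ we have $\mathcal N(z,z')^4=|z|_2^4+z'^2$, so writing $m=z'$ and $R_m=(x^4-m^2)^{1/4}$,
\[
\big|\mathbb Z^3\cap\delta_x\mathcal B\big|=\sum_{|m|\le x^2}\big|\{z\in\mathbb Z^2:|z|_2\le R_m\}\big|=\pi\!\!\sum_{|m|\le x^2}\!\!(x^4-m^2)^{1/2}+\sum_{|m|\le x^2}E(R_m),
\]
where $E(R)=\big|\{z\in\mathbb Z^2:|z|_2\le R\}\big|-\pi R^2$ is the Gauss circle error. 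First I would dispose of the smooth piece: applying Poisson summation to $\sum_{|m|\le x^2}(x^4-m^2)^{1/2}$ produces Bessel integrals $\tfrac{x^2}{2k}J_1(2\pi kx^2)$, whose $J_1$-asymptotics give a total of size $O(x)$ after subtracting the volume $\tfrac{\pi^2}2x^4$. Hence this discretisation term is negligible and $\mathcal E_1(x)=\sum_{|m|\le x^2}E(R_m)+O(x)$.

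Next I would extract the genuine main term from $\mathcal S(x)=\sum_{|m|\le x^2}E(R_m)$ by inserting Hardy's formula $E(R)=R\sum_{n\ge1}r_2(n)\,n^{-1/2}J_1(2\pi R\sqrt n)$ and exchanging the order of summation, reducing matters to the one-dimensional exponential sums $\sum_{|m|\le x^2}(x^4-m^2)^{1/8}\cos\!\big(2\pi\sqrt n\,(x^4-m^2)^{1/4}-\tfrac{3\pi}4\big)$. The phase $2\pi\sqrt n(x^4-m^2)^{1/4}$ has a stationary point precisely at the pole $m=0$, where $\partial\mathcal B$ is flat; near it the phase is $2\pi\sqrt n\,x-\tfrac{\pi\sqrt n}{2x^3}m^2+\cdots$, so a Fresnel evaluation contributes a factor $x^{3/2}n^{-1/4}$ against amplitude $x^{1/2}$. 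Collecting these contributions yields the expansion
\[
\mathcal E_1(x)=C\,x^2\sum_{n=1}^{\infty}\frac{r_2(n)}{n}\cos\!\big(2\pi\sqrt n\,x+\theta\big)+(\text{lower order}),
\]
with an absolute constant $C\ne0$ and a fixed phase $\theta$. The decisive structural fact is that $\sum_{n\le N}r_2(n)/n=\pi\log N+O(1)$ \emph{diverges}: this borderline divergence is simultaneously the source of the Garg--Nevo--Taylor $x^2\log x$ upper bound and of the $\Omega$-phenomenon to be proved.

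To obtain the two-sided $\Omega$-result I would run a resonance (Dirichlet box) argument on the frequencies $\{\sqrt n\}_{n\le N}$. Fixing $N$ and a target sign, Dirichlet's theorem on simultaneous Diophantine approximation furnishes arbitrarily large $x$ with $\|\sqrt n\,x-\beta_n\|<\varepsilon$ for all $n\le N$, where the $\beta_n$ are chosen so that every cosine $\cos(2\pi\sqrt n\,x+\theta)$ lies within $\varepsilon$ of $+1$ (respectively of $-1$). For such $x$ the truncated sum is $\gtrsim(1-O(\varepsilon))\sum_{n\le N}r_2(n)/n\sim\pi\log N$ (resp.\ $\le-\pi\log N$ up to $o(\log N)$), whence $\mathcal E_1(x)/x^2$ exceeds $\tfrac12\pi\log N$ (resp.\ is below $-\tfrac12\pi\log N$). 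Letting $N\to\infty$ gives $\limsup_{x\to\infty}\mathcal E_1(x)/x^2=+\infty$ and $\liminf_{x\to\infty}\mathcal E_1(x)/x^2=-\infty$; combined with $|\mathcal E_1(x)|\ll x^2\log x$ this yields $\kappa_1=2$.

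The hard part will be twofold. First, making the heuristic expansion rigorous requires a careful two-dimensional stationary-phase/Voronoi analysis localised at the \emph{degenerate} pole, where the curvature of $\partial\mathcal B$ vanishes to all orders; one must justify truncating the $n$-series, control the non-stationary contributions (those with $m$ bounded away from $0$ and Poisson index $h\ne0$), and show they remain $o(x^2\log x)$ uniformly. Second, the resonance step must cope with the tail $n>N$ of a merely conditionally convergent series: one has to arrange the truncation $N=N(x)$ so that the tail, estimated through the error term of the truncated Hardy/Voronoi formula, does not swamp the engineered main contribution $\pm\pi\log N$. Balancing these two competing demands---pushing $N\to\infty$ for the $\Omega$ while keeping the formula's tail under control---is the technical core of the argument.
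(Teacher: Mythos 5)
This theorem is not proved in the present paper; it is quoted from the author's earlier work \cite{gath2017best}, so the only internal yardstick is the $\Omega$-machinery of Sections~2--5 for $q\geq3$. Measured against that, your skeleton is the right one: slicing along the central coordinate, reducing to circle-problem errors $E(R_m)$ with $R_m=(x^{4}-m^{2})^{1/4}$, expanding by Hardy/Voronoi, evaluating the $m$-sum by stationary phase at the degenerate pole $m=0$, and resonating on the frequencies $\sqrt{n}$ against the divergent series $\sum_{n\leq N}r_{2}(n)/n\sim\pi\log N$ is precisely the architecture the paper itself uses (slicing, then the $B$-process in place of your stationary phase, then a Dirichlet-approximation resonance in $\S5$), and your identification of the borderline divergence of $\sum r_{2}(n)/n$ as the common source of the $x^{2}\log x$ upper bound and the $\Omega$-phenomenon is the correct structural point. (A minor slip: the discrepancy between $\sum_{|m|\leq x^{2}}\sqrt{x^{4}-m^{2}}$ and its integral is $O(x^{2})$, not $O(x)$ --- the integrand has unbounded derivative at the endpoints --- but any $O(x^{2})$ is harmless here.)

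The genuine gap is in the resonance step, and it costs you one of the two signs. Dirichlet's simultaneous approximation theorem is \emph{homogeneous}: it supplies arbitrarily large $x$ with $\|\sqrt{n}\,x\|<\varepsilon$ for all $n\leq N$, forcing every $\cos(2\pi\sqrt{n}\,x+\theta)$ close to $\cos\theta$; with your (nonzero) constant $C$ this yields exactly one of $\limsup=+\infty$, $\liminf=-\infty$. Your appeal to Dirichlet with arbitrary targets $\beta_{n}$ is not available: prescribing inhomogeneous targets is Kronecker's theorem and requires the frequencies to be linearly independent over $\mathbb{Q}$, whereas $\{\sqrt{n}\}_{n\leq N}$ is heavily dependent ($\sqrt{m^{2}k}=m\sqrt{k}$). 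Indeed the configuration ``all cosines near $-1$'' is outright impossible once $N\geq4$: if $\cos(2\pi x)\approx-1$ then $\cos(2\pi\sqrt{4}\,x)=2\cos^{2}(2\pi x)-1\approx+1$. So the second half of your box argument fails as written. Capturing the opposite sign needs an additional device --- e.g.\ restricting to squarefree $n$ (where Besicovitch's theorem gives $\mathbb{Q}$-independence so Kronecker applies, after checking that the squarefree subsum of $r_{2}(n)/n$ still diverges and that the rationally dependent terms cannot cancel the gain), or a Fej\'er-kernel positivity argument in the Hardy--Landau style (compare $\S5.1$ of this paper), or working with the $\psi$-sums before smoothing and exploiting the $\pm\tfrac12$ jump of $\psi$. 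Note that the one sign you do obtain already forces $|\mathcal{E}_{1}(x)|\geq Cx^{2}$ infinitely often for every $C$, which together with the Garg--Nevo--Taylor bound $|\mathcal{E}_{1}(x)|\ll x^{2}\log x$ gives $\kappa_{1}=2$; but the full two-sided statement of the theorem is not reached by the argument as proposed.
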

\text{ }\\
Before we proceed to present our results, we mention one last point of similarity (and difference) between the lattice point counting problem for Euclidean balls $\uplambda_{x}\mathscr{O}$ in $\mathbb{E}_{n}$, and the one for Cygan-Kor{\'a}nyi norm balls $\delta_{x}\mathcal{B}$ in $\mathbb{H}_{q}$. Recall that in the Euclidean one has $\varkappa_{n}=2$ as soon as $n\geq4$, and it is conjectured that this is also true for $n=3$. For $n=2$ the conjecture is $\varkappa_{2}=\frac{3}{2}$, and thus we see a distinct behavior of $\varkappa_{n}$ depending on the dimension: $n=2$ or $n\geq3$. Furthermore, estimating $\mathcal{E}^{\mathbb{E}}_{n}(x)$ in mean-square reveals a finer distinction between $n=3$ and $n\geq4$. We shall encounter this exact dimension-dependence behavior for $\mathbb{H}_{q}$.\\ Indeed, Theorem 2 stated below leads to the conjectural value $\kappa_{q}=3$ for $q\geq3$. This should also be the conjectural value of $\kappa_{2}$, where the corresponding results will appear in a separate paper as this case requires a different treatment. Moreover, as in the Euclidean case, estimating $\mathcal{E}_{q}(x)$ in mean-square we find the same finer distinction between $q=2$ and $q\geq3$. Finally, we have $\kappa_{1}=2$ unconditionally. The point of difference is that once $q>1$, the lattice point counting problem on $\mathbb{H}_{q}$ becomes intractable.
\subsection{Statement of results}
\begin{thm}
Let $q\geq3$ be integer. Then:
\begin{equation}\label{eq:1.5}
\begin{split}
\big|\mathcal{E}_{q}(x)\big|\ll x^{2q-2/3}\,.
\end{split}
\end{equation}
\end{thm}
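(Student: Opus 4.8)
\subsection*{Proof proposal}

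The plan is to reduce the $(2q{+}1)$-dimensional count to a one-parameter family of $2q$-dimensional Euclidean ball counts by fixing the central coordinate, and then to exploit cancellation across this family. Writing $R(z')=(x^4-z'^2)^{1/4}$ and letting $A_{2q}(R)$ denote the number of integer points in the Euclidean ball of radius $R$ in $\mathbb{R}^{2q}$, one has $\big|\mathbb{Z}^{2q+1}\cap\delta_x\mathcal{B}\big|=\sum_{|z'|\le x^2}A_{2q}(R(z'))$. Since $2q\ge 6$, the classical lattice point estimate gives $A_{2q}(R)=\omega_{2q}R^{2q}+P_{2q}(R)$ with $P_{2q}(R)\ll R^{2q-2}$, where $\omega_{2q}$ is the volume of the unit ball. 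Using that $\textit{vol}(\mathcal B)x^{2q+2}=\omega_{2q}\int_{-x^2}^{x^2}(x^4-u^2)^{q/2}\,du$, I would split
\begin{equation*}
\mathcal{E}_q(x)=\omega_{2q}\Big[\sum_{|z'|\le x^2}R(z')^{2q}-\int_{-x^2}^{x^2}(x^4-u^2)^{q/2}\,du\Big]+\sum_{|z'|\le x^2}P_{2q}(R(z'))\,.
\end{equation*}
The first bracket is a sum-minus-integral for the smooth $f(u)=(x^4-u^2)^{q/2}$; by Poisson summation its Fourier coefficients are Bessel integrals decaying like $x^q m^{-q/2-1}$, so this discrepancy is $O(x^q)$, which is negligible against $x^{2q-2/3}$ once $q\ge 1$. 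Everything therefore rests on the coupled sum $\mathcal S(x)=\sum_{|z'|\le x^2}P_{2q}(R(z'))$.

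The crucial point is that one must \emph{not} estimate $\mathcal S(x)$ slice by slice: the pointwise bound $P_{2q}(R)\ll R^{2q-2}$ summed over $z'$ only returns the trivial $\sum_{|z'|\le x^2}R(z')^{2q-2}\asymp x^{2q}$ of Garg--Nevo--Taylor. Instead I would insert the truncated Hardy--Voronoi expansion
\begin{equation*}
P_{2q}(R)=\frac{R^{q-1/2}}{\pi}\sum_{m\le M}\frac{r_{2q}(m)}{m^{q/2+1/4}}\cos\!\big(2\pi\sqrt{m}\,R-\tfrac{(2q+1)\pi}{4}\big)+\mathcal{R}_M(R)\,,
\end{equation*}
where $r_{2q}(m)$ counts representations as a sum of $2q$ squares, and interchange the two summations. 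This produces, for each $m\le M$, the one-dimensional exponential sum $T_m=\sum_{|z'|\le x^2}R(z')^{q-1/2}e^{2\pi i\sqrt m R(z')}$, to which van der Corput's Lemma applies: the phase $\sqrt m(x^4-z'^2)^{1/4}$ has a single stationary point at $z'=0$ with second derivative of size $\asymp\sqrt m\,x^{-3}$, while the amplitude $R(z')^{q-1/2}$ vanishes at the endpoints $z'=\pm x^2$ (the poles). For $1\le m\le M\le x^2$ the second-derivative estimate, combined with partial summation against the amplitude, yields $|T_m|\ll m^{-1/4}x^{q+1}$. Summing against the Voronoi coefficients and using $\sum_{m\le t}r_{2q}(m)\ll t^q$ gives
\begin{equation*}
\Big|\sum_{m\le M}\frac{r_{2q}(m)}{m^{q/2+1/4}}T_m\Big|\ll x^{q+1}\sum_{m\le M}\frac{r_{2q}(m)}{m^{q/2+1/2}}\ll x^{q+1}M^{(q-1)/2}\,.
\end{equation*}

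It remains to control the truncation term $\sum_{|z'|\le x^2}\mathcal R_M(R(z'))$, and this is where the main difficulty lies. One cannot afford the trivial per-slice bound for $\mathcal R_M$; rather, the tail $m>M$ must again be kept coupled to the $z'$-sum, so that the same van der Corput cancellation in $z'$ is available. Carrying this out, the tail is governed (after stationary phase at $z'=0$) by the conditionally convergent dual sum $x^{q+1}\sum_{m>M}r_{2q}(m)m^{-q/2-1/2}e^{2\pi i\sqrt m x}$, equivalently a lattice sum $x^{q+1}\sum_{|k|>\sqrt M}|k|^{-(q+1)}e^{2\pi i x|k|}$ over $k\in\mathbb Z^{2q}$; establishing the requisite cancellation here, so that this contributes $\ll x^{2q}M^{-1/2+\varepsilon}$, is the technical heart of the argument, and is precisely the step that the vanishing curvature at the poles renders delicate. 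Granting this, balancing $x^{q+1}M^{(q-1)/2}$ against $x^{2q}M^{-1/2}$ at $M=x^{2(q-1)/q}$ yields $\mathcal S(x)\ll x^{2q-1+1/q+\varepsilon}$, and since $2q-1+1/q\le 2q-2/3$ for every $q\ge 3$, this gives the claimed bound $|\mathcal E_q(x)|\ll x^{2q-2/3}$. The saving of $2/3$ over the trivial exponent is exactly the analogue of Vorono\"{i}'s exponent in the circle problem, and emerges from the same balance between a van der Corput second-derivative estimate and the truncation level.
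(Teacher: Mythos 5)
Your reduction to $\sum_{|z'|\le x^{2}}A_{2q}\big(R(z')\big)$ with $R(z')=(x^{4}-z'^{2})^{1/4}$, and the $O(x^{q+1})$ estimate for the smooth main term, are fine, and your decomposition is genuinely different from the paper's: the paper never treats the $2q$-dimensional sphere-problem error as a black box, but instead substitutes the exact divisor-type formula for $r_{2q}(m)$ (Jacobi/Ramanujan-type identities plus Deligne's bound for the cusp-form part), which collapses the whole problem to the two-dimensional weighted count $\sum_{m^{2}+n^{2}\le x^{4}}r_{2q}(m)$; this is split at the diagonal $m=|n|$, reduced to $\psi$-sums by Euler--Maclaurin, and converted to exponential sums via Vaaler's lemma and the \textit{B}-process, the exponent $2q-2/3$ arising from balancing the Vaaler truncation error $x^{2q}/H$ against the trivially estimated transformed sums $x^{2q-1}H^{1/2}$ at $H=x^{2/3}$.

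However, your argument has a genuine gap exactly where you write ``Granting this.'' The bound $\ll x^{2q}M^{-1/2+\varepsilon}$ for the coupled tail $\sum_{|z'|\le x^{2}}\mathcal{R}_{M}\big(R(z')\big)$ \emph{is} the entire content of the theorem: in dimension $2q\ge6$ the per-slice truncation error of any truncated Voronoi formula is genuinely of order $R^{2q-2}$ (the function $P_{2q}$ jumps by $r_{2q}(n)\asymp n^{q-1}$ at integers $n=R^{2}$), and the dual series you write down is not even conditionally summable as it stands, since $\sum_{m>M}r_{2q}(m)m^{-q/2-1/2}$ diverges for every $q\ge3$; summed trivially over the $\asymp x^{2}$ slices this returns only the trivial $x^{2q}$. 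Producing the required cancellation in $z'$, uniformly up to the poles $z'=\pm x^{2}$ where the phase's second derivative degenerates and all principal curvatures of $\partial\mathcal{B}$ vanish, is not a routine van der Corput application -- it is precisely the difficulty the paper's explicit $r_{2q}$-identity is designed to circumvent. Two smaller points: even granting the tail bound, your optimization gives $x^{2q-1+1/q+\varepsilon}$, which at $q=3$ is $x^{2q-2/3+\varepsilon}$ and so does not quite recover the stated estimate; and the fact that your exponent would strictly beat $2q-2/3$ for every $q\ge4$ should itself make you suspicious that the granted tail estimate is too optimistic.
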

\text{ }\\
Consequently, one has the lower bound  $\kappa_{q}\geq\frac{8}{3}=2.666...$ for all $q\geq3$ which is a significant improvement to the one obtained by Garg, Nevo \& Taylor stated in $\S$1.2.\\
\textbf{Remark.} By using the enhanced version of the Bombieri-Iwaniec method due to Huxley \cite{huxley2003exponential}, we can improve this lower bound further to $\kappa_{q}\geq\frac{285}{104}=2.74038...$.\\\\
Our next result gives support to the conjecture that $\kappa_{q}=3$ for $q\geq3$.
\begin{thm}
Let $q\geq3$ be integer. Then:\\\\
\textit{(1)} For $q\equiv0\,(2)$
\begin{equation}\label{eq:1.6}
\begin{split}
\frac{1}{X}\bigintssss\limits_{ X}^{2X}\mathcal{E}^{2}_{q}(x)\textit{d}x=
\gamma_{q}\Bigg\{\,\,\underset{\,\,(d,2m)=1}{\sum_{d,m=1}^{\infty}}\frac{r^{2}_{2}\big(m,d;q\big)}{m^{3/2}d^{2q-3}}+2^{2q}\underset{d\equiv0(4)}{\underset{\,\,(d,m)=1}{\sum_{d,m=1}^{\infty}}}\frac{r^{2}_{2}\big(m,d;q\big)}{m^{3/2}d^{2q-3}}\Bigg\}X^{2(2q-1)}+O\Big( X^{2(2q-1)-1}\log^{2}{X}\Big)
\end{split}
\end{equation}
\textit{(2)} For $q\equiv1\,(2)$
\begin{equation}\label{eq:1.7}
\begin{split}
\frac{1}{X}\bigintssss\limits_{ X}^{2X}\mathcal{E}^{2}_{q}(x)\textit{d}x=\gamma_{q}\Bigg\{\,\,\underset{\,\,(d,2m)=1}{\sum_{d,m=1}^{\infty}}\frac{r^{2}_{2}\big(m,d;q\big)}{m^{3/2}d^{2q-3}}+2^{2q}\underset{d\equiv0(4)}{\underset{\,\,(d,m)=1}{\sum_{d,m=1}^{\infty}}}\frac{r^{2}_{2,\chi}\big(m,d;q\big)}{m^{3/2}d^{2q-3}}\Bigg\}X^{2(2q-1)}+O\Big( X^{2(2q-1)-1}\log^{2}{X}\Big)
\end{split}
\end{equation}
where
\begin{equation*}
\begin{split}
r_{2}\big(m,d;q\big)=\underset{b\equiv0(d)}{\sum_{a^{2}+b^{2}=m}}\bigg(\frac{|a|}{\sqrt{m}}\bigg)^{q-1}\qquad;\qquad r_{2,\chi}\big(m,d;q\big)=\underset{b\equiv0(d)}{\sum_{a^{2}+b^{2}=m}}\chi\big(|a|\big)\bigg(\frac{|a|}{\sqrt{m}}\bigg)^{q-1}
\end{split}
\end{equation*}
and $\gamma_{q}=\frac{c_{q}}{2}\Big(\frac{\pi^{q-1}}{2\Gamma(q)}\Big)^{2}$. The constant $c_{q}=\frac{2^{4q-1}-1}{4q-1}$ arises from integrating the function $x^{2(2q-1)}$ along the dyadic interval $[X,2X]$. Here $\chi$ denotes the non-trivial Dirichlet character (mod 4).
\end{thm}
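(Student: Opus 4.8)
The plan is to derive a truncated Voronoï-type formula for $\mathcal{E}_q(x)$ and then read off the mean square from its diagonal. First I would pass to the $(N,z')$-plane: writing $N=|z|_2^2$, the counting function becomes $\sum_{N\ge0}r_{2q}(N)\,\#\{z'\in\mathbb{Z}:z'^2\le x^4-N^2\}$, i.e. a planar lattice count over the disc $N^2+z'^2\le x^4$ weighted by the sum-of-$2q$-squares function $r_{2q}$. Poisson summation in $z'$ — equivalently $\#\{z':z'^2\le u\}=2\sqrt{u}-2\psi(\sqrt{u})$ with $\psi$ the sawtooth — introduces a dual variable $b$, the term $b=0$ reproducing $\textit{vol}(\mathcal{B})x^{2q+2}$ plus a smooth remainder. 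For the weight I would use that, for $q\ge3$, $\theta^{2q}$ is a weight-$q$ modular form on $\Gamma_0(4)$, so that $r_{2q}(N)=\frac{\pi^q}{\Gamma(q)}N^{q-1}\mathfrak{S}_{2q}(N)+a_{2q}(N)$ with cuspidal remainder $a_{2q}(N)\ll N^{(q-1)/2+\varepsilon}$; being smaller than the main term by $N^{-(q-1)/2}$, the cusp part contributes $O(X^{3q-1})$ to the mean square, which is within the stated error term once $q\ge3$.

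With the weight resolved, I would expand $\mathfrak{S}_{2q}(N)$ in its Ramanujan series $\sum_{d}\beta_d\,c_d(N)$ — this supplies the modulus $d$, the additive characters $e(hN/d)$ with $(h,d)=1$, and the decay $\beta_d\asymp d^{-q}$ — and evaluate the resulting $N$-sums by stationary phase. For a frequency $h$ from $e(hN/d)$ and $b=kd$ from the $z'$-dual, the phase $2\pi\big(hN/d\pm k\sqrt{x^4-N^2}\big)$ is stationary at $N_0=|h|x^2/\sqrt{h^2+k^2d^2}$, with phase value $2\pi\tfrac{\sqrt{m}}{d}x^2$ where $m=a^2+b^2$, $a=h$, $b=kd$, amplitude $N_0^{q-1}=(|a|/\sqrt{m})^{q-1}x^{2q-2}$, and Hessian factor of size $d^{3/2}x/m^{3/4}$. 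Grouping $(a,b)$ by $m$ and $d$ then yields
\begin{equation*}
\mathcal{E}_q(x)=x^{2q-1}\sum_{d\ge1}\sum_{m\ge1}\frac{\lambda_q\,r_2(m,d;q)}{m^{3/4}\,d^{\,q-3/2}}\cos\!\Big(2\pi\tfrac{\sqrt{m}}{d}x^2+\phi_{m,d}\Big)+(\text{error}),
\end{equation*}
with $\lambda_q=\frac{\pi^{q-1}}{2\Gamma(q)}$; here the divisibility $b\equiv0\,(d)$ is precisely the one defining $r_2(m,d;q)$, the coprimality $(a,d)=1$ from the Ramanujan sums together with $d\mid b$ amounts to the outer condition $(d,m)=1$, and $\beta_d\asymp d^{-q}$ reproduces the denominator $d^{\,q-3/2}$. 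The terms $b=0$ (i.e. $k=0$) arise instead from the equatorial boundary $N=x^2$ and fall into the same family. The $2$-adic local factors of $\mathfrak{S}_{2q}$ split the $d$-sum into an odd part $(d,2m)=1$ and a part $d\equiv0\,(4)$, $(d,m)=1$ weighted by $2^{2q}$, and, according to the parity of $q$, attach the character $\chi\pmod4$, turning $r_2$ into $r_{2,\chi}$ when $q$ is odd.

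It remains to square and integrate. Using $\frac1X\int_X^{2X}x^{2(2q-1)}dx=c_qX^{2(2q-1)}$ with $c_q=\frac{2^{4q-1}-1}{4q-1}$ and the mean value $\tfrac12$ of $\cos^2$, the diagonal $(m,d)=(m',d')$ contributes $\tfrac12 c_q\lambda_q^2\sum_{m,d}\frac{r_2^2(m,d;q)}{m^{3/2}d^{2q-3}}X^{2(2q-1)}$, which after the $2$-adic bookkeeping is exactly $\gamma_q\{\cdots\}X^{2(2q-1)}$ with $\gamma_q=\frac{c_q}{2}\big(\frac{\pi^{q-1}}{2\Gamma(q)}\big)^2$. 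This is where $q\ge3$ is essential: $r_2(m,d;q)\le r_2(m)\ll m^{\varepsilon}$ secures the $m$-sum, while the $d$-sum converges precisely because $2q-3>1$ — the borderline failure at $q=2$ being the source of the intermediate behaviour mentioned in the introduction. The off-diagonal terms carry $\int_X^{2X}x^{2(2q-1)}\cos\!\big(2\pi(\tfrac{\sqrt{m}}{d}-\tfrac{\sqrt{m'}}{d'})x^2+\cdots\big)dx$, and since $\tfrac{\sqrt{m}}{d}\ne\tfrac{\sqrt{m'}}{d'}$ forces the integer $md'^2-m'd^2$ to be nonzero, the first-derivative test saves a full power of $X$; summing the reciprocal frequency gaps against the coefficient decay produces the claimed remainder $O\big(X^{2(2q-1)-1}\log^2 X\big)$.

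The main obstacle is concentrated in two places. The first is making the stationary-phase reduction uniform near the poles $N=0$ and $N=x^2$, where $\sqrt{x^4-N^2}$ degenerates — these are exactly the flat points of $\partial\mathcal{B}$; for $q\ge3$ the factor $N^{q-1}$ annihilates the pole at $N=0$, while the equatorial endpoint $N=x^2$ must be handled by a separate endpoint asymptotic and shown to merge with the $b=0$ family above. The second, and genuinely delicate, point is the off-diagonal estimate: obtaining both the power-saving and the sharp $\log^2 X$ requires a uniform lower bound on the spacing of the frequencies $\{\sqrt{m}/d\}$ together with a careful truncation of the (non-absolutely-convergent) Voronoï series and control of its tail, so that the reciprocal-gap sum loses no more than two logarithms. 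Finally, the precise constant $\gamma_q$ and the entire parity-dependent splitting into the two displayed sums rest on the explicit $2$-adic evaluation of $\mathfrak{S}_{2q}$, which is exactly where the distinction between even and odd $q$ — and the character $\chi\pmod4$ — enters.
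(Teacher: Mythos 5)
Your outline follows the same broad strategy as the paper (slice along the central direction, split $r_{2q}$ into an Eisenstein part plus a cusp part, convert the resulting sawtooth sums into exponential sums with frequencies $\sqrt{m}/d$, and extract the diagonal), but the three steps you compress into a sentence each are exactly where the proof lives, and as stated they do not close. First, the truncation of the non-absolutely-convergent expansion of $\psi$: you name this as delicate but give no mechanism. A bare truncation at $H$ harmonics leaves an error of size $\min\big(1,(H\|\cdot\|)^{-1}\big)$ whose contribution to the mean square is of the same order as the main term. The device that makes this work is Vaaler's approximation: the error is majorised by a nonnegative trigonometric polynomial $\psi^{\ast}_{H}$ whose $h$-th coefficient is $\tau^{\ast}(h/H)/h\ll 1/H$ uniformly in $h$, so that after the $B$-process the resulting coefficients $\mathfrak{a}^{\ast}_{H}(m,d)$ carry an extra factor of order $\sqrt{m}/H$ relative to the leading ones (Lemma 4.4), and their mean square is then $O\big(X^{2(2q-1)-2}\log^{2}X\big)$ (Propositions 4.1--4.2). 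Moreover the standard $B$-process remainders are not quite sharp enough for the stated error term; the paper needs the Karatsuba--Korolev form. Without some version of this machinery your error is not $o$ of the main term.

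Second, your diagonal is misidentified. The resonant set is $\sqrt{m}/d=\sqrt{m'}/d'$, i.e.\ $d'^{2}m=d^{2}m'$, which is strictly larger than $(m,d)=(m',d')$, and the degenerate off-diagonal pairs contribute at the full order $X^{2(2q-1)}$. Resolving this (write $m=m_{1}^{2}\ell$ with $\ell$ squarefree, extract $\gcd(d,m_{1})$, and resum over the common factor $r$, as in (4.16) and (4.58)--(4.61)) is precisely what produces the coprimality conditions $(d,2m)=1$ and $(d,m)=1$, $d\equiv0\,(4)$ in the statement and cancels the $\zeta(q)$ hidden in $\varrho_{q}$ to yield the clean constant $\gamma_{q}$; your unrestricted sum $\sum_{m,d}r_{2}^{2}(m,d;q)\,m^{-3/2}d^{3-2q}$ is not the quantity appearing in the theorem. (If your Ramanujan-sum expansion genuinely enforces $(a,d)=1$, hence $(m,d)=1$, throughout, then the diagonal does collapse to $(m,d)=(m',d')$ --- but then you must track the accompanying local factors to recover $\gamma_{q}$, which you have not done.) Third, there is no ``uniform lower bound on the spacing of the frequencies $\sqrt{m}/d$'': with $d\ll X$ and $m\ll X^{4}$ the nonzero gaps can be as small as $\big(d_{1}d_{2}(d_{2}\sqrt{m_{1}}+d_{1}\sqrt{m_{2}})\big)^{-1}$, and the square values of $m$ give rational frequencies with massive near-coincidences. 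The off-diagonal must instead be handled by Hilbert's inequality, which weights each frequency by the distance to its nearest neighbour, together with a separate argument for $m=\square$, as in Lemmas 4.1--4.3; a first-derivative test against a putative uniform gap will not deliver the power saving with only $\log^{2}X$ loss.
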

\text{ }\\
\textbf{Remark.} It is possible to show that the bounds for the error term in \eqref{eq:1.6} and \eqref{eq:1.7} are sharp, where the only possible improvement is with respect to the exponent appearing in the logarithmic factor. For example, one may easily reduce the $\log^{2}{X}$ factor to $\log{X}$, and with additional effort this can be further reduced to $\log^{\alpha}{X}$ with some $0<\alpha<1$, as soon as $q\geq4$. The proof that these bounds are indeed sharp is a subject for a different paper.\\\\
The above estimates show that $\mathcal{E}_{q}(x)$ has order of magnitude $x^{2q-1}$ in mean-square. Our third and final result establishes the existence of an unbounded sequences of ”exceptional” $x$-values, for which $\mathcal{E}_{q}(x)$ can be abnormally large. More precisely, we prove: 
\begin{thm}
Let $q\geq3$ be integer. Then there are arbitrarily large values of $x$ for which:
\begin{equation}\label{eq:1.8}
\big|\mathcal{E}_{q}(x)\big|\geq\beta_{q}\, x^{2q-1}\big(\log{x}\big)^{1/4}\big(\log{\log{x}}\big)^{1/8}
\end{equation}
where $\beta_{q}>0$ is some constant depending on $q$.
\end{thm}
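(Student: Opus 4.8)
My plan is to derive a Hardy--Voronoi type expansion for $\mathcal{E}_{q}(x)$ and then to force a resonance by correlating it against a non-negative trigonometric kernel built from the frequencies carrying the largest coefficients. The starting point, which is in any case the engine behind the second moment of Theorem~2, is a truncated formula
\begin{equation*}
\mathcal{E}_{q}(x)=\frac{\pi^{q-1}}{2\Gamma(q)}\,x^{2q-1}\sum_{\sqrt{m}/d\leq \mathcal{T}}\frac{r_{2}(m,d;q)}{m^{3/4}\,d^{\,q-3/2}}\cos\Big(2\pi\frac{\sqrt{m}}{d}\,x-\phi\Big)+R_{\mathcal{T}}(x),
\end{equation*}
with a remainder $R_{\mathcal{T}}(x)$ that is negligible once the truncation $\mathcal{T}$ is taken to be a suitable power of $x$. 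Such a formula should be obtained by splitting the count over the central coordinate $z'$, inserting the lattice point asymptotics for the $2q$-dimensional Euclidean balls of radius $(x^{4}-z'^{2})^{1/4}$ (available since $2q\geq 6$) and then carrying out the $z'$-summation; the coefficients are non-negative and, up to the sign of the frequency, all share the single phase $\phi$ (the genuine formula splits into the two families distinguished in \eqref{eq:1.6}--\eqref{eq:1.7}, but for a lower bound it suffices to keep the terms with $d=1$). Theorem~3 then reduces to producing arbitrarily large $x$ at which this cosine series exceeds a fixed multiple of $(\log x)^{1/4}(\log\log x)^{1/8}$.

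For the resonance I would fix a large parameter $Y$, let $\mathcal{M}$ denote the set of squarefree $m\leq Y$ all of whose prime factors are $\equiv 1\,(4)$, and form the non-negative kernel
\begin{equation*}
K(x)=\prod_{m\in\mathcal{M}}\Big(1+\cos\big(2\pi\sqrt{m}\,x-\phi\big)\Big)\geq 0,\qquad \frac{1}{X}\int_{0}^{X}K(x)\,dx=1+o(1).
\end{equation*}
Expanding the product, the frequencies present in $K$ are the signed sums $\sum_{m\in T}\pm\sqrt{m}$ over subsets $T\subseteq\mathcal{M}$. Using $\sup_{x\leq X}\mathcal{E}_{q}(x)x^{-(2q-1)}\geq \big(\tfrac1X\int_{0}^{X}K\big)^{-1}\tfrac1X\int_{0}^{X}\mathcal{E}_{q}(x)x^{-(2q-1)}K(x)\,dx$, the singleton subsets $T=\{m\}$ resonate with the Voronoi frequencies $\sqrt{m}$ and contribute the main term, so that, taking $X$ large enough to kill the non-principal frequencies, one is left with
\begin{equation*}
\sup_{x\leq X}\frac{\mathcal{E}_{q}(x)}{x^{2q-1}}\;\gg\;\sum_{m\in\mathcal{M}}\frac{r_{2}(m,1;q)}{m^{3/4}}.
\end{equation*}

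It then remains to estimate the two quantities that decide the exponents. First, the admissible window: killing the non-principal frequencies of $K$ fixes the window at $\log X\asymp|\mathcal{M}|$, and by Landau's theorem on integers composed of primes $\equiv 1\,(4)$ one has $|\mathcal{M}|\asymp Y/\sqrt{\log Y}$, whence $Y\asymp(\log X)\sqrt{\log\log X}$. Second, for squarefree $m\in\mathcal{M}$ one has $r_{2}(m,1;q)\gg 2^{\omega(m)}$ provided a positive proportion of the representations $a^{2}+b^{2}=m$ have $|a|\asymp\sqrt{m}$, and a standard Rankin/partial-summation estimate gives $\sum_{m\in\mathcal{M}}2^{\omega(m)}m^{-3/4}\asymp Y^{1/4}$. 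Combining these, $\sup_{x\leq X}\mathcal{E}_{q}(x)x^{-(2q-1)}\gg Y^{1/4}\asymp(\log X)^{1/4}(\log\log X)^{1/8}$, which is the asserted order; note that the exponent $\tfrac18$ is exactly half of $\tfrac14$ and traces back to the density exponent $\tfrac12$ of sums of two squares.

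I expect the difficulties to be concentrated in two places. The first is the angular weight $(|a|/\sqrt{m})^{q-1}$ inside $r_{2}(m,1;q)$: because it is small for representations with $|a|$ near $0$ (the directions approaching the degenerate poles of $\partial\mathcal{B}$), I must show that on the set $\mathcal{M}$ the representations are not abnormally concentrated near the $w$-axis, so that $r_{2}(m,1;q)$ retains the full order $2^{\omega(m)}$; this is an equidistribution statement for Gaussian integers of a given norm. The second, and more serious, is controlling the non-principal spectrum of $K$: one must ensure that the multi-term radical sums $\sum_{m\in T}\pm\sqrt{m}$ with $|T|\geq 2$ neither coincide with a Voronoi frequency nor come so close to $0$ as to demand a still larger window (the separation of sums of square roots), while the tail and truncation error of the Voronoi formula stay below $Y^{1/4}$. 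The linear independence over $\mathbb{Q}$ of distinct square roots, together with the rapid decay of the product coefficients, should render this tractable, but it is the step that pins the argument down quantitatively.
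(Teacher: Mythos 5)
Your overall architecture (truncated Voronoi-type expansion with frequencies $\sqrt{m}/d$ and coefficients $r_{2}(m,d;q)$, then a positivity/resonance step, with the numerology $Y^{1/4}\asymp(\log X)^{1/4}(\log\log X)^{1/8}$ driven by Landau's density $1/\sqrt{\log Y}$ for sums of two squares) matches the paper's. But the resonance step itself has a genuine gap, and it is exactly the one you flag at the end without resolving. For the kernel $K(x)=\prod_{m\in\mathcal{M}}\bigl(1+\cos(2\pi\sqrt{m}\,x-\phi)\bigr)$ the non-principal frequencies are the signed sums $\sum_{m\in T}\pm\sqrt{m}$ with $|T|\ge 2$, and linear independence of square roots over $\mathbb{Q}$ only tells you these are nonzero. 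The only known quantitative lower bound (take the product of the conjugates of the algebraic integer $\sum_{m\in T}\pm\sqrt{m}$, which is a nonzero rational integer) is of the shape $\bigl(|T|\sqrt{Y}\bigr)^{-(2^{|T|}-1)}$, i.e.\ doubly exponentially small in $|\mathcal{M}|$; improving this is a well-known open problem. Since the total spectral mass of $K$ off the principal frequencies is of order $2^{|\mathcal{M}|}$ and each term contributes $\ll (X|\lambda|)^{-1}$ to $\frac1X\int_0^X K$, ``taking $X$ large enough to kill the non-principal frequencies'' forces $\log X\gg 2^{|\mathcal{M}|}\log Y$, not $\log X\asymp|\mathcal{M}|$. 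Running your numerology with this constraint gives $|\mathcal{M}|\lesssim\log\log X$, hence $Y\lesssim(\log\log X)\sqrt{\log\log\log X}$ and a final bound of order $(\log\log x)^{1/4}(\log\log\log x)^{1/8}$ --- an exponential loss against the claim. The same obstruction reappears when you integrate the truncation remainder and the Vaaler-approximation error against $K$, since you must rule out near-coincidences between their frequencies and the non-principal spectrum.

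The paper gets $\log X\ll|\mathscr{A}(P)|$ unconditionally by replacing the product kernel with Dirichlet's simultaneous approximation theorem: for the finite set $\mathscr{A}(P)$ of frequencies $\sqrt{m}/d$ with $d\le D_{0}$ and $\sqrt{m}\le dP$, there is an integer $X$ with $P^{2}\le X\le P^{2}D_{0}^{|\mathscr{A}(P)|}$ and $\|\sqrt{m}X/d\|\le 1/D_{0}$ for every element, so all cosines are simultaneously near $\cos(\pi/4)$; no separation between the frequencies is ever needed, and the pigeonhole box volume alone gives $\log X\ll|\mathscr{A}(P)|\ll P^{2}/\sqrt{\log P}$, whence $P^{1/2}\gg(\log X)^{1/4}(\log\log X)^{1/8}$. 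The series is truncated at frequency $P$ and the remainder terms are tamed by convolving against a Fej\'er kernel on $[X-1,X+1]$ (Lemmas 5.1--5.3), whose Fourier transform $\upphi(|\vartheta|/P)$ kills everything above $P$ up to an $O(1)$ error. Two smaller remarks: your worry about equidistribution of the representations of individual $m$ (so that $r_{2}(m,1;q)\gg 2^{\omega(m)}$) is unnecessary, since both approaches only need the average $\sum_{\sqrt{m}\le dP}r_{2}(m,d;q)m^{-3/4}\asymp (P/d)^{1/2}$, which follows by partial summation from the weighted lattice-point count in a disc; and restricting to $d=1$ is not quite enough, because the sign-alternating coefficients $\xi(d)$ (resp.\ $\chi(d)$) for $d\ge 2$ must be shown not to cancel the $d=1$ contribution --- the paper aligns all $d\le D_{0}$ and chooses $D_{0}$ so that the resulting constant $\mathscr{L}(D_{0})$ is positive.
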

\text{ }\\
\textbf{Remark.} It would be interesting to know whether the factor $\big(\log{\log{x}}\big)^{1/8}$ in \eqref{eq:1.8} could be strengthened.
Unfortunately, the methods in \cite{soundararajan2003omega} leading to $F(x)$ in \eqref{eq:1.2} rely on a crucial positivity argument which does not hold in our case.
\subsection{Organization of the paper}
At this point, it is worth mentioning that our approach to the lattice point counting problem on $\mathbb{H}_{q}$ is conceptually different from the one in \cite{garg2015lattice}. Specifically, Garg, Nevo \& Taylor's approach is to dominate the lattice point count in $\delta_{x}\mathcal{B}$ from above and below by convolving (in the Euclidean sense) the characteristic function $\chi_{\mathcal{B}}$ of $\mathcal{B}$ against a certain bump function $\rho_{\epsilon}$, where $\epsilon>0$ is chosen in a way so as to optimize the end results. A key point in their approach is the fact that $\rho_{\epsilon}$ is defined using Heisenberg dilations. An application of the Euclidean Poisson summation formula is now imminent, and this would necessitate establishing spectral decay estimates for the Euclidean Fourier transform of $\chi_{\mathcal{B}}$. Now, as was pointed out in $\S$1.2, $\partial\mathcal{B}$ contains points of vanishing Gaussian curvature, rendering the above estimates much harder.\\
The spectral analysis lies at the heart of the work of Garg, Nevo \& Taylor, with the resulting decay estimates leading to the lower bound $\kappa_{q}\geq2$. While for $q=1$, where this lower bound was shown by the author to be tight, that is $\kappa_{1}=2$, this is no longer the case when $q>1$. This does not mean that the above approach can not yield stronger results in the higher dimensional case, where in fact, the author believes it most certainly can.\\
However, it would be preferable not to perform the lattice point count at once, i.e Poisson summation formula, but rather more gradually, by splitting (or in our case slicing) $\delta_{x}\mathcal{B}$, and then estimate the number of lattice point in each region using different analytic arguments. This is precisely what we shall do. While an application of Poisson summation formula quickly gives an approximate expression for $\mathcal{E}_{q}(x)$, in our case, it will require considerably more work, and the required approximate expression only emerges at the very end of third section of this paper $\S$3.4. As we shall see, this hard work will be well rewarded. The paper is organized as follows:\\\\
$\S$2. In this section we obtain an initial expression for $\mathcal{E}_{q}(x)$, which is achieved by employing a certain slicing argument for the lattice point count in $\delta_{x}\mathcal{B}$. This will require us to establish several results regarding weighted lattice points in Euclidean balls and lattice points in shrinking annuli. The latter case is treated in a straightforward manner by appealing to the Euler–Maclaurin summation formula, where the resulting error term is given in an adequate form. As we intend on proving Theorem 2 in its sharpest possible form, the estimates we shall obtain in the former case will be derived from Lemma 2.1, where a vast arsenal of tolls from analytic number theory will be used. Collecting the results, we arrive at this so called initial expression. At this point, we could have already presented a proof of Theorem 1, however, we have chosen to postpone it to a later stage.\\\\
$\S$3. We subject $\mathcal{E}_{q}(x)$ to a transformation process, whose end result is the desired approximate expression mentioned above. The process begins with an application of Vaaler's Lemma, which enables us to approximate the $\psi$-sums obtained in Proposition 2.3 by a certain type of trigonometric sums. In turn, these trigonometric sums will be transformed using a sharp form of the \textit{B}-process of Van der Corput due to Karatsuba and Korolev. We complete the transformation process in $\S$3.3, where the $\psi$-sums are estimated in several different ranges. Gathering the results, we obtain two approximate expressions for $\mathcal{E}_{q}(x)$ stated in $\S$3.4, which will be the starting point from which we shall embark upon the proofs of Theorem 2 and Theorem 3. We end this section with a proof of Theorem 1.\\\\
$\S$4 - $\S$5. The last two sections are devoted to the proofs of Theorem 2 and Theorem 3 respectively, where the reader may find a detailed description for the course of proof.
\begin{ack}
I would like to express my deepest gratitude to my advisor Prof. Amos Nevo for his support and guidance throughout the writing of this paper. %
\end{ack}
\subsection{Notation}
$\star$ Throughout this paper, $q\geq3$ is an arbitrary \textbf{fixed integer} .\\\\
The following notation will occur repeatedly in this paper. Note that for some of the notations below we have indicated their dependence on $q$, while for others we have chosen to suppress it.
\begin{equation*}
\begin{split}
&\RomanNumeralCaps{1}.\quad\mathfrak{f}(t)=\sqrt{1-t^{2}}\quad,\quad \mathfrak{g}(t)=\big(1-t^{2}\big)^{\frac{q-1}{2}}\quad,\quad \hat{\mathfrak{g}}(t)=t^{q-1}\qquad;\qquad t\in[0,1]\,.\\\\
&\RomanNumeralCaps{2}.\quad\psi(t)=t-[t]-1/2\quad,\quad\textit{\large{e}}\big(t\big)=\textit{\large{e}}^{2\pi it}\quad,\quad\parallel t \parallel=\text{\,min\,}\{|t-m|:m\in\mathbb{Z}\big\}\quad;\quad t\in\mathbb{R}\,.\\\\
&\RomanNumeralCaps{3}.\quad\Re(s)=\sigma\quad,\quad\Im(s)=t\quad;\quad s=\sigma+it\,\,,\,\,s,t\in\mathbb{R}\,.\\\\
&\RomanNumeralCaps{4}.\quad r_{k}(m)=\Big|\Big\{\big(a_{1},\ldots,a_{k}\big)\in\mathbb{Z}^{k}:a^{2}_{1}+\cdots+ a^{2}_{k}=m\Big\}\Big|\quad,\quad\varpi(m)=\sum_{d|m}1\quad;\quad m\in\mathbb{N},\,k\geq2\\\\
&\RomanNumeralCaps{5}.\quad r_{2}\big(m,d;q\big)=\underset{b\equiv0(d)}{\sum_{a^{2}+b^{2}=m}}\hat{\mathfrak{g}}\bigg(\frac{|a|}{\sqrt{m}}\bigg)\quad,\quad r_{2,\chi}\big(m,d;q\big)=\underset{b\equiv0(d)}{\sum_{a^{2}+b^{2}=m}}\chi\big(|a|\big)\hat{\mathfrak{g}}\bigg(\frac{|a|}{\sqrt{m}}\bigg)\quad;\quad m,d\in\mathbb{N},\, a,b\in\mathbb{Z}\,.\\\\
&\RomanNumeralCaps{6}.\quad\xi(d)=\mathds{1}_{d\equiv1(2)}+(-1)^{\frac{q}{2}+1}\mathds{1}_{d\equiv0(2)}+(-1)^{\frac{q}{2}}2^{q}\mathds{1}_{d\equiv0(4)}\quad;\quad d\in\mathbb{N}\,.\\\\
&\RomanNumeralCaps{7}.\quad\varrho_{q}=\frac{\pi^{q}}{(1-2^{-q})\Gamma(q)\zeta(q)}\quad,\quad\varrho_{\chi,q}=\frac{\pi^{q}}{2^{q-1}\Gamma(q)L(q,\chi)}\quad;\quad\zeta(s)=\sum_{n=1}^{\infty}\frac{1}{n^{s}}\,\,,\,\,L(s,\chi)=\sum_{n=1}^{\infty}\frac{\chi(n)}{n^{s}}\,\,;\,\,\Re(s)>1\,.\\\\
&\RomanNumeralCaps{8}.\quad\chi=\text{\,the non-trivial Dirichlet character (mod 4)}\,.\\\\
&\RomanNumeralCaps{9}.\quad m=\square\,\,,\,\, m\neq\square\Longleftrightarrow m \text{ is or is not equal to a square}\quad;\quad m\in\mathbb{N}\,.
\end{split}
\end{equation*}
\newpage
\section{Extracting the main term and an initial expression for $\mathcal{E}_{q}(x)$}
The method we shall use to count the number of lattice points in $\delta_{x}\mathcal{B}$, will be by slicing it with hyperplanes, and counting the number of lattice points in each sliced section separately. To do so, we shall first need to establish several results regarding weighted lattice points in Euclidean balls and lattice points in shrinking annuli. This will be done in the first two subsections, and the relevant results are given by Proposition 2.1 \& 2.2. In the third subsection we gather the results to obtain an initial expression for $\mathcal{E}_{q}(x)$.     
\subsection{Weighted lattice points in Euclidean balls}
We begin this subsection by establishing the following lemma, which will then be combined with Corollary 2.1 to prove Proposition 2.1. 
\begin{lem}
For $Y>0$ define:
\begin{equation*}
\mathscr{S}_{q}(Y)=\sum_{0\,\leq\,m\,\leq\,Y}r_{2q}(m)\bigg(1-\frac{m}{Y}\bigg)
\end{equation*}
and set
\begin{equation*}
\mathscr{E}_{q}(Y)=\mathscr{S}_{q}(Y)-\frac{\pi^{q}}{\Gamma(q+2)}\,Y^{q}\,.
\end{equation*}
Then:
\begin{equation}\label{eq:2.1}
\big|\mathscr{E}_{q}(Y)\big|\ll Y^{q-2}\log{Y}\,.
\end{equation}
\end{lem}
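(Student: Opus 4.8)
The plan is to estimate the Cesàro-weighted sum $\mathscr{S}_{q}(Y)=\sum_{0\leq m\leq Y}r_{2q}(m)(1-m/Y)$ by exploiting the fact that the smooth weight $(1-m/Y)$ is much gentler than a sharp cutoff, so the error should be far smaller than in the unweighted lattice-point count. The natural tool is a contour-integral (Perron/Mellin) representation. I would write the weight via its Mellin transform: for a suitable vertical line $\Re(s)=\sigma$ lying to the right of $1$, one has
\begin{equation*}
\Big(1-\frac{m}{Y}\Big)_{+}=\frac{1}{2\pi i}\bigintsss_{(\sigma)}\frac{Y^{s}}{m^{s}}\,\frac{ds}{s(s+1)}\,,
\end{equation*}
so that, interchanging sum and integral,
\begin{equation*}
\mathscr{S}_{q}(Y)=\frac{1}{2\pi i}\bigintsss_{(\sigma)}Z_{q}(s)\,\frac{Y^{s}}{s(s+1)}\,ds\,,\qquad Z_{q}(s)=\sum_{m=1}^{\infty}\frac{r_{2q}(m)}{m^{s}}\,.
\end{equation*}
The Dirichlet series $Z_{q}(s)$ is the Epstein zeta function attached to the sum of $2q$ squares; it equals (up to the elementary factor coming from $m=0$) a product/combination of $\zeta(s)$ and $\zeta(s-q+1)$-type factors, and more precisely it admits a meromorphic continuation with a single simple pole at $s=q$ whose residue is $\pi^{q}/\Gamma(q)$, and it satisfies a functional equation relating $s$ to $q-s$.

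The key steps, in order, are as follows. First, establish the analytic data for $Z_{q}(s)$: meromorphic continuation to the whole plane, the simple pole at $s=q$ with residue $\pi^{q}/\Gamma(q)$, polynomial growth in vertical strips (via the functional equation and convexity/Phragmén–Lindelöf), and the relevant subconvexity or at least convexity bound for $Z_{q}(\sigma+it)$ on the critical-strip lines I will use. Second, shift the contour from $\Re(s)=\sigma>q$ leftward. Crossing the pole at $s=q$ picks up the residue
\begin{equation*}
\mathrm{Res}_{s=q}\frac{Z_{q}(s)Y^{s}}{s(s+1)}=\frac{\pi^{q}}{\Gamma(q)}\cdot\frac{Y^{q}}{q(q+1)}=\frac{\pi^{q}}{\Gamma(q+2)}\,Y^{q}\,,
\end{equation*}
which is exactly the claimed main term, since $q(q+1)\Gamma(q)=\Gamma(q+2)$. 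Third, I would push the line of integration to $\Re(s)=q-2$ (or just to the left of it), so that the remaining integral contributes $Y^{q-2}$ in size; the extra factor $s(s+1)$ in the denominator and the polynomial growth of $Z_{q}$ must combine to give an absolutely convergent integral, and this is where the $\log{Y}$ is expected to appear — typically from a pole of $Z_{q}(s)$ at $s=1$ (the subsidiary pole of the Epstein zeta function coming from the $\zeta(s)$ factor when $q\geq3$, so that $1<q-1$), which when $q-2\geq 1$ lies on the boundary of the shifted contour and forces either a careful indentation or the logarithmic loss. Collecting the residue as the main term and bounding the shifted integral yields $\mathscr{E}_{q}(Y)\ll Y^{q-2}\log Y$.

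The main obstacle will be controlling the shifted contour integral at $\Re(s)=q-2$ with the correct power and extracting the logarithm honestly. Two points need care. First, one must verify that no additional pole is crossed between $s=q$ and $s=q-2$ that would produce a term of order larger than $Y^{q-2}$; for $q\geq 3$ the only other pole is the one at $s=1<q-2$ (or at $s=1=q-2$ when $q=3$), so I must handle the boundary case $q=3$ separately, and it is precisely the near-coincidence of the integration line with this pole that generates the logarithmic factor. Second, I need the growth estimate for $Z_{q}(q-2+it)$ to be good enough that $\int|Z_{q}(q-2+it)|/|t|^{2}\,dt$ converges; the $1/(s(s+1))$ decay supplies two powers of $t^{-1}$, and a convexity bound on $Z_{q}$ of the form $|Z_{q}(q-2+it)|\ll (1+|t|)^{A}$ with $A<1$ will suffice, which the functional equation delivers comfortably for the relatively far-left line $\Re(s)=q-2$. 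Once these analytic facts are in place the estimate \eqref{eq:2.1} follows immediately.
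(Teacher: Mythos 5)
Your opening moves coincide with the paper's: the Mellin representation of the Ces\`aro weight with kernel $\frac{1}{s(s+1)}=\frac{\Gamma(s)}{\Gamma(s+2)}$, the single simple pole of $Z_{q}(s)$ at $s=q$ with residue $\pi^{q}/\Gamma(q)$, and the residue computation producing the main term $\pi^{q}Y^{q}/\Gamma(q+2)$ are all exactly as in the paper. The proof collapses, however, at the step you flag as the "main obstacle": the bound $|Z_{q}(q-2+it)|\ll(1+|t|)^{A}$ with $A<1$ is false. Phragm\'en--Lindel\"of between $\Re(s)=q+\delta$ (where $Z_{q}$ is essentially bounded) and $\Re(s)=-\delta$ (where the functional equation and Stirling give $|Z_{q}(s)|\ll|1+s|^{q+2\delta}$, as in the paper's (2.4)) yields the convexity exponent $q-\sigma$, i.e. $|Z_{q}(q-2+it)|\ll|t|^{2+\varepsilon}$. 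Worse, this is not merely a lossy bound: the divisor structure of $r_{2q}$ forces $Z_{q}(s)$ to behave like $\zeta(s-q+1)$ times a bounded factor, and $\zeta(-1+it)\asymp|t|^{3/2}$, so the true order of magnitude on the line $\Re(s)=q-2$ is at least $|t|^{3/2}$. Hence $\int|Z_{q}(q-2+it)|\,|t|^{-2}\,dt$ diverges and no subconvexity input can save an absolute-value estimate there; shifting only as far left as absolute convergence permits (roughly $\Re(s)=q-\tfrac{3}{2}+\varepsilon$) caps your method at $\mathscr{E}_{q}(Y)\ll Y^{q-3/2+\varepsilon}$, short of the claimed $Y^{q-2}\log Y$. (A secondary point: $Z_{q}$ has no pole at $s=1$ --- the Epstein zeta function of a positive definite form in $2q$ variables has its unique pole at $s=q$ --- so the logarithm cannot be generated the way you suggest; in the paper it comes from the choice $\delta=1/\log Y$ and the truncation errors.)

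What is missing is precisely the exploitation of oscillation that occupies the bulk of the paper's proof. There, the integral is truncated at height $T$, the contour is moved all the way to $\Re(s)=-\delta$, and the functional equation converts the remaining integral into a truncated Vorono\"i-type series $\pi^{-1}Y^{(q-1)/2}\sum_{m\leq M}r_{2q}(m)m^{-(q+1)/2}\mathcal{J}_{q+1}(2\pi\sqrt{mY})$ with $M\asymp T^{2}Y^{-1}$. Estimating this by absolute values would again only give $Y^{q-3/2}$; the paper instead substitutes the explicit decomposition $r_{2q}=\rho_{2q}+\tau_{2q}$ (Eisenstein part as divisor sums, cusp part controlled by Deligne's bound), unfolds the divisor sums, and applies van der Corput's second-derivative test to the resulting exponential sums $\sum_{m}e\big(\sqrt{dY}\,m^{1/2}+\tfrac{a}{4}m\big)$, gaining the square-root cancellation needed to reach $Y^{q-2}\log Y$ after balancing with $T=Y$. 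You would need to supply this entire second stage --- or an equivalent mechanism for extracting cancellation from the shifted integral --- for the argument to close.
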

\begin{proof}
Let $Y$ be large, $\sqrt{2}\pi Y^{\frac{1}{2}}\leq T\leq Y$ a parameter to be chosen later, and set $\delta=\frac{1}{\log{Y}}$. Write $\upphi$ for the continuous function on $\mathbb{R}_{\text{}_{+}}$ defined by $\upphi(y)=1-y$ if $y\in[0,1]$, and $\upphi(y)=0$ otherwise. We have: 
\begin{equation}\label{eq:2.2}
\begin{split}
\mathscr{S}_{q}(Y)&=1+\sum_{m=1}^{\infty}r_{2q}(m)\upphi\bigg(\frac{m}{Y}\bigg)=1+\frac{1}{2\pi i}\bigintssss\limits_{q+\delta-i\infty}^{q+\delta+i\infty}\zeta_{\text{}_{2q}}(s)\breve{\upphi}(s)Y^{s}\textit{d}s=\\
&=1+\frac{1}{2\pi i}\bigintssss\limits_{q+\delta-iT}^{q+\delta+iT}\zeta_{\text{}_{2q}}(s)\breve{\upphi}(s)Y^{s}\textit{d}s+O\bigg(Y^{q}T^{-2}\sum_{m=1}^{\infty}\frac{r_{2q}(m)}{m^{q+\delta}}\bigg(1+\text{min}\bigg\{T,\frac{1}{|\log{\frac{Y}{m}}|}\bigg\}\bigg)\bigg)=\\
&=1+\frac{1}{2\pi i}\bigintssss\limits_{q+\delta-iT}^{q+\delta+iT}\zeta_{\text{}_{2q}}(s)\breve{\upphi}(s)Y^{s}\textit{d}s+O\bigg(Y^{q}T^{-2}\log{Y}\bigg)
\end{split}
\end{equation}
where $\breve{\upphi}(s)=\frac{\Gamma(s)}{\Gamma(s+2)}$ is the Mellin transform of $\upphi$, and:
\begin{equation*}
\zeta_{\text{}_{2q}}(s)=\sum_{m=1}^{\infty}\frac{r_{2q}(m)}{m^{s}}\qquad;\qquad\Re(s)>q\,.
\end{equation*}
The Zeta function $\zeta_{\text{}_{2q}}(s)$, initially defined for $\Re(s)>q$, admits an analytic continuation to the entire complex plane except at $s=q$ where it has a simple pole with residue $\frac{\pi^{q}}{\Gamma(q)}$, and satisfies the functional equation:
\begin{equation*}
\pi^{-s}\Gamma(s)\zeta_{\text{}_{2q}}(s)=\pi^{-(q-s)}\Gamma(q-s)\zeta_{\text{}_{2q}}(q-s)\,.
\end{equation*}
Now, $s(s-q)\zeta_{\text{}_{2q}}(s)\breve{\upphi}(s)Y^{s}$ is regular in the strip $-\delta\leq\Re(s)\leq q+\delta$, and by Stirling's asymptotic formula for the Gamma function (see \cite{iwaniec2004analytic}, A.4 (5.112))
\begin{equation}\label{eq:2.3}
\Gamma(s)=\bigg(\frac{2\pi}{s}\bigg)^{\frac{1}{2}}\bigg(\frac{s}{e}\bigg)^{s}\bigg(1+O_{\epsilon}\bigg(\frac{1}{|s|}\bigg)\bigg)\qquad;\qquad|\text{Arg}(s)|\leq\pi-\epsilon
\end{equation}
together with the functional equation, we obtain the bounds: 
\begin{equation}\label{eq:2.4}
\begin{split}
&\big|s(s-q)\zeta_{\text{}_{2q}}(s)\breve{\upphi}(s)Y^{s}\big|\ll\,Y^{q}\log{Y}\qquad\qquad\quad\,\,;\qquad\Re(s)=q+\delta\\\\
&\big|s(s-q)\zeta_{\text{}_{2q}}(s)\breve{\upphi}(s)Y^{s}\big|\ll\,|1+s|^{q+2\delta}\log{Y}\qquad;\qquad\Re(s)=-\delta\,.
\end{split}
\end{equation}
Hence, by the Phragm{\'e}n-{L}indel{\"o}f principle we deduce:
\begin{equation}\label{eq:2.5}
\big|\zeta_{\text{}_{2q}}(s)\breve{\upphi}(s)Y^{s}\big|\ll\,Y^{q}T^{-2}\log{Y}\qquad;\qquad-\delta\leq\Re(s)\leq q+\delta \text{ },\text{ }|\Im(s)|=T\,.
\end{equation}
Moving the line of integration to $\Re(s)=-\delta$, and using \eqref{eq:2.5}, we have by the theorem of residues
\begin{equation}\label{eq:2.6}
\begin{split}
\frac{1}{2\pi i}\bigintssss\limits_{q+\delta-iT}^{q+\delta+iT}\zeta_{\text{}_{2q}}(s)\breve{\upphi}(s)Y^{s}\textit{d}s&=\bigg\{\underset{s=q}{\text{Res}}+\underset{s=0}{\text{Res}}\bigg\}\zeta_{\text{}_{2q}}(s)\breve{\upphi}(s)Y^{s}+\frac{1}{2\pi i}\bigintssss\limits_{-\delta-iT}^{-\delta+iT}\zeta_{\text{}_{2q}}(s)\breve{\upphi}(s)Y^{s}\textit{d}s+O\bigg(Y^{q}T^{-2}\log{Y}\bigg)=\\
&=\frac{\pi^{q}}{\Gamma(q+2)}\,Y^{q}-1+\frac{1}{2\pi i}\bigintssss\limits_{-\delta-iT}^{-\delta+iT}\zeta_{\text{}_{2q}}(s)\breve{\upphi}(s)Y^{s}\textit{d}s+O\bigg(Y^{q}T^{-2}\log{Y}\bigg)\,.
\end{split}
\end{equation}
Inserting \eqref{eq:2.6} into the the RHS of \eqref{eq:2.2}, and applying the functional equation, we arrive at:
\begin{equation}\label{eq:2.7}
\mathscr{E}_{q}(Y)=\sum_{m=1}^{\infty}\frac{r_{2q}(m)}{(\pi m)^{q}}\text{\Large{J}}_{m}+O\Big(Y^{q}T^{-2}\log{Y}\Big)
\end{equation}
where
\begin{equation}\label{eq:2.8}
\text{\Large{J}}_{m}=\frac{1}{2\pi i}\bigintssss\limits_{-\delta-iT}^{-\delta+iT}\frac{\Gamma(q-s)(\pi^{2}mY)^{s}}{\Gamma(s+2)}\textit{d}s\,.
\end{equation}
We shall estimate \eqref{eq:2.8} separately for $m>M$ and $m\leq M$, with $M=\pi^{-2}T^{2}Y^{-1}\geq2$. Suppose $m>M$. By Stirling's asymptotic formula \eqref{eq:2.3} :
\begin{equation}\label{eq:2.9}
\big|\text{\Large{J}}_{m}\big|\ll\,\frac{1}{m^{\delta}}\bigg(\,\bigg|\bigintssss\limits_{1}^{T}g_{\text{}_{2\delta}}(t)e^{if_{m}(t)}\textit{d}t\bigg|+O\big(T^{q-2}\big)\bigg)
\end{equation}
where $\,\,g_{\text{}_{2\delta}}(t)=t^{q-2+2\delta\,\,}
$ and $\,\,f_{m}(t)=-2t\log{t}+2t +t\log{\big(\pi^{2}mY\big)}$. Trivial integration and integration by parts give
\begin{equation*}
\bigg|\bigintssss\limits_{1}^{T}g_{\text{}_{2\delta}}(t)e^{if_{m}(t)}\textit{d}t\bigg|\ll\,T^{q-2}\text{min}\bigg\{T,\frac{1}{\log{\frac{m}{M}}}\bigg\}
\end{equation*}
hence we obtain:
\begin{equation}\label{eq:2.10}
\big|\text{\Large{J}}_{m}\big|\ll\,\frac{T^{q-2}}{m^{\delta}}\bigg(1+\text{min}\bigg\{T,\frac{1}{\log{\frac{m}{M}}}\bigg\}\bigg).
\end{equation}
Suppose now that $m\leq M$. Set $\sigma=q-\delta$. Appealing to \eqref{eq:2.3}, we move the line of integration to $\Re(s)=\sigma$ and then extend the integral all the way from $-\infty$ to $\infty$, obtaining:
\begin{equation}\label{eq:2.11}
\begin{split}
\text{\Large{J}}_{m}&=\frac{1}{2\pi i}\bigintssss\limits_{ \sigma-iT}^{\sigma+iT}\frac{\Gamma(q-s)(\pi^{2}mY)^{s}}{\Gamma(s+2)}\textit{d}s+O\bigg(T^{q-2}\bigintssss\limits_{-\delta}^{\sigma}\bigg(\frac{m}{M}\bigg)^{\alpha}\textit{d}\alpha\bigg)=\\
&=\frac{1}{2\pi i}\bigintssss\limits_{ \sigma-i\infty}^{\sigma+i\infty}\frac{\Gamma(q-s)(\pi^{2}mY)^{s}}{\Gamma(s+2)}\textit{d}s+O\bigg(\big(\pi^{2}mY\big)^{\sigma}\bigg|\bigintssss\limits_{T}^{\infty}g_{\text{}_{-2\sigma}}(t)e^{if_{m}(t)}\textit{d}t\bigg|+T^{q-2}\bigg)
\end{split}
\end{equation}
with $g_{\text{}_{-2\sigma}}(t)=t^{q-2-2\sigma}$, and $f_{m}(t)$ is defined as before.
Trivial integration and integration by parts give
\begin{equation*}
\bigg|\bigintssss\limits_{T}^{\infty}g_{\text{}_{-2\sigma}}(t)e^{if_{m}(t)}\textit{d}t\bigg|\ll\,T^{q-2-2\sigma}\text{min}\bigg\{T,\frac{1}{\log{\frac{M}{m}}}\bigg\}
\end{equation*}
and since $\big(\pi^{2}mY\big)^{\sigma}T^{q-2-2\sigma}=\big(\frac{m}{M}\big)^{\sigma}T^{q-2}\leq T^{q-2}$, we obtain:
\begin{equation}\label{eq:2.12}
\begin{split}
\big(\pi^{2}mY\big)^{\sigma}\bigg|\bigintssss\limits_{T}^{\infty}g_{\text{}_{-2\sigma}}(t)e^{if_{m}(t)}\textit{d}t\bigg|+T^{q-2}&\ll\,T^{q-2}\bigg(1+\text{min}\bigg\{T,\frac{1}{\log{\frac{M}{m}}}\bigg\}\bigg)\ll M^{\delta}\frac{T^{q-2}}{m^{\delta}}\bigg(1+\text{min}\bigg\{T,\frac{1}{\log{\frac{M}{m}}}\bigg\}\bigg)\ll\\
&\ll\frac{T^{q-2}}{m^{\delta}}\bigg(1+\text{min}\bigg\{T,\frac{1}{\log{\frac{M}{m}}}\bigg\}\bigg)
\end{split}
\end{equation}
where that last $\ll$ follows since $M\ll Y$. Summing over all $m\in\mathbb{N}$, we get by \eqref{eq:2.7}, \eqref{eq:2.10} and \eqref{eq:2.12}:\\
\begin{equation}\label{eq:2.13}
\begin{split}
\mathscr{E}_{q}(Y)&=\sum_{1\,\leq\,m\,\leq\,M}^{}\frac{r_{2q}(m)}{(\pi m)^{q}}\frac{1}{2\pi i}\bigintssss\limits_{ \sigma-i\infty}^{\sigma+i\infty}\frac{\Gamma(q-s)(\pi^{2}mY)^{s}}{\Gamma(s+2)}\textit{d}s+O\bigg(T^{q-2}\sum_{m=1}^{\infty}\frac{r_{2q}(m)}{m^{q+\delta}}\bigg(1+\text{min}\bigg\{T,\frac{1}{|\log{\frac{M}{m}}|}\bigg\}\bigg)\bigg)+\\
&+O\Big(Y^{q}T^{-2}\log{Y}\Big)=\sum_{1\,\leq\,m\,\leq\,M}^{}\frac{r_{2q}(m)}{(\pi m)^{q}}\frac{1}{2\pi i}\bigintssss\limits_{ \sigma-i\infty}^{\sigma+i\infty}\frac{\Gamma(q-s)(\pi^{2}mY)^{s}}{\Gamma(s+2)}\textit{d}s+O\Big(Y^{q}T^{-2}\log{Y}\Big)=\\
&=\pi^{-1}Y^{\frac{q-1}{2}}\sum_{1\,\leq\,m\,\leq\,M}^{}\frac{r_{2q}(m)}{m^{\frac{q+1}{2}}}\mathcal{J}_{q+1}\big(2\pi\sqrt{mY}\,\big)+\Big(Y^{q}T^{-2}\log{Y}\Big)
\end{split}
\end{equation}
where for $\nu>0$ the Bessel function $\mathcal{J}_{\nu}$ of order $\nu$ is defined by
\begin{equation*}
\mathcal{J}_{\nu}(y)=\sum_{k=0}^{\infty}\frac{(-1)^{k}}{k!\Gamma(k+1+\nu)}\bigg(\frac{y}{2}\bigg)^{\nu+2k}\,.
\end{equation*}
To proceed further, we need an asymptotic estimate for the Bessel function (see \cite{iwaniec2002spectral}, B.4 (B.35)). For fixed $\nu>0$ :
\begin{equation}\label{eq:2.14}
\mathcal{J}_{\nu}(y)=\bigg(\frac{2}{\pi y}\bigg)^{1/2}\cos{\bigg(y-\frac{1}{2}\nu\pi-\frac{1}{4}\pi\bigg)}+O\bigg(\frac{1}{y^{3/2}}\bigg)\,,\text{ as } y\to\infty\,.
\end{equation}
For $m\in\mathbb{N}$ we have (see \cite{rankin_1977}, 7.4):
\begin{equation}\label{eq:2.15}
\begin{split}
&\text{ }\qquad\qquad\quad\,\,\, r_{2q}(m)=\rho_{2q}(m)+\tau_{2q}(m)\\\\
&q\equiv0\,(2)\,:\quad\rho_{2q}(m)=\varrho_{\text{}_{q}}m^{q-1}\bigg\{\underset{d\equiv1\,(2)}{\sum_{d|m}}d^{1-q}+(-1)^{\frac{q}{2}}\underset{d\equiv0\,(2)}{\sum_{d|m}}(-1)^{\frac{m}{d}}d^{1-q}\bigg\}\\
&q\equiv1\,(2)\,:\quad\rho_{2q}(m)=\varrho_{\text{}_{\chi,q}}m^{q-1}\bigg\{2^{q-1}\sum_{d|m}\chi(d)d^{1-q}+(-1)^{\frac{q-1}{2}}\sum_{d|m}\chi\bigg(\frac{m}{d}\bigg)d^{1-q}\bigg\}
\end{split}
\end{equation}
where $\tau_{2q}(m)=0$ for $q=3,4$. For $q\geq5$ we have $|\tau_{2q}(m)|\ll m^{\frac{q-1}{2}}\varpi(m)$ which is a consequence of Deligne's proof of the Riemann Hypothesis for varieties over finite fields \cite{deligne1974conjecture}. Using \eqref{eq:2.14} and \eqref{eq:2.15}, splitting the summation over $m$ into dyadic segments and unfolding $\rho_{2q}(m)$ we arrive at:
\begin{equation}\label{eq:2.16}
\begin{split}
\big|\mathscr{E}_{q}(Y)\big|\ll\,Y^{\frac{q-1}{2}-\frac{1}{4}}N^{\frac{q}{2}-\frac{7}{4}}\log{Y}\sum_{a\,(4)}\,\,\sum_{1\,\leq\,d\,\leq\,2N}\frac{1}{d^{q-1}}W_{a,d}+Y^{q}T^{-2}\log{Y}
\end{split}
\end{equation}
for some $1\leq N\leq M$, where 
\begin{equation}\label{eq:2.17}
\begin{split}
W_{a,d}&=\underset{\frac{N}{d}\,\leq\,y\,<\,y'\,\leq\,2\frac{N}{d}}{\sup}\,\,\bigg|\sum_{y\,<\,m\,\leq\,y'}\textit{\large{e}}\Big(f_{a,d}(m)\Big)\bigg|
\end{split}
\end{equation}
and $f_{a,d}(t)=\sqrt{dY}t^{\frac{1}{2}}+\frac{a}{4}t$. To estimate \eqref{eq:2.17} we appeal to (see \cite{graham1991van}, 2.1 (Thm 2.2)):
\begin{VDCL}
Let $\mathcal{I}\subset\mathbb{R}$ be a finite interval of length $\big|\mathcal{I}\big|\geq1$, g a real valued function with two continuous
derivatives on $\mathcal{I}$ satisfying:
\begin{equation*}
\lambda\leq|g^{(2)}(t)|\leq C\lambda\qquad;\qquad t\in\mathcal{I},\,\lambda>0 \text{ and } C\geq1\,.
\end{equation*}
Then:
\begin{equation}\label{eq:2.18}
\bigg|\sum_{m\in\mathcal{I}}\textit{\large{e}}\big(g(m)\big)\bigg|\ll_{\text{}_{C}}\,\big|\mathcal{I}\big|\lambda^{\frac{1}{2}}+\lambda^{-\frac{1}{2}}\,.
\end{equation}
\end{VDCL}
\text{ }\\
Now given $d\in\mathbb{N}$, $a\in\mathbb{Z}$ and $\frac{N}{d}\leq y<y'\leq2\frac{N}{d}$, we apply \eqref{eq:2.18} if $y'-y\geq1$ and estimate trivially otherwise, obtaining:
\begin{equation}\label{eq:2.19}
\big|W_{a,d}\big|\ll\,1+Y^{\frac{1}{4}}N^{\frac{1}{4}}+\frac{N^{\frac{3}{4}}}{Y^{\frac{1}{4}}d}\ll\,Y^{\frac{1}{4}}N^{\frac{1}{4}}\,.
\end{equation}
Inserting \eqref{eq:2.19} into the RHS of \eqref{eq:2.16}, and recalling that $N\ll M\ll T^{2}Y^{-1}$ we derive:
\begin{equation}
\begin{split}
&\big|\mathscr{E}_{q}(Y)\big|\ll\,\Big(YT^{q-3}+Y^{q}T^{-2}\Big)\log{Y}\,.
\end{split}
\end{equation}
To balance the two error terms we make the choice $T=Y$, so $\sqrt{2}\pi Y^{\frac{1}{2}}\leq T\leq Y$, which gives:
\begin{equation}
\big|\mathscr{E}_{q}(Y)\big|\ll Y^{q-2}\log{Y}
\end{equation}
as claimed.
\end{proof}
\text{ }\\
Using Lemma 2.1 we deduce:
\begin{cor}
For $Y>0$ and $k\in\mathbb{N}$ define:
\begin{equation*}
\mathscr{S}^{\ast}_{k,q}(Y)=\sum_{0\,\leq\,m\,\leq\, \sqrt{Y}}r_{2q}(m)\bigg(1-\frac{m^{2}}{Y}\bigg)^{k}
\end{equation*}
and set
\begin{equation*}
\mathscr{E}^{\ast}_{k,q}(Y)=\mathscr{S}^{\ast}_{k,q}(Y)-\frac{k!\pi^{q}}{(\frac{q}{2}+1)\cdots(\frac{q}{2}+k)\Gamma(q+1)}Y^{\frac{q}{2}}\,.
\end{equation*}
Then:
\begin{equation}\label{eq:2.22}
\big|\mathscr{E}^{\ast}_{k,q}(Y)\big|\ll\,Y^{\frac{q}{2}-1}\log{Y}
\end{equation}
where the implied constant does not depend on $k$.
\end{cor}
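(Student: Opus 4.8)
The plan is to deduce the estimate from Lemma 2.1 by expanding the weight $(1-m^{2}/Y)^{k}$ into elementary ramp weights $(W-m)_{+}:=\max\{W-m,0\}$, whose weighted sum is governed by the lemma. Put $\Phi(t)=(1-t^{2}/Y)^{k}$ and introduce
\[
P(W)=\sum_{m\geq0}r_{2q}(m)\,(W-m)_{+}=W\,\mathscr{S}_{q}(W),
\]
so that Lemma 2.1 supplies $P(W)=\tfrac{\pi^{q}}{\Gamma(q+2)}W^{q+1}+W\mathscr{E}_{q}(W)$, with $|W\mathscr{E}_{q}(W)|\ll W^{q-1}\log W$ once $W$ is large and $\ll W$ for bounded $W$. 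Because $\Phi$ and $\Phi'$ both vanish at $t=\sqrt{Y}$ when $k\geq2$, two integrations by parts give the representation $\Phi(m)=\int_{0}^{\infty}(W-m)_{+}\,\Phi''(W)\,dW$, valid for all $m\geq0$ (for $k=1$ the derivative $\Phi'$ jumps at $\sqrt{Y}$, producing one extra boundary term treated separately below).

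Interchanging summation and integration then yields
\[
\mathscr{S}^{\ast}_{k,q}(Y)=\int_{0}^{\sqrt{Y}}\Phi''(W)\,P(W)\,dW,
\]
into which I substitute the two pieces of $P$. The polynomial piece produces the main term: writing $\Phi(W)=\phi(W/\sqrt{Y})$ with $\phi(s)=(1-s^{2})^{k}$, the change of variable $W=\sqrt{Y}\,s$ followed by two integrations by parts reduces $\int_{0}^{\sqrt{Y}}\Phi''(W)W^{q+1}\,dW$ to $q(q+1)Y^{q/2}\int_{0}^{1}(1-s^{2})^{k}s^{q-1}\,ds$. Evaluating the elementary integral $\int_{0}^{1}(1-s^{2})^{k}s^{q-1}\,ds=\tfrac12\,\Gamma(q/2)\,k!\,/\,\Gamma(q/2+k+1)$ and using $\Gamma(q+2)=q(q+1)\Gamma(q)$ together with $\Gamma(q/2+k+1)=(\tfrac q2+1)\cdots(\tfrac q2+k)\cdot\tfrac q2\,\Gamma(q/2)$ collapses the main term to exactly $\tfrac{k!\,\pi^{q}}{(\frac q2+1)\cdots(\frac q2+k)\Gamma(q+1)}Y^{q/2}$, matching the claim.

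It then remains to bound $\mathscr{E}^{\ast}_{k,q}(Y)=\int_{0}^{\sqrt{Y}}\Phi''(W)\,W\mathscr{E}_{q}(W)\,dW$. Inserting the uniform bound $|\mathscr{E}_{q}(W)|\ll 1+W^{q-2}\log(W+2)$ and substituting $W=\sqrt{Y}s$ (so $\Phi''(W)=Y^{-1}\phi''(s)$) splits the error into
\[
\ll\;Y^{q/2-1}\log Y\int_{0}^{1}|\phi''(s)|\,s^{q-1}\,ds\;+\;\int_{0}^{1}|\phi''(s)|\,s\,ds.
\]
The decisive point is that, with $\phi''(s)=-2k(1-s^{2})^{k-1}+4k(k-1)s^{2}(1-s^{2})^{k-2}$, both integrals are bounded independently of $k$: the $s^{q-1}$-weighted integral evaluates through two Beta integrals to a combination of $\Gamma(q/2)\Gamma(k+1)/\Gamma(q/2+k)$ and $\Gamma(q/2+1)\Gamma(k+1)/\Gamma(q/2+k)$, each $\asymp k^{1-q/2}=O(1)$ precisely because $q\geq3$ forces $q/2>1$, while the $s$-weighted integral evaluates exactly to a quantity $\leq 3$. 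Hence $|\mathscr{E}^{\ast}_{k,q}(Y)|\ll Y^{q/2-1}\log Y$ with an implied constant independent of $k$, as required.

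\textbf{Main obstacle.} The whole content of the corollary beyond Lemma 2.1 is this uniformity in $k$. A crude bound $|\Phi''(W)|\ll k^{2}/Y$ on $[0,\sqrt{Y}]$ would lose a factor $k^{2}$ and be useless; the resolution is that the $k^{2}$ created by differentiating twice is exactly absorbed by the decay $\int_{0}^{1}s^{q-1}(1-s^{2})^{k-2}\,ds\asymp k^{-q/2}$ of the Beta integral, and this cancellation is what pins the clean estimate to the range $q\geq3$. The only bookkeeping subtlety is the $k=1$ boundary term $2Y^{-1/2}P(\sqrt{Y})$, whose polynomial part merges consistently into the stated main term and whose error part is $\ll Y^{q/2-1}\log Y$.
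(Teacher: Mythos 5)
Your argument is correct, and it reaches Lemma 2.1 through the same underlying object --- the ramp sum $P(W)=\sum_{m}r_{2q}(m)(W-m)_{+}=W\mathscr{S}_{q}(W)$ --- but by a genuinely different mechanism. The paper goes \emph{upward} from $k=1$: it observes that $\mathscr{S}^{\ast}_{k,q}(Y)$ is the normalized $k$-fold iterated integral $\frac{k!}{Y^{k}}\int_{0}^{Y}\cdots\int_{0}^{y_{2}}y_{1}\mathscr{S}^{\ast}_{1,q}(y_{1})\,dy_{1}\cdots dy_{k-1}$, and since the same identity propagates to the error terms and the normalized operator is an average of total mass $1$, the sup-norm bound $|\mathscr{E}^{\ast}_{k,q}(Y)|\leq 3\sup_{0<y<\sqrt{Y}}|\mathscr{E}_{q}(y)|$ falls out with no computation and with uniformity in $k$ entirely for free. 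You instead go \emph{downward} from $k$: you differentiate the weight twice and write $\Phi(m)=\int_{0}^{\sqrt{Y}}(W-m)_{+}\Phi''(W)\,dW$, which costs a factor $k^{2}$ in $\|\Phi''\|_{\infty}$ that you must then recover from the decay of the Beta integrals $\int_{0}^{1}s^{q-1}(1-s^{2})^{k-2}\,ds\asymp k^{-q/2}$; this works (I checked the boundary term for $k=1$ and the main-term constant, which does collapse to $\frac{k!\pi^{q}}{(\frac{q}{2}+1)\cdots(\frac{q}{2}+k)\Gamma(q+1)}$), but it makes the uniformity in $k$ a quantitative cancellation rather than a structural triviality, and it nominally ties the argument to $q/2\geq 1$ --- in fact $q\geq 2$ already suffices for $k^{1-q/2}=O(1)$, whereas the paper's averaging argument imposes no constraint on $q$ at all. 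The trade-off is that your route produces the main-term constant by a self-contained Beta-function evaluation, while the paper's builds it up implicitly through the iteration; both are legitimate, but the paper's is the shorter path to the $k$-independence that, as you rightly identify, is the only real content of the corollary beyond Lemma 2.1.
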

\begin{proof}
Let $Y$ be large. We have for $2\leq k\in\mathbb{N}$:
\begin{equation*}
\begin{split}
\mathscr{S}^{\ast}_{k,q}(Y)=\frac{k!}{Y^{k}}\bigintssss\limits_{y_{\text{}_{k-1}}=0}^{Y}\,\,\bigintssss\limits_{y_{\text{}_{k-2}}=0}^{y_{\text{}_{k-1}}}\ldots\bigintssss\limits_{y_{\text{}_{1}}=0}^{y_{\text{}_{2}}}y_{\text{}_{1}}\mathscr{S}^{\ast}_{1,q}(y_{\text{}_{1}})\textit{d}y_{\text{}_{1}}\ldots\textit{d}y_{\text{}_{k-1}}
\end{split}
\end{equation*}
and with the notation as in Lemma 2.1
\begin{equation*}
Y\mathscr{S}^{\ast}_{1,q}(Y)=2\bigg\{Y\mathscr{S}_{q}\big(\sqrt{Y}\,\big)-\bigintssss\limits_{ y_{\text{}_{0}}=0}^{\sqrt{Y}}y_{\text{}_{0}}\mathscr{S}_{q}(y_{\text{}_{0}})\textit{d}y_{\text{}_{0}}\bigg\}\,.
\end{equation*}
Thus, for $2\leq k\in\mathbb{N}$:
\begin{equation}\label{eq:2.23}
\begin{split}
\mathscr{E}^{\ast}_{k,q}(Y)=\frac{k!}{Y^{k}}\bigintssss\limits_{y_{\text{}_{k-1}}=0}^{Y}\,\,\bigintssss\limits_{y_{\text{}_{k-2}}=0}^{y_{\text{}_{k-1}}}\ldots\bigintssss\limits_{y_{\text{}_{1}}=0}^{y_{\text{}_{2}}}y_{\text{}_{1}}\mathscr{E}^{\ast}_{1,q}(y_{\text{}_{1}})\textit{d}y_{\text{}_{1}}\ldots\textit{d}y_{\text{}_{k-1}}
\end{split}
\end{equation}
and with the notation as in Lemma 2.1
\begin{equation}\label{eq:2.24}
Y\mathscr{E}^{\ast}_{1,q}(Y)=2\bigg\{Y\mathscr{E}_{\text{}_{q}}\big(\sqrt{Y}\,\big)-\bigintssss\limits_{ y_{\text{}_{0}}=0}^{\sqrt{Y}}y_{\text{}_{0}}\mathscr{E}_{q}(y_{\text{}_{0}})\textit{d}y_{\text{}_{0}}\bigg\}\,.
\end{equation}
Hence, by \eqref{eq:2.23}, \eqref{eq:2.24} and \eqref{eq:2.1}, we have for $k\in\mathbb{N}$ :
\begin{equation}
\big|\mathscr{E}^{\ast}_{\text{}_{k,q}}(Y)\big|\leq3\underset{0\,<\,y\,<\,\sqrt{Y}}{\sup}\,\big|\mathscr{E}_{\text{}_{q}}(y)\big|\ll\,Y^{\frac{q}{2}-1}\log{Y}
\end{equation}
which proves the claim.
\end{proof}
\text{ }\\
Combining Lemma 2.1 and Corollary 2.1, we have the following proposition regarding the error term for the number of weighted lattice points in Euclidean balls. We do not need to have a concrete expression for this error term, all we need to know is its order of magnitude.
\begin{prop}
For $x>0$ define: 
\begin{equation*}
\mathscr{S}^{\flat}_{q}(x)=\sum_{0\,\leq\,m\,\leq\, \frac{x^{2}}{\sqrt{2}}}r_{2q}(m)\Big(\sqrt{x^{4}-m^{2}}-m\Big)
\end{equation*}
and set
\begin{equation*}
\mathscr{E}^{\flat}_{q}(x)=\mathscr{S}^{\flat}_{q}(x)-\alpha_{q,\flat}x^{2q+2}
\end{equation*}
where
\begin{equation*}
\alpha_{q,\flat}=\frac{\pi^{q}}{2^{\frac{q+1}{2}}\Gamma(q+1)}\sum_{k=1}^{\infty}\frac{f^{(k)}(1)}{(\frac{q}{2}+1)\cdots(\frac{q}{2}+k)}+\frac{\pi^{q}}{2^{\frac{q+1}{2}}\Gamma(q+2)}\quad;\quad f(y)=\sqrt{y}\,.
\end{equation*}
Then:
\begin{equation}\label{eq:2.26}
\big|\mathscr{E}^{\flat}_{q}(x)\big|\ll\,x^{2q-2}\log{x}\,.
\end{equation}
\end{prop}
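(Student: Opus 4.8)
The plan is to reduce the weighted count $\mathscr{S}^{\flat}_{q}(x)$ to the sums $\mathscr{S}^{\ast}_{k,q}$ of Corollary 2.1 and $\mathscr{S}_{q}$ of Lemma 2.1. First I would split the weight, writing $\mathscr{S}^{\flat}_{q}(x)=A-B$ with $A=\sum_{0\le m\le x^{2}/\sqrt{2}}r_{2q}(m)\sqrt{x^{4}-m^{2}}$ and $B=\sum_{0\le m\le x^{2}/\sqrt{2}}r_{2q}(m)\,m$. The decisive observation is that, upon setting $Y=x^{4}/2$ (so that $\sqrt{Y}=x^{2}/\sqrt{2}$ is exactly the top of the summation range), one has $x^{4}-m^{2}=Y\big(1+(1-m^{2}/Y)\big)$, whence $\sqrt{x^{4}-m^{2}}=\sqrt{Y}\,f\big(1+(1-m^{2}/Y)\big)$ with $f(y)=\sqrt{y}$. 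Since $0\le 1-m^{2}/Y\le 1$ throughout the range, the Taylor expansion of $f$ about $1$ converges and yields $\sqrt{x^{4}-m^{2}}=\sqrt{Y}\sum_{k\ge 0}\frac{f^{(k)}(1)}{k!}\big(1-m^{2}/Y\big)^{k}$, and the weights $\big(1-m^{2}/Y\big)^{k}$ are precisely those defining $\mathscr{S}^{\ast}_{k,q}(Y)$.

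Because the $m$-sum is finite I may interchange summations freely to obtain $A=\sqrt{Y}\sum_{k\ge 0}\frac{f^{(k)}(1)}{k!}\mathscr{S}^{\ast}_{k,q}(Y)$. I would then peel off the $k=0$ term, $\sqrt{Y}\,\mathscr{S}^{\ast}_{0,q}(Y)=\sqrt{Y}\sum_{m\le\sqrt{Y}}r_{2q}(m)$, and combine it with $B$: since $\sqrt{Y}\sum r_{2q}(m)-\sum r_{2q}(m)m=\sqrt{Y}\sum_{m\le\sqrt{Y}}r_{2q}(m)\big(1-m/\sqrt{Y}\big)=\sqrt{Y}\,\mathscr{S}_{q}(\sqrt{Y})$, Lemma 2.1 applied with argument $\sqrt{Y}$ gives $\sqrt{Y}\,\mathscr{S}_{q}(\sqrt{Y})=\frac{\pi^{q}}{\Gamma(q+2)}(\sqrt{Y})^{q+1}+O\big((\sqrt{Y})^{q-1}\log Y\big)$. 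Here $(\sqrt{Y})^{q+1}=x^{2q+2}/2^{(q+1)/2}$ reproduces the second summand of $\alpha_{q,\flat}$, while the error is $O(x^{2q-2}\log x)$.

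For the remaining terms $k\ge 1$ I would insert the expansion of Corollary 2.1, namely $\mathscr{S}^{\ast}_{k,q}(Y)=\frac{k!\,\pi^{q}}{(\frac{q}{2}+1)\cdots(\frac{q}{2}+k)\Gamma(q+1)}Y^{q/2}+O\big(Y^{q/2-1}\log Y\big)$, crucially exploiting that the implied constant is uniform in $k$. The main terms then sum to $\frac{\pi^{q}}{2^{(q+1)/2}\Gamma(q+1)}x^{2q+2}\sum_{k\ge 1}\frac{f^{(k)}(1)}{(\frac{q}{2}+1)\cdots(\frac{q}{2}+k)}$, which is exactly the first summand of $\alpha_{q,\flat}$, while the aggregated error is $O\big(Y^{(q-1)/2}\log Y\sum_{k\ge 1}\frac{|f^{(k)}(1)|}{k!}\big)=O(x^{2q-2}\log x)$. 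Collecting the three contributions gives $\mathscr{S}^{\flat}_{q}(x)=\alpha_{q,\flat}x^{2q+2}+O(x^{2q-2}\log x)$, which is the assertion.

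The main obstacle is the analytic bookkeeping surrounding the infinite $k$-sum. One must verify two convergence facts, both resting on the identity $\frac{f^{(k)}(1)}{k!}=\binom{1/2}{k}$ and the asymptotic $\binom{1/2}{k}=O(k^{-3/2})$: that $\sum_{k\ge 1}\frac{|f^{(k)}(1)|}{k!}$ converges (so the uniform-in-$k$ error of Corollary 2.1 may legitimately be summed over $k$), and that the main-term series $\sum_{k\ge 1}\frac{f^{(k)}(1)}{(\frac{q}{2}+1)\cdots(\frac{q}{2}+k)}$ converges (its general term is $O(k^{-3/2-q/2})$ after using $\Gamma(k+1)/\Gamma(k+\frac{q}{2}+1)\asymp k^{-q/2}$), so that $\alpha_{q,\flat}$ is well defined and the splitting into main and error parts is valid. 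Everything else — the interchange of the two summations, which is permissible because the $m$-range is finite, and the recognition of $\sqrt{Y}\,\mathscr{S}_{q}(\sqrt{Y})$ — is routine.
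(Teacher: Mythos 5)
Your argument is correct and is essentially the paper's own proof: the paper likewise writes $\sqrt{x^{4}-m^{2}}-m$ as $\frac{x^{2}}{\sqrt{2}}\big(\sqrt{2-m^{2}/(x^{4}/2)}-1\big)+\frac{x^{2}}{\sqrt{2}}\big(1-\frac{m}{x^{2}/\sqrt{2}}\big)$, expands the first piece by Taylor's theorem into the weights of $\mathscr{S}^{\ast}_{k,q}(x^{4}/2)$ and recognizes the second as $\frac{x^{2}}{\sqrt{2}}\mathscr{S}_{q}(x^{2}/\sqrt{2})$, then invokes \eqref{eq:2.1} and \eqref{eq:2.22} exactly as you do. Your peeling off of the $k=0$ term is only a cosmetic rearrangement of that identity, and your explicit convergence checks via $\binom{1/2}{k}=O(k^{-3/2})$ just make precise what the paper leaves implicit.
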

\begin{proof}
Let $x$ be large. For $0\leq m\leq \frac{x^{2}}{\sqrt{2}}$ an integer we have
\begin{equation*}
\sqrt{x^{4}-m^{2}}-m=\frac{x^{2}}{\sqrt{2}}\bigg(\sqrt{2-\frac{m^{2}}{x^{4}/2}}-1\bigg)
+\frac{x^{2}}{\sqrt{2}}\bigg(1-\frac{m}{x^{2}/\sqrt{2}}\bigg)
\end{equation*}
and by Taylor expansion with $f(y)=\sqrt{y}$
\begin{equation*}
\sqrt{2-\frac{m^{2}}{x^{4}/2}}-1=\sum_{k=1}^{\infty}\frac{f^{(k)}(1)}{k!}\bigg(1-\frac{m^{2}}{x^{4}/2}\bigg)^{k}
\end{equation*}
we obtain:
\begin{equation}\label{eq:2.27}
\sqrt{x^{4}-m^{2}}-m=\frac{x^{2}}{\sqrt{2}}\,\sum_{k=1}^{\infty}\frac{f^{(k)}(1)}{k!}\bigg(1-\frac{m^{2}}{x^{4}/2}\bigg)^{k}+\frac{x^{2}}{\sqrt{2}}\bigg(1-\frac{m}{x^{2}/\sqrt{2}}\bigg)\,.
\end{equation}
Multiplying \eqref{eq:2.27} by $r_{2q}(m)$ and Summing over all $0\leq m\leq x^{2}/\sqrt{2}$, we have with the notations as in Lemma 2.1 and Corollary 2.1
\begin{equation*}
\begin{split}
&\mathscr{S}^{\flat}_{q}(x)=\frac{x^{2}}{\sqrt{2}}\,\sum_{k=1}^{\infty}\frac{f^{(k)}(1)}{k!}\,\mathscr{S}^{\ast}_{k,q}\bigg(\frac{x^{4}}{2}\bigg)+\frac{x^{2}}{\sqrt{2}}\,\mathscr{S}_{q}\bigg(\frac{x^{2}}{\sqrt{2}}\bigg)=\alpha_{q,\flat}x^{2q+2}+\frac{x^{2}}{\sqrt{2}}\,\sum_{k=1}^{\infty}\frac{f^{(k)}(1)}{k!}\,\mathscr{E}^{\ast}_{k,q}\bigg(\frac{x^{4}}{2}\bigg)+\frac{x^{2}}{\sqrt{2}}\,\mathscr{E}_{q}\bigg(\frac{x^{2}}{\sqrt{2}}\bigg)
\end{split}
\end{equation*}
hence:
\begin{equation}
\mathscr{E}^{\flat}_{q}(x)=\frac{x^{2}}{\sqrt{2}}\,\sum_{k=1}^{\infty}\frac{f^{(k)}(1)}{k!}\,\mathscr{E}^{\ast}_{k,q}\bigg(\frac{x^{4}}{2}\bigg)+\frac{x^{2}}{\sqrt{2}}\,\mathscr{E}_{q}\bigg(\frac{x^{2}}{\sqrt{2}}\bigg)\,.
\end{equation}
By \eqref{eq:2.1} and \eqref{eq:2.22} we obtain:
\begin{equation}
\begin{split}
&\bigg|\frac{x^{2}}{\sqrt{2}}\,\sum_{k=1}^{\infty}\frac{f^{(k)}(1)}{k!}\,\mathscr{E}^{\ast}_{k,q}\bigg(\frac{x^{4}}{2}\bigg)+\frac{x^{2}}{\sqrt{2}}\,\mathscr{E}_{q}\bigg(\frac{x^{2}}{\sqrt{2}}\bigg)\bigg|\ll\,\bigg\{\sum_{k=1}^{\infty}\frac{|f^{(k)}(1)|}{k!}+1\bigg\}\,x^{2q-2}\log{x}\ll\,x^{2q-2}\log{x}
\end{split}
\end{equation}
as claimed.
\end{proof}
\subsection{Lattice points in shrinking annuli}
We now turn to the problem of counting the number of lattice points in shrinking annuli. Unlike the previous subsection, in which the shape of the error term was not of our concern (only its order of magnitude was relevant), here this is no longer the case. We shall need to have the error term in an explicit form.
\begin{mydef}
Let $\varphi:\mathbb{R}\longrightarrow\mathbb{C}$ be a 1-periodic function. For $f,g:[0,1]\longrightarrow\mathbb{R}$ and $Y>0$ define:
\begin{equation*}
\begin{split}
&\textit{(I)}\quad\quad\text{\large{S}}^{\,g}_{\varphi}(Y;f)=\sum_{0\,<\,n\,\leq\, \frac{Y}{\sqrt{2}}}g\bigg(\frac{n}{Y}\bigg)\varphi\bigg(Yf\bigg(\frac{n}{Y}\bigg)\bigg)\\
&\textit{(II)}\,\,\,\quad\text{\large{S}}^{\,g}_{\varphi,\chi}(Y;f)=\sum_{0\,<\,n\,\leq\, \frac{Y}{\sqrt{2}}}g\bigg(\frac{n}{Y}\bigg)\sum_{a\,(4)}\chi(a)\varphi\bigg(Yf\bigg(\frac{n}{Y}\bigg)+\frac{a}{4}\bigg)\\
&\textit{(III)}\quad\text{\large{S}}^{\,g,\chi}_{\varphi}(Y;f)=\sum_{0\,<\,n\,\leq\, \frac{Y}{\sqrt{2}}}g\bigg(\frac{n}{Y}\bigg)\chi(n)\varphi\bigg(Yf\bigg(\frac{n}{Y}\bigg)\bigg)\,.
\end{split}
\end{equation*}
\end{mydef}
\text{ }\\
\textbf{Remark.} When $\varphi(t)=\textit{\large{e}}(t)$, we shall drop the subscript $\varphi$ and write $\text{\large{S}}^{\,g}_{\varphi}(Y;f)=\text{\large{S}}^{\,g}(Y;f)$, $\text{\large{S}}^{\,g}_{\varphi,\chi}(Y;f)=\text{\large{S}}^{\,g}_{\chi}(Y;f)$ and $\text{\large{S}}^{\,g,\chi}_{\varphi}(Y;f)=\text{\large{S}}^{\,g,\chi}(Y;f)$.
\begin{prop}
For $x>0$ define:
\begin{equation*}
\mathscr{S}^{\sharp}_{q}(x)=\sum_{0\,<\,m\,\leq\, \frac{x^{2}}{\sqrt{2}}}r_{2q}(m)+\sum_{0\,\leq\,|n|\,\leq\, \frac{x^{2}}{\sqrt{2}}}\,\,\sum_{|n|\,<\,m\,\leq\,\sqrt{x^{4}-n^{2}}}r_{2q}(m)
\end{equation*}
and set
\begin{equation*}
\mathscr{E}^{\sharp}_{q}(x)=\mathscr{S}^{\sharp}_{q}(x)-\alpha_{q,\sharp}x^{2q+2}\qquad;\qquad\alpha_{q,\sharp}=\frac{2\pi^{q}}{\Gamma(q+1)}\bigintssss\limits_{0}^{\frac{1}{\sqrt{2}}}\Big(\big(1-t^{2}\big)^{\frac{q}{2}}-t^{q}\Big)\textit{d}t\,.
\end{equation*}
Then:\\\\
(1) For $q\equiv0\,(2)$
\begin{equation}\label{eq:2.30}
\mathscr{E}^{\sharp}_{q}(x)=-2\varrho_{q}x^{2q-2}\sum_{1\,\leq\,d\,\leq\,\frac{x^{2}}{\sqrt{2}}}\frac{\xi(d)}{d^{q-1}}\,\text{\large{S}}^{\,\mathfrak{g}}_{\psi}\Big(x^{2}\,;\,\frac{1}{d}\mathfrak{f}\Big)+O\Big(x^{2q-2}\log{x}\Big)\,.
\end{equation}
(2) For $q\equiv1\,(2)$
\begin{equation}\label{eq:2.31}
\begin{split}
\mathscr{E}^{\sharp}_{q}(x)=&-2\varrho_{\chi,q}x^{2q-2}\sum_{1\,\leq\,d\,\leq\,\frac{x^{2}}{\sqrt{2}}}\frac{1}{d^{q-1}}\bigg\{2^{q-1}\chi(d)\text{\large{S}}^{\,\mathfrak{g}}_{\psi}\Big(x^{2}\,;\,\frac{1}{d}\mathfrak{f}\Big)+(-1)^{\frac{q+1}{2}}\text{\large{S}}^{\,\mathfrak{g}}_{\psi,\chi}\Big(x^{2}\,;\,\frac{1}{4d}\mathfrak{f}\Big)\bigg\}+\\
&+O\Big(x^{2q-2}\log{x}\Big)\,.
\end{split}
\end{equation}
\end{prop}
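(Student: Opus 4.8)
The plan is to insert the Hardy--Ramanujan type splitting $r_{2q}(m)=\rho_{2q}(m)+\tau_{2q}(m)$ from \eqref{eq:2.15} into $\mathscr{S}^{\sharp}_{q}(x)$ and to show that the cusp-form part $\tau_{2q}$ is negligible while the singular-series part $\rho_{2q}$ produces both the main term and the $\psi$-sums. The $\tau_{2q}$ contribution vanishes identically for $q=3,4$; for $q\geq5$ the bound $|\tau_{2q}(m)|\ll m^{(q-1)/2}\varpi(m)$ together with partial summation gives $\sum_{m\leq x^{2}}|\tau_{2q}(m)|\ll x^{q+1}\log{x}$, and since every inner $m$-range occurring in $\mathscr{S}^{\sharp}_{q}$ lies in $[0,x^{2}]$ while the outer variable $n$ runs over $O(x^{2})$ values, the total $\tau_{2q}$-contribution is $\ll x^{q+3}\log{x}\ll x^{2q-2}\log{x}$ for $q\geq5$ (and is even smaller for the $A=\sum_{0<m\leq x^{2}/\sqrt2}r_{2q}(m)$ piece, since $q+1\leq 2q-2$ for $q\geq3$). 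Thus it remains to analyse the $\rho_{2q}$-part, which is where Euler--Maclaurin enters.

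For the $\rho_{2q}$-part I would keep $n$ as the outer variable, insert the divisor expansion of $\rho_{2q}(m)$, and reverse the order of summation over $m$ and its divisors. Writing $m=d\ell$ so that $m^{q-1}d^{1-q}=\ell^{q-1}$, each inner sum becomes $\sum_{|n|/d<\ell\leq\sqrt{x^{4}-n^{2}}/d}\ell^{q-1}\,w(d,\ell)$, where $w$ is the relevant parity/character weight. Applying the Euler--Maclaurin summation formula to the $\ell$-sum, the \emph{upper} boundary term $-\psi(\sqrt{x^{4}-n^{2}}/d)\,(\sqrt{x^{4}-n^{2}}/d)^{q-1}$, after factoring $(\sqrt{x^{4}-n^{2}})^{q-1}=x^{2q-2}\mathfrak{g}(n/x^{2})$ and using the symmetry $n\mapsto-n$ to generate the factor $2$, is exactly $x^{2q-2}\mathfrak{g}(n/x^{2})\psi\big(x^{2}\cdot\tfrac{1}{d}\mathfrak{f}(n/x^{2})\big)$, i.e. the summand of $\mathrm{S}^{\mathfrak{g}}_{\psi}(x^{2};\tfrac{1}{d}\mathfrak{f})$ (the single $n=0$ term being $O(x^{2q-2})$). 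The residual Euler--Maclaurin integrals $\int\psi(t)(q-1)t^{q-2}\,dt\ll(\sqrt{x^{4}-n^{2}}/d)^{q-2}$ sum to $\ll x^{2q-2}\log{x}$, the $\log$ surfacing only at $q=3$.

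The coefficient $\xi(d)$ and the $\chi$-twist come from reorganising the even/odd divisor contributions. In the even case odd $d$ carry weight $1$ and deposit a $\psi$ at level $d$; even $d$ carry the alternation $(-1)^{m/d}=(-1)^{\ell}$, which I would resolve through the doubling identity $\sum_{\ell}(-1)^{\ell}\ell^{q-1}=2^{q}\sum_{j}j^{q-1}-\sum_{\ell}\ell^{q-1}$, depositing $\psi$'s at levels $d$ and $2d$; collecting every source at a fixed level $D$ then yields the coefficients $1$ for $D$ odd, $(-1)^{q/2+1}$ for $D\equiv2\,(4)$, and $(-1)^{q/2+1}+(-1)^{q/2}2^{q}$ for $D\equiv0\,(4)$, which is precisely $\xi(D)$ with the prefactor $-2\varrho_{q}$. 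In the odd case the sum $2^{q-1}\sum_{d\mid m}\chi(d)d^{1-q}$ places $\chi(d)$ on the level and gives $2^{q-1}\chi(d)\,\mathrm{S}^{\mathfrak{g}}_{\psi}(x^{2};\tfrac{1}{d}\mathfrak{f})$, whereas $\sum_{d\mid m}\chi(m/d)d^{1-q}$ carries $\chi(\ell)$ on the summation variable; splitting $\ell$ into residues $a\,(4)$ and applying Euler--Maclaurin at modulus $4$ manufactures the shifted sum $\sum_{a\,(4)}\chi(a)\psi\big(\tfrac{x^{2}}{4d}\mathfrak{f}+\tfrac{a}{4}\big)$, that is $\mathrm{S}^{\mathfrak{g}}_{\psi,\chi}(x^{2};\tfrac{1}{4d}\mathfrak{f})$, with the sign $(-1)^{(q+1)/2}$.

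Finally the main term and the dangerous intermediate orders. The \emph{integral} parts $\tfrac{1}{q}\big[(\sqrt{x^{4}-n^{2}}/d)^{q}-(|n|/d)^{q}\big]$ of the Euler--Maclaurin, summed over $n$ and $d$ and combined with $\varrho_{q}$ (resp. $\varrho_{\chi,q}$) through the Dirichlet series $\sum_{d}w(d)d^{-q}$, collapse via a Riemann-sum to $\tfrac{2\pi^{q}}{\Gamma(q+1)}x^{2q+2}\int_{0}^{1/\sqrt2}\big((1-t^{2})^{q/2}-t^{q}\big)\,dt=\alpha_{q,\sharp}x^{2q+2}$. The hard part will be the bookkeeping of everything of intermediate size: the \emph{lower} boundary terms $+\psi(|n|/d)(|n|/d)^{q-1}$ are of mean order $x^{2q}$, and it is precisely here that the first sum $A$ of $\mathscr{S}^{\sharp}_{q}$ intervenes, since $A$ is itself of size $\tfrac{\pi^{q}}{\Gamma(q+1)}(x^{2}/\sqrt2)^{q}$ and is designed to cancel the averaged lower-boundary contribution up to $O(x^{2q-2}\log{x})$. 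The real obstacle is therefore not any single estimate but the uniform verification, across both parity/character cases, that all contributions of orders $x^{2q+1}$ and $x^{2q}$ cancel against the main term and against $A$, leaving only the $\psi$-sums (which may themselves be as large as $x^{2q-1}$) together with an error $O(x^{2q-2}\log{x})$.
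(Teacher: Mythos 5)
Your proposal follows essentially the same route as the paper's proof: split $r_{2q}=\rho_{2q}+\tau_{2q}$ via \eqref{eq:2.15} and discard $\tau_{2q}$ trivially, unfold the divisor sums (which is exactly how the paper's coefficient $\xi(d)$ and the shifted character sums arise), apply Euler--Maclaurin to the inner power sums so that the upper boundary terms assemble into $\text{\large{S}}^{\,\mathfrak{g}}_{\psi}$ and $\text{\large{S}}^{\,\mathfrak{g}}_{\psi,\chi}$, let the integral parts collapse to $\alpha_{q,\sharp}x^{2q+2}$, and cancel the lower boundary terms against the first sum $\sum_{0<m\leq x^{2}/\sqrt{2}}r_{2q}(m)$ using $\sum_{b\,(d)}\psi(b/d)=-\tfrac12$ together with the classical estimate \eqref{eq:2.40} — precisely the paper's treatment of $E$ and $E_{\chi}$. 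The intermediate bounds you quote (the $\tau_{2q}$ contribution, the Euler--Maclaurin residuals with the $\log$ appearing only at $q=3$, the $O(x^{2q-2})$ cost of the $n=0$ term) all match the paper's, so the plan is sound and not a genuinely different argument.
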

\begin{proof}
Let $x$ be large.\\\\
\textit{(1)} Suppose $q\equiv0\,(2)$. Let $n\in\mathbb{Z}$ be such that $|n|\,\leq\, \frac{x^{2}}{\sqrt{2}}$. Using \eqref{eq:2.15} we have:
\begin{equation}\label{eq:2.32}
\sum_{|n|\,<\,m\,\leq\,\sqrt{x^{4}-n^{2}}}r_{2q}(m)=\varrho_{q}\sum_{1\,\leq\,d\,\leq\,\frac{x^{2}}{\sqrt{2}}}\xi(d)\sum_{\frac{|n|}{d}\,<\,m\,\leq\,\frac{\sqrt{x^{4}-n^{2}}}{d}}m^{q-1}+O\Big(x^{2}+\mathds{1}_{\text{}_{q\geq6}}x^{q+1}\log{x}\Big)\,.
\end{equation}
Fix $1\leq d\leq \frac{x^{2}}{\sqrt{2}}$. Applying the Euler–Maclaurin summation formula we obtain:
\begin{equation}\label{eq:2.33}
\sum_{\frac{|n|}{d}\,<\,m\,\leq\,\frac{\sqrt{x^{4}-n^{2}}}{d}}m^{q-1}=\frac{1}{qd^{q}}\mathscr{F}\big(|n|\big)+\frac{1}{d^{q-1}}\mathscr{L}_{d}\big(|n|\big)+O\bigg(\frac{x^{2q-4}}{d^{q-2}}\bigg)
\end{equation}
where 
\begin{equation*}
\mathscr{L}_{d}(y)=y^{q-1}\psi\bigg(\frac{y}{d}\bigg)-x^{2q-2}\,\mathfrak{g}\bigg(\frac{y}{x^{2}}\bigg)\psi\bigg(\frac{1}{d}x^{2}\mathfrak{f}\bigg(\frac{y}{x^{2}}\bigg)\bigg)
\end{equation*}
and $\mathscr{F}(y)=\bigg(x^{4}-y^{2}\bigg)^{\frac{q}{2}}-y^{q}$.\\\\
Inserting \eqref{eq:2.33} into the RHS of \eqref{eq:2.32}, and using the estimate
\begin{equation}\label{eq:2.34}
\begin{split}
q^{-1}\varrho_{q}\sum_{1\,\leq\,d\,\leq\,\frac{x^{2}}{\sqrt{2}}}\frac{\xi(d)}{d^{q}}=\frac{\pi^{q}}{\Gamma(q+1)}+O\Big(x^{-2q+2}\Big)
\end{split}
\end{equation}
we derive:
\begin{equation}\label{eq:2.35}
\begin{split}
\sum_{|n|\,<\,m\,\leq\,\sqrt{x^{4}-n^{2}}}r_{2q}(m)=\frac{\pi^{q}}{\Gamma(q+1)}\mathscr{F}\big(|n|\big)+\varrho_{q}\sum_{1\,\leq\,d\,\leq\,\frac{x^{2}}{\sqrt{2}}}\frac{\xi(d)}{d^{q-1}}\mathscr{L}_{d}\big(|n|\big)+O\Big(x^{2q-4}\log{x}\Big)\,.
\end{split}
\end{equation}
Summing \eqref{eq:2.35} over all $|n|\,\leq\, \frac{x^{2}}{\sqrt{2}}$, and using the estimate
\begin{equation}\label{eq:2.36}
\sum_{0\,\leq\,|n|\,\leq\, \frac{x^{2}}{\sqrt{2}}}\mathscr{F}\big(|n|\big)=2\bigintssss\limits_{0}^{\frac{1}{\sqrt{2}}}\Big(\big(1-t^{2}\big)^{\frac{q}{2}}-t^{q}\Big)\textit{d}t\,x^{2q+2}+O\Big(x^{2q-2}\Big)
\end{equation}
we arrive at:
\begin{equation}\label{eq:2.37}
\mathscr{S}^{\sharp}_{q}(x)=\alpha_{q,\sharp}x^{2q+2}-2\varrho_{q}x^{2q-2}\sum_{1\,\leq\,d\,\leq\,\frac{x^{2}}{\sqrt{2}}}\frac{\xi(d)}{d^{q-1}}\,\text{\large{S}}^{\,\mathfrak{g}}_{\psi}\Big(x^{2}\,;\,\frac{1}{d}\mathfrak{f}\Big)+E+O\Big(x^{2q-2}\log{x}\Big)
\end{equation}
where
\begin{equation*}
E=\sum_{0\,<\,m\,\leq\, \frac{x^{2}}{\sqrt{2}}}r_{2q}(m)
+\varrho_{q}\sum_{1\,\leq\,d\,\leq\,\frac{x^{2}}{\sqrt{2}}}\frac{\xi(d)}{d^{q-1}}\sum_{0\,\leq\,|n|\,\leq\, \frac{x^{2}}{\sqrt{2}}}|n|^{q-1}\psi\bigg(\frac{|n|}{d}\bigg)\,.
\end{equation*}
It remains to show that $|E|\ll\,x^{2q-2}\log{x}$. \\\\
Let $1\leq d\leq \frac{x^{2}}{\sqrt{2}}$ be an integer. Splitting into residue classes $(\text{mod }d)$, and using the identity $\sum\limits_{b\,(d)}\psi\big(\frac{b}{d}\big)=-\frac{1}{2}$, we have:
\begin{equation}\label{eq:2.38}
\sum_{0\,\leq\,|n|\,\leq\, \frac{x^{2}}{\sqrt{2}}}|n|^{q-1}\psi\bigg(\frac{|n|}{d}\bigg)=2\sum_{b\,(d)}\psi\bigg(\frac{b}{d}\bigg)\underset{n\equiv b\,(d)}{\sum_{0\,<\,n\,\leq\, \frac{x^{2}}{\sqrt{2}}}}n^{q-1}=-\frac{1}{qd}\bigg(\frac{x^{2}}{\sqrt{2}}\bigg)^{q}+O\Big(dx^{2q-2}\Big)\,.
\end{equation}
Multiplying \eqref{eq:2.38} by $\varrho_{q}d^{1-q}\xi(d)$, summing over all $1\leq d\leq \frac{x^{2}}{\sqrt{2}}$ and using \eqref{eq:2.34}, we obtain:
\begin{equation}\label{eq:2.39}
E=\sum_{0\,<\,m\,\leq\, \frac{x^{2}}{\sqrt{2}}}r_{2q}(m)-\frac{\pi^{q}}{\Gamma(q+1)}\bigg(\frac{x^{2}}{\sqrt{2}}\bigg)^{q}+O\Big(x^{2q-2}\log{x}\Big)\,.
\end{equation}
Finally, using the well known estimate (see \cite{iwaniec2004analytic}, 1.5 (1.76)):
\begin{equation}\label{eq:2.40}
\sum_{0\,<\,m\,\leq\, Y}r_{k}(m)-\frac{\pi^{\frac{k}{2}}}{\Gamma\big(\frac{k}{2}+1\big)}Y^{\frac{k}{2}}=O\Big(Y^{\frac{k}{2}-1}\log{Y}\Big)\qquad;\qquad k\geq4
\end{equation}
we deduce that $|E|\ll\,x^{2q-2}\log{x}$, which proves \eqref{eq:2.30}.\\\\
\textit{(2)} Suppoe $q\equiv1\,(2)$. Let $n\in\mathbb{Z}$ be such that $|n|\,\leq\, \frac{x^{2}}{\sqrt{2}}$. Using \eqref{eq:2.15} we have:
\begin{equation}\label{eq:2.41}
\begin{split}
\sum_{|n|\,<\,m\,\leq\,\sqrt{x^{4}-n^{2}}}r_{2q}(m)&=2^{q-1}\varrho_{\chi,q}\sum_{1\,\leq\,d\,\leq\,\frac{x^{2}}{\sqrt{2}}}\chi(d)\sum_{\frac{|n|}{d}\,<\,m\,\leq\,\frac{\sqrt{x^{4}-n^{2}}}{d}}m^{q-1}+\\
&+(-1)^{\frac{q-1}{2}}\varrho_{\chi,q}\sum_{1\,\leq\,d\,\leq\,\frac{x^{2}}{\sqrt{2}}}\,\,\sum_{\frac{|n|}{d}\,<\,m\,\leq\,\frac{\sqrt{x^{4}-n^{2}}}{d}}\chi(m) m^{q-1}+O\Big(x^{2}+\mathds{1}_{\text{}_{q\geq5}}x^{q+1}\log{x}\Big)\,.
\end{split}
\end{equation}
Fix $1\leq d\leq \frac{x^{2}}{\sqrt{2}}$. Applying the Euler–Maclaurin summation formula we obtain:
\begin{equation}\label{eq:2.42}
\sum_{\frac{|n|}{d}\,<\,m\,\leq\,\frac{\sqrt{x^{4}-n^{2}}}{d}}m^{q-1}=\frac{1}{qd^{q}}\mathscr{F}\big(|n|\big)+\frac{1}{d^{q-1}}\mathscr{L}_{d}\big(|n|\big)+O\bigg(\frac{x^{2q-4}}{d^{q-2}}\bigg)
\end{equation}
where $\mathscr{F}(y)$ and $\mathscr{L}_{d}(y)$ are defined as before, and:
\begin{equation}\label{eq:2.43}
\begin{split}
\sum_{\frac{|n|}{d}\,<\,m\,\leq\,\frac{\sqrt{x^{4}-n^{2}}}{d}}\chi(m) m^{q-1}=\frac{1}{d^{q-1}}\mathscr{L}_{4d,\chi}\big(|n|\big)+O\bigg(\frac{x^{2q-4}}{d^{q-2}}\bigg)
\end{split}
\end{equation}
where 
\begin{equation*}
\mathscr{L}_{4d,\chi}(y)=y^{q-1}\sum_{a\,(4)}\chi(a)\psi\bigg(\frac{y}{4d}-\frac{a}{4}\bigg)+x^{2q-2}\,\mathfrak{g}\bigg(\frac{y}{x^{2}}\bigg)\sum_{a\,(4)}\chi(a)\psi\bigg(\frac{1}{4d}x^{2}\mathfrak{f}\bigg(\frac{y}{x^{2}}\bigg)+\frac{a}{4}\bigg)\,.
\end{equation*}
Substituting \eqref{eq:2.42} and \eqref{eq:2.43} into the RHS of \eqref{eq:2.41}, and using the estimate 
\begin{equation}\label{eq:2.44}
2^{q-1}\varrho_{\chi,q}\sum_{1\,\leq\,d\,\leq\,\frac{x^{2}}{\sqrt{2}}}\frac{\chi(d)}{d^{q}}=\frac{\pi^{q}}{\Gamma(q+1)}+O\Big(x^{-2q+2}\Big)
\end{equation}
we derive:
\begin{equation}\label{eq:2.45}
\begin{split}
\sum_{|n|\,<\,m\,\leq\,\sqrt{x^{4}-n^{2}}}r_{2q}(m)&=\frac{\pi^{q}}{\Gamma(q+1)}\mathscr{F}(n)+2^{q-1}\varrho_{\chi,q}\sum_{1\,\leq\,d\,\leq\,\frac{x^{2}}{\sqrt{2}}}\frac{\chi(d)}{d^{q-1}}\mathscr{L}_{d}\big(|n|\big)+\\
&+(-1)^{\frac{q-1}{2}}\varrho_{\chi,q}\sum_{1\,\leq\,d\,\leq\,\frac{x^{2}}{\sqrt{2}}}\frac{1}{d^{q-1}}\mathscr{L}_{4d,\chi}\big(|n|\big)
+O\Big(x^{2q-4}\log{x}\Big)\,.
\end{split}{}
\end{equation}
Summing \eqref{eq:2.45} over all $|n|\,\leq\, \frac{x^{2}}{\sqrt{2}}$, we have by \eqref{eq:2.36}, \eqref{eq:2.38}, \eqref{eq:2.44} and \eqref{eq:2.40} :
\begin{equation}\label{eq:2.46}
\begin{split}
\mathscr{S}^{\sharp}_{q}(x)&=-2\varrho_{\chi,q}x^{2q-2}\sum_{1\,\leq\,d\,\leq\,\frac{x^{2}}{\sqrt{2}}}\frac{1}{d^{q-1}}\bigg\{2^{q-1}\chi(d)\text{\large{S}}^{\,\mathfrak{g}}_{\psi}\Big(x^{2}\,;\,\frac{1}{d}\mathfrak{f}\Big)+(-1)^{\frac{q+1}{2}}\text{\large{S}}^{\,\mathfrak{g}}_{\psi,\chi}\Big(x^{2}\,;\,\frac{1}{4d}\mathfrak{f}\Big)\bigg\}
+E_{\chi}+O\Big(x^{2q-2}\log{x}\Big)
\end{split}
\end{equation}
where
\begin{equation*}
E_{\chi}=(-1)^{\frac{q-1}{2}}\varrho_{\chi,q}\sum_{1\,\leq\,d\,\leq\,\frac{x^{2}}{\sqrt{2}}}\frac{1}{d^{q-1}}\sum_{0\,\leq\,|n|\,\leq\, \frac{x^{2}}{\sqrt{2}}}|n|^{q-1}\sum_{a\,(4)}\chi(a)\psi\bigg(\frac{|n|}{4d}-\frac{a}{4}\bigg).
\end{equation*}
It remains to show that $|E_{\chi}|\ll\,x^{2q-2}\log{x}$. \\\\
Let $1\leq d\leq \frac{x^{2}}{\sqrt{2}}$ be an integer. Splitting into residue classes $(\text{mod }4d)$, and using the identity $\sum\limits_{b\,(4d)}\psi\big(\frac{b}{4d}-\frac{a}{4}\big)=\sum\limits_{b\,(4d)}\psi\big(\frac{b}{4d}\big)=-\frac{1}{2}$ which is valid for any $a\in\mathbb{Z}$, we obtain:
\begin{equation}\label{eq:2.47}
\begin{split}
\sum_{0\,\leq\,|n|\,\leq\, \frac{x^{2}}{\sqrt{2}}}|n|^{q-1}\sum_{a\,(4)}\chi(a)\psi\bigg(\frac{|n|}{4d}-\frac{a}{4}\bigg)&=2\sum_{a\,(4)}\chi(a)\sum_{b\,(4d)}\psi\bigg(\frac{b}{4d}-\frac{a}{4}\bigg)\underset{n\equiv b\,(4d)}{\sum_{0\,<\,n\,\leq\, \frac{x^{2}}{\sqrt{2}}}}n^{q-1}=\\
&=-\frac{1}{4qd}\bigg(\frac{x^{2}}{\sqrt{2}}\bigg)^{q}\sum_{a\,(4)}\chi(a)+O\Big(dx^{2q-2}\Big)=O\bigg(dx^{2q-2}\bigg)\,.
\end{split}
\end{equation}
Multiplying \eqref{eq:2.47} by $d^{1-q}$, and summing over all $1\leq d\leq \frac{x^{2}}{\sqrt{2}}$, we deduce that $|E_{\chi}|\ll\,x^{2q-2}\log{x}$, which proves \eqref{eq:2.31}. 
\end{proof}
\subsection{The error term $\mathcal{E}_{q}(x)$ in its initial form}
Collecting the results from subsections $\S$2.1 and $\S$2.2, we can now extract the main term in the asymptotic estimate for the number of lattice points contained inside Heisenberg dilates of the Cygan-Kor{\'a}nyi norm ball, in which the error term is given in an adequate form.   
\begin{prop}
Let $x>0$ be large. Then:\\\\
(1) For $q\equiv0\,(2)$
\begin{equation}\label{eq:2.48}
\mathcal{E}_{q}(x)=-2\varrho_{q}x^{2q-2}\sum_{1\,\leq\,d\,\leq\,\frac{x^{2}}{\sqrt{2}}}\frac{\xi(d)}{d^{q-1}}\bigg\{\text{\large{S}}^{\,\mathfrak{g}}_{\psi}\Big(x^{2}\,;\,\frac{1}{d}\mathfrak{f}\Big)+\text{\large{S}}^{\,\hat{\mathfrak{g}}}_{\psi}\Big(\frac{1}{d}x^{2}\,;\,d\mathfrak{f}\Big)\bigg\}+O\Big(x^{2q-2}\log{x}\Big)
\end{equation}
(2) For $q\equiv1\,(2)$
\begin{equation}\label{eq:2.49}
\begin{split}
\mathcal{E}_{q}(x)=&-2\varrho_{\chi,q}x^{2q-2}\sum_{1\,\leq\,d\,\leq\,\frac{x^{2}}{\sqrt{2}}}\frac{2^{q-1}\chi(d)}{d^{q-1}}\bigg\{\text{\large{S}}^{\,\mathfrak{g}}_{\psi}\Big(x^{2}\,;\,\frac{1}{d}\mathfrak{f}\Big)+\text{\large{S}}^{\,\hat{\mathfrak{g}}}_{\psi}\Big(\frac{1}{d}x^{2}\,;\,d\mathfrak{f}\Big)\bigg\}+\\
&-2\varrho_{\chi,q}x^{2q-2}\sum_{1\,\leq\,d\,\leq\,\frac{x^{2}}{\sqrt{2}}}\frac{(-1)^{\frac{q+1}{2}}}{d^{q-1}}\bigg\{\text{\large{S}}^{\,\mathfrak{g}}_{\psi,\chi}\Big(x^{2}\,;\,\frac{1}{4d}\mathfrak{f}\Big)-\text{\large{S}}^{\,\hat{\mathfrak{g}},\chi}_{\psi}\Big(\frac{1}{d}x^{2}\,;\,d\mathfrak{f}\Big)\bigg\}+O\Big(x^{2q-2}\log{x}\Big)\,.
\end{split}
\end{equation}
\end{prop}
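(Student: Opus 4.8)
The plan is to carry out the slicing announced in the section heading at the level of the exact lattice count, and then read off the two families of $\psi$-sums from the two pieces, invoking Propositions 2.1 and 2.2 to absorb the smooth remainders. Writing $m=|z|_{2}^{2}$ and $n=z'$, the equivalence $\mathcal{N}(z,z')\le x\Leftrightarrow m^{2}+n^{2}\le x^{4}$ recasts the count as $\big|\mathbb{Z}^{2q+1}\cap\delta_{x}\mathcal{B}\big|=\sum_{m\ge0,\,n\in\mathbb{Z},\,m^{2}+n^{2}\le x^{4}}r_{2q}(m)$, a weighted lattice count over the half-disc of radius $x^{2}$ in the $(m,n)$-plane. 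I would slice this half-disc along the diagonals $|n|=m$ into a region $\mathrm{I}=\{m>|n|\}$ lying against the $m$-axis and a region $\mathrm{II}=\{m<|n|\}$ lying against the $n$-axis, the diagonal $m=|n|$ being handled separately. In region $\mathrm{I}$ I group by the outer variable $n$, noting that $m>|n|$ together with $m^{2}+n^{2}\le x^{4}$ forces $|n|<x^{2}/\sqrt2$; in region $\mathrm{II}$ I group by $m$, where the symmetry $n\mapsto-n$ supplies a factor $2$ and each slice counts the integers $m<n\le\sqrt{x^{4}-m^{2}}$ with $0\le m<x^{2}/\sqrt2$.

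The point of this choice is that each region carries exactly one layer of rounding. Grouped by $n$, region $\mathrm{I}$ is (up to the boundary $|n|=x^{2}/\sqrt2$) the second, double, sum in the definition of $\mathscr{S}^{\sharp}_{q}(x)$. In region $\mathrm{II}$ I apply $\lfloor y\rfloor=y-\psi(y)-\tfrac12$ to the $n$-count $\lfloor\sqrt{x^{4}-m^{2}}\rfloor-m$, which produces $2\mathscr{S}^{\flat}_{q}(x)$, a weighted fractional-part sum $\Psi(x)=\sum_{0\le m\le x^{2}/\sqrt2}r_{2q}(m)\,\psi\big(\sqrt{x^{4}-m^{2}}\big)$, and elementary terms whose net contribution, once combined with the diagonal $m=|n|$, is exactly the first sum $\sum_{0<m\le x^{2}/\sqrt2}r_{2q}(m)$ of $\mathscr{S}^{\sharp}_{q}$. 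Hence $\big|\mathbb{Z}^{2q+1}\cap\delta_{x}\mathcal{B}\big|=\mathscr{S}^{\sharp}_{q}(x)+2\mathscr{S}^{\flat}_{q}(x)-2\Psi(x)+O(x^{2q-2}\log x)$. Proposition 2.1 gives $\mathscr{E}^{\flat}_{q}=O(x^{2q-2}\log x)$, so the $\mathscr{S}^{\flat}$-term reduces to its main term $2\alpha_{q,\flat}x^{2q+2}$, while Proposition 2.2 replaces $\mathscr{S}^{\sharp}_{q}$ by $\alpha_{q,\sharp}x^{2q+2}$ plus the family $\text{\large{S}}^{\,\mathfrak{g}}_{\psi}\big(x^{2};\tfrac1d\mathfrak{f}\big)$ (and, for odd $q$, the twisted $\text{\large{S}}^{\,\mathfrak{g}}_{\psi,\chi}$). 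Subtracting $\textit{vol}(\mathcal{B})x^{2q+2}$ and using $2\alpha_{q,\flat}+\alpha_{q,\sharp}=\textit{vol}(\mathcal{B})$ — which must hold since the main term of a lattice count is the volume, and may be checked directly from $\textit{vol}(\mathcal{B})=\frac{2\pi^{q}}{\Gamma(q+1)}\int_{0}^{1}(1-w^{2})^{q/2}\,dw$ — cancels the leading term and leaves $\mathcal{E}_{q}(x)$ equal to the first family of $\psi$-sums minus $2\Psi(x)$, modulo $O(x^{2q-2}\log x)$.

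It then remains to recognize $-2\Psi(x)$ as the second family $\text{\large{S}}^{\,\hat{\mathfrak{g}}}_{\psi}\big(\tfrac1dx^{2};d\mathfrak{f}\big)$. Here I would unfold $r_{2q}(m)=\rho_{2q}(m)+\tau_{2q}(m)$ via \eqref{eq:2.15}, write $m=de$, and collapse the divisor structure of $\rho_{2q}$ into the weight $\xi(d)$ exactly as in the passage from \eqref{eq:2.15} to \eqref{eq:2.32}: the only nontrivial point is the even-$d$ alternating factor $(-1)^{m/d}=(-1)^{e}$, resolved by splitting $e$ into even and odd parts and doubling $e=2e'$, which promotes an even divisor to one divisible by $4$ and produces the $2^{q}$ occurring in $\xi$ for $d\equiv0\,(4)$. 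Crucially, the carried weight $\psi\big(\sqrt{x^{4}-m^{2}}\big)$ depends only on the product $m=de$, so it survives this rearrangement unchanged, and comparison with Definition (I) gives $\sum_{e}e^{q-1}\psi\big(\sqrt{x^{4}-(de)^{2}}\big)=x^{2q-2}d^{1-q}\,\text{\large{S}}^{\,\hat{\mathfrak{g}}}_{\psi}\big(\tfrac1dx^{2};d\mathfrak{f}\big)$. Thus $\Psi(x)=\varrho_{q}x^{2q-2}\sum_{d}\frac{\xi(d)}{d^{q-1}}\text{\large{S}}^{\,\hat{\mathfrak{g}}}_{\psi}\big(\tfrac1dx^{2};d\mathfrak{f}\big)+O(x^{q+1}\log x)$ for even $q$, the error being the trivial bound on the $\tau_{2q}$-term through $|\tau_{2q}(m)|\ll m^{(q-1)/2}\varpi(m)$; since $q\ge3$ this is $O(x^{2q-2}\log x)$. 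For odd $q$ the same manipulation, run with $\varrho_{\chi,q}$ and with the two divisor sums of \eqref{eq:2.15}, yields both a $\chi(d)$-weighted copy and a $\chi(e)$-twisted copy $\text{\large{S}}^{\,\hat{\mathfrak{g}},\chi}_{\psi}$, reproducing the sign pattern of \eqref{eq:2.49}. Adding $\mathscr{E}^{\sharp}_{q}$ (first family) to $-2\Psi$ (second family) makes the common prefactor $-2\varrho_{q}x^{2q-2}\xi(d)d^{1-q}$ assemble the brackets of \eqref{eq:2.48}, and analogously for \eqref{eq:2.49}.

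I expect the main obstacle to be this conversion of the weighted fractional-part sum $\Psi(x)$ into structured form: one must reproduce the $\xi(d)$-packaging with its $d\equiv0\,(4)$ doubling while carrying the nonlinear weight $\psi(\sqrt{x^{4}-m^{2}})$ through the substitution $m=de$ intact, and simultaneously confirm that the $\tau_{2q}$-contribution remains below $x^{2q-2}\log x$ — which is precisely where the hypothesis $q\ge3$ enters. The accompanying boundary bookkeeping in the slicing (the diagonal $m=|n|$, the endpoints $|n|\approx x^{2}/\sqrt2$, and the factor-$2$ symmetry) is elementary but must be tracked carefully, so that the leftover elementary terms reconstitute the first sum of $\mathscr{S}^{\sharp}_{q}$ exactly rather than leaving an unabsorbed term of size $x^{2q}$.
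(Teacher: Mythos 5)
Your proposal is correct and follows essentially the same route as the paper: the same slicing of the half-disc $m^{2}+n^{2}\le x^{4}$ along the diagonals $|n|=m$, the same reduction to $\mathscr{S}^{\sharp}_{q}$ and $\mathscr{S}^{\flat}_{q}$ via Propositions 2.1 and 2.2, and the same unfolding of $\rho_{2q}(m)$ through \eqref{eq:2.15} to package the weighted $\psi$-sum $\Psi(x)$ into the $\text{\large{S}}^{\,\hat{\mathfrak{g}}}_{\psi}$ family with the $\xi(d)$ weights. The only cosmetic difference is that you verify $2\alpha_{q,\flat}+\alpha_{q,\sharp}=\textit{vol}(\mathcal{B})$ by direct integration, while the paper deduces it from the asymptotic $\big|\mathbb{Z}^{2q+1}\cap\delta_{x}\mathcal{B}\big|\sim\textit{vol}(\mathcal{B})x^{2q+2}$; both are fine.
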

\begin{proof}
Let $x$ be large. By the definition of the Cygan-Kor{\'a}nyi norm, we have with the notations as in Proposition 2.1 and Proposition 2.2
\begin{equation}\label{eq:2.50}
\begin{split}
&\big|\mathbb{Z}^{2q+1}\cap\delta_{x}\mathcal{B}\big|=\sum_{0\,\leq\, m^{2}+n^{2}\,\leq\, x^{4}}r_{2q}(m)
=\sum_{0\,\leq\, m^{2}+n^{2}\,\leq\, x^{4}}r_{2q}(m)\bigg\{\mathds{1}_{m\,<\,|n|}+\mathds{1}_{m\,=\,|n|}+\mathds{1}_{m\,>\,|n|}\bigg\}=\\
&=\sum_{0\,\leq\,m\,\leq\, \frac{x^{2}}{\sqrt{2}}}r_{2q}(m)\bigg\{2\Big(\sqrt{x^{4}-m^{2}}-m\Big)-2\psi\Big(\sqrt{x^{4}-m^{2}}\,\Big)-1\bigg\}+1+2\sum_{0\,<\,m\,\leq\, \frac{x^{2}}{\sqrt{2}}}r_{2q}(m)+\\
&+\sum_{0\,\leq\,|n|\,\leq\, \frac{x^{2}}{\sqrt{2}}}\,\,\sum_{|n|\,<\,m\,\leq\,\sqrt{x^{4}-n^{2}}}r_{2q}(m)=
\alpha_{q}x^{2q+2}+\mathscr{E}^{\sharp}_{q}(x)-2\sum_{0\,\leq\,m\,\leq\, \frac{x^{2}}{\sqrt{2}}}r_{2q}(m)\psi\Big(\sqrt{x^{4}-m^{2}}\,\Big)+2\mathscr{E}^{\flat}_{q}(x)
\end{split}
\end{equation}
where $\alpha_{q}=2\alpha_{q,\flat}+\alpha_{q,\sharp}$. Using \eqref{eq:2.26}, we obtain:
\begin{equation}\label{eq:2.51}
\big|\mathbb{Z}^{2q+1}\cap\delta_{x}\mathcal{B}\big|-\alpha_{q}x^{2q+2}=\mathscr{E}^{\sharp}_{q}(x)-2\sum_{0\,\leq\,m\,\leq\, \frac{x^{2}}{\sqrt{2}}}r_{2q}(m)\psi\Big(\sqrt{x^{4}-m^{2}}\,\Big)+O\Big(x^{2q-2}\log{x}\Big)\,.
\end{equation}
By trivial estimation the RHS of \eqref{eq:2.51} is $\ll\,x^{2q}$, and since $\big|\mathbb{Z}^{2q+1}\cap\delta_{x}\mathcal{B}\big|\sim\textit{meas}\big(\mathcal{B}\big)x^{2q+2}$ as $x\to\infty$, it follows that $\alpha_{q}=\textit{meas}\big(\mathcal{B}\big)$. Appealing to \eqref{eq:2.15} one last time, we have:
\begin{equation*}
\sum_{0\,\leq\,m\,\leq\, \frac{x^{2}}{\sqrt{2}}}r_{2q}(m)\psi\Big(\sqrt{x^{4}-m^{2}}\,\Big)=\sum_{0\,<\,m\,\leq\, \frac{x^{2}}{\sqrt{2}}}\rho_{2q}(m)\psi\Big(\sqrt{x^{4}-m^{2}}\,\Big)+O\Big(1+\mathds{1}_{\text{}_{q\geq5}}x^{q+1}\log{x}\Big)\,.
\end{equation*}
Inserting this into the RHS of \eqref{eq:2.51}, we arrive at:
\begin{equation}
\begin{split}
\mathcal{E}_{q}(x)&=\mathscr{E}^{\sharp}_{q}(x)-2\sum_{0\,<\,n\,\leq\, \frac{x^{2}}{\sqrt{2}}}\rho_{2q}(n)\psi\Big(\sqrt{x^{4}-n^{2}}\,\Big)+O\Big(x^{2q-2}\log{x}+\mathds{1}_{\text{}_{q\geq5}}x^{q+1}\log{x}\Big)=\\
&=\mathscr{E}^{\sharp}_{q}(x)-2\sum_{0\,<\,n\,\leq\, \frac{x^{2}}{\sqrt{2}}}\rho_{2q}(n)\psi\Big(\sqrt{x^{4}-n^{2}}\,\Big)+O\Big(x^{2q-2}\log{x}\Big)\,.
\end{split}
\end{equation}
Finally, we have for $q\equiv0\,(2)$:
\begin{equation*}
\begin{split}
\sum_{0\,<\,m\,\leq\, \frac{x^{2}}{\sqrt{2}}}\rho_{2q}(m)\psi\Big(\sqrt{x^{4}-m^{2}}\,\Big)=\varrho_{q}x^{2q-2}\sum_{1\,\leq\,d\,\leq\,\frac{x^{2}}{\sqrt{2}}}\frac{\xi(d)}{d^{q-1}}\,\text{\large{S}}^{\,\hat{\mathfrak{g}}}_{\psi}\Big(\frac{1}{d}x^{2}\,;\,d\mathfrak{f}\Big)
\end{split}
\end{equation*}
and for $q\equiv1\,(2)$:
\begin{equation*}
\begin{split}
&\sum_{0\,<\,m\,\leq\, \frac{x^{2}}{\sqrt{2}}}\rho_{2q}(m)\psi\Big(\sqrt{x^{4}-m^{2}}\,\Big)=\\
&=\varrho_{\chi,q}x^{2q-2}\sum_{1\,\leq\,d\,\leq\,\frac{x^{2}}{\sqrt{2}}}\frac{1}{d^{q-1}}\bigg\{2^{q-1}\chi(d)\text{\large{S}}^{\,\hat{\mathfrak{g}}}_{\psi}\Big(\frac{1}{d}x^{2}\,;\,d\mathfrak{f}\Big)+(-1)^{\frac{q-1}{2}}\text{\large{S}}^{\,\hat{\mathfrak{g}},\chi}_{\psi}\Big(\frac{1}{d}x^{2}\,;\,d\mathfrak{f}\Big)\bigg\}
\end{split}
\end{equation*}
which combined with \eqref{eq:2.30} and \eqref{eq:2.31} concludes the proof.
\end{proof}
\section{Transformation of $\mathcal{E}_{q}(x)$ and proof of Theorem 1}
As it stands, the initial expression for $\mathcal{E}_{q}(x)$ obtained in Proposition 2.3 is not yet ready for applications, and needs to be subjected to a transformation process. At the end of this process, a new 
expression for $\mathcal{E}_{q}(x)$ will emerge which is well suited and flexible enough to meet our needs. This will be the subject of the current section, and the main results are stated in Proposition 3.1 \& 3.2. In subsection 3.5 we shall give a proof of Theorem 1.
\subsection{Transitioning from $\psi$-sums to exponential sums}
We now turn our attention to the sums involving the 1-periodic function $\psi$ appearing on the RHS of \eqref{eq:2.48} and \eqref{eq:2.49}, and we begin the  transformation process of $\mathcal{E}_{q}(x)$ by applying a suitable approximation to $\psi$ for which a proof can be found in \cite{vaaler1985some}.
\begin{VL}
Let $\mathcal{H}\geq1$, and define the trigonometrical polynomials:
\begin{equation*}
\begin{split}
&\textit{(1)}\qquad\psi_{\text{}_{\mathcal{H}}}(\omega)=\sum_{1\,\leq\,h\,\leq\,\mathcal{H}}\tau\bigg(\frac{h}{[\mathcal{H}]+1}\bigg)\frac{1}{h}\sin{(-2\pi h\omega)}\\
&\textit{(2)}\qquad\psi^{\ast}_{\text{}_{\mathcal{H}}}(\omega)=\sum_{1\,\leq\,h\,\leq\,\mathcal{H}}\tau^{\ast}\bigg(\frac{h}{[\mathcal{H}]+1}\bigg)\frac{1}{h}\cos{(-2\pi h\omega)}\,.
\end{split}
\end{equation*}
where:
\begin{equation*}
\begin{split}
&\tau(t)=t(1-t)\cot{(\pi t)}+\pi^{-1}t\qquad\,\,\,\,\,\,\,\,;\qquad0<t<1\\
&\tau^{\ast}(t)=t(1-t)\qquad\quad\qquad\qquad\qquad;\qquad0<t<1\,.
\end{split}
\end{equation*}
Then there holds the inequality:
\begin{equation*}
\big|\psi(\omega)-\psi_{\text{}_{\mathcal{H}}}(\omega)\big|\leq\psi^{\ast}_{\text{}_{\mathcal{H}}}(\omega)+\frac{1}{2[\mathcal{H}]+2}\,.
\end{equation*}
\end{VL}
\text{ }\\
Having Vaaler's Lemma, we can now transition from fractional part sums to exponential sums.\\
\begin{lem} 
Let $x>0$ large, $d\in\mathbb{N}$ satisfying $1\leq d\leq \frac{x^{2}}{\sqrt{2}}$. Then for any $\mathcal{H}\geq1$ :\\\\
\textit{(1)}
\begin{equation}\label{eq:3.1}
\text{\large{S}}^{\,\mathfrak{g}}_{\psi}\Big(x^{2}\,;\,\frac{1}{d}\mathfrak{f}\Big)=\sum_{1\,\leq\,h\,\leq\,\mathcal{H}}\tau\bigg(\frac{h}{[\mathcal{H}]+1}\bigg)\frac{1}{h}\Im\Bigg(\text{\large{S}}^{\,\mathfrak{g}}\Big(x^{2}\,;\,-\frac{h}{d}\mathfrak{f}\Big)\Bigg)+\mathscr{E}_{1}\big(\mathcal{H},d\big)
\end{equation}
where $\mathscr{E}_{1}\big(\mathcal{H},d\big)$ satisfies the bound:
\begin{equation}\label{eq:3.2}
\big|\mathscr{E}_{1}\big(\mathcal{H},d\big)\big|\leq\sum_{1\,\leq\,h\,\leq\,\mathcal{H}}\tau^{\ast}\bigg(\frac{h}{[\mathcal{H}]+1}\bigg)\frac{1}{h}\Re\Bigg(\text{\large{S}}^{\,\mathfrak{g}}\Big(x^{2}\,;\,-\frac{h}{d}\mathfrak{f}\Big)\Bigg)
+O\bigg(\frac{x^{2}}{\mathcal{H}}\bigg)\,.
\end{equation}
\textit{(2)}
\begin{equation}\label{eq:3.3}
\text{\large{S}}^{\,\mathfrak{g}}_{\psi,\chi}\Big(x^{2}\,;\,\frac{1}{4d}\mathfrak{f}\Big)=2\sum_{1\,\leq\,h\,\leq\,\mathcal{H}}\chi(-h)\tau\bigg(\frac{h}{[\mathcal{H}]+1}\bigg)\frac{1}{h}\Re\Bigg(\text{\large{S}}^{\,\mathfrak{g}}\Big(x^{2}\,;\,-\frac{h}{4d}\mathfrak{f}\Big)\Bigg)+\mathscr{E}_{2}\big(\mathcal{H},d\big)
\end{equation}
where $\mathscr{E}_{2}\big(\mathcal{H},d\big)$ satisfies the bound:
\begin{equation}\label{eq:3.4}
\big|\mathscr{E}_{2}\big(\mathcal{H},d\big)\big|\leq2\sum_{1\,\leq\,h\,\leq\,\mathcal{H}}\Big\{\mathds{1}_{h\equiv0(4)}-\mathds{1}_{h\equiv2(4)}\Big\}\tau^{\ast}\bigg(\frac{h}{[\mathcal{H}]+1}\bigg)\frac{1}{h}\Re\Bigg(\text{\large{S}}^{\,\mathfrak{g}}\Big(x^{2}\,;\,-\frac{h}{4d}\mathfrak{f}\Big)\Bigg)
+O\bigg(\frac{x^{2}}{\mathcal{H}}\bigg)\,.
\end{equation}
\textit{(3)}
\begin{equation}\label{eq:3.5}
\text{\large{S}}^{\,\hat{\mathfrak{g}}}_{\psi}\Big(\frac{1}{d}x^{2}\,;\,d\mathfrak{f}\Big)=\sum_{1\,\leq\,h\,\leq\,\mathcal{H}}\tau\bigg(\frac{h}{[\mathcal{H}]+1}\bigg)\frac{1}{h}\Im\Bigg(\text{\large{S}}^{\,\hat{\mathfrak{g}}}\Big(\frac{1}{d}x^{2}\,;\,-dh\mathfrak{f}\Big)\Bigg)+\mathscr{E}_{3}\big(\mathcal{H},d\big)
\end{equation}
where $\mathscr{E}_{3}\big(\mathcal{H},d\big)$ satisfies the bound:
\begin{equation}\label{eq:3.6}
\big|\mathscr{E}_{3}\big(\mathcal{H},d\big)\big|\leq\sum_{1\,\leq\,h\,\leq\,\mathcal{H}}\tau^{\ast}\bigg(\frac{h}{[\mathcal{H}]+1}\bigg)\frac{1}{h}\Re\Bigg(\text{\large{S}}^{\,\hat{\mathfrak{g}}}\Big(\frac{1}{d}x^{2}\,;\,-dh\mathfrak{f}\Big)\Bigg)
+O\bigg(\frac{x^{2}}{d\mathcal{H}}\bigg)\,.
\end{equation}
\textit{(4)}
\begin{equation}\label{eq:3.7}
-\text{\large{S}}^{\,\hat{\mathfrak{g}},\chi}_{\psi}\Big(\frac{1}{d}x^{2}\,;\,d\mathfrak{f}\Big)=\frac{1}{2}\sum_{1\,\leq\,h\,\leq\,\mathcal{H}}\tau\bigg(\frac{h}{[\mathcal{H}]+1}\bigg)\frac{1}{h}\Re\Bigg(\sum_{a\,(4)}\chi(-a)\text{\large{S}}^{\,\hat{\mathfrak{g}}}\Big(\frac{1}{d}x^{2}\,;\,-dh\mathfrak{f}\,-\,\frac{a}{4}\mathfrak{h}\Big)\Bigg)+\mathscr{E}_{4}\big(\mathcal{H},d\big)
\end{equation}
where $\mathscr{E}_{4}\big(\mathcal{H},d\big)$ satisfies the bound:
\begin{equation}\label{eq:3.8}
\bigg|\mathscr{E}_{4}\big(\mathcal{H},d\big)\bigg|\leq\sum_{1\,\leq\,h\,\leq\,\mathcal{H}}\tau^{\ast}\bigg(\frac{h}{[\mathcal{H}]+1}\bigg)\frac{1}{h}\Re\Bigg(\text{\large{S}}^{\,\hat{\mathfrak{g}}}\Big(\frac{1}{d}x^{2}\,;\,-dh\mathfrak{f}\Big)\Bigg)
+O\bigg(\frac{x^{2}}{d\mathcal{H}}\bigg)
\end{equation}
and $\mathfrak{h}(t)=t$.
\end{lem}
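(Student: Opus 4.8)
The unifying idea is to apply Vaaler's Lemma termwise inside each $\psi$-sum and then to read off the resulting trigonometric polynomials as real or imaginary parts of the exponential sums $\text{\large{S}}^{\,g}$. It is convenient to abbreviate the argument of $\psi$ by $\omega_{n}$; thus in \textit{(1)} one has $\omega_{n}=\tfrac{1}{d}x^{2}\mathfrak{f}(n/x^{2})$ and $\text{\large{S}}^{\,\mathfrak{g}}_{\psi}(x^{2};\tfrac{1}{d}\mathfrak{f})=\sum_{0<n\leq x^{2}/\sqrt{2}}\mathfrak{g}(n/x^{2})\,\psi(\omega_{n})$, and similarly for the other three sums. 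In every part I would write $\psi=\psi_{\mathcal{H}}+(\psi-\psi_{\mathcal{H}})$ and handle the two contributions separately, using the defining relations $\sin(-2\pi h\omega)=\Im\,\textit{\large{e}}(-h\omega)$ and $\cos(-2\pi h\omega)=\Re\,\textit{\large{e}}(-h\omega)$ to pass between trigonometric and exponential form.

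For the non-character parts \textit{(1)} and \textit{(3)}, the $\psi_{\mathcal{H}}$-contribution is immediate: expanding $\psi_{\mathcal{H}}(\omega_{n})=\sum_{1\leq h\leq\mathcal{H}}\tau(\tfrac{h}{[\mathcal{H}]+1})\tfrac1h\sin(-2\pi h\omega_{n})$, interchanging the finite $h$-sum with the $n$-sum, and pulling the real weight $\mathfrak{g}$ (respectively $\hat{\mathfrak{g}}$) inside $\Im$ identifies the inner sum as $\text{\large{S}}^{\,\mathfrak{g}}(x^{2};-\tfrac{h}{d}\mathfrak{f})$ (respectively $\text{\large{S}}^{\,\hat{\mathfrak{g}}}(\tfrac1d x^{2};-dh\mathfrak{f})$), giving \eqref{eq:3.1} and \eqref{eq:3.5}. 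For the complementary part I would invoke the majorant bound $|\psi-\psi_{\mathcal{H}}|\leq\psi^{\ast}_{\mathcal{H}}+\tfrac{1}{2[\mathcal{H}]+2}$; since $\mathfrak{g},\hat{\mathfrak{g}}\geq0$ on $[0,1]$ the weights pass through the absolute value, and expanding the cosine-polynomial $\psi^{\ast}_{\mathcal{H}}$ produces the $\Re\,\text{\large{S}}^{\,g}$ terms of \eqref{eq:3.2} and \eqref{eq:3.6}, while the constant $\tfrac{1}{2[\mathcal{H}]+2}$ summed over the $O(x^{2})$ (respectively $O(x^{2}/d)$) values of $n$ yields the remainder $O(x^{2}/\mathcal{H})$ (respectively $O(x^{2}/(d\mathcal{H}))$).

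The character cases require the extra bookkeeping that I expect to be the main obstacle. In \textit{(2)} the character sits in the shift, so after Vaaler one must evaluate $\sum_{a\,(4)}\chi(a)\sin\!\big(-2\pi h(\omega_{n}+\tfrac{a}{4})\big)$; only $a\equiv1,3$ survive, and the product-to-sum identity collapses this to $2\sin(\tfrac{\pi h}{2})\cos(-2\pi h\omega_{n}-\pi h)$. Using $\sin(\tfrac{\pi h}{2})=\chi(h)$, $\cos(\pi h)=(-1)^{h}$ and $(-1)^{h}\chi(h)=\chi(-h)$, this becomes $2\chi(-h)\Re\,\textit{\large{e}}(-h\omega_{n})$, which is precisely the factor in \eqref{eq:3.3}. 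For the error \eqref{eq:3.4}, the triangle inequality replaces $\sum_{a\,(4)}\chi(a)(\psi-\psi_{\mathcal{H}})(\omega_{n}+\tfrac{a}{4})$ by $\psi^{\ast}_{\mathcal{H}}(\omega_{n}+\tfrac14)+\psi^{\ast}_{\mathcal{H}}(\omega_{n}+\tfrac34)$ (plus the constant), and the analogous product-to-sum computation now produces $\cos(\tfrac{\pi h}{2})=\mathds{1}_{h\equiv0(4)}-\mathds{1}_{h\equiv2(4)}$ in place of $\sin(\tfrac{\pi h}{2})$, giving the stated majorant.

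Finally, in \textit{(4)} the character sits on $n$, and I would resolve it through the Gauss-sum identity $\chi(n)=\tfrac{1}{2i}\sum_{a\,(4)}\chi(-a)\,\textit{\large{e}}(-an/4)$ (the relabelling $a\mapsto-a$ arranges the sign of the additive twist to match the exponential sum). Folding $\textit{\large{e}}(-an/4)$ into the linear phase $-\tfrac{a}{4}\mathfrak{h}$, the $\psi_{\mathcal{H}}$-contribution becomes $\sum_{h}\tau(\cdot)\tfrac1h\,\Im\big(\tfrac{1}{2i}\sum_{a\,(4)}\chi(-a)\,\text{\large{S}}^{\,\hat{\mathfrak{g}}}(\tfrac1d x^{2};-dh\mathfrak{f}-\tfrac{a}{4}\mathfrak{h})\big)$, and the elementary conversion $\Im(\tfrac{1}{2i}Z)=-\tfrac12\Re(Z)$ delivers \eqref{eq:3.7} after multiplying by $-1$. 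The error \eqref{eq:3.8} is cheaper: since $|\chi(n)|\leq1$, one simply bounds $|\psi-\psi_{\mathcal{H}}|$ by the majorant as in \textit{(3)}. The delicate points throughout are the sign and phase conventions in these finite residue-sum evaluations — in particular obtaining $\chi(-h)$ rather than $\chi(h)$ in \eqref{eq:3.3}, and matching $\textit{\large{e}}(-an/4)$ exactly with the $-\tfrac{a}{4}\mathfrak{h}$ term in \eqref{eq:3.7} — and these are where I would proceed most carefully.
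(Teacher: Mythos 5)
Your proposal is correct and takes essentially the same route as the paper: a direct termwise application of Vaaler's Lemma (with the nonnegativity of $\mathfrak{g},\hat{\mathfrak{g}}$ and of the majorant $\psi^{\ast}_{\mathcal{H}}+\frac{1}{2[\mathcal{H}]+2}$ handling the error bounds) for parts \textit{(1)} and \textit{(3)}, and the additive-character expansion of $\chi$ for parts \textit{(2)} and \textit{(4)}, your product-to-sum evaluations being exactly equivalent to the paper's identities for $\sum_{a\,(4)}\chi(a)\textit{\large{e}}(ma/4)$ and $\sum_{a\,(4)}|\chi(a)|\textit{\large{e}}(ma/4)$. The sign and phase bookkeeping you single out — obtaining $\chi(-h)$ in \eqref{eq:3.3} and matching $\textit{\large{e}}(-an/4)$ with the twist $-\frac{a}{4}\mathfrak{h}$ in \eqref{eq:3.7} — is carried out correctly.
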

\begin{proof}
The above assertions all follow from a direct application of Valler's Lemma, except for \eqref{eq:3.3}, \eqref{eq:3.4} and \eqref{eq:3.7} which necessitate some additional wrok in order to deal with the presence of the Dirichlet character $\chi$. The derivation of \eqref{eq:3.7} is simple and follows by expanding $\chi$ using additive characters:
\begin{equation*}
\chi(m)=\frac{1}{2i}\sum_{a\,(4)}\chi(a)\textit{\large{e}}\bigg(\frac{ma}{4}\bigg)\,.
\end{equation*}
As for \eqref{eq:3.3} and \eqref{eq:3.4}, we make use of the additional identity
\begin{equation*}
\sum_{a\,(4)}|\chi(a)|\textit{\large{e}}\bigg(\frac{ma}{4}\bigg)=2\Big\{\mathds{1}_{m\equiv0(4)}-\mathds{1}_{m\equiv2(4)}\Big\}
\end{equation*}
from which we derive for $\vary\in\mathbb{R}$:
\begin{equation*}
\sum_{a\,(4)}\chi(a)\sin{\Big(2\pi m\Big(\vary+\frac{a}{4}\Big)\Big)}=\Im\bigg(\sum_{a\,(4)}\chi(a)\textit{\large{e}}\bigg(\frac{ma}{4}\bigg)\textit{\large{e}}\big(m\vary\big)\bigg)=2\chi(m)\cos{(2\pi m\vary)}
\end{equation*}
and:
\begin{equation*}
\qquad\quad\,\,\,\,\sum_{a\,(4)}|\chi(a)|\cos{\Big(2\pi m\Big(\vary+\frac{a}{4}\Big)\Big)}=\Re\bigg(\sum_{a\,(4)}|\chi(a)|\textit{\large{e}}\bigg(\frac{ma}{4}\bigg)\textit{\large{e}}\big(m\vary\big)\bigg)=2\Big\{\mathds{1}_{m\equiv0(4)}-\mathds{1}_{m\equiv2(4)}\Big\}\cos{(2\pi m\vary)}\,.
\end{equation*}
Taking $m=-h$ and $\vary=\frac{x^{2}}{4d}\mathfrak{f}\Big(\frac{n}{x^{2}}\Big)$ with $1\leq h\leq \mathcal{H}$ and $0<n\leq\frac{x^{2}}{\sqrt{2}}$, one obtains \eqref{eq:3.3} and \eqref{eq:3.4}.  
\end{proof}
\subsection{Estimating exponential sums of certain type}
We now arrive at the core of the transformation process, which is the estimation of the exponential sums produced in Lemma 3.1. These sums will be estimated by applying the \textit{B}-process of Van der Corput. The standard estimates for the error terms produced by the \textit{B}-process will suffice for proving Theorem 1 and Theorem 3, but will fall just short of what we need when we arrive at the proof of Theorem 2. Thus, in order to make the transformation process applicable to all of our present purposes, we appeal to a result of Karatsuba and Korolev \cite{karatsuba2007theorem}. Here, we shall state it in a more convenient form.  
\begin{lem}[A sharp form of the \textit{B}-process]
Let $r,s\in\mathbb{R}$ with $s>r\geq0$, $f$ and $g$ real functions defined on $[r,s]$. Suppose the following conditions hold: \\\\
(C.1) $f\in C^{4}\big([r,s]\big)$ and $g\in C^{2}\big([r,s]\big)$.\\\\
(C.2) There exists a constant $c_{f}<0$ such that $f^{(2)}(t)\leq c_{f}$ for $t\in[r,s]$.\\\\
Write $\digamma$ for the inverse function of $f^{(1)}$, $\rho=f^{(1)}(r)$ and $\sigma=f^{(1)}(s)$. Let $u,v\in\mathbb{R}$ with $u>0$, and set $f_{u,v}(t)=uf(t)+vt$. Then for $Y\geq1$ we have:
\begin{equation}\label{eq:3.9}
\begin{split}
&\sum_{rY\,<\,n\,\leq\,sY}g\bigg(\frac{n}{Y}\bigg)\textit{\large{e}}\bigg(-Yf_{u,v}\bigg(\frac{n}{Y}\bigg)\bigg)=\\
&=\sqrt{\frac{Y}{u}}\underset{-u\rho-v\,\leq\, n\,\leq\,-u\sigma-v}{\sideset{}{''}\sum_{n\in\mathbb{Z}}}\,\,\frac{g\circ\digamma\Big(-\frac{n+v}{u}\Big)}{\sqrt{\Big|f^{(2)}\circ\digamma\Big(-\frac{n+v}{u}\Big)\Big|}}\textit{\large{e}}\bigg(-Yf_{u,n+v}\circ\digamma\Big(-\frac{n+v}{u}\Big)+\frac{1}{8}\bigg)+\mathscr{E}
\end{split}
\end{equation}
where the double-dash $''$ indicates that if one of the limit points in the above summation is an integer, then the corresponding summand is multiplied by $1/2$. The error term $\mathscr{E}$ satisfies the bound:
\begin{equation*}
\begin{split}
&|\mathscr{E}|\ll\log{\big((\rho-\sigma)u+2\big)}+1+\frac{1}{u}+\frac{u}{Y}+\mathcal{R}_{\rho}(u,v)+\mathcal{R}_{\sigma}(u,v)\\
&\textit{ where for } \delta=\rho,\sigma\quad \mathcal{R}_{\delta}(u,v)=\left\{
        \begin{array}{ll}
            0& ;\,u\delta+v\in\mathbb{Z}\\
            \textit{min}\bigg\{\sqrt{\frac{Y}{u}},\parallel -u\delta-v\parallel^{-1}\bigg\}&;\,\textit{otherwise}
        \end{array}
    \right.
\end{split}
\end{equation*}
and the implied constant in the $\ll$ relation depends on r,s and bounds for $f^{(k)}$ and $g^{(j)}$ with $k=2,3,4$ and $j=0,1,2$.
\end{lem}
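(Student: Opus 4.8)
The plan is to obtain this lemma from the theorem of Karatsuba and Korolev \cite{karatsuba2007theorem} by a change of variables that absorbs the scale $Y$ and the dilation $u$. Setting $F(n)=-Yf_{u,v}(n/Y)$ and $G(n)=g(n/Y)$, the left-hand side of \eqref{eq:3.9} takes the shape $\sum_{rY<n\leq sY}G(n)\textit{\large{e}}(F(n))$, to which their result applies directly. My first step is to verify the hypotheses: by (C.1) the chain rule gives $F\in C^{4}$ and $G\in C^{2}$, with $F^{(1)}(n)=-uf^{(1)}(n/Y)-v$ and $F^{(2)}(n)=-\tfrac{u}{Y}f^{(2)}(n/Y)$; by (C.2) and $u>0$ one has $F^{(2)}(n)\geq\tfrac{u}{Y}|c_{f}|>0$, so $F$ is convex with second derivative of constant sign and bounded away from zero, which is exactly the input the \textit{B}-process requires.

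Next I would identify the transformed data. The saddle-point equation $F^{(1)}(n)=m$ is $f^{(1)}(n/Y)=-(m+v)/u$, with solution $n_{m}=Y\digamma\big(-(m+v)/u\big)$; since $f^{(1)}$ is strictly decreasing we have $\rho>\sigma$, and $n_{m}\in[rY,sY]$ precisely when $-u\rho-v\leq m\leq-u\sigma-v$, which is the summation range in \eqref{eq:3.9}. Writing $\xi_{m}=\digamma(-(m+v)/u)$, a direct computation gives $|F^{(2)}(n_{m})|=\tfrac{u}{Y}|f^{(2)}(\xi_{m})|$, whence the amplitude $\sqrt{Y/u}\,|f^{(2)}\circ\digamma|^{-1/2}$, while the transformed phase telescopes as $F(n_{m})-mn_{m}=-Yf_{u,m+v}(\xi_{m})$. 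As $F^{(2)}>0$, the stationary-phase constant is $+1/8$. Renaming $m\to n$ reproduces the main term of \eqref{eq:3.9} verbatim, and the half-weighting of an integer limit point (the double dash) is the standard treatment of a saddle point landing on an endpoint.

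The delicate step is to convert the Karatsuba--Korolev error into the form displayed after \eqref{eq:3.9}. The transformed interval has length $(-u\sigma-v)-(-u\rho-v)=u(\rho-\sigma)$, which supplies the factor $\log\big((\rho-\sigma)u+2\big)$ counting the number of main terms; the term $u/Y$ is $\sup|F^{(2)}|$ up to constants, and the remaining $1+\tfrac{1}{u}$ comes from the truncation of the Poisson expansion and the normalization of the leading Gaussian integral. The genuinely subtle contributions are $\mathcal{R}_{\rho}(u,v)$ and $\mathcal{R}_{\sigma}(u,v)$, which record the behaviour when a saddle point $\xi_{m}$ sits near an endpoint, equivalently when the transformed endpoint $-u\delta-v$ (with $\delta=\rho,\sigma$) is close to an integer. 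I expect this endpoint analysis to be the main obstacle: one must show the boundary error is controlled by $\min\{\sqrt{Y/u},\,\parallel-u\delta-v\parallel^{-1}\}$ and disappears when $-u\delta-v\in\mathbb{Z}$, matching the half-weighted boundary term in the main sum.

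Finally I would confirm the uniformity assertion. Because all dependence on $Y$ and $u$ has been made explicit through the substitution $F=-Yf_{u,v}(\cdot/Y)$, tracing the above identifications shows every implied constant depends only on $r,s$ and on bounds for $f^{(k)}$, $g^{(j)}$ with $k=2,3,4$ and $j=0,1,2$. A self-contained alternative would avoid \cite{karatsuba2007theorem} altogether: apply truncated Poisson summation to $G(n)\textit{\large{e}}(F(n))$ on $(rY,sY]$, evaluate each frequency integral $Y\int_{r}^{s}g(\xi)\textit{\large{e}}\big(-Yf_{u,m+v}(\xi)\big)\,d\xi$ by stationary phase when a saddle lies in $[r,s]$ and by repeated integration by parts (the Van der Corput Lemma) otherwise, and reassemble; but achieving the sharp endpoint error in the last step is precisely the content of \cite{karatsuba2007theorem}, so the reduction above is the efficient route.
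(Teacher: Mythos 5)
Your proposal is correct and is essentially the paper's (implicit) argument: the paper gives no proof of this lemma, merely citing Karatsuba--Korolev and stating that the result is their theorem ``in a more convenient form,'' which is precisely the rescaling $F(n)=-Yf_{u,v}(n/Y)$, $G(n)=g(n/Y)$ that you carry out. Your verification of the hypotheses, the saddle-point data, the phase identity $F(n_{m})-mn_{m}=-Yf_{u,m+v}(\xi_{m})$, and the sign of the $1/8$ all check out.
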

\text{ }\\
With the aid of Lemma 3.2 we can now prove:
\begin{lem}
Let $x>0$ large, $d\in\mathbb{N}$ satisfying $1\leq d\leq \frac{x^{2}}{\sqrt{2}}$. Then for any integer $h\geq1$ we have:
\begin{equation}\label{eq:3.10}
\text{\large{S}}^{\,\mathfrak{g}}\Big(x^{2}\,;\,-\frac{h}{d}\mathfrak{f}\Big)=x\sqrt{d}h\underset{n\,\equiv\,0\,(d)}{\sideset{}{''}\sum_{0\,\leq\,n\,\leq\,h}}\frac{\mathfrak{g}\Big(\frac{n}{\sqrt{n^{2}+h^{2}}}\Big)}{\big(n^{2}+h^{2}\big)^{3/4}}\textit{\large{e}}\bigg(-\frac{\sqrt{n^{2}+h^{2}}}{d}x^{2}+\frac{1}{8}\bigg)
+O\Big(\log{2h}+d+\frac{h}{dx^{2}}\Big)\,.
\end{equation}
\begin{equation}\label{eq:3.11}
\text{\large{S}}^{\,\hat{\mathfrak{g}}}\Big(\frac{1}{d}x^{2}\,;\,-dh\mathfrak{f}\Big)=x\sqrt{d}h\sideset{}{''}\sum_{0\,\leq\,n\,\leq\,dh}\frac{\hat{\mathfrak{g}}\Big(\frac{n}{\sqrt{n^{2}+(dh)^{2}}}\Big)}{\big(n^{2}+(dh)^{2}\big)^{3/4}}\textit{\large{e}}\bigg(-\frac{\sqrt{n^{2}+(dh)^{2}}}{d}x^{2}+\frac{1}{8}\bigg)
+O\Big(\log{2hd}+\frac{hd^{2}}{x^{2}}\Big)\,.
\end{equation}
\begin{equation}\label{eq:3.12}
\begin{split}
\sum_{a\,(4)}\chi(-a)\text{\large{S}}^{\,\hat{\mathfrak{g}}}\Big(\frac{1}{d}x^{2}\,;\,-dh\mathfrak{f}\,-\,\frac{a}{4}\mathfrak{h}\Big)=&
4x\sqrt{4d}h\sideset{}{''}\sum_{0\,\leq\,n\,\leq\,4dh}\chi(-n)\frac{\hat{\mathfrak{g}}\Big(\frac{n}{\sqrt{n^{2}+(4dh)^{2}}}\Big)}{\big(n^{2}+(4dh)^{2}\big)^{3/4}}\textit{\large{e}}\bigg(-\frac{\sqrt{n^{2}+(4dh)^{2}}}{4d}x^{2}+\frac{1}{8}\bigg)+\\
&+O\Big(\log{2hd}+\frac{hd^{2}}{x^{2}}\Big)\,.
\end{split}
\end{equation}
\end{lem}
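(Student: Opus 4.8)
All three identities reduce to a single computation: unfold the definition of the exponential sum $\text{S}^{g}$ and feed it into the sharp $B$-process of Lemma 3.2. The plan is to obtain \eqref{eq:3.10} and \eqref{eq:3.11} as direct applications, and then to derive \eqref{eq:3.12} by running the $B$-process separately over the four residues $a\,(4)$ and reassembling the outputs into one character sum.

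First I would record the analytic data for the phase $\mathfrak{f}(t)=\sqrt{1-t^{2}}$ on $[0,1/\sqrt{2}]$ that makes Lemma 3.2 applicable. One has $\mathfrak{f}^{(1)}(t)=-t(1-t^{2})^{-1/2}$ and $\mathfrak{f}^{(2)}(t)=-(1-t^{2})^{-3/2}$, so $\mathfrak{f}\in C^{4}$ and $\mathfrak{f}^{(2)}(t)\leq-1$ there, verifying (C.1)--(C.2) with $c_{f}=-1$; the amplitudes $\mathfrak{g},\hat{\mathfrak{g}}\in C^{2}$ as well. Solving $\mathfrak{f}^{(1)}(t)=-p$ gives the inverse $\digamma(-p)=p(1+p^{2})^{-1/2}$, and together with $1-\digamma(-p)^{2}=(1+p^{2})^{-1}$ this is the only identity needed to put both $\sqrt{|\mathfrak{f}^{(2)}\circ\digamma|}$ and $\mathfrak{f}\circ\digamma$ in closed form. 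Finally $\mathfrak{f}^{(1)}(0)=0$ and $\mathfrak{f}^{(1)}(1/\sqrt{2})=-1$ give $\rho=0$, $\sigma=-1$.

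For \eqref{eq:3.10} I would write $\text{\large{S}}^{\,\mathfrak{g}}(x^{2};-\tfrac{h}{d}\mathfrak{f})=\sum_{0<n\leq x^{2}/\sqrt{2}}\mathfrak{g}(n/x^{2})\,e(-\tfrac{h}{d}x^{2}\mathfrak{f}(n/x^{2}))$ and match it to Lemma 3.2 with $Y=x^{2}$, $g=\mathfrak{g}$, $f=\mathfrak{f}$, $u=h/d$, $v=0$. The transform variable runs over $0\leq k\leq -u\sigma=h/d$; substituting $n=kd$ converts this into the stated sum over $n\equiv0\,(d)$, $0\leq n\leq h$, with $\digamma(-kd/h)=n(n^{2}+h^{2})^{-1/2}$, so that the amplitude $\sqrt{Y/u}\,(\mathfrak{g}\circ\digamma)/\sqrt{|\mathfrak{f}^{(2)}\circ\digamma|}$ collapses to $x\sqrt{d}\,h\,\mathfrak{g}(n/\sqrt{n^{2}+h^{2}})(n^{2}+h^{2})^{-3/4}$ and the phase $-Yf_{u,k}\circ\digamma$ to $-\tfrac{\sqrt{n^{2}+h^{2}}}{d}x^{2}$, exactly the main term of \eqref{eq:3.10}. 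For the error, $-u\rho-v=0\in\mathbb{Z}$ forces $\mathcal{R}_{\rho}=0$, while $-u\sigma-v=h/d$ has denominator dividing $d$, so either it is an integer or $\parallel h/d\parallel\geq1/d$ and $\mathcal{R}_{\sigma}\leq\parallel h/d\parallel^{-1}\leq d$; the remaining contributions $\log(h/d+2)+1+d/h+h/(dx^{2})$ are all $O(\log 2h+d+h/(dx^{2}))$. Equation \eqref{eq:3.11} is the same computation with $Y=x^{2}/d$, $u=dh$, $v=0$ and $n=k$ (no reindexing, hence no congruence); now both $-u\rho-v=0$ and $-u\sigma-v=dh$ are integers, so $\mathcal{R}_{\rho}=\mathcal{R}_{\sigma}=0$ and the error reduces to $O(\log 2hd+hd^{2}/x^{2})$.

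The one genuinely new point is \eqref{eq:3.12}, where the linear shift $\mathfrak{h}(t)=t$ is precisely the term the $B$-process absorbs into $v$. I would apply Lemma 3.2 to each summand $\text{\large{S}}^{\,\hat{\mathfrak{g}}}(\tfrac{x^{2}}{d};-dh\mathfrak{f}-\tfrac{a}{4}\mathfrak{h})$ with $Y=x^{2}/d$, $g=\hat{\mathfrak{g}}$, $u=dh$, $v=a/4$. For fixed $a$ the index satisfies $-a/4\leq k\leq dh-a/4$, i.e.\ $0\leq 4k+a\leq 4dh$; setting $m=4k+a$ and letting $a$ range over the residues mod $4$, the pairs $(a,k)$ sweep each integer $0\leq m\leq 4dh$ exactly once, with $\chi(-a)=\chi(-m)$, while the argument $-\tfrac{k+a/4}{dh}=-\tfrac{m}{4dh}$ yields $\digamma=m(m^{2}+(4dh)^{2})^{-1/2}$. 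The four sums thus coalesce into the single character sum of \eqref{eq:3.12} with modulus $4dh$; tracking constants, $\sqrt{Y/u}\,(4dh)^{3/2}$ collapses to $4x\sqrt{4d}\,h$ and the phase to $-\tfrac{\sqrt{m^{2}+(4dh)^{2}}}{4d}x^{2}$. The endpoints $-u\rho-v=-a/4$ and $-u\sigma-v=dh-a/4$ have denominator at most $4$, so $\mathcal{R}_{\rho},\mathcal{R}_{\sigma}=O(1)$ and summing four error terms gives $O(\log 2hd+hd^{2}/x^{2})$. The main obstacle is bookkeeping rather than analysis: one must align the double-dash endpoint conventions between the $k$-sum and the reindexed $n$- (or $m$-) sum, and confirm that $\mathcal{R}_{\rho},\mathcal{R}_{\sigma}$ remain within the stated bound by using that the relevant endpoints are rationals of denominator dividing $d$ (respectively $4$).
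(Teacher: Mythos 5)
Your proposal is correct and follows essentially the same route as the paper: the same choice of parameters $(Y,g,u,v)$ in Lemma 3.2 for each of the three sums, the same reindexing $n\mapsto kd$ (resp. $m=4k+a$) to produce the congruence conditions, and the same treatment of the endpoint terms $\mathcal{R}_{\rho},\mathcal{R}_{\sigma}$ via the denominators $d$ and $4$. The only difference is that you make the closed forms of $\digamma$ and $\mathfrak{f}^{(2)}\circ\digamma$ and the resulting amplitude/phase collapse fully explicit, which the paper leaves implicit.
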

\begin{proof}
Set $r=0$, $s=\frac{1}{\sqrt{2}}$, $\rho=0$, $\sigma=-1$, $f=\mathfrak{f}$ and $\digamma(t)=-\frac{t}{\sqrt{1+t^{2}}}$.\\\\
\textit{(1)} The exponential sum $\text{\large{S}}^{\,\mathfrak{g}}\Big(x^{2}\,;\,-\frac{h}{d}\mathfrak{f}\Big)$ is of the type appearing on the RHS of \eqref{eq:3.9}, where $Y=x^{2}$, $g=\mathfrak{g}$, $u=\frac{h}{d}$ and $v=0$. Conditions C.1 and C.2 are clearly satisfied. We may thus appeal to Lemma 3.2 to obtain:\text{ }\\
\begin{equation}\label{eq:3.13}
\begin{split}
\text{\large{S}}^{\,\mathfrak{g}}\Big(x^{2}\,;\,-\frac{h}{d}\mathfrak{f}\Big)
&=x\sqrt{\frac{d}{h}}\,\,\sideset{}{''}\sum_{0\,\leq\,n\,\leq\,\frac{h}{d}}\,\,\frac{\mathfrak{g}\circ\digamma\Big(-\frac{dn}{h}\Big)}{\sqrt{\Big|\mathfrak{f}^{(2)}\circ\digamma\Big(-\frac{dn}{h}\Big)\Big|}}\textit{\large{e}}\bigg(-x^{2}\,\mathfrak{f}_{\frac{h}{d},n}\circ\digamma\Big(-\frac{dn}{h}\Big)+\frac{1}{8}\bigg)+\\
&+O\bigg(\log{\Big(\frac{h}{d}+2\Big)}+1+\frac{d}{h}+\frac{h}{dx^{2}}+\mathcal{R}_{0}\Big(\frac{h}{d},0\Big)+\mathcal{R}_{-1}\Big(\frac{h}{d},0\Big)\bigg)=\\
&=x\sqrt{d}h\underset{n\,\equiv\,0\,(d)}{\sideset{}{''}\sum_{0\,\leq\,n\,\leq\,h}}\frac{\mathfrak{g}\Big(\frac{n}{\sqrt{n^{2}+h^{2}}}\Big)}{\big(n^{2}+h^{2}\big)^{3/4}}\textit{\large{e}}\bigg(-\frac{\sqrt{n^{2}+h^{2}}}{d}x^{2}+\frac{1}{8}\bigg)
+O\Big(\log{2h}+d+\frac{h}{dx^{2}}\Big)\,.
\end{split}
\end{equation}
\textit{(2)} The exponential sum $\text{\large{S}}^{\,\hat{\mathfrak{g}}}\Big(\frac{1}{d}x^{2}\,;\,-dh\mathfrak{f}\Big)$ is of the type appearing on the RHS of \eqref{eq:3.9}, where $Y=\frac{x^{2}}{d}$, $g=\hat{\mathfrak{g}}$, $u=dh$ and $v=0$. Conditions C.1 and C.2 are clearly satisfied. We may thus appeal to Lemma 3.2 to obtain:\text{ }\\
\begin{equation}\label{eq:3.14}
\begin{split}
\text{\large{S}}^{\,\hat{\mathfrak{g}}}\Big(\frac{1}{d}x^{2}\,;\,-dh\mathfrak{f}\Big)&=\frac{x}{d\sqrt{h}}\,\,\,\sideset{}{''}\sum_{0\,\leq\,n\,\leq\,dh}\,\,\frac{\hat{\mathfrak{g}}\circ\digamma\Big(-\frac{n}{dh}\Big)}{\sqrt{\Big|\mathfrak{f}^{(2)}\circ\digamma\Big(-\frac{n}{dh}\Big)\Big|}}\textit{\large{e}}\bigg(-\frac{x^{2}}{d}\,\mathfrak{f}_{dh,n}\circ\digamma\Big(-\frac{n}{dh}\Big)+\frac{1}{8}\bigg)+\\
&+O\bigg(\log{\big(dh+2\big)}+1+\frac{1}{dh}+\frac{hd^{2}}{x^{2}}+\mathcal{R}_{0}\Big(dh,0\Big)+\mathcal{R}_{-1}\Big(dh,0\Big)\bigg)=\\
&=x\sqrt{d}h\sideset{}{''}\sum_{0\,\leq\,n\,\leq\,dh}\frac{\hat{\mathfrak{g}}\Big(\frac{n}{\sqrt{n^{2}+(dh)^{2}}}\Big)}{\big(n^{2}+(dh)^{2}\big)^{3/4}}\textit{\large{e}}\bigg(-\frac{\sqrt{n^{2}+(dh)^{2}}}{d}x^{2}+\frac{1}{8}\bigg)
+O\Big(\log{2hd}+\frac{hd^{2}}{x^{2}}\Big)\,.
\end{split}
\end{equation}
\textit{(3)} Fix an integer $0\leq a\leq 4$. Then the exponential sum $\text{\large{S}}^{\,\hat{\mathfrak{g}}}\Big(\frac{1}{d}x^{2}\,;\,-dh\mathfrak{f}\,-\,\frac{a}{4}\mathfrak{h}\Big)$ has the same parameters as in \eqref{eq:3.11}, except that now $v=\frac{a}{4}$. We thus have:
\begin{equation}\label{eq:3.15}
\begin{split}
\text{\large{S}}^{\,\hat{\mathfrak{g}}}\Big(\frac{1}{d}x^{2}\,;\,-dh\mathfrak{f}\,-\,\frac{a}{4}\mathfrak{h}\Big)&=\frac{x}{d\sqrt{h}}\,\,\sideset{}{''}\sum_{-\frac{a}{4}\,\leq\,n\,\leq\,dh-\frac{a}{4}}\,\,\frac{\hat{\mathfrak{g}}\circ\digamma\Big(-\frac{4n+a}{4dh}\Big)}{\sqrt{\Big|\mathfrak{f}^{(2)}\circ\digamma\Big(-\frac{4n+a}{4dh}\Big)\Big|}}\textit{\large{e}}\bigg(-\frac{x^{2}}{d}\,\mathfrak{f}_{dh,n+\frac{a}{4}}\circ\digamma\Big(-\frac{4n+a}{4dh}\Big)+\frac{1}{8}\bigg)+\\
&+O\bigg(\log{\big(dh+2\big)}+1+\frac{1}{dh}+\frac{hd^{2}}{x^{2}}+\mathcal{R}_{0}\Big(dh,\frac{a}{4}\Big)+\mathcal{R}_{-1}\Big(dh,\frac{a}{4}\Big)\bigg)=\\
&=4x\sqrt{4d}h\underset{n\,\equiv\,a\,(4)}{\sideset{}{''}\sum_{0\,\leq\,n\,\leq\,4dh}}\frac{\hat{\mathfrak{g}}\Big(\frac{n}{\sqrt{n^{2}+(4dh)^{2}}}\Big)}{\big(n^{2}+(4dh)^{2}\big)^{3/4}}\textit{\large{e}}\bigg(-\frac{\sqrt{n^{2}+(4dh)^{2}}}{4d}x^{2}+\frac{1}{8}\bigg)+O\Big(\log{2hd}+\frac{hd^{2}}{x^{2}}\Big)\,.
\end{split}
\end{equation}
Multiplying \eqref{eq:3.15} by $\chi(-a)$, and summing over $0\leq a\leq4$, we obtain \eqref{eq:3.12}.
\end{proof}
\subsection{Completing the transformation process}
With Lemma 3.1 \& 3.3 at our disposal, we can now successfully transform the sums involving $\psi$. In order to state the results, we shall need the following definitions.
\begin{mydef}
Let $H\geq1$. For $1\leq d\leq H$ an integer and $m\in\mathbb{N}$ define:
\begin{equation*}
\mathfrak{a}_{H}\big(m,d\big)=\frac{1}{m^{3/4}}\Bigg\{\,\,\,\underset{n\,\equiv\,0\,(d)}{\underset{0\,\leq\,n\,\leq\,h}{\underset{1\,\leq\, h\,\leq\, H}{\sideset{}{''}\sum_{n^{2}+h^{2}=m}}}}\tau\bigg(\frac{h}{[H]+1}\bigg)\mathfrak{g}\bigg(\frac{n}{\sqrt{m}}\bigg)+\underset{h\,\equiv\,0\,(d)}{\underset{0\,\leq\,n\,\leq\,h}{\underset{1\,\leq\, h\,\leq\, H}{\sideset{}{''}\sum_{n^{2}+h^{2}=m}}}}\tau\bigg(\frac{h}{d[H/d]+d}\bigg)\hat{\mathfrak{g}}\bigg(\frac{n}{\sqrt{m}}\bigg)\Bigg\}
\end{equation*}
\begin{equation*}
\begin{split}
&\mathfrak{a}^{\ast}_{H}\big(m,d\big)=\frac{1}{m^{3/4}}\Bigg\{\,\,\,\underset{n\,\equiv\,0\,(d)}{\underset{0\,\leq\,n\,\leq\,h}{\underset{1\,\leq\, h\,\leq\, H}{\sideset{}{''}\sum_{n^{2}+h^{2}=m}}}}\tau^{\ast}\bigg(\frac{h}{[H]+1}\bigg)\mathfrak{g}\bigg(\frac{n}{\sqrt{m}}\bigg)+\underset{h\,\equiv\,0\,(d)}{\underset{0\,\leq\,n\,\leq\,h}{\underset{1\,\leq\, h\,\leq\, H}{\sideset{}{''}\sum_{n^{2}+h^{2}=m}}}}\tau^{\ast}\bigg(\frac{h}{d[H/d]+d}\bigg)\hat{\mathfrak{g}}\bigg(\frac{n}{\sqrt{m}}\bigg)\Bigg\}\\
&\mathfrak{a}_{H,\chi}\big(m,d\big)=\frac{2}{m^{3/4}}\Bigg\{\,\,\,\underset{n\,\equiv\,0\,(d)}{\underset{0\,\leq\,n\,\leq\,h}{\underset{1\,\leq\, h\,\leq\, H}{\sideset{}{''}\sum_{n^{2}+h^{2}=m}}}}\chi(-h)\tau\bigg(\frac{h}{[H]+1}\bigg)\mathfrak{g}\bigg(\frac{n}{\sqrt{m}}\bigg)+\underset{h\,\equiv\,0\,(d)}{\underset{0\,\leq\,n\,\leq\,h}{\underset{1\,\leq\, h\,\leq\, H}{\sideset{}{''}\sum_{n^{2}+h^{2}=m}}}}\chi(-n)\tau\bigg(\frac{h}{d[H/d]+d}\bigg)\hat{\mathfrak{g}}\bigg(\frac{n}{\sqrt{m}}\bigg)\Bigg\}\\
&\mathfrak{b}^{\ast}_{H}\big(m,d\big)=\frac{2}{m^{3/4}}\Bigg\{\,\,\,\underset{n\,\equiv\,0\,(d)}{\underset{0\,\leq\,n\,\leq\,h}{\underset{1\,\leq\, h\,\leq\, H}{\sideset{}{''}\sum_{n^{2}+h^{2}=m}}}}\lambda(h)\tau^{\ast}\bigg(\frac{h}{[H]+1}\bigg)\mathfrak{g}\bigg(\frac{n}{\sqrt{m}}\bigg)+2\underset{n\,\equiv\,0\,(4)\,,\,h\,\equiv\,0\,(d)}{\underset{0\,\leq\,n\,\leq\,h}{\underset{1\,\leq\, h\,\leq\, H}{\sideset{}{''}\sum_{n^{2}+h^{2}=m}}}}\tau^{\ast}\bigg(\frac{h}{d[H/d]+d}\bigg)\hat{\mathfrak{g}}\bigg(\frac{n}{\sqrt{m}}\bigg)\Bigg\}
\end{split}
\end{equation*}
Here, the double-dash $''$ indicates that the terms $(n,h)=(0,h),(h,h)$ are multiplied by $1/2$, and $\lambda(h)=\mathds{1}_{h\equiv0(4)}-\mathds{1}_{h\equiv2(4)}$.
\end{mydef}
\text{ }\\
\textbf{Remark.} Note that if $m>2H^{2}$, then $\mathfrak{a}_{H}\big(m,\cdot\big),\,\mathfrak{a}^{\ast}_{H}\big(m,\cdot\big),\,\mathfrak{a}_{H,\chi}\big(m,\cdot\big),\,\mathfrak{b}^{\ast}_{H}\big(m,\cdot\big)\equiv0$. For $m$ of moderate size relative to $H$, $\mathfrak{a}_{H}\big(m,\cdot\big)$ and $\mathfrak{a}_{H,\chi}\big(m,\cdot\big)$ are very well approximated by a simple closed form expression, see $\S4.3$ Lemma $4.5$. For the remaining two, we shall only need an upper bound, which is given in $\S4.2$ Lemma $4.4$.\\\\ 
We are now ready to state the main results of this subsection. Note that in \eqref{eq:3.16} and \eqref{eq:3.18} stated below we obtain two different expressions for the same $\psi$-sum in the range $\sqrt{H}<d\leq H$. These tow expressions are of a different form, but are equal up to an admissible error. This will be crucial when we arrive at the mean square estimate for $q=3$, where \eqref{eq:3.18} will be used. The same remark applies to \eqref{eq:3.20} and \eqref{eq:3.22} for $\sqrt{H}4<d\leq H/4$. 
\begin{lem}
Let $x>0$ be large, $1\leq H<\frac{x^{2}}{\sqrt{2}}$ and $d\in\mathbb{N}$ satisfying $1\leq d\leq \frac{x^{2}}{\sqrt{2}}$.\\\\
\textit{(1)} If $d\leq H$, then:
\begin{equation}\label{eq:3.16}
\begin{split}
\text{\large{S}}^{\,\mathfrak{g}}_{\psi}\Big(x^{2}\,;\,\frac{1}{d}\mathfrak{f}\Big)+\text{\large{S}}^{\,\hat{\mathfrak{g}}}_{\psi}\Big(\frac{1}{d}x^{2}\,;\,d\mathfrak{f}\Big)=x\sqrt{d}\sum_{m}\mathfrak{a}_{H}\big(m,d\big)\sin{\Big(-2\pi \frac{\sqrt{m}}{d}x^{2}+\frac{\pi}{4}\Big)}+\mathcal{E}_{1}(d,H)
+O\Big(\log^{2}{x}+d\log{x}\Big)
\end{split}
\end{equation}
where $\mathcal{E}_{1}(d,H)$ satisfies the bound:
\begin{equation}\label{eq:3.17}
\begin{split}
\big|\mathcal{E}_{1}(d,H)\big|\leq x\sqrt{d}\sum_{m}\mathfrak{a}^{\ast}_{H}\big(m,d\big)\cos{\Big(-2\pi \frac{\sqrt{m}}{d}x^{2}+\frac{\pi}{4}\Big)}+O\Big(\log^{2}{x}+d\log{x}+\frac{x^{2}}{H}\Big)\,. 
\end{split}
\end{equation}
\textit{(2)} If $\sqrt{H}<d\leq H$, then:
\begin{equation}\label{eq:3.18}
\begin{split}
\text{\large{S}}^{\,\mathfrak{g}}_{\psi}\Big(x^{2}\,;\,\frac{1}{d}\mathfrak{f}\Big)+\text{\large{S}}^{\,\hat{\mathfrak{g}}}_{\psi}\Big(\frac{1}{d}x^{2}\,;\,d\mathfrak{f}\Big)&=\frac{x\sqrt{d}}{2}\sum_{1\,\leq\,h\,\leq\frac{H}{d}}\frac{1}{h^{3/2}}\tau\bigg(\frac{h}{[H/d]+1}\bigg)\sin{\Big(-2\pi \frac{h}{d}x^{2}+\frac{\pi}{4}\Big)}+\mathcal{E}_{2}(d,H)+\\
&+O\Big(\log^{2}{x}+d\log{x}+\frac{x^{2}}{d}\Big)
\end{split}
\end{equation}
where $\mathcal{E}_{2}(d,H)$ satisfies the bound:
\begin{equation}\label{eq:3.19}
\begin{split}
\big|\mathcal{E}_{2}(d,H)\big|\leq\frac{x\sqrt{d}}{2}\sum_{1\,\leq\,h\,\leq\frac{H}{d}}\frac{1}{h^{3/2}}\tau^{\ast}\bigg(\frac{h}{[H/d]+1}\bigg)\cos{\Big(-2\pi \frac{h}{d}x^{2}+\frac{\pi}{4}\Big)}+O\Big(\log^{2}{x}+d\log{x}+\frac{dx^{2}}{H}\Big)\,.
\end{split}
\end{equation}
\textit{(3)} If $d\leq\ H/4$, then:
\begin{equation}\label{eq:3.20}
\begin{split}
\text{\large{S}}^{\,\mathfrak{g}}_{\psi,\chi}\Big(x^{2}\,;\,\frac{1}{4d}\mathfrak{f}\Big)-\text{\large{S}}^{\,\hat{\mathfrak{g}},\chi}_{\psi}\Big(\frac{1}{d}x^{2}\,;\,d\mathfrak{f}\Big)&=x\sqrt{4d}\sum_{m}\mathfrak{a}_{H,\chi}\big(m,4d\big)\cos{\Big(-2\pi \frac{\sqrt{m}}{4d}x^{2}+\frac{\pi}{4}\Big)}+\mathcal{E}_{3}(d,H)+\\
&+O\Big(\log^{2}{x}+d\log{x}\Big)
\end{split}
\end{equation}
where $\mathcal{E}_{3}(d,H)$ satisfies the bound:
\begin{equation}\label{eq:3.21}
\begin{split}
\big|\mathcal{E}_{3}(d,H)\big|\leq x\sqrt{4d}\sum_{m}\mathfrak{b}^{\ast}_{H}\big(m,4d\big)\cos{\Big(-2\pi \frac{\sqrt{m}}{4d}x^{2}+\frac{\pi}{4}\Big)}+O\Big(\log^{2}{x}+d\log{x}+\frac{x^{2}}{H}\Big)\,. 
\end{split}
\end{equation}
\textit{(4)} If $\sqrt{H}/4<d\leq H/4$, then:
\begin{equation}\label{eq:3.22}
\begin{split}
\text{\large{S}}^{\,\mathfrak{g}}_{\psi,\chi}\Big(x^{2}\,;\,\frac{1}{4d}\mathfrak{f}\Big)-\text{\large{S}}^{\,\hat{\mathfrak{g}},\chi}_{\psi}\Big(\frac{1}{d}x^{2}\,;\,d\mathfrak{f}\Big)=&x\sqrt{4d}\sum_{1\,\leq\,h\,\leq\frac{H}{4d}}\frac{\chi(-h)}{h^{3/2}}\tau\bigg(\frac{h}{[H/4d]+1}\bigg)\cos{\Big(-2\pi \frac{h}{4d}x^{2}+\frac{\pi}{4}\Big)}+\\
&+\mathcal{E}_{4}(d,H)+O\Big(\log^{2}{x}+d\log{x}+\frac{x^{2}}{d}\Big)
\end{split}
\end{equation}
where $\mathcal{E}_{4}(d,H)$ satisfies the bound:
\begin{equation}\label{eq:3.23}
\begin{split}
\big|\mathcal{E}_{4}(d,H)\big|\leq x\sqrt{4d}\sum_{1\,\leq\,h\,\leq\frac{H}{4d}}\frac{\lambda(h)}{h^{3/2}}\tau^{\ast}\bigg(\frac{h}{[H/4d]+1}\bigg)\cos{\Big(-2\pi\frac{h}{4d}x^{2}+\frac{\pi}{4}\Big)}+O\Big(\log^{2}{x}+d\log{x}+\frac{dx^{2}}{H}\Big)\,.
\end{split}
\end{equation}
\end{lem}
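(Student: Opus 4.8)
The plan is to obtain all four identities by substituting the sharp $B$-process estimates of Lemma 3.3 into the Vaaler approximations of Lemma 3.1, and then reorganizing the resulting double sum over the Vaaler index $h$ and the stationary-phase index $n$ into a single sum over $m=n^{2}+h^{2}$. The only genuine freedom is the choice of the Vaaler cutoff $\mathcal H$ in each invocation of Lemma 3.1, and the entire lemma amounts to selecting it so that the transformed sum matches a coefficient of Definition 3.1.

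For \eqref{eq:3.16} I would apply Lemma 3.1(1) to $\text{S}^{\mathfrak g}_\psi(x^{2};\frac1d\mathfrak f)$ with $\mathcal H=H$ and Lemma 3.1(3) to $\text{S}^{\hat{\mathfrak g}}_\psi(\frac1d x^{2};d\mathfrak f)$ with $\mathcal H=H/d$ (permissible since $d\le H$). Inserting \eqref{eq:3.10} into the first and \eqref{eq:3.11} into the second, the factor $h$ emitted by the $B$-process cancels the $1/h$ carried by Vaaler's coefficients, so both main terms acquire the common prefactor $x\sqrt d$, the common phase $e(-\frac{\sqrt m}{d}x^{2}+\frac18)$, and after taking imaginary parts the common oscillation $\sin(-2\pi\frac{\sqrt m}{d}x^{2}+\frac\pi4)$. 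For fixed $m$, the pairs $(n,h)$ with $n^{2}+h^{2}=m$ and $n\equiv0\,(d)$ arrive with weight $\tau(h/([H]+1))\mathfrak g(n/\sqrt m)$ from the first sum, while (after relabelling $h\mapsto dh$) those with $h\equiv0\,(d)$ arrive with weight $\tau(h/(d[H/d]+d))\hat{\mathfrak g}(n/\sqrt m)$ from the second; together these reproduce exactly $x\sqrt d\,\mathfrak a_H(m,d)$, the endpoint terms $n=0,h$ of \eqref{eq:3.10} and \eqref{eq:3.11} being precisely the pairs $(0,h),(h,h)$ that Definition 3.1 weights by $\tfrac12$. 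Running the same substitution through the $\psi^{\ast}$-half of Vaaler's Lemma produces the $\mathfrak a^{\ast}_H$-majorant of \eqref{eq:3.17}, which is what $\mathcal E_1(d,H)$ abbreviates. It remains to aggregate errors: the $B$-process remainders $O(\log 2h+d+\frac{h}{dx^{2}})$ and $O(\log 2hd+\frac{hd^{2}}{x^{2}})$, weighted by $1/h$ and summed over $h\le H$ and $h\le H/d$ respectively, contribute $O(\log^{2}x+d\log x)$ once $H<x^{2}$ is used, while the $O(x^{2}/\mathcal H)$ tails of \eqref{eq:3.2} and \eqref{eq:3.6} give the $O(x^{2}/H)$ recorded in \eqref{eq:3.17}.

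Part (3) repeats this computation through the character-twisted identities \eqref{eq:3.3} and \eqref{eq:3.7}, applying Lemma 3.1(2) with $\mathcal H=H$ and Lemma 3.1(4) with $\mathcal H=H/d$ and inserting \eqref{eq:3.10} (with $d$ replaced by $4d$) together with \eqref{eq:3.12}; here one works with real parts throughout, which is why \eqref{eq:3.20} carries $\cos$ in place of $\sin$, and the sign identities for $\sum_{a\,(4)}\chi(a)e(ma/4)$ proved in Lemma 3.1 are what turn the $a$-sum into the factors $\chi(-h),\chi(-n)$ of the coefficient $\mathfrak a_{H,\chi}(m,4d)$, with $\mathfrak b^{\ast}_H$ serving as the $\psi^{\ast}$-majorant. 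The reduced forms \eqref{eq:3.18} and \eqref{eq:3.22} are where the constraint $d>\sqrt H$ (resp. $4d>\sqrt H$) is spent: applying Lemma 3.1(1) to the $\mathfrak g$-sum with the \emph{smaller} cutoff $\mathcal H=H/d$ forces $h\le H/d<d$, so the congruence $n\equiv0\,(d)$ with $0\le n\le h$ leaves only $n=0$; then \eqref{eq:3.10} collapses to its single term $\frac{x\sqrt d}{2}h^{-1/2}e(-\frac hd x^{2}+\frac18)$, and summation against $\tau(h/([H/d]+1))h^{-1}$ yields exactly the clean main term of \eqref{eq:3.18} (its $\psi^{\ast}$-counterpart giving \eqref{eq:3.19}). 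The residual sum $\text{S}^{\hat{\mathfrak g}}_\psi(\frac1d x^{2};d\mathfrak f)$ is then discarded trivially: it has $\ll x^{2}/d$ terms, each $O(1)$, which is the $O(x^{2}/d)$ appearing in \eqref{eq:3.18}. Part (4) is the character analogue, using $4d>\sqrt H$ to collapse the $\mathfrak g$-part to $n=0$.

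The main obstacle I anticipate is the exact bookkeeping in parts (1) and (3): one must verify that the two separately transformed sums — built with different cutoffs $H$ and $H/d$ and governed by the distinct congruences $n\equiv0\,(d)$ and $h\equiv0\,(d)$ — fuse into the single coefficient of Definition 3.1 with the correct weights, the correct $\tfrac12$ factors on the axis and diagonal terms, and (in the twisted case) the correct powers of $2$ and values of $\chi$; and that the scattered error contributions truly collapse to $O(\log^{2}x+d\log x)$ uniformly in $d$. Everything past these verifications is a mechanical substitution.
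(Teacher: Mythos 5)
Your proposal follows the paper's proof essentially verbatim: Vaaler with cutoffs $H$ and $H/d$, substitution of the $B$-process estimates of Lemma 3.3, fusion of the two $(n,h)$-sums into the coefficients of Definition 3.1 (with the stated halving of the axis/diagonal terms), collapse to $n=0$ in the ranges $d>\sqrt{H}$ (resp.\ $4d>\sqrt{H}$), and the same error aggregation. The one slip is in part (3): the second Vaaler application, to $-\text{\large{S}}^{\,\hat{\mathfrak{g}},\chi}_{\psi}\big(\tfrac{1}{d}x^{2};d\mathfrak{f}\big)$, must be taken with $\mathcal{H}=H/(4d)$ rather than $H/d$, since after the relabelling $4dh\mapsto h$ this is what produces the range $h\leq H$ and the weight $\tau\big(h/(4d[H/4d]+4d)\big)$ required to match $\mathfrak{a}_{H,\chi}(m,4d)$ exactly; with $H/d$ the resulting coefficient is not the one defined.
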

\begin{proof}
\text{ }\\\\
\textit{(1)} Suppose $d\leq H$. We begin by applying Lemma 3.1 to $\text{\large{S}}^{\,\mathfrak{g}}_{\psi}\Big(x^{2}\,;\,\frac{1}{d}\mathfrak{f}\Big)$ with $\mathcal{H}=H$, and to $\text{\large{S}}^{\,\hat{\mathfrak{g}}}_{\psi}\Big(\frac{1}{d}x^{2}\,;\,d\mathfrak{f}\Big)$ with $\mathcal{H}=H/d$, obtaining:
\begin{equation}\label{eq:3.24}
\begin{split}
\text{\large{S}}^{\,\mathfrak{g}}_{\psi}\Big(x^{2}\,;\,\frac{1}{d}\mathfrak{f}\Big)+\text{\large{S}}^{\,\hat{\mathfrak{g}}}_{\psi}\Big(\frac{1}{d}x^{2}\,;\,d\mathfrak{f}\Big)=&\sum_{1\,\leq\,h\,\leq\,H}\tau\bigg(\frac{h}{[H]+1}\bigg)\frac{1}{h}\Im\Bigg(\text{\large{S}}^{\,\mathfrak{g}}\Big(x^{2}\,;\,-\frac{h}{d}\mathfrak{f}\Big)\Bigg)+\\
&+\sum_{1\,\leq\,h\,\leq\,\frac{H}{d}}\tau\bigg(\frac{h}{[H/d]+1}\bigg)\frac{1}{h}\Im\Bigg(\text{\large{S}}^{\,\hat{\mathfrak{g}}}\Big(\frac{1}{d}x^{2}\,;\,-dh\mathfrak{f}\Big)\Bigg)+\mathcal{E}_{1}(d,H)
\end{split}
\end{equation}
where $\mathcal{E}_{1}(d,H)$ satisfies the bound:
\begin{equation}\label{eq:3.25}
\begin{split}
\big|\mathcal{E}_{1}(d,H)\big|\leq&\sum_{1\,\leq\,h\,\leq\,H}\tau^{\ast}\bigg(\frac{h}{[H]+1}\bigg)\frac{1}{h}\Re\Bigg(\text{\large{S}}^{\,\mathfrak{g}}\Big(x^{2}\,;\,-\frac{h}{d}\mathfrak{f}\Big)\Bigg)+\sum_{1\,\leq\,h\,\leq\,\frac{H}{d}}\tau^{\ast}\bigg(\frac{h}{[H/d]+1}\bigg)\frac{1}{h}\Re\Bigg(\text{\large{S}}^{\,\hat{\mathfrak{g}}}\Big(\frac{1}{d}x^{2}\,;\,-dh\mathfrak{f}\Big)\Bigg)+\\
&+O\bigg(\frac{x^{2}}{H}\bigg)\,.
\end{split}
\end{equation}
Next, for each integer $h\geq1$, the exponential sums $\text{\large{S}}^{\,\mathfrak{g}}\Big(x^{2}\,;\,-\frac{h}{d}\mathfrak{f}\Big)$ and $\text{\large{S}}^{\,\hat{\mathfrak{g}}}\Big(\frac{1}{d}x^{2}\,;\,-dh\mathfrak{f}\Big)$ are estimated by \eqref{eq:3.10} (resp.) \eqref{eq:3.11} in Lemma 3.3. Inserting these estimates to the RHS of \eqref{eq:3.24}, summing over all $1\leq h\leq H$ and $1\leq h\leq\frac{H}{d}$, and then grouping the terms together, we obtain:
\begin{equation}\label{eq:3.26}
\begin{split}
&\sum_{1\,\leq\,h\,\leq\,H}\tau\bigg(\frac{h}{[H]+1}\bigg)\frac{1}{h}\Im\Bigg(\text{\large{S}}^{\,\mathfrak{g}}\Big(x^{2}\,;\,-\frac{h}{d}\mathfrak{f}\Big)\Bigg)
+\sum_{1\,\leq\,h\,\leq\,\frac{H}{d}}\tau\bigg(\frac{h}{[H/d]+1}\bigg)\frac{1}{h}\Im\Bigg(\text{\large{S}}^{\,\hat{\mathfrak{g}}}\Big(\frac{1}{d}x^{2}\,;\,-dh\mathfrak{f}\Big)\Bigg)=\\
&=x\sqrt{d}\sum_{1\,\leq\,h\,\leq\,H}\tau\bigg(\frac{h}{[H]+1}\bigg)\underset{n\,\equiv\,0\,(d)}{\sideset{}{''}\sum_{0\,\leq\,n\,\leq\,h}}\frac{\mathfrak{g}\Big(\frac{n}{\sqrt{n^{2}+h^{2}}}\Big)}{\big(n^{2}+h^{2}\big)^{3/4}}\sin{\Big(-2\pi \frac{\sqrt{n^{2}+h^{2}}}{d}x^{2}+\frac{\pi}{4}\Big)}+\\
&+x\sqrt{d}\sum_{1\,\leq\,h\,\leq\,\frac{H}{d}}\tau\bigg(\frac{h}{[H/d]+1}\bigg)\sideset{}{''}\sum_{0\,\leq\,n\,\leq\,dh}\frac{\hat{\mathfrak{g}}\Big(\frac{n}{\sqrt{n^{2}+(dh)^{2}}}\Big)}{\big(n^{2}+(dh)^{2}\big)^{3/4}}\sin{\Big(-2\pi \frac{\sqrt{n^{2}+h^{2}}}{d}x^{2}+\frac{\pi}{4}\Big)}+\\
&+O\Big(\log^{2}{x}+d\log{x}+\frac{H}{dx^{2}}+\frac{dH}{x^{2}}\Big)=x\sqrt{d}\sum_{m}\mathfrak{a}_{H}\big(m,d\big)\sin{\Big(-2\pi \frac{\sqrt{m}}{d}x^{2}+\frac{\pi}{4}\Big)}+O\Big(\log^{2}{x}+d\log{x}\Big)\,.
\end{split}
\end{equation}
The same calculation gives:
\begin{equation}\label{eq:3.27}
\begin{split}
\big|\mathcal{E}_{1}(d,H)\big|\leq x\sqrt{d}\sum_{m}\mathfrak{a}^{\ast}_{H}\big(m,d\big)\cos{\bigg(-2\pi \frac{\sqrt{m}}{d}x^{2}+\frac{\pi}{4}\bigg)}+O\bigg(\log^{2}{x}+d\log{x}+\frac{x^{2}}{H}\bigg)\,. 
\end{split}
\end{equation}
\textit{(2)} Suppose now that $\sqrt{H}<d\leq H$. We apply Lemma 3.1 to $\text{\large{S}}^{\,\mathfrak{g}}_{\psi}\Big(x^{2}\,;\,\frac{1}{d}\mathfrak{f}\Big)$ with $\mathcal{H}=H/d$, and bound $\text{\large{S}}^{\,\hat{\mathfrak{g}}}_{\psi}\Big(\frac{1}{d}x^{2}\,;\,d\mathfrak{f}\Big)$ trivially, obtaining:
\begin{equation}\label{eq:3.28}
\begin{split}
\text{\large{S}}^{\,\mathfrak{g}}_{\psi}\Big(x^{2}\,;\,\frac{1}{d}\mathfrak{f}\Big)+\text{\large{S}}^{\,\hat{\mathfrak{g}}}_{\psi}\Big(\frac{1}{d}x^{2}\,;\,d\mathfrak{f}\Big)&=\sum_{1\,\leq\,h\,\leq\,\frac{H}{d}}\tau\bigg(\frac{h}{[H/d]+1}\bigg)\frac{1}{h}\Im\Bigg(\text{\large{S}}^{\,\mathfrak{g}}\Big(x^{2}\,;\,-\frac{h}{d}\mathfrak{f}\Big)\Bigg)+\mathcal{E}_{2}(d,H)+O\bigg(\frac{x^{2}}{d}\bigg)
\end{split}
\end{equation}
where $\mathcal{E}_{2}(d,H)$ satisfies the bound:
\begin{equation}\label{eq:3.29}
\big|\mathcal{E}_{2}(d,H)\big|\leq\sum_{1\,\leq\,h\,\leq\,\frac{H}{d}}\tau^{\ast}\bigg(\frac{h}{[H/d]+1}\bigg)\frac{1}{h}\Re\Bigg(\text{\large{S}}^{\,\mathfrak{g}}\Big(x^{2}\,;\,-\frac{h}{d}\mathfrak{f}\Big)\Bigg)
+O\bigg(\frac{dx^{2}}{H}\bigg)\,.
\end{equation}
Fix $1\leq h\leq\frac{H}{d}$. By \eqref{eq:3.10} in Lemma 3.3
\begin{equation}\label{eq:3.30}
\begin{split}
\text{\large{S}}^{\,\mathfrak{g}}\Big(x^{2}\,;\,-\frac{h}{d}\mathfrak{f}\Big)&=x\sqrt{d}h\underset{n\,\equiv\,0\,(d)}{\sideset{}{''}\sum_{0\,\leq\,n\,\leq\,h}}\frac{\mathfrak{g}\Big(\frac{n}{\sqrt{n^{2}+h^{2}}}\Big)}{\big(n^{2}+h^{2}\big)^{3/4}}\textit{\large{e}}\bigg(-\frac{\sqrt{n^{2}+h^{2}}}{d}x^{2}+\frac{1}{8}\bigg)+O\Big(\log{2h}+d+\frac{h}{dx^{2}}\Big)
\end{split}
\end{equation}
and since $d>\sqrt{H}$, we have:
\begin{equation}\label{eq:3.31}
\begin{split}
&\underset{n\,\equiv\,0\,(d)}{\sideset{}{''}\sum_{0\,\leq\,n\,\leq\,h}}\frac{\mathfrak{g}\Big(\frac{n}{\sqrt{n^{2}+h^{2}}}\Big)}{\big(n^{2}+h^{2}\big)^{3/4}}\textit{\large{e}}\bigg(-\frac{\sqrt{n^{2}+h^{2}}}{d}x^{2}+\frac{1}{8}\bigg)=\frac{\mathfrak{g}(0)}{2h^{3/2}}\textit{\large{e}}\bigg(-\frac{h}{d}x^{2}+\frac{1}{8}\bigg)=\frac{1}{2h^{3/2}}\textit{\large{e}}\bigg(-\frac{h}{d}x^{2}+\frac{1}{8}\bigg)\,.
\end{split}
\end{equation}
Inserting these estimates to the RHS of \eqref{eq:3.28} and \eqref{eq:3.29}, summing over all $1\leq h\leq\frac{H}{d}$, we obtain \eqref{eq:3.18} and \eqref{eq:3.19}.
\text{ }\\\\
\textit{(3)} Suppose $d\leq H/4$. Applying Lemma 3.1 to $\text{\large{S}}^{\,\mathfrak{g}}_{\psi,\chi}\Big(x^{2}\,;\,\frac{1}{4d}\mathfrak{f}\Big)$ with $\mathcal{H}=H$, and to $-\text{\large{S}}^{\,\hat{\mathfrak{g}},\chi}_{\psi}\Big(\frac{1}{d}x^{2}\,;\,d\mathfrak{f}\Big)$ with $\mathcal{H}=H/4d$, we obtain:
\begin{equation}\label{eq:3.32}
\begin{split}
\text{\large{S}}^{\,\mathfrak{g}}_{\psi,\chi}\Big(x^{2}\,;\,\frac{1}{4d}\mathfrak{f}\Big)-\text{\large{S}}^{\,\hat{\mathfrak{g}},\chi}_{\psi}\Big(\frac{1}{d}x^{2}\,;\,d\mathfrak{f}\Big)&=2\sum_{1\,\leq\,h\,\leq\,H}\chi(-h)\tau\bigg(\frac{h}{[H]+1}\bigg)\frac{1}{h}\Re\Bigg(\text{\large{S}}^{\,\mathfrak{g}}\Big(x^{2}\,;\,-\frac{h}{4d}\mathfrak{f}\Big)\Bigg)+\\
&+\frac{1}{2}\sum_{1\,\leq\,h\,\leq\,\frac{H}{4d}}\tau\bigg(\frac{h}{[H/4d]+1}\bigg)\frac{1}{h}\Re\Bigg(\sum_{a\,(4)}\chi(-a)\text{\large{S}}^{\,\hat{\mathfrak{g}}}\Big(\frac{1}{d}x^{2}\,;\,-dh\mathfrak{f}\,-\,\frac{a}{4}\mathfrak{h}\Big)\Bigg)+\mathcal{E}_{3}(d,H)
\end{split}
\end{equation}
where $\mathcal{E}_{3}(d,H)$ satisfies the bound:
\begin{equation}\label{eq:3.33}
\begin{split}
\big|\mathcal{E}_{3}(d,H)\big|\leq&2\sum_{1\,\leq\,h\,\leq\,\mathcal{H}}\Big\{\mathds{1}_{h\equiv0(4)}-\mathds{1}_{h\equiv2(4)}\Big\}\tau^{\ast}\bigg(\frac{h}{[\mathcal{H}]+1}\bigg)\frac{1}{h}\Re\Bigg(\text{\large{S}}^{\,\mathfrak{g}}\Big(x^{2}\,;\,-\frac{h}{4d}\mathfrak{f}\Big)\Bigg)+\\
&+\sum_{1\,\leq\,h\,\leq\,\frac{H}{4d}}\tau^{\ast}\bigg(\frac{h}{[H/4d]+1}\bigg)\frac{1}{h}\Re\Bigg(\text{\large{S}}^{\,\hat{\mathfrak{g}}}\Big(\frac{1}{d}x^{2}\,;\,-dh\mathfrak{f}\Big)\Bigg)+O\bigg(\frac{x^{2}}{H}\bigg)\,.
\end{split}
\end{equation}
Next, for each integer $h\geq1$, the exponential sums $\text{\large{S}}^{\,\mathfrak{g}}\Big(x^{2}\,;\,-\frac{h}{4d}\mathfrak{f}\Big)$, $\text{\large{S}}^{\,\hat{\mathfrak{g}}}\Big(\frac{1}{d}x^{2}\,;\,-dh\mathfrak{f}\Big)$ and\newline$\sum\limits_{a\,(4)}\chi(-a)\text{\large{S}}^{\,\hat{\mathfrak{g}}}\Big(\frac{1}{d}x^{2}\,;\,-dh\mathfrak{f}\,-\,\frac{a}{4}\mathfrak{h}\Big)$ are estimates by \eqref{eq:3.10} \eqref{eq:3.11} and \eqref{eq:3.12} respectively in Lemma 3.3. Inserting these estimates to the RHS of \eqref{eq:3.32}, summing over all $1\leq h\leq H$ and $1\leq h\leq\frac{H}{4d}$, and then grouping the terms together, we obtain:\\
\begin{equation}\label{eq:3.34}
\begin{split}
&2\sum_{1\,\leq\,h\,\leq\,H}\chi(-h)\tau\bigg(\frac{h}{[H]+1}\bigg)\frac{1}{h}\Re\Bigg(\text{\large{S}}^{\,\mathfrak{g}}\Big(x^{2}\,;\,-\frac{h}{4d}\mathfrak{f}\Big)\Bigg)
+\frac{1}{2}\sum_{1\,\leq\,h\,\leq\,\frac{H}{4d}}\tau\bigg(\frac{h}{[H/4d]+1}\bigg)\frac{1}{h}\Re\Bigg(\sum_{a\,(4)}\chi(-a)\text{\large{S}}^{\,\hat{\mathfrak{g}}}\Big(\frac{1}{d}x^{2}\,;\,-dh\mathfrak{f}\,-\,\frac{a}{4}\mathfrak{h}\Big)\Bigg)=\\
&=2x\sqrt{4d}\sum_{1\,\leq\,h\,\leq\,H}\chi(-h)\tau\bigg(\frac{h}{[H]+1}\bigg)\underset{n\,\equiv\,0\,(4d)}{\sideset{}{''}\sum_{0\,\leq\,n\,\leq\,h}}\frac{\mathfrak{g}\Big(\frac{n}{\sqrt{n^{2}+h^{2}}}\Big)}{\big(n^{2}+h^{2}\big)^{3/4}}\cos\Big(-\frac{\sqrt{n^{2}+h^{2}}}{4d}x^{2}+\frac{1}{8}\Big)+\\
&+2x\sqrt{4d}\sum_{1\,\leq\,h\,\leq\,\frac{H}{4d}}\tau\bigg(\frac{h}{[H/4d]+1}\bigg)\sideset{}{''}\sum_{0\,\leq\,n\,\leq\,4dh}\chi(-n)\frac{\hat{\mathfrak{g}}\Big(\frac{n}{\sqrt{n^{2}+(4dh)^{2}}}\Big)}{\big(n^{2}+(4dh)^{2}\big)^{3/4}}\cos\Big(-\frac{\sqrt{n^{2}+(4dh)^{2}}}{4d}x^{2}+\frac{1}{8}\Big)+\\
&+O\Big(\log^{2}{x}+d\log{x}+\frac{H}{dx^{2}}+\frac{dH}{x^{2}}\Big)=x\sqrt{4d}\sum_{m}\mathfrak{a}_{H,\chi}\big(m,4d\big)\cos{\Big(-2\pi \frac{\sqrt{m}}{4d}x^{2}+\frac{\pi}{4}\Big)}+O\Big(\log^{2}{x}+d\log{x}\Big)\,.
\end{split}
\end{equation}
The same calculation gives:
\begin{equation}\label{eq:3.35}
\begin{split}
\big|\mathcal{E}_{3}(d,H)\big|\leq x\sqrt{4d}\sum_{m}\mathfrak{b}^{\ast}_{H}\big(m,4d\big)\cos{\Big(-2\pi \frac{\sqrt{m}}{4d}x^{2}+\frac{\pi}{4}\Big)}+O\Big(\log^{2}{x}+d\log{x}+\frac{x^{2}}{H}\Big)\,. 
\end{split}
\end{equation}
\textit{(4)} The proof is the same as in \textit{(2)}.
\end{proof}
\subsection{Approximate expressions for $\mathcal{E}_{q}(x)$}
We now gather the results from the previous subsections in order to obtain various expressions for $\mathcal{E}_{q}(x)$. We begin with an approximate expression for $\mathcal{E}_{q}(x)$ which will be used for the proof of Theorem 2.
\begin{prop}
Let $X>0$ be large, and set $H=X^{2}/2$. Then for any $X\leq x\leq2X$ we have:\\\\
\textit{(1)} For $q\equiv0\,(2)$
\begin{equation}\label{eq:3.36}
\begin{split}
&\mathcal{E}_{q}(x)=-2\varrho_{q}x^{2q-1}\sum_{1\,\leq\,d\,\leq\,\sqrt{H}}\frac{\xi(d)}{d^{q-3/2}}\sum_{m}\mathfrak{a}_{H}\big(m,d\big)\sin{\Big(-2\pi \frac{\sqrt{m}}{d}x^{2}+\frac{\pi}{4}\Big)}+
\mathcal{E}^{H}_{q}(x)+O\Big(x^{2q-2}\log^{2}{x}\Big)
\end{split}
\end{equation}
\text{ }\\
where $\mathcal{E}^{H}_{q}(x)$ satisfies the bound:
\begin{equation}\label{eq:3.37}
\begin{split}
&\big|\mathcal{E}^{H}_{q}(x)\big|\leq2\varrho_{q}x^{2q-1}\sum_{1\,\leq\,d\,\leq\,\sqrt{H}}\frac{|\xi(d)|}{d^{q-3/2}}\sum_{m}\mathfrak{a}^{\ast}_{H}\big(m,d\big)\cos{\Big(-2\pi \frac{\sqrt{m}}{d}x^{2}+\frac{\pi}{4}\Big)}+O\Big(x^{2q-2}\log^{2}{x}\Big)\,.
\end{split}
\end{equation}
\textit{(2)} For $q\equiv1\,(2)$
\begin{equation}\label{eq:3.38}
\begin{split}
\mathcal{E}_{q}(x)=&-2\varrho_{\chi,q}x^{2q-1}\sum_{1\,\leq\,d\,\leq\,\sqrt{H}}\frac{2^{q-1}\chi(d)}{d^{q-3/2}}\sum_{m}\mathfrak{a}_{H}\big(m,d\big)\sin{\Big(-2\pi \frac{\sqrt{m}}{d}x^{2}+\frac{\pi}{4}\Big)}+\\
&-2\varrho_{\chi,q}x^{2q-1}\underset{d\,\equiv\,0\,(4)}{\sum_{1\,\leq\,d\,\leq\,\sqrt{H}}}\frac{(-1)^{\frac{q+1}{2}}4^{q-1}}{d^{q-3/2}}\sum_{m}\mathfrak{a}_{H,\chi}\big(m,d\big)\cos{\Big(-2\pi \frac{\sqrt{m}}{d}x^{2}+\frac{\pi}{4}\Big)}+\\
&+\mathds{1}_{q=3}\bigg\{\Theta^{H}_{q,\chi}(x)+\Theta^{H,\chi}_{q}(x)\bigg\}+\mathcal{E}^{H}_{q}(x)+O\Big(x^{2q-2}\log^{2}{x}\Big)
\end{split}
\end{equation}
where $\mathcal{E}^{H}_{q}(x)$ satisfies the bound:
\begin{equation}\label{eq:3.39}
\begin{split}
&\big|\mathcal{E}^{H}_{q}(x)\big|\leq2\varrho_{\chi,q}x^{2q-1}\sum_{1\,\leq\,d\,\leq\,\sqrt{H}}\frac{1}{d^{q-3/2}}\sum_{m}\mathfrak{d}^{\ast}_{H}\big(m,d\big)\cos{\Big(-2\pi \frac{\sqrt{m}}{d}x^{2}+\frac{\pi}{4}\Big)}
+\mathds{1}_{q=3}\Theta^{H}_{q}(x)+O\Big(x^{2q-2}\log^{2}{x}\Big)
\end{split}
\end{equation}
with $\mathfrak{d}^{\ast}_{H}\big(m,d\big)=2^{q-1}\mathfrak{a}^{\ast}_{H}\big(m,d\big)+\mathds{1}_{d\equiv0(4)}4^{q-1}\mathfrak{b}^{\ast}_{H}\big(m,d\big)$. The terms $\Theta^{H}_{q,\chi}(x)$, $\Theta^{H,\chi}_{q}(x)$ and $\Theta^{\ast}_{q}(H)$ are given by:
\begin{equation*}
\begin{split}
&\Theta^{H}_{q,\chi}(x)=-2\varrho_{\chi,q}x^{2q-1}\sum_{\sqrt{H}\,<\,d\,\leq\,H}\frac{2^{q-2}\chi(d)}{d^{q-3/2}}\sum_{1\,\leq\,h\,\leq\frac{H}{d}}\frac{1}{h^{3/2}}\tau\bigg(\frac{h}{[H/d]+1}\bigg)\sin{\Big(-2\pi \frac{h}{d}x^{2}+\frac{\pi}{4}\Big)}\\
&\Theta^{H,\chi}_{q}(x)=-2\varrho_{\chi,q}x^{2q-1}\underset{d\,\equiv\,0\,(4)}{\sum_{\sqrt{H}\,<\,d\,\leq\,H}}\frac{(-1)^{\frac{q+1}{2}}4^{q-1}}{d^{q-3/2}}\sum_{1\,\leq\,h\,\leq\frac{H}{d}}\frac{\chi(-h)}{h^{3/2}}\tau\bigg(\frac{h}{[H/d]+1}\bigg)\cos{\Big(-2\pi \frac{h}{d}x^{2}+\frac{\pi}{4}\Big)}\\
&\Theta^{H}_{q}(x)=2\varrho_{\chi,q}x^{2q-1}\sum_{\sqrt{H}\,<\,d\,\leq\,H}\frac{1}{d^{q-3/2}}\sum_{1\,\leq\,h\,\leq\frac{H}{d}}\lambda^{\ast}(h,d)\frac{1}{h^{3/2}}\tau^{\ast}\bigg(\frac{h}{[H/d]+1}\bigg)\cos{\Big(-2\pi \frac{h}{d}x^{2}+\frac{\pi}{4}\Big)}
\end{split}
\end{equation*}
\text{ }\\
with $\lambda^{\ast}(h,d)=2^{q-2}+\mathds{1}_{d\equiv0(4)}4^{q-1}\lambda(h)$.
\end{prop}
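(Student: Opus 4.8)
The starting point is the initial expression for $\mathcal{E}_q(x)$ furnished by Proposition 2.3, in which $\mathcal{E}_q(x)$ is written, up to an admissible $O(x^{2q-2}\log x)$, as a weighted sum over $1\le d\le x^2/\sqrt{2}$ of the $\psi$-sums $\text{\large{S}}^{\,\mathfrak{g}}_{\psi}+\text{\large{S}}^{\,\hat{\mathfrak{g}}}_{\psi}$ (together with their character-twisted analogues $\text{\large{S}}^{\,\mathfrak{g}}_{\psi,\chi}-\text{\large{S}}^{\,\hat{\mathfrak{g}},\chi}_{\psi}$ when $q$ is odd). The entire proof consists in feeding the transformation formulae of Lemma 3.4 into this expression, and the whole game is to split the $d$-range at the two thresholds $\sqrt{H}$ and $H$ dictated by the hypotheses of that lemma. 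Concretely, I would write $\sum_{1\le d\le x^2/\sqrt2}=\sum_{d\le\sqrt H}+\sum_{\sqrt H<d\le H}+\sum_{H<d\le x^2/\sqrt2}$ (and, for the twisted part, the analogous split of Lemma 3.4(3)'s range $d\le H/4$ at $\sqrt H/4$), then treat each piece separately.

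On the principal range $d\le\sqrt H$ I would invoke Lemma 3.4(1) for $\text{\large{S}}^{\,\mathfrak{g}}_{\psi}+\text{\large{S}}^{\,\hat{\mathfrak{g}}}_{\psi}$ and Lemma 3.4(3) for the twisted pair. The only point to check is that the prefactors collapse correctly: the weight $x^{2q-2}d^{-(q-1)}$ carried over from Proposition 2.3, multiplied against the $x\sqrt d$ (resp. $x\sqrt{4d}$) produced by Lemma 3.4, yields exactly $x^{2q-1}d^{-(q-3/2)}$, the normalization appearing in \eqref{eq:3.36} and \eqref{eq:3.38}; for the twisted part one additionally reindexes $4d\mapsto d$, turning the constraint into a sum over $d\equiv0\,(4)$ with $d\le\sqrt H$ and generating the factor $4^{q-1}$. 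The Vaaler main sums then become the stated $\mathfrak{a}_H$- and $\mathfrak{a}_{H,\chi}$-series, while the majorant sides assemble into the bounds for $\mathcal{E}^H_q(x)$ through $\mathfrak{a}^\ast_H$ in the even case and through $\mathfrak{a}^\ast_H,\mathfrak{b}^\ast_H$ combined as $\mathfrak{d}^\ast_H$ in the odd case \eqref{eq:3.39}. The residual $O(\log^2 x+d\log x)$ from each application, weighted by $x^{2q-2}d^{-(q-1)}$ and summed over $d\le\sqrt H$, contributes $\ll x^{2q-2}\log^2 x$, since $\sum_d d^{-(q-1)}$ converges and $\sum_{d\le\sqrt H}d^{-(q-2)}\ll\log x$ in the worst case $q=3$.

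The two remaining ranges are where the dimension-dependence surfaces, and this is the main obstacle. On $\sqrt H<d\le H$ I would use the diagonal form of Lemma 3.4(2),(4): since $H/d<\sqrt H<d$, the congruence $n\equiv0\,(d)$ with $0\le n\le h\le H/d$ forces $n=0$, so only the $h$-diagonal survives and one is left with a clean one-dimensional sum of size $\ll x^{2q-1}\sum_{d>\sqrt H}d^{-(q-3/2)}$. For $q\ge4$ this is $\ll x^{q+3/2}\ll x^{2q-2}$ and disappears into the error, whereas for $q=3$ the exponent $q-3/2=3/2$ makes the tail only $\ll x^{9/2}$, too large to discard; this forces the retention of the one-dimensional terms $\Theta^H_{q,\chi}(x),\Theta^{H,\chi}_q(x)$ and of $\Theta^H_q(x)$ on the majorant side, accounting for the $\mathds 1_{q=3}$ factors. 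Finally, on the tail $H<d\le x^2/\sqrt2$ the sum $\text{\large{S}}^{\,\hat{\mathfrak{g}}}_{\psi}(\tfrac1d x^2;d\mathfrak f)$ degenerates to a single ($n=1$) term of size $O(1)$, while Lemma 3.1 applied to $\text{\large{S}}^{\,\mathfrak{g}}_{\psi}$ with $\mathcal H=H$ followed by Lemma 3.3 again leaves only the $n=0$ diagonal (as $h\le H<d$); the total here is $\ll x^{2q-1}\sum_{d>H}d^{-(q-3/2)}\ll x^{4}$, hence absorbed into $O(x^{2q-2}\log^2 x)$ for every $q\ge3$. Collecting the three pieces, separating the sign structure via $\xi(d)$ in the even case and via $\chi$ and the constant $(-1)^{(q+1)/2}$ in the odd case, and checking that every discarded contribution respects the claimed bound completes the proof.
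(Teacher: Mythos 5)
Your proposal is correct and follows essentially the same route as the paper: feed the initial expression of Proposition 2.3 into the transformation formulae of Lemma 3.4, split the $d$-range at $\sqrt{H}$ and $H$ (resp.\ $\sqrt{H}/4$ and $H/4$ for the twisted part), verify that the prefactors collapse to $x^{2q-1}d^{-(q-3/2)}$ after the reindexing $4d\mapsto d$, and observe that the one-dimensional sums from the range $\sqrt{H}<d\leq H$ are $\ll x^{q+3/2}\ll x^{2q-2}$ precisely when $q\geq4$, which accounts for the $\mathds{1}_{q=3}$ retention of the $\Theta$-terms. The only cosmetic difference is that on the ranges beyond $\sqrt{H}$ (even $q$) and beyond $H$ (all $q$) the paper simply bounds the $\psi$-sums trivially rather than re-running Lemmas 3.1 and 3.3 as you do; both give admissible errors.
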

\begin{proof}
Let $X\leq x\leq2X$.\\\\
\textit{(1)} Suppose $q\equiv0\,(2)$. By \eqref{eq:2.48} in Proposition 2.3, and \eqref{eq:3.16} in Lemma 3.4, we have:
\begin{equation}\label{eq:3.40}
\begin{split}
&\mathcal{E}_{q}(x)=-2\varrho_{q}x^{2q-2}\sum_{1\,\leq\,d\,\leq\,\sqrt{H}}\frac{\xi(d)}{d^{q-1}}\bigg\{\text{\large{S}}^{\,\mathfrak{g}}_{\psi}\Big(x^{2}\,;\,\frac{1}{d}\mathfrak{f}\Big)+\text{\large{S}}^{\,\hat{\mathfrak{g}}}_{\psi}\Big(\frac{1}{d}x^{2}\,;\,d\mathfrak{f}\Big)\bigg\}+O\Big(H^{1-q/2}x^{2q}+x^{2q-2}\log{x}\Big)=\\
&=-2\varrho_{q}x^{2q-2}\sum_{1\,\leq\,d\,\leq\,\sqrt{H}}\frac{\xi(d)}{d^{q-1}}\bigg\{x\sqrt{d}\sum_{m}\mathfrak{a}_{H}\big(m,d\big)\sin{\Big(-2\pi \frac{\sqrt{m}}{d}x^{2}+\frac{\pi}{4}\Big)}+\mathcal{E}_{1}(d,H)
+O\Big(\log^{2}{x}+d\log{x}\Big)\bigg\}+\\
&+O\Big(x^{2q-2}\log{x}\Big)=-2\varrho_{q}x^{2q-1}\sum_{1\,\leq\,d\,\leq\,\sqrt{H}}\frac{\xi(d)}{d^{q-3/2}}\sum_{m}\mathfrak{a}_{H}\big(m,d\big)\sin{\bigg(-2\pi \frac{\sqrt{m}}{d}x^{2}+\frac{\pi}{4}\bigg)}+\mathcal{E}^{H}_{q}(x)+O\Big(x^{2q-2}\log^{2}{x}\Big)
\end{split}
\end{equation}
where $\mathcal{E}^{H}_{q}(x)$ is given by:
\begin{equation}\label{eq:3.41}
\begin{split}
\mathcal{E}^{H}_{q}(x)=
-2\varrho_{q}x^{2q-2}\sum_{1\,\leq\,d\,\leq\,\sqrt{H}}\frac{\xi(d)}{d^{q-1}}\mathcal{E}_{1}(d,H)\,.
\end{split}
\end{equation}
Using \eqref{eq:3.17} in Lemma 3.4, we obtain the desired bound for $\mathcal{E}^{H}_{q}(x)$.
\text{ }\\\\
\textit{(2)} Suppose $q\equiv1\,(2)$. By \eqref{eq:2.49} in Proposition 2.3, and \eqref{eq:3.16}, \eqref{eq:3.18}, \eqref{eq:3.20}, \eqref{eq:3.22} in Lemma 3.4, we have:
\begin{equation}\label{eq:3.42}
\begin{split}
\mathcal{E}_{q}(x)=&-2\varrho_{\chi,q}x^{2q-2}\Bigg\{\sum_{1\,\leq\,d\,\leq\,\sqrt{H}}\frac{2^{q-1}\chi(d)}{d^{q-1}}\bigg\{\ldots\bigg\}+\sum_{\sqrt{H}\,<\,d\,\leq\,H}\frac{2^{q-1}\chi(d)}{d^{q-1}}\bigg\{\ldots\bigg\}\Bigg\}+\\
&-2\varrho_{\chi,q}x^{2q-2}\Bigg\{\sum_{1\,\leq\,d\,\leq\,\sqrt{H}/4}\frac{(-1)^{\frac{q+1}{2}}}{d^{q-1}}\bigg\{\ldots\bigg\}+\sum_{\sqrt{H}/4\,<\,d\,\leq\,H/4}\frac{(-1)^{\frac{q+1}{2}}}{d^{q-1}}\bigg\{\ldots\bigg\}\Bigg\}+O\bigg(\frac{x^{2q}}{H^{q-2}}+x^{2q-2}\log{x}\bigg)=\\
&=-2\varrho_{\chi,q}x^{2q-1}\sum_{1\,\leq\,d\,\leq\,\sqrt{H}}\frac{2^{q-1}\chi(d)}{d^{q-3/2}}\sum_{m}\mathfrak{a}_{H}\big(m,d\big)\sin{\Big(-2\pi \frac{\sqrt{m}}{d}x^{2}+\frac{\pi}{4}\Big)}+\\
&-2\varrho_{\chi,q}x^{2q-1}\underset{d\,\equiv\,0\,(4)}{\sum_{1\,\leq\,d\,\leq\,\sqrt{H}}}\frac{(-1)^{\frac{q+1}{2}}4^{q-1}}{d^{q-3/2}}\sum_{m}\mathfrak{a}_{H,\chi}\big(m,d\big)\cos{\Big(-2\pi \frac{\sqrt{m}}{d}x^{2}+\frac{\pi}{4}\Big)}+\Theta^{H}_{q,\chi}(x)+\Theta^{H,\chi}_{q}(x)+\\
&+\mathcal{E}^{H}_{q}(x)+O\Big(x^{2q-2}\log^{2}{x}\Big)
\end{split}
\end{equation}
where $\mathcal{E}^{H}_{q}(x)$ is given by:
\begin{equation}\label{eq:3.43}
\begin{split}
\mathcal{E}^{H}_{q}(x)=&
-2\varrho_{\chi,q}x^{2q-2}\Bigg\{\sum_{1\,\leq\,d\,\leq\,\sqrt{H}}\frac{2^{q-1}\chi(d)}{d^{q-1}}\mathcal{E}_{1}(d,H)+\sum_{1\,\leq\,d\,\leq\,\sqrt{H}/4}\frac{(-1)^{\frac{q+1}{2}}}{d^{q-1}}\mathcal{E}_{3}(d,H)\Bigg\}+\\
&-2\varrho_{\chi,q}x^{2q-2}\Bigg\{\sum_{\sqrt{H}\,<\,d\,\leq\,H}\frac{2^{q-1}\chi(d)}{d^{q-1}}\mathcal{E}_{2}(d,H)+\sum_{\sqrt{H}/4\,<\,d\,\leq\,H/4}\frac{(-1)^{\frac{q+1}{2}}}{d^{q-1}}\mathcal{E}_{4}(d,H)\Bigg\}\,.
\end{split}
\end{equation}
Using \eqref{eq:3.17}, \eqref{eq:3.19}, \eqref{eq:3.21} and \eqref{eq:3.23} in Lemma 3.4, we obtain:
\begin{equation}\label{eq:3.44}
\begin{split}
&\big|\mathcal{E}^{H}_{q}(x)\big|\leq2\varrho_{\chi,q}x^{2q-1}\sum_{1\,\leq\,d\,\leq\,\sqrt{H}}\frac{1}{d^{q-3/2}}\sum_{m}\mathfrak{d}^{\ast}_{H}\big(m,d\big)\cos{\Big(-2\pi \frac{\sqrt{m}}{d}x^{2}+\frac{\pi}{4}\Big)}
+\Theta^{H}_{q}(x)+O\Big(x^{2q-2}\log^{2}{x}\Big)\,.
\end{split}
\end{equation}
Now, since for $q>3$ odd we have that $\big|\Theta^{H}_{q,\chi}(x)\big|,\,\big|\Theta^{H,\chi}_{q}(x)\big|,\,\big|\Theta^{H}_{q}(x)\big|\ll x^{2q-2}$, we may write
\begin{equation*}
\begin{split}
\Theta^{H}_{q,\chi}(x)+\Theta^{H,\chi}_{q}(x)&=\mathds{1}_{q=3}\bigg\{\Theta^{H}_{q,\chi}(x)+\Theta^{H,\chi}_{q}(x)\bigg\}+\mathds{1}_{q>3}\bigg\{\Theta^{H}_{q,\chi}(x)+\Theta^{H,\chi}_{q}(x)\bigg\}=\\
&=\mathds{1}_{q=3}\bigg\{\Theta^{H}_{q,\chi}(x)+\Theta^{H,\chi}_{q}(x)\bigg\}+O\Big(x^{2q-2}\Big)
\end{split}
\end{equation*}
and
\begin{equation*}
\begin{split}
\Theta^{H}_{q}(x)=\mathds{1}_{q=3}\Theta^{H}_{q}(x)+\mathds{1}_{q>3}\Theta^{H}_{q}(x)=
\mathds{1}_{q=3}\Theta^{H}_{q}(x)+O\Big(x^{2q-2}\Big)\,.
\end{split}
\end{equation*}
Inserting these estimates to the RHS of \eqref{eq:3.38} and \eqref{eq:3.39}, we obtain \eqref{eq:2.41} and \eqref{eq:2.42}. 
\end{proof}
\text{ }\\
We end this section with the following proposition which will be used for the proof of Theorem 3.
\begin{prop}
Let $X>0$ large, and set $H=X/2$. For $x>0$ define:
\begin{equation*}
\begin{split}
&\Delta_{q}(x)=-\frac{\mathcal{E}_{q}\big(\sqrt{x}\,\big)}{2\varrho_{q}x^{q-1/2}}\quad\,\,\,;\quad q\equiv0\,(2)\\\\
&\Delta_{q}(x)=-\frac{\mathcal{E}_{q}\big(\sqrt{x}\,\big)}{2\varrho_{\chi,q}x^{q-1/2}}\quad;\quad q\equiv1\,(2)\,.
\end{split}
\end{equation*}
Then for any $X-1\leq x\leq X+1$ we have:\\\\
\textit{(1)} For $q\equiv0\,(2)$
\begin{equation}\label{eq:3.45}
\Delta_{q}(x)=\sum_{1\,\leq\,d\,\leq\,\sqrt{H}}\frac{\xi(d)}{d^{q-3/2}}\sum_{m\neq\square}\mathfrak{a}_{H}\big(m,d\big)\sin{\Big(-2\pi \frac{\sqrt{m}}{d}x+\frac{\pi}{4}\Big)}+
\Delta^{H}_{q}(x)+O\big(1\big)
\end{equation}
where $\Delta^{H}_{q}(x)$ satisfies the bound:
\begin{equation}\label{eq:3.46}
\begin{split}
&\big|\Delta^{H}_{q}(x)\big|\leq\sum_{1\,\leq\,d\,\leq\,\sqrt{H}}\frac{|\xi(d)|}{d^{q-3/2}}\sum_{m\neq\square}\mathfrak{a}^{\ast}_{H}\big(m,d\big)\cos{\Big(-2\pi \frac{\sqrt{m}}{d}x+\frac{\pi}{4}\Big)}+O\big(1\big)\,.
\end{split}
\end{equation}
\textit{(2)} For $q\equiv1\,(2)$
\begin{equation}\label{eq:3.47}
\begin{split}
\Delta_{q}(x)=&\sum_{1\,\leq\,d\,\leq\,\sqrt{H}}\frac{2^{q-1}\chi(d)}{d^{q-3/2}}\sum_{m\neq\square}\mathfrak{a}_{H}\big(m,d\big)\sin{\Big(-2\pi \frac{\sqrt{m}}{d}x+\frac{\pi}{4}\Big)}+\\
&+\underset{d\,\equiv\,0\,(4)}{\sum_{1\,\leq\,d\,\leq\,\sqrt{H}}}\frac{(-1)^{\frac{q+1}{2}}4^{q-1}}{d^{q-3/2}}\sum_{m\neq\square}\mathfrak{a}_{H,\chi}\big(m,d\big)\cos{\Big(-2\pi \frac{\sqrt{m}}{d}x+\frac{\pi}{4}\Big)}+\Delta^{H}_{q}(x)+O\big(1\big)
\end{split}
\end{equation}
where $\Delta^{H}_{q}(x)$ satisfies the bound:
\begin{equation}\label{eq:3.48}
\begin{split}
&\big|\Delta^{H}_{q}(x)\big|\leq\sum_{1\,\leq\,d\,\leq\,\sqrt{H}}\frac{1}{d^{q-\frac{3}{2}}}\sum_{m\neq\square}\mathfrak{d}^{\ast}_{H}\big(m,d\big)\cos{\Big(-2\pi \frac{\sqrt{m}}{d}x+\frac{\pi}{4}\Big)}
+O\big(1\big)\,.
\end{split}
\end{equation}
\end{prop}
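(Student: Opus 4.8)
The plan is to obtain Proposition 3.2 by transcribing the assembly of Proposition 3.1 under the substitution $x\mapsto\sqrt{x}$. Re-running Proposition 2.3 and Lemma 3.4 with base variable $\sqrt{x}$, the $d$-sum is truncated at $d\le(\sqrt{x})^2/\sqrt{2}=x/\sqrt{2}$, and the cut-off $H$ in Lemma 3.4 enters as a free parameter constrained only by $1\le H<x/\sqrt{2}$; I would choose $H=X/2$, the exact analogue of the choice $H=X^2/2\asymp x^2$ made in Proposition 3.1. Since $(\sqrt{x})^{2q-1}=x^{q-1/2}$ and $(\sqrt{x})^2=x$, every $x^2$ inside the trigonometric arguments becomes $x$ and the prefactor becomes $-2\varrho_q x^{q-1/2}$ (resp. $-2\varrho_{\chi,q}x^{q-1/2}$); dividing through by it, as in the definition of $\Delta_q$, reproduces the main sums of \eqref{eq:3.45} and \eqref{eq:3.47} apart from the square frequencies. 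Note $H=X/2\asymp x$ uniformly on the short interval $X-1\le x\le X+1$, so no dyadic sub-division is needed and $H$ is a single quantity attached to $X$.

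Next I would verify that the normalization sends every error stream to $O(1)$. The Proposition 3.1 remainder $O\big((\sqrt{x})^{2q-2}\log^2 x\big)=O\big(x^{q-1}\log^2 x\big)$ becomes $O\big(x^{-1/2}\log^2 x\big)=O(1)$ after division by $x^{q-1/2}$, and $\mathcal{E}^H_q$ transfers to $\Delta^H_q$ with the majorants \eqref{eq:3.46}, \eqref{eq:3.48}. The decisive size check concerns the tail terms $\Theta^H_{q,\chi},\Theta^{H,\chi}_q,\Theta^H_q$ from the range $\sqrt{H}<d\le H$: bounding $\tau,\sin,\cos$ trivially and using $\sum_h h^{-3/2}\ll1$ gives $\Theta_q\ll x^{q-1/2}\sum_{d>\sqrt{H}}d^{-(q-3/2)}\ll x^{q-1/2}H^{-(q-5/2)/2}$, which with $H\asymp x$ is $\asymp x^{q/2+3/4}$, hence $O\big(x^{5/4-q/2}\big)=O(1)$ after normalization for every $q\ge3$ --- including $q=3$. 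This is precisely why the $\mathds{1}_{q=3}$ terms present in Proposition 3.1 are absent here: the smaller window $H\asymp x$ (versus $H\asymp x^2$) damps even the $q=3$ tail.

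The one genuinely substantive step is replacing $\sum_m$ by $\sum_{m\neq\square}$. For a square frequency $m=k^2$ the normalizing factor $m^{-3/4}=k^{-3/2}$ together with the boundedness of the summands over the representations $n^2+h^2=k^2$ yields $\mathfrak{a}_H(k^2,d),\mathfrak{a}^{\ast}_H(k^2,d),\mathfrak{a}_{H,\chi}(k^2,d),\mathfrak{b}^{\ast}_H(k^2,d)\ll r_2(k^2)/k^{3/2}$. Invoking $r_2(k^2)\ll_\varepsilon k^\varepsilon$, I obtain $\sum_{m=\square}|\mathfrak{a}_H(m,d)|\ll\sum_k k^{-3/2+\varepsilon}=O(1)$ uniformly in $d$ and $H$; and since $\sum_{d\le\sqrt{H}}|\xi(d)|d^{-(q-3/2)}$ and $\sum_{d\equiv0(4)}4^{q-1}d^{-(q-3/2)}$ converge for $q\ge3$, bounding the trigonometric factors by $1$ shows that discarding the square terms costs only $O(1)$, in the main expressions and in the majorants for $\Delta^H_q$ alike.

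The hard part is therefore not any single estimate but the uniform bookkeeping: one must confirm that, over the whole interval $[X-1,X+1]$ and uniformly in $d$, each of the four error streams --- the rescaled Proposition 3.1 remainder, the $\Theta$-tails, the square frequencies, and the contribution of $\mathcal{E}^H_q$ --- is truly $O(1)$. The delicate point is that the decay now available in the $d$- and $m$-sums is weaker than in Proposition 3.1 (the prefactor is $x^{q-1/2}$ rather than $x^{2q-1}$, and $H\asymp x$ rather than $x^2$), yet the combination of $q\ge3$, the convergence of $\sum_d d^{-(q-3/2)}$, and the subconvex bound $r_2(k^2)\ll_\varepsilon k^\varepsilon$ still just suffices to push every contribution below the normalization threshold.
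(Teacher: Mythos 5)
Your proposal is correct and follows essentially the same route as the paper: Proposition 2.3 and Lemma 3.4 are re-run with $x$ replaced by $\sqrt{x}$ and $H=X/2$, the rescaled remainders are checked to be $O(1)$ after division by $x^{q-1/2}$, and the square frequencies are discarded via $\mathfrak{a}_{H}\big(m^{2},d\big),\mathfrak{a}^{\ast}_{H}\big(m^{2},d\big),\mathfrak{a}_{H,\chi}\big(m^{2},d\big),\mathfrak{d}^{\ast}_{H}\big(m^{2},d\big)\ll m^{-3/2}r_{2}(m^{2})$ together with the convergence of $\sum_{d}d^{-(q-3/2)}$. One minor remark: your bound $x^{5/4-q/2}=O(1)$ for the $\Theta$-tails is correct, but the reason the $\mathds{1}_{q=3}$ terms drop out is the coarser error tolerance (an absolute error of size $y^{2q-1}$ in $\mathcal{E}_{q}(y)$, $y=\sqrt{x}$, rather than $y^{2q-2}\log^{2}y$), not a relatively smaller window, since one still has $H\asymp y^{2}$ in the effective variable.
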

\begin{proof}
Let $X-1\leq x\leq X+1$. Applying Proposition 2.3 with $x$ replaced by $\sqrt{x}$, and \eqref{eq:3.16}, \eqref{eq:3.17}, \eqref{eq:3.20}, \eqref{eq:3.21} in Lemma 3.4 with $x^{2}$ replaced by $x,$ we have by the same arguments as in the proof of Proposition 3.1 :\\\\
\textit{(1)} For $q\equiv0\,(2)$
\begin{equation}\label{eq:3.49}
\Delta_{q}(x)=\sum_{1\,\leq\,d\,\leq\,\sqrt{H}}\frac{\xi(d)}{d^{q-3/2}}\sum_{m}\mathfrak{a}_{H}\big(m,d\big)\sin{\Big(-2\pi \frac{\sqrt{m}}{d}x+\frac{\pi}{4}\Big)}+
\Delta^{H}_{q}(x)+O\big(1\big)
\end{equation}
with
\begin{equation}\label{eq:3.50}
\begin{split}
&\big|\Delta^{H}_{q}(x)\big|\leq\sum_{1\,\leq\,d\,\leq\,\sqrt{H}}\frac{|\xi(d)|}{d^{q-3/2}}\sum_{m}\mathfrak{a}^{\ast}_{H}\big(m,d\big)\cos{\Big(-2\pi \frac{\sqrt{m}}{d}x+\frac{\pi}{4}\Big)}+O\big(1\big)\,.
\end{split}
\end{equation}
\textit{(2)} For $q\equiv1\,(2)$
\begin{equation}\label{eq:3.51}
\begin{split}
\Delta_{q}(x)=&\sum_{1\,\leq\,d\,\leq\,\sqrt{H}}\frac{2^{q-1}\chi(d)}{d^{q-3/2}}\sum_{m}\mathfrak{a}_{H}\big(m,d\big)\sin{\Big(-2\pi \frac{\sqrt{m}}{d}x+\frac{\pi}{4}\Big)}+\\
&+\underset{d\,\equiv\,0\,(4)}{\sum_{1\,\leq\,d\,\leq\,\sqrt{H}}}\frac{(-1)^{\frac{q+1}{2}}4^{q-1}}{d^{q-3/2}}\sum_{m}\mathfrak{a}_{H,\chi}\big(m,d\big)\cos{\Big(-2\pi \frac{\sqrt{m}}{d}x+\frac{\pi}{4}\Big)}+\Delta^{H}_{q}(x)+O\big(1\big)
\end{split}
\end{equation}
with
\begin{equation}\label{eq:3.52}
\begin{split}
&\big|\Delta^{H}_{q}(x)\big|\leq\sum_{1\,\leq\,d\,\leq\,\sqrt{H}}\frac{1}{d^{q-3/2}}\sum_{m\neq\square}\mathfrak{d}^{\ast}_{H}\big(m,d\big)\cos{\Big(-2\pi \frac{\sqrt{m}}{d}x+\frac{\pi}{4}\Big)}
+O\big(1\big)\,.
\end{split}
\end{equation}
Since $|\mathfrak{a}_{H}\big(m^{2},d\big)|,\,|\mathfrak{a}^{\ast}_{H}\big(m^{2},d\big)|,\,|\mathfrak{a}_{H,\chi}\big(m^{2},d\big)|,\,|\mathfrak{d}^{\ast}_{H}\big(m^{2},d\big)|\ll m^{-3/2}r_{2}(m^{2})$, their total contribution is $O\big(1\big)$. Thus we may remove these terms.
\end{proof}
\subsection{Proof of Theorem 1}
We have everything in place for the proof of Theorem 1. In fact, we can prove the following much stronger result.
\begin{theorem*}
Let $q\geq3$ be fixed integer, $x>0$ large. Then for any large $1\leq H< \frac{x^{2}}{\sqrt{2}}$ we have:\\\\
\textit{(1)} For $q\equiv0\,(2)$
\begin{equation}\label{eq:3.53}
\begin{split}
\big|\mathcal{E}_{q}(x)\big|\ll&\,\, x^{2q-1}\sum_{1\,\leq\,d\,\leq\,H}\frac{1}{d^{q-3/2}}\Bigg\{\,\bigg|\sum_{m}\mathfrak{a}_{H}\big(m,d\big)\textit{\large{e}}\Big(\frac{\sqrt{m}}{d}x^{2}\Big)\bigg|+\bigg|\sum_{m}\mathfrak{a}^{\ast}_{H}\big(m,d\big)\textit{\large{e}}\Big(\frac{\sqrt{m}}{d}x^{2}\Big)\bigg|\Bigg\}+\\
&+\frac{x^{2q}}{H}+x^{2q-2}\log^{2}{x}\,.
\end{split}
\end{equation}
\textit{(2)} For $q\equiv1\,(2)$
\begin{equation}\label{eq:3.54}
\begin{split}
\big|\mathcal{E}_{q}(x)\big|\ll&\,\, x^{2q-1}\sum_{1\,\leq\,d\,\leq\,H}\frac{1}{d^{q-3/2}}\Bigg\{\,\bigg|\sum_{m}\mathfrak{a}_{H}\big(m,d\big)\textit{\large{e}}\Big(\frac{\sqrt{m}}{d}x^{2}\Big)\bigg|+\bigg|\sum_{m}\mathfrak{a}^{\ast}_{H}\big(m,d\big)\textit{\large{e}}\Big(\frac{\sqrt{m}}{d}x^{2}\Big)\bigg|\Bigg\}+\\
&+x^{2q-1}\sum_{1\,\leq\,d\,\leq\,H}\frac{1}{d^{q-3/2}}\Bigg\{\,\bigg|\sum_{m}\mathfrak{a}_{H,\chi}\big(m,d\big)\textit{\large{e}}\Big(\frac{\sqrt{m}}{d}x^{2}\Big)\bigg|+\bigg|\sum_{m}\mathfrak{b}^{\ast}_{H}\big(m,d\big)\textit{\large{e}}\Big(\frac{\sqrt{m}}{d}x^{2}\Big)\bigg|\Bigg\}+\\
&+\frac{x^{2q}}{H}+x^{2q-2}\log^{2}{x}\,.
\end{split}
\end{equation}
\end{theorem*}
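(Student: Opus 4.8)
The plan is to bypass Proposition 3.1, which is tailored to the single choice $H=X^{2}/2$, and instead derive the bound directly from the initial expression of Proposition 2.3 together with the transformation in Lemma 3.4. First I would split the $d$-sum in \eqref{eq:2.48} (resp.\ \eqref{eq:2.49}) at the parameter $H$: the head $1\le d\le H$ (and $d\le H/4$ for the twisted family in the odd case) will be processed by Lemma 3.4, while the tail $d>H$ will be estimated trivially. Since $q$ is a fixed integer, the factors $|\xi(d)|$, $|\chi(d)|$, $2^{q-1}$ and $4^{q-1}$ are all $O(1)$ and may be absorbed into the implied constants throughout.

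For the head range, apply \eqref{eq:3.16} (and \eqref{eq:3.20}) to each bracketed pair of $\psi$-sums. After multiplication by $-2\varrho_{q}x^{2q-2}\xi(d)d^{-(q-1)}$ (resp.\ its analogue for odd $q$), the factor $x\sqrt d$ produced by Lemma 3.4 upgrades $x^{2q-2}$ to $x^{2q-1}$ and turns $d^{-(q-1)}$ into $d^{-(q-3/2)}$, matching the claimed exponents. I would then convert each trigonometric sum into an exponential sum: writing
\[
\sin\Big(-2\pi\tfrac{\sqrt m}{d}x^{2}+\tfrac{\pi}{4}\Big)=\Im\Big(\textit{\large{e}}\big(\tfrac18\big)\textit{\large{e}}\big(-\tfrac{\sqrt m}{d}x^{2}\big)\Big),
\]
and likewise for the cosine, and using that the coefficients $\mathfrak{a}_{H}(m,d)$, $\mathfrak{a}^{\ast}_{H}(m,d)$, $\mathfrak{a}_{H,\chi}(m,d)$, $\mathfrak{b}^{\ast}_{H}(m,d)$ are real, one obtains for each $d$ the majorant $\big|\sum_{m}\mathfrak{a}_{H}(m,d)\textit{\large{e}}(\tfrac{\sqrt m}{d}x^{2})\big|$, and similarly for the starred and twisted coefficients. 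The error terms $\mathcal{E}_{1}(d,H)$ and $\mathcal{E}_{3}(d,H)$ are handled identically via \eqref{eq:3.17} and \eqref{eq:3.21}: their oscillatory parts feed into the $\mathfrak{a}^{\ast}_{H}$ and $\mathfrak{b}^{\ast}_{H}$ sums (so that in the odd case all four families of \eqref{eq:3.54} appear), while their remainders are collected separately. One must also note that \eqref{eq:3.20} produces $\mathfrak{a}_{H,\chi}(m,4d)$ for $d\le H/4$; reindexing $d\mapsto 4d$ embeds these into $\sum_{1\le d\le H}$ at the cost of the constant $4^{q-3/2}$, which is admissible.

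The two remaining mechanisms to control are the accumulated remainders and the tail. Summing $x^{2q-2}(\log^{2}x+d\log x+x^{2}/H)d^{-(q-1)}$ over $1\le d\le H$ and invoking $q\ge3$ — so that $\sum_{d}d^{-(q-1)}$ converges while $\sum_{d\le H}d^{-(q-2)}\ll\log H\ll\log x$ when $q=3$ and is $O(1)$ otherwise — yields $O\big(x^{2q-2}\log^{2}x+x^{2q}/H\big)$. For the tail I would use the trivial bounds $\text{\large{S}}^{\,\mathfrak{g}}_{\psi},\text{\large{S}}^{\,\mathfrak{g}}_{\psi,\chi}\ll x^{2}$ and $\text{\large{S}}^{\,\hat{\mathfrak{g}}}_{\psi},\text{\large{S}}^{\,\hat{\mathfrak{g}},\chi}_{\psi}\ll x^{2}/d$, so that the tail contributes
\[
\ll x^{2q-2}\sum_{d>H}\frac{x^{2}}{d^{q-1}}\ll\frac{x^{2q}}{H^{q-2}}\le\frac{x^{2q}}{H},
\]
again using $q\ge3$. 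Collecting the head and tail estimates gives \eqref{eq:3.53} and \eqref{eq:3.54}.

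The main difficulty here is bookkeeping rather than any single sharp estimate: one must verify that after the $d$-summation every stray term — those coming from Vaaler's lemma through Lemma 3.4 and those from the trivial tail — collapses into the single budget $x^{2q}/H+x^{2q-2}\log^{2}x$. The delicate case is $q=3$, where $\sum_{d\le H}d^{-(q-2)}$ diverges logarithmically and the tail exponent $q-2=1$ is smallest; this is precisely where the $\log^{2}x$ (rather than $\log x$) factor is forced. Once this convergence accounting is settled, the passage from oscillatory sums to $\big|\sum_{m}\cdots\textit{\large{e}}(\tfrac{\sqrt m}{d}x^{2})\big|$ and the absorption of the $q$-dependent constants are entirely routine.
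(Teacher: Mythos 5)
Your proposal follows essentially the same route as the paper: the paper's proof of Theorem 1.A likewise bypasses Proposition 3.1, truncates the $d$-sum of Proposition 2.3 at $H$ with a trivial tail bound $H^{2-q}x^{2q}\le x^{2q}/H$, feeds the head through \eqref{eq:3.16}--\eqref{eq:3.17} (and \eqref{eq:3.20}--\eqref{eq:3.21} for odd $q$), and sums the stray $O(\log^{2}x+d\log x+x^{2}/H)$ terms against $d^{-(q-1)}$ exactly as you describe. Your bookkeeping, including the $q=3$ logarithmic accounting and the reindexing $d\mapsto4d$ for the twisted family, is correct and matches the paper's argument.
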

\text{ }\\
Assuming the truth of Theorem 1.A for the moment, we present the proof of Theorem 1.
\begin{proof}(Theorem 1). Let $x,H>0$ be large, with $1\leq H< \frac{x^{2}}{\sqrt{2}}$. Before we appeal to Theorem 1.A, let us note that for any integers $d,m\in\mathbb{N}$ we have the crude estimate:
\begin{equation*}
\begin{split}
|\mathfrak{a}_{H}\big(m,d\big)|,|\mathfrak{a}^{\ast}_{H}\big(m,d\big)|,|\mathfrak{a}_{H,\chi}\big(m,d\big)|,|\mathfrak{b}^{\ast}_{H}\big(m,d\big)|\ll\frac{r_{2}(m)}{m^{3/4}}\mathds{1}_{m\leq2H^{2}}\,. 
\end{split}
\end{equation*}
Thus, estimating trivially the exponential sums in \eqref{eq:3.53} and \eqref{eq:3.54}, and then summing over all $1\leq d\leq H$, we obtain:
\begin{equation}\label{eq:3.55}
\begin{split}
\big|\mathcal{E}_{q}(x)\big|&\ll x^{2q-1}\sum_{1\,\leq\,m\,\leq\,2H^{2}}\frac{r_{2}(m)}{m^{3/4}}+\frac{x^{2q}}{H}+
x^{2q-2}\log^{2}{x}\ll x^{2q-1}H^{1/2}+\frac{x^{2q}}{H}+x^{2q-2}\log^{2}{x}\ll x^{2q-2/3}
\end{split}
\end{equation}
upon making the optimal choice $H=x^{2/3}$.
\end{proof}
\text{ }\\
We now proceed to the proof of Theorem 1.A.
\begin{proof}(Theorem 1.A).
Let $x,H>0$ be large with $1\leq H<\frac{x^{2}}{\sqrt{2}}$.\\\\
\textit{(1)} By \eqref{eq:2.48} in Proposition 2.3, and \eqref{eq:3.16} in Lemma 3.4, we have:
\begin{equation}
\begin{split}
\mathcal{E}_{q}(x)&=-2\varrho_{q}x^{2q-2}\sum_{1\,\leq\,d\,\leq\,\frac{x^{2}}{\sqrt{2}}}\frac{\xi(d)}{d^{q-1}}\bigg\{\text{\large{S}}^{\,\mathfrak{g}}_{\psi}\Big(x^{2}\,;\,\frac{1}{d}\mathfrak{f}\Big)+\text{\large{S}}^{\,\hat{\mathfrak{g}}}_{\psi}\Big(\frac{1}{d}x^{2}\,;\,d\mathfrak{f}\Big)\bigg\}+O\Big(x^{2q-2}\log{x}\Big)=\\
&=-2\varrho_{q}x^{2q-2}\sum_{1\,\leq\,d\,\leq\,H}\frac{\xi(d)}{d^{q-1}}\bigg\{\text{\large{S}}^{\,\mathfrak{g}}_{\psi}\Big(x^{2}\,;\,\frac{1}{d}\mathfrak{f}\Big)+\text{\large{S}}^{\,\hat{\mathfrak{g}}}_{\psi}\Big(\frac{1}{d}x^{2}\,;\,d\mathfrak{f}\Big)\bigg\}+O\Big(x^{2q-2}\log{x}+H^{2-q}x^{2q}\Big)=\\
&=-2\varrho_{q}x^{2q-2}\sum_{1\,\leq\,d\,\leq\,H}\frac{\xi_{\text{}_{q}}(d)}{d^{q-1}}\bigg\{x\sqrt{d}\sum_{m}\mathfrak{a}_{H}\big(m,d\big)\sin{\Big(-2\pi \frac{\sqrt{m}}{d}x^{2}+\frac{\pi}{4}\Big)}+\mathcal{E}_{1}(d,H)
+O\Big(\log^{2}{x}+d\log{x}\Big)\bigg\}+\\
&+O\Big(x^{2q-2}\log{x}+H^{-1}x^{2q}\Big)=-2\varrho_{q}x^{2q-1}\sum_{1\,\leq\,d\,\leq\,H}\frac{\xi_{\text{}_{q}}(d)}{d^{q-\frac{3}{2}}}\sum_{m}\mathfrak{a}_{H}\big(m,d\big)\sin{\Big(-2\pi \frac{\sqrt{m}}{d}x^{2}+\frac{\pi}{4}\Big)}+\\
&-2\varrho_{q}x^{2q-2}\sum_{1\,\leq\,d\,\leq\,H}\frac{\xi_{\text{}_{q}}(d)}{d^{q-1}}\mathcal{E}_{1}(d,H)+O\Big(x^{2q-2}\log^{2}{x}+H^{-1}x^{2q}\Big)\,.
\end{split}
\end{equation}
Taking absolute value, and using \eqref{eq:3.17} in Lemma 3.4, we obtain \eqref{eq:3.53}.
\text{ }\\\\
\textit{(2)} The proof is the same as in \textit{(1)}, where now we appeal to \eqref{eq:2.49} in Proposition 2.3, and \eqref{eq:3.16}, \eqref{eq:3.17}, \eqref{eq:3.20}, \eqref{eq:3.21} in Lemma 3.4.
\end{proof}
\section{Mean square estimates and proof of Theorem 2}
This section is devoted to the proof of Theorem 2, and we shall break down the proof into several steps. Consider the approximate expression for $\mathcal{E}_{q}(x)$ obtained in Proposition 3.1. To begin with, we first need to treat the remainder term $\mathcal{E}^{H}_{q}(x)$, and we shall do so separately for $q=3$ and $q>3$. In the latter case, the arguments are straight forward. When $q=3$ some care is needed, and in addition we shall also need to deal with $\Theta^{H}_{q,\chi}(x)$, $\Theta^{H,\chi}_{q}(x)$ and $\Theta^{\ast}_{q}(H)$. After some preparation work, we shall obtain the required bounds in either cases, and the relevant results are stated in Proposition 4.1 \& 4.2. Our next step will be then to estimate in mean-square the leading terms in the approximate expression, and  we shall do so in subsection 4.3. In the last subsection we shall give the proof of Theorem 2.
\subsection{General estimates}
In this subsection we shall state and prove some mean-square estimates for a certain class of arithmetical functions. The following lemma will be our primary tool in proving all of these results, where a proof may be found in \cite{montgomery1974hilbert}.    
\begin{HI}
Let $(a_{\lambda})_{\lambda\in\Lambda}$ and $(b_{\lambda})_{\lambda\in\Lambda}$ be two sequences of complex numbers indexed by a finite set $\Lambda$ of real numbers. Then:
\begin{equation*}
\bigg|\,\underset{\lambda\neq\nu}{\sum_{\lambda,\nu\in\Lambda}}\frac{a_{\lambda}\bar{b}_{\nu}}{\lambda-\nu}\bigg|\ll\bigg(\sum_{\lambda\in\Lambda}|a_{\lambda}|^{2}\delta_{\lambda}^{-1}\bigg)^{1/2}\bigg(\sum_{\lambda\in\Lambda}|b_{\lambda}|^{2}\delta_{\lambda}^{-1}\bigg)^{1/2}
\end{equation*}
where $\delta_{\lambda}=\underset{\nu\neq \lambda}{\underset{\nu\in\Lambda}{\textit{min}}}\,|\lambda-\nu|$, and the implied constant is absolute.
\end{HI}
\begin{lem}
Let $X>0$ large, and set $H=X^{2}/2$. Suppose the functions $\nu,\eta:\mathbb{N}\longrightarrow\mathbb{R}$ and  $\alpha,\beta:\mathbb{N}^{2}\longrightarrow\mathbb{R}$ satisfy the following two conditions:
\begin{equation*}
\begin{split}
&\textit{(C.1)} \quad |\nu(d)|,\,|\eta(d)|\ll \frac{1}{d^{q-3/2}}\,\mathds{1}_{d\leq\sqrt{H}}\\
&\textit{(C.2)} \quad |\alpha\big(m,d\big)|,\,|\beta\big(m,d\big)|\ll\frac{1}{m^{3/4}}\bigg\{\underset{a\neq0\,,\,b\equiv0(d)}{\sum_{a^{2}+b^{2}=m}}1\bigg\}\mathds{1}_{m\leq2H^{2}}
\end{split}
\end{equation*}
For $x>0$ define:
\begin{equation*}
\begin{split}
&\Psi^{\sin{}}_{H}\big(x;\nu,\alpha\big)=x^{2q-1}\sum_{d,m}\nu(d)\alpha\big(m,d\big)\sin{\Big(-2\pi \frac{\sqrt{m}}{d}x^{2}+\frac{\pi}{4}\Big)}\\
&\Psi^{\cos{}}_{H}\big(x;\eta,\beta\big)=x^{2q-1}\sum_{d,m}\eta(d)\beta\big(m,d\big)\cos{\Big(-2\pi \frac{\sqrt{m}}{d}x^{2}+\frac{\pi}{4}\Big)}\,.
\end{split}
\end{equation*}
and set
\begin{equation*}
\begin{split}
&\Xi_{H}\big(\nu,\alpha\big)=\underset{(d,m)=1}{\sum_{\ell,d,m}}|\mu(\ell)|\bigg(\sum_{r}\nu(rd)\alpha\big((rm)^{2}\ell,rd\big)\bigg)^{2}\quad;\quad
\Xi_{H}\big(\eta,\beta\big)=\underset{(d,m)=1}{\sum_{\ell,d,m}}|\mu(\ell)|\bigg(\sum_{r}\eta(rd)\beta\big((rm)^{2}\ell,rd\big)\bigg)^{2}
\end{split}
\end{equation*}
where $\mu$ is the m{\"o}bius function. Then:
\begin{equation}\label{eq:4.1}
\begin{split}
&\frac{1}{X}\bigintssss\limits_{ X}^{2X}\bigg\{\Psi^{\sin{}}_{H}\big(x;\nu,\alpha\big)+\Psi^{\cos{}}_{H}\big(x;\eta,\beta\big)\bigg\}^{2}\textit{d}x=\frac{c_{q}}{2}\bigg\{\Xi_{H}\big(\nu,\alpha\big)+\Xi_{H}\big(\eta,\beta\big)\bigg\}X^{2(2q-1)}+X^{2(2q-1)-2}\mathscr{E}^{H}\big(\nu,\alpha;\eta,\beta\big)
\end{split}
\end{equation}
where $\mathscr{E}^{H}\big(\nu,\alpha;\eta,\beta\big)$ satisfies the bound:
\begin{equation}\label{eq:4.2}
\begin{split}
\big|\mathscr{E}^{H}\big(\nu,\alpha;\eta,\beta\big)\big|&\ll \bigg(\sum_{1\,\leq\,d\,\leq\,\sqrt{H}}\frac{1}{d^{q-5/2}}\bigg)^{2}\log{X}+\bigg(\sum_{1\,\leq\,d\,\leq\,\sqrt{H}}\frac{\varpi(d)}{d^{q-2}}\bigg)^{2}\log^{2}{X}
+\bigg(\sum_{1\,\leq\,d\,\leq\,\sqrt{H}}\frac{1}{d^{q-5/2}}\bigg)\bigg(\sum_{1\,\leq\,d\,\leq\,\sqrt{H}}\frac{\varpi(d)}{d^{q-2}}\bigg)\log^{3/2}{X}
\end{split}
\end{equation}
and $c_{q}=\frac{2^{4q-1}-1}{4q-1}$.
\end{lem}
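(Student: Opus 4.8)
The plan is to linearise the trigonometric factors into exponentials with a quadratic phase and then separate a diagonal (main) contribution from an oscillatory (error) one. Writing $\sin(-2\pi\tfrac{\sqrt m}{d}x^2+\tfrac\pi4)=\Im\,e(-\tfrac{\sqrt m}{d}x^2+\tfrac18)$ and $\cos(\cdots)=\Re\,e(\cdots)$, one has
\begin{equation*}
\Psi^{\sin}_{H}(x;\nu,\alpha)+\Psi^{\cos}_{H}(x;\eta,\beta)=x^{2q-1}\,\Re\,Z(x),\qquad Z(x)=\sum_{d,m}w(m,d)\,e\Big(-\tfrac{\sqrt m}{d}x^2+\tfrac18\Big),
\end{equation*}
with $w(m,d)=\eta(d)\beta(m,d)-i\,\nu(d)\alpha(m,d)$. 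Since $(\Re Z)^2=\tfrac12|Z|^2+\tfrac12\Re(Z^2)$, the square decomposes into a modulus term carrying the diagonal and two purely oscillatory terms: the off-diagonal of $|Z|^2$, with difference-frequency $\tfrac{\sqrt m}{d}-\tfrac{\sqrt{m'}}{d'}$, and $Z^2$, with sum-frequency. It is convenient to record all oscillatory terms at once by indexing on the symmetric set $\Lambda=\{\pm\tfrac{\sqrt m}{d}\}$ with conjugate-paired coefficients $a_\lambda$, so that both difference- and sum-frequencies appear as $\lambda+\mu$ with $\lambda,\mu\in\Lambda$.

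I would first extract the main term. The diagonal consists of the pairs with $\tfrac{\sqrt m}{d}=\tfrac{\sqrt{m'}}{d'}$; on these the phase vanishes and $\tfrac1X\int_X^{2X}x^{2(2q-1)}\,dx=c_qX^{2(2q-1)}$. Setting $W(\gamma)=\sum_{\sqrt m/d=\gamma}w(m,d)$ and grouping by the common frequency $\gamma$, the diagonal of $\tfrac12|Z|^2$ equals $\tfrac12\sum_\gamma|W(\gamma)|^2$ times that integral. Using the canonical parametrisation $\gamma=\tfrac{m_0\sqrt\ell}{d_0}$ with $\ell$ squarefree, $(m_0,d_0)=1$ and $(m,d)=((rm_0)^2\ell,\,rd_0)$, the coefficient sum over $r$ is precisely the one defining $\Xi_H$; moreover $|W(\gamma)|^2$ splits as the sum of the squares of the $\cos$- and $\sin$-coefficient totals and so carries no $\sin$–$\cos$ cross term. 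Hence $\sum_\gamma|W(\gamma)|^2=\Xi_H(\nu,\alpha)+\Xi_H(\eta,\beta)$, and the diagonal equals $\tfrac{c_q}{2}\{\Xi_H(\nu,\alpha)+\Xi_H(\eta,\beta)\}X^{2(2q-1)}$ with no error, the asserted leading term.

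For the oscillatory terms I would apply the first-derivative test to $\int_X^{2X}x^{2(2q-1)}e((\lambda+\mu)x^2)\,dx$, whose phase has derivative $\gg X|\lambda+\mu|$ on $[X,2X]$; this yields a gain $|\lambda+\mu|^{-1}$ and overall size $X^{2(2q-1)-2}|\lambda+\mu|^{-1}$ after division by $X$. Setting $\nu=-\mu$ turns $|\lambda+\mu|^{-1}$ into the Hilbert kernel $(\lambda-\nu)^{-1}$, and Hilbert's inequality bounds the entire oscillatory contribution by $X^{2(2q-1)-2}\sum_{\lambda\in\Lambda}|a_\lambda|^2\delta_\lambda^{-1}$, the sum-frequency ($Z^2$) part causing no extra difficulty since $\lambda+\mu\geq 2/\sqrt H$ stays away from zero on $\Lambda$. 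This identifies $X^{2(2q-1)-2}\mathscr E^H$ and reduces everything to estimating the weighted sum $\sum_\gamma|W(\gamma)|^2\delta_\gamma^{-1}$.

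The decisive and most delicate step is this last estimate, which must reproduce, after squaring, the three terms of \eqref{eq:4.2}. Two inputs are needed. First, a spacing bound: for distinct frequencies,
\begin{equation*}
\Big|\tfrac{\sqrt m}{d}-\tfrac{\sqrt{m'}}{d'}\Big|=\frac{|md'^2-m'd^2|}{d^2d'^2\big(\tfrac{\sqrt m}{d}+\tfrac{\sqrt{m'}}{d'}\big)}\geq\frac{1}{2\sqrt2\,H\,d^2d'^2},
\end{equation*}
using $m,m'\leq 2H^2$, which controls $\delta_\gamma^{-1}$ explicitly. Second, expanding $|W(\gamma)|^2$ over the parameter $r$ (equivalently over two denominators $d,d'$ sharing the same reduced fraction) creates the double-sum structure $(\sum_d\cdots)^2$, while the counting bound of condition (C.2) together with standard mean-value estimates for the restricted representation numbers $\#\{a^2+b^2=m:a\neq0,\ b\equiv0\,(d)\}$ supply the divisor factors $\varpi(d)$ and the logarithmic powers. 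Splitting $W$ into the two coefficient types present in (C.2) and bounding their weighted sums separately by $\big(\sum_{d\leq\sqrt H}d^{5/2-q}\big)^2\log X$ and $\big(\sum_{d\leq\sqrt H}\varpi(d)d^{2-q}\big)^2\log^2 X$, the triangle inequality on $\sqrt{\Sigma}$ and the product shape of Hilbert's bound assemble exactly \eqref{eq:4.2}. I expect the careful bookkeeping of these two regimes against the gap bound to be the main obstacle.
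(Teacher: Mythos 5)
Your overall architecture (extract the diagonal to get $\tfrac{c_q}{2}\{\Xi_H(\nu,\alpha)+\Xi_H(\eta,\beta)\}$, kill the sum-frequency and off-diagonal terms by a first-derivative test followed by Hilbert's inequality) is the same as the paper's, and your treatment of the main term — including the observation that the $\sin$–$\cos$ cross term has no diagonal and the reparametrisation $(m,d)=((rm_0)^2\ell,rd_0)$ — is correct. The gap is in the off-diagonal estimate, and it is fatal as written. You apply Hilbert's inequality \emph{globally} on the frequency set $\Lambda=\{\pm\sqrt{m}/d\}$ and control $\delta_\lambda^{-1}$ by the worst-case pairwise gap $|\sqrt{m}/d-\sqrt{m'}/d'|\gg (Hd^2d'^2)^{-1}$. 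Since $d,d'$ range up to $\sqrt{H}$ and there are $\asymp H^{5/2}$ frequencies in an interval of length $\asymp H$, one has $\delta_\lambda^{-1}\gtrsim H^{3/2}$ for typical $\lambda$ (and up to $H^3$ in the worst case), so
\begin{equation*}
\sum_{\lambda\in\Lambda}|a_\lambda|^2\delta_\lambda^{-1}\;\gg\;H\;\asymp\;X^{2}
\end{equation*}
no matter how the coefficient decay in $d$ and $m$ is exploited. The resulting error $X^{2(2q-1)-2}\cdot X^{2}$ is already of the size of the main term, and nothing close to the purely logarithmic bound \eqref{eq:4.2} can come out of this route. The three-term shape of \eqref{eq:4.2} cannot be "assembled" from a global gap bound that carries a positive power of $H$.

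The paper's proof avoids this by never invoking a global spacing. It first fixes the pair $(d_1,d_2)$, pulls the weights $d_1^{-(q-5/2)}d_2^{-(q-5/2)}$ outside, and puts the two frequencies over the common denominator $d_1d_2$, i.e.\ rewrites $\sqrt{m_1}/d_1=\sqrt{m_1d_2^2}/(d_1d_2)$ and $\sqrt{m_2}/d_2=\sqrt{m_2d_1^2}/(d_1d_2)$, so that the Hilbert kernel becomes $(\sqrt{n_1}-\sqrt{n_2})^{-1}$ with $n_1,n_2$ ranging over \emph{integers}. There the spacing is $\delta_n\gg n^{-1/2}$, independent of $H$, and $\sum_n|a_n|^2\sqrt{n}$ reduces to the mean value $\sum_{m}m^{-1}\{\#\{a^2+b^2=m,\,a\neq0,\,b\equiv0\,(d)\}\}^2\ll d^{-1}\log X+(\varpi(d)/d)^2\log^2X$, which after summing over $d_1,d_2$ produces exactly the three terms of \eqref{eq:4.2}. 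A second ingredient you would also need: the range $\min\{\sqrt{m_1},\sqrt{m_2}\}\leq d_1d_2/2$ must be split off and treated by hand, because there condition (C.2) forces the coefficients to be supported on perfect squares (as $|b|<d$ and $d\mid b$ force $b=0$), and a naive Hilbert bound in that range would again be too lossy. So the decisive step of your plan needs to be replaced by the per-pair, common-denominator application of Hilbert's inequality together with this small-$m$ splitting.
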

\text{ }\\
\textbf{Remark.} The estimation of the error term is not optimal with respect to the second and third summands appearing on the RHS of \eqref{eq:4.2}, and with more effort one could obtain sharper bounds. However, for the proof of Theorem 2, the above estimate will more than suffice.
\begin{proof}
We have:
\begin{equation}\label{eq:4.3}
\begin{split}
&\frac{1}{X}\bigintssss\limits_{ X}^{2X}\bigg\{\Psi^{\sin{}}_{H}\big(x;\nu,\alpha\big)+\Psi^{\cos{}}_{H}\big(x;\eta,\beta\big)\bigg\}^{2}\textit{d}x=\sum_{d_{1},d_{2},m_{1},m_{2}}\nu(d_{1})\nu(d_{2})\alpha\big(m_{1},d_{1}\big)\alpha\big(m_{2},d_{2}\big)\mathcal{I}_{1}\bigg(\frac{\sqrt{m_{1}}}{d_{1}},\frac{\sqrt{m_{2}}}{d_{2}}\bigg)+\\
&+\sum_{d_{1},d_{2},m_{1},m_{2}}\eta(d_{1})\eta(d_{2})\beta\big(m_{1},d_{1}\big)\beta\big(m_{2},d_{2}\big)\mathcal{I}_{2}\bigg(\frac{\sqrt{m_{1}}}{d_{1}},\frac{\sqrt{m_{2}}}{d_{2}}\bigg)+\\
&+2\sum_{d_{1},d_{2},m_{1},m_{2}}\nu(d_{1})\eta(d_{2})\alpha\big(m_{1},d_{1}\big)\beta\big(m_{2},d_{2}\big)\mathcal{I}_{3}\bigg(\frac{\sqrt{m_{1}}}{d_{1}},\frac{\sqrt{m_{2}}}{d_{2}}\bigg)=\text{\large{S}}_{1}+\text{\large{S}}_{2}+2\text{\large{S}}_{3}
\end{split}
\end{equation}
\text{ }\\
where for real numbers $r,s>0$  : 
\begin{equation}\label{eq:4.4}
\begin{split}
\mathcal{I}_{1}\big(r,s\big)&=\frac{1}{X}\bigintssss\limits_{ X}^{2X}x^{2(2q-1)}\sin{\Big(-2\pi rx^{2}+\frac{\pi}{4}\Big)}\sin{\Big(-2\pi sx^{2}+\frac{\pi}{4}\Big)}\textit{d}x=\frac{c_{q}}{2}\mathds{1}_{r=s}X^{2(2q-1)}+\\
&+\mathds{1}_{r\neq s}\frac{1}{r-s}\frac{1}{16\pi iX}\bigintssss\limits_{ X}^{2X}x^{2(2q-1)-1}\frac{\textit{d}}{\textit{d}x}\bigg\{\textit{\large{e}}\Big((r-s)x^{2}\Big)-\textit{\large{e}}\Big((s-r)x^{2}\Big)\bigg\}\textit{d}x
+O\bigg(\frac{X^{2(2q-1)-2}}{\sqrt{rs}}\bigg)
\end{split}
\end{equation}
\begin{equation}\label{eq:4.5}
\begin{split}
\mathcal{I}_{2}\big(r,s\big)&=\frac{1}{X}\bigintssss\limits_{ X}^{2X}x^{2(2q-1)}\cos{\Big(-2\pi rx^{2}+\frac{\pi}{4}\Big)}\cos{\Big(-2\pi sx^{2}+\frac{\pi}{4}\Big)}\textit{d}x=
\frac{c_{q}}{2}\mathds{1}_{r=s}X^{2(2q-1)}+\\
&+\mathds{1}_{r\neq s}\frac{1}{r-s}\frac{1}{16\pi iX}\bigintssss\limits_{ X}^{2X}x^{2(2q-1)-1}\frac{\textit{d}}{\textit{d}x}\bigg\{\textit{\large{e}}\Big((r-s)x^{2}\Big)-\textit{\large{e}}\Big((s-r)x^{2}\Big)\bigg\}\textit{d}x
+O\bigg(\frac{X^{2(2q-1)-2}}{\sqrt{rs}}\bigg)
\end{split}
\end{equation}
\begin{equation}\label{eq:4.6}
\begin{split}
\mathcal{I}_{3}\big(r,s\big)&=\frac{1}{X}\bigintssss\limits_{ X}^{2X}x^{2(2q-1)}\sin{\bigg(-2\pi rx^{2}+\frac{\pi}{4}\bigg)}\cos{\bigg(-2\pi sx^{2}+\frac{\pi}{4}\bigg)}\textit{d}x=\\
&+\mathds{1}_{r\neq s}\frac{1}{r-s}\frac{1}{16\pi X}\bigintssss\limits_{ X}^{2X}x^{2(2q-1)-1}\frac{\textit{d}}{\textit{d}x}\bigg\{\textit{\large{e}}\Big((r-s)x^{2}\Big)+\textit{\large{e}}\Big((s-r)x^{2}\Big)\bigg\}\textit{d}x
+O\bigg(\frac{X^{2(2q-1)-2}}{\sqrt{rs}}\bigg)\,.
\end{split}
\end{equation}
Next, we insert \eqref{eq:4.4}, \eqref{eq:4.5} and \eqref{eq:4.6} into the RHS of \eqref{eq:4.3}. For the off-diagonal terms, we fix $d_{1},d_{2}$, insert the summation over $\sqrt{m_{1}}/d_{1}\neq\sqrt{m_{2}}/d_{2}$ inside the integral, and then apply integration by parts once. We obtain:
\begin{equation}\label{eq:4.7}
\begin{split}
&\text{\large{S}}_{1}=\frac{c_{q}}{2}\sum_{d_{2}\sqrt{m_{1}}=d_{1}\sqrt{m_{2}}}\nu(d_{1})\nu(d_{2})\alpha\big(m_{1},d_{1}\big)\alpha\big(m_{2},d_{2}\big)+X^{2(2q-1)-2}E_{H}\big(\alpha,\alpha\big)+O\Big(X^{2(2q-1)-2}\log^{2}{X}\Big)\\\\
&\text{\large{S}}_{2}=\frac{c_{q}}{2}\sum_{d_{2}\sqrt{m_{1}}=d_{1}\sqrt{m_{2}}}\eta(d_{1})\eta(d_{2})\beta\big(m_{1},d_{1}\big)\beta\big(m_{2},d_{2}\big)+X^{2(2q-1)-2}E_{H}\big(\beta,\beta\big)+O\Big(X^{2(2q-1)-2}\log^{2}{X}\Big)\\\\
&\text{\large{S}}_{3}= X^{2(2q-1)-2}E_{H}\big(\alpha,\beta\big)+O\Big(X^{2(2q-1)-2}\log^{2}{X}\Big)
\end{split}
\end{equation}
where the terms $E_{H}\big(\alpha,\alpha\big),\,E_{H}\big(\beta,\beta\big)$ and $E_{H}\big(\alpha,\beta\big)$ satisfy the bounds:
\begin{equation}\label{eq:4.8}
\begin{split}
&\big|E_{H}\big(\alpha,\alpha\big)\big|\ll \sum_{1\,\leq\,d_{1},d_{2}\,\leq\,\sqrt{H}}\frac{1}{d_{1}^{q-5/2}}\frac{1}{d_{2}^{q-5/2}}\,\underset{X^{2}\,\leq\,\mathfrak{u}\,\leq\,4X^{2}}{\sup}\bigg|\sum_{m_{1}\neq m_{2}}\frac{\alpha_{\mathfrak{u}}\big(m_{1};d_{2},d_{1}\big)\bar{\alpha}_{\mathfrak{u}}\big(m_{2};d_{1},d_{2}\big)}{\sqrt{m_{1}}-\sqrt{m_{2}}}\bigg|\\\\
&\big|E_{H}\big(\beta,\beta\big)\big|\ll \sum_{1\,\leq\,d_{1},d_{2}\,\leq\,\sqrt{H}}\frac{1}{d_{1}^{q-5/2}}\frac{1}{d_{2}^{q-5/2}}\,\underset{X^{2}\,\leq\,\mathfrak{u}\,\leq\,4X^{2}}{\sup}\bigg|\sum_{m_{1}\neq m_{2}}\frac{\beta_{\mathfrak{u}}\big(m_{1};d_{2},d_{1}\big)\bar{\beta}_{\mathfrak{u}}\big(m_{2};d_{1},d_{2}\big)}{\sqrt{m_{1}}-\sqrt{m_{2}}}\bigg|\\\\
&\big|E_{H}\big(\alpha,\beta\big)\big|\ll \sum_{1\,\leq\,d_{1},d_{2}\,\leq\,\sqrt{H}}\frac{1}{d_{1}^{q-5/2}}\frac{1}{d_{2}^{q-5/2}}\,\underset{X^{2}\,\leq\,\mathfrak{u}\,\leq\,4X^{2}}{\sup}\bigg|\sum_{m_{1}\neq m_{2}}\frac{\alpha_{\mathfrak{u}}\big(m_{1};d_{2},d_{1}\big)\bar{\beta}_{\mathfrak{u}}\big(m_{2};d_{1},d_{2}\big)}{\sqrt{m_{1}}-\sqrt{m_{2}}}\bigg|
\end{split}
\end{equation}
and for $a,b\in\mathbb{N}$, $\mathfrak{u}\in\mathbb{R}$:
\begin{equation*}
\begin{split}
&\alpha_{\mathfrak{u}}\big(m;a,b\big)=\alpha\big(ma^{-2},b\big)\textit{\large{e}}\bigg(\frac{\sqrt{m}}{ab}\mathfrak{u}\bigg)\mathds{1}_{m\,\equiv\,0\,(a^{2})}\\\\
&\beta_{\mathfrak{u}}\big(m;a,b\big)=\beta\big(ma^{-2},b\big)\textit{\large{e}}\bigg(\frac{\sqrt{m}}{ab}\mathfrak{u}\bigg)\mathds{1}_{m\,\equiv\,0\,(a^{2})}\,.
\end{split}
\end{equation*}
We now proceed to deal with the terms appearing in \eqref{eq:4.8}. We shall only deal with $E_{H}\big(\alpha,\alpha\big)$, as the treatment of the other terms is identical.\\\\
Fix $1\leq d_{1},d_{2}\leq\sqrt{H}$ and $X^{2}\leq\mathfrak{u}\leq4X^{2}$. We first consider the sum over $m_{1}\neq m_{2}$ for which $\textit{min}\{\sqrt{m_{1}},\sqrt{m_{2}}\,\}\,\leq\,d_{1}d_{2}/2$. We have:
\begin{equation}\label{eq:4.9}
\begin{split}
&\bigg|\underset{\textit{min}\{\sqrt{m_{1}},\sqrt{m_{2}}\,\}\,\leq\,d_{1}d_{2}/2}{\sum_{m_{1}\neq m_{2}}}\frac{\alpha_{\mathfrak{u}}\big(m_{1};d_{2},d_{1}\big)\bar{\alpha}_{\mathfrak{u}}\big(m_{2};d_{1},d_{2}\big)}{\sqrt{m_{1}}-\sqrt{m_{2}}}\bigg|\ll\bigg|\underset{\frac{1}{2}\sqrt{m_{1}}\,<\,\sqrt{m_{2}}\,<\,2\sqrt{m_{1}}}{\underset{\textit{min}\{\sqrt{m_{1}},\sqrt{m_{2}}\,\}\,\leq\,d_{1}d_{2}/2}{\sum_{m_{1}\neq m_{2}}}}\frac{\alpha_{\mathfrak{u}}\big(m_{1};d_{2},d_{1}\big)\bar{\alpha}_{\mathfrak{u}}\big(m_{2};d_{1},d_{2}\big)}{\sqrt{m_{1}}-\sqrt{m_{2}}}\bigg|+\\\\
&+\frac{1}{d_{1}^{1/2}d_{2}^{1/2}}\log^{2}X\leq\underset{\sqrt{m_{1}}\,<\,d_{1}\,,\,\sqrt{m_{2}}\,<\,d_{2}}{\sum_{d_{2}^{2}m_{1}\neq d_{1}^{2}m_{2}}}\frac{\big|\alpha\big(m_{1},d_{1}\big)\big|\big|\alpha\big(m_{2},d_{2}\big)\big|}{|d_{2}\sqrt{m_{1}}-d_{1}\sqrt{m_{2}}|}+\frac{1}{d_{1}^{1/2}d_{2}^{1/2}}\log^{2}X\,.
\end{split}
\end{equation}
Note that for $\sqrt{m}<d$ we have by condition \textit{(C.2)}
\begin{equation*}
\begin{split}
\big|\alpha\big(m,d\big)\big|&\ll\frac{1}{m^{3/4}}\bigg\{\underset{a\neq0\,,\,b\equiv0(d)}{\sum_{a^{2}+b^{2}=m}}1\bigg\}\mathds{1}_{m\leq2H^{2}}=\frac{2}{m^{3/4}}\mathds{1}_{m=\square}\,\mathds{1}_{m\leq2H^{2}}\,.
\end{split}
\end{equation*}
Hence:
\begin{equation}\label{eq:4.10}
\begin{split}
&\underset{\sqrt{m_{1}}\,<\,d_{1}\,,\,\sqrt{m_{2}}\,<\,d_{2}}{\sum_{d_{2}^{2}m_{1}\neq d_{1}^{2}m_{2}}}\frac{\big|\alpha\big(m_{1},d_{1}\big)\big|\big|\alpha\big(m_{2},d_{2}\big)\big|}{|d_{2}\sqrt{m_{1}}-d_{1}\sqrt{m_{2}}|}\ll\underset{m_{1},m_{2}\geq1}{\sum_{d_{2}m_{1}\neq d_{1}m_{2}}}\frac{1}{m_{1}^{3/2}m_{2}^{3/2}|d_{2}m_{1}-d_{1}m_{2}|}\ll1\,.
\end{split}
\end{equation}
Inserting \eqref{eq:4.10} into \eqref{eq:4.9} we obtain:
\begin{equation}\label{eq:4.11}
\begin{split}
&\bigg|\underset{\textit{min}\{\sqrt{m_{1}},\sqrt{m_{2}}\,\}\,\leq\,d_{1}d_{2}/2}{\sum_{m_{1}\neq m_{2}}}\frac{\alpha_{\mathfrak{u}}\big(m_{1};d_{2},d_{1}\big)\bar{\alpha}_{\mathfrak{u}}\big(m_{2};d_{1},d_{2}\big)}{\sqrt{m_{1}}-\sqrt{m_{2}}}\bigg|\ll1+\frac{1}{d_{1}^{1/2}d_{2}^{1/2}}\log^{2}X\,.
\end{split}
\end{equation}
To estimate the complementary sum, we appeal to Hilbert's inequality, which gives:
\begin{equation}\label{eq:4.12}
\begin{split}
&\bigg|\underset{\sqrt{m_{1}},\sqrt{m_{2}}\,>\,d_{1}d_{2}/2}{\sum_{m_{1}\neq m_{2}}}\frac{\alpha_{\mathfrak{u}}\big(m_{1};d_{2},d_{1}\big)\bar{\alpha}_{\mathfrak{u}}\big(m_{2};d_{1},d_{2}\big)}{\sqrt{m_{1}}-\sqrt{m_{2}}}\bigg|\ll
\Bigg(\sum_{\sqrt{m}\,>\,d_{1}d_{2}/2}\big|\alpha_{\mathfrak{u}}\big(m;d_{2},d_{1}\big)\big|^{2}\sqrt{m}\Bigg)^{1/2}\Bigg(\sum_{\sqrt{m}\,>\,d_{1}d_{2}/2}\big|\bar{\alpha}_{\mathfrak{u}}\big(m;d_{1},d_{2}\big)\big|^{2}\sqrt{m}\Bigg)^{1/2}\ll\\
&\ll(d_{1}d_{2})^{1/2}\Bigg(\,\,\underset{m\,>\,d_{1}^{2}/4}{\sum_{1\,\leq\,m\,\leq\,2H^{2}}}\frac{1}{m}\bigg\{\underset{a\neq0\,,\,b\equiv0(d_{1})}{\sum_{a^{2}+b^{2}=m}}1\bigg\}^{2}\Bigg)^{1/2}\Bigg(\,\,\underset{m\,>\,d_{2}^{2}/4}{\sum_{1\,\leq\,m\,\leq\,2H^{2}}}\frac{1}{m}\bigg\{\underset{a\neq0\,,\,b\equiv0(d_{2})}{\sum_{a^{2}+b^{2}=m}}1\bigg\}^{2}\Bigg)^{1/2}\,.
\end{split}
\end{equation}
For $1\leq d\leq\sqrt{H}$ we have:
\begin{equation}\label{eq:4.13}
\underset{m\,>\,d^{2}/4}{\sum_{1\,\leq\,m\,\leq\,2H^{2}}}\frac{1}{m}\bigg\{\underset{a\neq0\,,\,b\equiv0(d)}{\sum_{a^{2}+b^{2}=m}}1\bigg\}^{2}\ll\frac{1}{d}\log{X}+\bigg(\frac{\varpi(d)}{d}\bigg)^{2}\log^{2}{X}\,.
\end{equation}
Inserting \eqref{eq:4.13} into \eqref{eq:4.12} we obtain:\\
\begin{equation}\label{eq:4.14}
\begin{split}
&\bigg|\underset{\sqrt{m_{1}},\sqrt{m_{2}}\,>\,d_{1}d_{2}/2}{\sum_{m_{1}\neq m_{2}}}\frac{\alpha_{\mathfrak{u}}\big(m_{1};d_{2},d_{1}\big)\bar{\alpha}_{\mathfrak{u}}\big(m_{2};d_{1},d_{2}\big)}{\sqrt{m_{1}}-\sqrt{m_{2}}}\bigg|\ll\log{X}+\frac{\varpi(d_{1})}{d_{1}^{1/2}}\log^{3/2}{X}+\frac{\varpi(d_{2})}{d_{2}^{1/2}}\log^{3/2}{X}+\frac{\varpi(d_{1})\varpi(d_{2})}{d_{1}^{1/2}d_{2}^{1/2}}\log^{2}{X}\,.
\end{split}
\end{equation}
Combining \eqref{eq:4.11} and \eqref{eq:4.14}, and summing over all $1\leq d_{1},d_{2}\leq\sqrt{H}$, we derive:
\begin{equation}\label{eq:4.15}
\begin{split}
\big|E_{H}\big(\alpha,\alpha\big)\big|&\ll \bigg(\sum_{1\,\leq\,d\,\leq\,\sqrt{H}}\frac{1}{d^{q-5/2}}\bigg)^{2}\log{X}+\bigg(\sum_{1\,\leq\,d\,\leq\,\sqrt{H}}\frac{\varpi(d)}{d^{q-2}}\bigg)^{2}\log^{2}{X}
+\bigg(\sum_{1\,\leq\,d\,\leq\,\sqrt{H}}\frac{1}{d^{q-5/2}}\bigg)\bigg(\sum_{1\,\leq\,d\,\leq\,\sqrt{H}}\frac{\varpi(d)}{d^{q-2}}\bigg)\log^{3/2}{X}
\end{split}
\end{equation}
The same bound holds for $E_{H}\big(\beta,\beta\big)$ and $E_{H}\big(\alpha,\beta\big)$. This proves \eqref{eq:4.2}.\\\\
It remains to deal with the diagonal terms appearing in \eqref{eq:4.7}, and we shall do so only for $\text{\large{S}}_{1}$, as the other case is identical. Recalling the simple fact that every integer $n\in\mathbb{N}$ can be written uniquely as $n=n_{1}^{2}n_{2}$ with $n_{2}$ square-free, we obtain:
\begin{equation}\label{eq:4.16}
\begin{split}
\sum_{d_{2}\sqrt{m_{1}}=d_{1}\sqrt{m_{2}}}\nu(d_{1})\nu(d_{2})\alpha\big(m_{1},d_{1}\big)\alpha\big(m_{2},d_{2}\big)&=\sum_{(d_{2}m_{1})^{2}\ell_{1}=(d_{1}m_{2})^{2}\ell_{2}}|\mu(\ell_{1})||\mu(\ell_{2})|\nu(d_{1})\nu(d_{2})\alpha\big(m_{1}^{2}\ell_{1},d_{1}\big)\alpha\big(m_{2}^{2}\ell_{2},d_{2}\big)=\\\\
&=\sum_{\ell}|\mu(\ell)|\sum_{d_{2}m_{1}=d_{1}m_{2}}\nu(d_{1})\nu(d_{2})\alpha\big(m_{1}^{2}\ell,d_{1}\big)\alpha\big(m_{2}^{2}\ell,d_{2}\big)=\\\\
&=\sum_{\ell,r_{1},r_{2}}|\mu(\ell)|\underset{(d_{1},m_{1})=(d_{2},m_{2})=1}{\sum_{d_{2}m_{1}=d_{1}m_{2}}}\nu(r_{1}d_{1})\nu(r_{2}d_{2})\alpha\big((r_{1}m_{1})^{2}\ell,r_{1}d_{1}\big)\alpha\big((r_{2}m_{2})^{2}\ell,r_{2}d_{2}\big)=\\\\
&=\underset{(d,m)=1}{\sum_{\ell,d,m}}|\mu(\ell)|\bigg(\sum_{r}\nu(rd)\alpha\big((rm)^{2}\ell,rd\big)\bigg)^{2}\,.
\end{split}
\end{equation}
This concludes the proof.
\end{proof}
\begin{lem}
Let $X>0$ large, and set $H=X^{2}/2$. Suppose $\eta:\mathbb{N}\longrightarrow\mathbb{R}$, $\beta:\mathbb{N}^{2}\longrightarrow\mathbb{R}$ satisfy conditions (C.1) and (C.2) from Lemma 4.1. Then:
\begin{equation}\label{eq:4.17}
\begin{split}
&\frac{1}{X}\bigintssss\limits_{ X}^{2X}\bigg\{\Psi^{\cos{}}_{H}\big(x;\eta,\beta\big)\bigg\}^{2}\textit{d}x\ll\bigg\{\Upsilon_{1}\big(\eta,\beta\big)+\Upsilon_{2}\big(\eta,\beta\big)\bigg\}X^{2(2q-1)}\log{X}+X^{2(2q-1)-2}\log^{2}{X}
\end{split}
\end{equation}
where
\begin{equation*}
\begin{split}
\Upsilon_{1}\big(\eta,\beta\big)=\sum_{d,m}|\eta(d)|\beta^{2}\big(m,d\big)\quad,\quad
\Upsilon_{2}\big(\eta,\beta\big)=\sum_{(d,m)=1}\gamma^{2}\big(m,d\big)\quad;\quad\gamma\big(m,d\big)=\sum_{r}\eta(rd)\beta\big((rm)^{2},rd\big)\,.
\end{split}
\end{equation*}
\end{lem}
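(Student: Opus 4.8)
The argument follows the template of Lemma 4.1, specialised to a single cosine sum and aiming only at an upper bound; the work lies in replacing the unconditional $d$-sums of \eqref{eq:4.2} by the quantities $\Upsilon_1$ and $\Upsilon_2$. Opening the square exactly as in \eqref{eq:4.3} (with the $\sin$-part absent) and decomposing the coefficient integral $\mathcal{I}_2$ through \eqref{eq:4.5}, I would separate the diagonal contribution, on which $\sqrt{m_1}/d_1=\sqrt{m_2}/d_2$, from the off-diagonal remainder. The plan is to show that the diagonal is $\ll(\Upsilon_1+\Upsilon_2)X^{2(2q-1)}$ and that the off-diagonal is $\ll(\Upsilon_1+\Upsilon_2)X^{2(2q-1)}\log X+X^{2(2q-1)-2}\log^2 X$, and then to add the two.

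For the diagonal, the identity \eqref{eq:4.16} (read with $\eta,\beta$ in place of $\nu,\alpha$) evaluates it as $\tfrac{c_q}{2}X^{2(2q-1)}\Xi_H(\eta,\beta)$, where $\Xi_H(\eta,\beta)=\sum_\ell|\mu(\ell)|\sum_{(d,m)=1}\gamma_\ell(m,d)^2$ with $\gamma_\ell(m,d)=\sum_r\eta(rd)\beta\big((rm)^2\ell,rd\big)$. The squarefree value $\ell=1$ returns precisely $\Upsilon_2(\eta,\beta)$. For $\ell>1$ I would apply Cauchy--Schwarz to the inner $r$-sum, bounding $\sum_r|\eta(rd)|\ll d^{-(q-3/2)}$ by condition (C.1), and then re-index by $D=rd$ and $M=(rm)^2\ell$, the squarefree part of $M$ recovering $\ell$ uniquely; since $\sum_{d\mid D}d^{-(q-3/2)}\ll1$ for $q\ge3$, this collapses the $\ell>1$ part to $\ll\Upsilon_1(\eta,\beta)$. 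Hence $\Xi_H(\eta,\beta)\ll\Upsilon_1+\Upsilon_2$, which disposes of the diagonal.

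The off-diagonal is the substance of the lemma. Here I would proceed as in \eqref{eq:4.8}, fixing $d_1,d_2\le\sqrt H$ and applying Hilbert's inequality to the sum over $\sqrt{m_1}/d_1\neq\sqrt{m_2}/d_2$ with the coefficients $\beta_{\mathfrak u}(m;d_2,d_1)$. The essential departure from Lemma 4.1 is that the resulting nearest-neighbour masses $\sum_\lambda|a_\lambda|^2\delta_\lambda^{-1}$ must be estimated against $\Upsilon_1$ and $\Upsilon_2$ rather than being maxed out through \eqref{eq:4.13}. I would split the pairs according to the size of the gap $|\sqrt{m_1}/d_1-\sqrt{m_2}/d_2|$: pairs with gap $\ll X^{-2}$ are near-collisions, and after weighting by the spacing $\delta_\lambda^{-1}\gg X^2$ they reconstitute the collapsed mass $\Upsilon_2$; the well-separated pairs are summed dyadically in the gap, each dyadic block contributing $\ll(\Upsilon_1+\Upsilon_2)X^2$, the number of blocks accounting for the single factor $\log X$, while the coarsest count of how many frequencies $\sqrt m/d$ fall in a short interval (the lattice-point/divisor input behind \eqref{eq:4.13}) produces the residual $X^{2(2q-1)-2}\log^2 X$. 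Collecting the diagonal and the off-diagonal then yields \eqref{eq:4.17}.

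The main obstacle is exactly this last transfer: converting the Hilbert-inequality output into the $\beta$-sensitive shape $(\Upsilon_1+\Upsilon_2)\log X$ instead of the unconditional $d$-sums of \eqref{eq:4.2}. The delicate point is that the spacing weights $\delta_\lambda^{-1}$ attached to the frequencies $\{\sqrt m/d\}$ convert the weight $m^{-3/2}$ carried by $\Upsilon_1$ into the weight $m^{-1}$ demanded by Hilbert's inequality, so one must verify that no more than a single logarithm is lost in this exchange and that the near-collision terms are genuinely absorbed by $\Upsilon_2$; once this counting is in hand, the remaining manipulations are identical to those of Lemma 4.1.
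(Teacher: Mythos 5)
Your diagonal analysis is essentially fine: reducing $\Xi_H(\eta,\beta)$ to $\Upsilon_1+\Upsilon_2$ by Cauchy--Schwarz on the $r$-sum and re-indexing by $D=rd$, $M=(rm)^2\ell$ does work. The gap is in the off-diagonal, and it is fatal precisely for the one case this lemma exists to handle, namely $q=3$. If you keep the full double sum over $d_1,d_2\le\sqrt H$ and run the Hilbert-inequality argument of \eqref{eq:4.8}--\eqref{eq:4.14}, the off-diagonal error carries the factor $\big(\sum_{d\le\sqrt H}d^{-(q-5/2)}\big)^2$, which for $q=3$ is $\big(\sum_{d\le\sqrt H}d^{-1/2}\big)^2\asymp H^{1/2}\asymp X$; this yields $X^{2(2q-1)-1}\log X$, not the claimed $X^{2(2q-1)-2}\log^2 X$. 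That is exactly why the paper proves Lemma 4.2 separately instead of quoting \eqref{eq:4.2}. Your proposed rescue --- one global application of Hilbert's inequality to the whole family $\{\sqrt m/d\}$, refined dyadically in the gap, with near-collisions ``reconstituting'' $\Upsilon_2$ --- is not a proof: Hilbert's inequality in the Montgomery--Vaughan form sees only the minimal spacing $\delta_\lambda$, which over the full two-parameter family can be of order $H^{-5/2}$, and nothing in your sketch shows that the nearly coincident pairs with $d_1\ne d_2$ are controlled by the purely diagonal quantities $\Upsilon_1,\Upsilon_2$; $\Upsilon_2$ involves only perfect-square $m$, whereas near-collisions need not.

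The paper's route is to split the frequency set according to whether $m$ is a perfect square. For $m\neq\square$ it applies Cauchy--Schwarz in $d$ \emph{first} (using $\sum_d|\eta(d)|\ll1$), so that Hilbert's inequality is only ever invoked for a single fixed $d$, where the frequencies $\sqrt m/d$ have spacing $\gg1/(d\sqrt m)$ and the off-diagonal collapses to $\frac1d\log X+(\varpi(d)/d)^2\log^2X$, summable against $|\eta(d)|\,d$; this yields $\Upsilon_1X^{2(2q-1)}+X^{2(2q-1)-2}\log^2X$ with no divergent $d$-sum. For $m=\square$ the frequencies are rationals $m/d$; after extracting $\gcd(d,m)$ one localizes $d$ to dyadic blocks $d\sim D$ (the source of the $\log X$ on $\Upsilon_2$), and the Farey spacing $|m_1/d_1-m_2/d_2|\ge1/(d_1d_2)$ turns Hilbert's inequality into $\ll\sum_{d\sim D}d^2\gamma^2(m,d)\ll\sum_{d\sim D}d^{-(2q-5)}$, which is summable for $q\ge3$. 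Without this square/non-square dichotomy and the order of operations it dictates, your argument does not close.
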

\begin{proof}
Writing
\begin{equation*}
\begin{split}
&\Psi^{\neq\square}_{H}\big(x;\eta,\beta\big)=x^{2q-1}\underset{m\neq\square}{\sum_{d,m}}\eta(d)\beta\big(m,d\big)\textit{\large{e}}\bigg(\frac{\sqrt{m}}{d}x^{2}\bigg)\\
&\Psi^{=\square}_{H}\big(x;\eta,\beta\big)=x^{2q-1}\underset{m=\square}{\sum_{d,m}}\eta(d)\beta\big(m,d\big)\textit{\large{e}}\bigg(\frac{\sqrt{m}}{d}x^{2}\bigg)=x^{2q-1}\sum_{d,m}\eta(d)\beta\big(m^{2},d\big)\textit{\large{e}}\bigg(\frac{m}{d}x^{2}\bigg)
\end{split}
\end{equation*}
We have:
\begin{equation}\label{eq:4.18}
\begin{split}
&\frac{1}{X}\bigintssss\limits_{ X}^{2X}\bigg\{\Psi^{\cos{}}_{H}\big(x;\eta,\beta\big)\bigg\}^{2}\textit{d}x\ll\frac{1}{X}\bigintssss\limits_{ X}^{2X}\Big|\Psi^{\neq\square}_{H}\big(x;\eta,\beta\big)\Big|^{2}\textit{d}x+\frac{1}{X}\bigintssss\limits_{ X}^{2X}\Big|\Psi^{=\square}_{H}\big(x;\eta,\beta\big)\Big|^{2}\textit{d}x\,.
\end{split}
\end{equation}
We first consider $\Psi^{\neq\square}_{H}\big(x;\eta,\beta\big)$. Applying Cauchy–Schwarz inequality in order to move the summation over $d$ to the outside, we have:
\begin{equation}\label{eq:4.19}
\begin{split}
&\frac{1}{X}\bigintssss\limits_{ X}^{2X}\Big|\Psi^{\neq\square}_{H}\big(x;\eta,\beta\big)\Big|^{2}\textit{d}x\ll\sum_{d}|\eta(d)|\,\frac{1}{X}\bigintssss\limits_{ X}^{2X}x^{2(2q-1)}\bigg|\sum_{m\neq\square}\beta\big(m,d\big)\textit{\large{e}}\bigg(\frac{\sqrt{m}}{d}x^{2}\bigg)\bigg|^{2}\textit{d}x
\end{split}
\end{equation}
Fix $d\in\mathbb{N}$. Then, by the same arguments as in Lemma 3.1, we have:
\begin{equation}\label{eq:4.20}
\begin{split}
&\frac{1}{X}\bigintssss\limits_{ X}^{2X}x^{2(2q-1)}\bigg|\sum_{m\neq\square}\beta\big(m,d\big)\textit{\large{e}}\bigg(\frac{\sqrt{m}}{d}x^{2}\bigg)\bigg|^{2}\textit{d}x=c_{q}\sum_{m\neq\square}\beta^{2}\big(m,d\big)+dX^{2(2q-1)-2}E_{H}\big(\beta;d\big)
\end{split}
\end{equation}
where $E_{H}\big(\beta;d\big)$ satisfies the bound:
\begin{equation}\label{eq:4.21}
\begin{split}
&\big|E_{H}\big(\beta;d\big)\big|\ll \underset{X^{2}\,\leq\,\mathfrak{u}\,\leq\,4X^{2}}{\sup}\bigg|\underset{m_{1},m_{2}\neq\square}{\sum_{m_{1}\neq m_{2}}}\frac{\beta_{\mathfrak{u}}\big(m_{1},d\big)\bar{\beta}_{\mathfrak{u}}\big(m_{2},d\big)}{\sqrt{m_{1}}-\sqrt{m_{2}}}\bigg|
\end{split}
\end{equation}
and for $\mathfrak{u}\in\mathbb{R}$:
\begin{equation*}
\beta_{\mathfrak{u}}\big(m,d\big)=\beta\big(m,d\big)\textit{\large{e}}\bigg(\frac{\sqrt{m}}{d}\mathfrak{u}\bigg)\,.
\end{equation*}
\text{ }\\
Applying Hilbert's inequality, we get:
\begin{equation}\label{eq:4.22}
\begin{split}
&\underset{X^{2}\,\leq\,\mathfrak{u}\,\leq\,4X^{2}}{\sup}\bigg|\underset{m_{1},m_{2}\neq\square}{\sum_{m_{1}\neq m_{2}}}\frac{\beta_{\mathfrak{u}}\big(m_{1},d\big)\bar{\beta}_{\xi}\big(m_{2};d\big)}{\sqrt{m_{1}}-\sqrt{m_{2}}}\bigg|\ll\underset{m\neq\square}{\sum_{1\,\leq\,m\,\leq\,2H^{2}}}\frac{1}{m}\bigg\{\underset{a\neq0\,,\,b\equiv0(d)}{\sum_{a^{2}+b^{2}=m}}1\bigg\}^{2}=\\
&=\underset{m\neq\square}{\sum_{1\,\leq\,m\,\leq\,2H^{2}}}\frac{1}{m}\bigg\{\underset{a,b\neq0\,,\,b\equiv0(d)}{\sum_{a^{2}+b^{2}=m}}1\bigg\}^{2}\ll\frac{1}{d}\log{X}+\bigg(\frac{\varpi(d)}{d}\bigg)^{2}\log^{2}{X}\,.
\end{split}
\end{equation}
Summing over all $d$'s, we obtain by \eqref{eq:4.19} and \eqref{eq:4.22}:
\begin{equation}\label{eq:4.23}
\begin{split}
\frac{1}{X}\bigintssss\limits_{ X}^{2X}\Big|\Psi^{\neq\square}_{H}\big(x;\eta,\beta\big)\Big|^{2}\textit{d}x\ll\Upsilon_{1}\big(\eta,\beta\big)X^{2(2q-1)}+X^{2(2q-1)-2}\log^{2}{X}\,.
\end{split}
\end{equation}
Now we treat $\Psi^{=\square}_{H}\big(x;\eta,\beta\big)$. We begin by extracting the greatest common divisor of $d$ and $m$.\\
\begin{equation*}
\Psi^{=\square}_{H}\big(x;\eta,\beta\big)=x^{2q-1}\sum_{(d,m)=1}\bigg\{\sum_{r}\eta(rd)\beta\big((rm)^{2},rd\big)\bigg\}\textit{\large{e}}\bigg(\frac{m}{d}x^{2}\bigg)=x^{2q-1}\sum_{(d,m)=1}\gamma\big(m,d\big)\textit{\large{e}}\bigg(\frac{m}{d}x^{2}\bigg)\,.
\end{equation*}
Note that for $d,m\in\mathbb{N}$, $\gamma\big(m,d\big)$ satisfies the bound: 
\begin{equation}\label{eq:4.24}
\begin{split}
&|\gamma\big(m,d\big)|\ll\frac{1}{d^{q-3/2}m^{3/2}}\underset{r\,\leq\,\textit{min}\,\big\{\frac{\sqrt{H}}{d},\frac{\sqrt{2}H}{m}\big\}}{\sum_{r\in\mathbb{N}}}\frac{1}{r^{q}}\bigg\{\underset{a\neq0\,,\,b\equiv0(rd)}{\sum_{a^{2}+b^{2}=(rm)^{2}}}1\bigg\}=\\
&=\frac{1}{d^{q-3/2}m^{3/2}}\bigg\{\underset{a\neq0\,,\,b\equiv0(d)}{\sum_{a^{2}+b^{2}=m^{2}}}1\bigg\}\underset{r\,\leq\,\textit{min}\,\big\{\frac{\sqrt{H}}{d},\frac{\sqrt{2}H}{m}\big\}}{\sum_{r\in\mathbb{N}}}\frac{1}{r^{q}}\ll\frac{1}{d^{q-3/2}m^{3/2}}\bigg\{\underset{a\neq0\,,\,b\equiv0(d)}{\sum_{a^{2}+b^{2}=m^{2}}}1\bigg\}\mathds{1}_{d\leq\sqrt{H}}\,\mathds{1}_{m\leq\sqrt{2}H}\,.
\end{split}
\end{equation}
Set $\mathcal{D}=\Big\{2^{k}:k\in\mathbb{N}_{0}\,,\,2^{k}\leq\sqrt{H}\Big\}$, and for $D\in\mathcal{D}$ we shall write $d\sim D$ to mean $D\leq d<2D$. Then we have:
\begin{equation}\label{eq:4.25}
\Psi^{=\square}_{H}\big(x;\eta,\beta\big)=\sum_{D\in\mathcal{D}}\,\,x^{2q-1}\underset{d\,\sim\,D}{\sum_{(d,m)=1}}\gamma\big(m,d\big)\textit{\large{e}}\bigg(\frac{m}{d}x^{2}\bigg)=\sum_{D\in\mathcal{D}}\,\Psi^{=\square}_{H}\big(x;\eta,\beta;D\big)
\end{equation}
where
\begin{equation*}
\Psi^{=\square}_{H}\big(x;\eta,\beta;D\big)=x^{2q-1}\underset{d\,\sim\,D}{\sum_{(d,m)=1}}\gamma\big(m,d\big)\textit{\large{e}}\bigg(\frac{m}{d}x^{2}\bigg).
\end{equation*}
Applying Cauchy–Schwarz inequality in order to localize the variable $d$ to a dyadic segment $\sim D$, we have:  
\begin{equation}\label{eq:4.26}
\begin{split}
&\frac{1}{X}\bigintssss\limits_{ X}^{2X}\Big|\Psi^{=\square}_{H}\big(x;\eta,\beta\big)\Big|^{2}\textit{d}x\ll\log{X}\sum_{D\in\mathcal{D}}\,\,\frac{1}{X}\bigintssss\limits_{ X}^{2X}\Big|\Psi^{=\square}_{H}\big(x;\eta,\beta;D\big)\Big|^{2}\textit{d}x\,.
\end{split}
\end{equation}
Fix $D\in\mathcal{D}$. Then
\begin{equation}\label{eq:4.27}
\begin{split}
&\frac{1}{X}\bigintssss\limits_{ X}^{2X}\Big|\Psi^{=\square}_{H}\big(x;\eta,\beta;D\big)\Big|^{2}\textit{d}x=c_{q}X^{2(2q-1)}\underset{d\,\sim\, D}{\sum_{(d,m)=1}}\gamma^{2}\big(m,d\big)+X^{2(2q-1)-2}E_{H}\big(D\big)
\end{split}
\end{equation}
where $E_{H}\big(D\big)$ satisfies the bound:
\begin{equation}\label{eq:4.28}
\begin{split}
&\big|E_{H}\big(D\big)\big|\ll\underset{X^{2}\,\leq\,\mathfrak{u}\,\leq\,4X^{2}}{\sup}\bigg|\underset{d_{1},d_{2}\,\sim\,D}{\underset{(m_{1},d_{1})=(m_{2},d_{2})=1}{\sum_{\frac{m_{1}}{d_{1}}\neq\frac{m_{2}}{d_{2}}}}}\frac{\gamma_{\mathfrak{u}}\big(m_{1},d_{1}\big)\bar{\gamma}_{\mathfrak{u}}\big(m_{2},d_{2}\big)}{\frac{m_{1}}{d_{1}}-\frac{m_{2}}{d_{2}}}\bigg|
\end{split}
\end{equation}
and for $d,m\in\mathbb{N}$, $\mathfrak{u}\in\mathbb{R}$:
\begin{equation*}
\gamma_{\mathfrak{u}}\big(m,d\big)=\gamma\big(m,d\big)\textit{\large{e}}\bigg(\frac{m}{d}\mathfrak{u}\bigg)\,.
\end{equation*}
Applying Hilbert's inequality, we get by \eqref{eq:4.24}:
\begin{equation}\label{eq:4.29}
\begin{split}
&\underset{X^{2}\,\leq\,\mathfrak{u}\,\leq\,4X^{2}}{\sup}\bigg|\underset{d_{1},d_{2}\,\sim\,D}{\underset{(m_{1},d_{1})=(m_{2},d_{2})=1}{\sum_{\frac{m_{1}}{d_{1}}\neq\frac{m_{2}}{d_{2}}}}}\frac{\gamma_{\mathfrak{u}}\big(m_{1},d_{1}\big)\bar{\gamma}_{\mathfrak{u}}\big(m_{2},d_{2}\big)}{\frac{m_{1}}{d_{1}}-\frac{m_{2}}{d_{2}}}\bigg|\ll\underset{d\,\sim\,D}{\sum_{(d,m)=1}}d^{2}\gamma^{2}\big(m,d\big)\ll\sum_{d\,\sim\,D}\frac{1}{d^{2q-5}}
\end{split}
\end{equation}
Summing over all $D$'s, we obtain by \eqref{eq:4.26} and \eqref{eq:4.29}:
\begin{equation}\label{eq:4.30}
\begin{split}
&\frac{1}{X}\bigintssss\limits_{ X}^{2X}\Big|\Psi^{=\square}_{H}\Big(x;\eta,\beta\Big)\Big|^{2}\textit{d}x\ll\Upsilon_{2}\big(\eta,\beta\big) X^{2(2q-1)}\log{X}+X^{2(2q-1)-2}\log^{2}{X}.
\end{split}
\end{equation}
By \eqref{eq:4.18} this concludes the proof.
\end{proof}
\begin{lem}
Let $X>0$ large, and set $H=X^{2}/2$. Suppose the function $\theta:\mathbb{N}^{2}\longrightarrow\mathbb{R}$ satisfies the following codition:
\begin{equation*}
\textit{(C.3)} \quad |\theta\big(h,d\big)|\ll \frac{1}{d^{q-\frac{3}{2}}h^{3/2}}\mathds{1}_{\text{}_{\sqrt{H}<d\leq H}}\mathds{1}_{\text{}_{dh\leq H}}\,.
\end{equation*}
For $x>0$ define:
\begin{equation*}
\begin{split}
&\Phi^{\sin{}}_{H}\big(x;\theta\big)=x^{2q-1}\sum_{d,h}\theta\big(h,d\big)\sin{\Big(-2\pi \frac{h}{d}x^{2}+\frac{\pi}{4}\Big)}\\
&\Phi^{\cos{}}_{H}\big(x;\theta\big)=x^{2q-1}\sum_{d,h}\theta\big(h,d\big)\cos{\Big(-2\pi \frac{h}{d}x^{2}+\frac{\pi}{4}\Big)}\,.
\end{split}
\end{equation*}
Then:
\begin{equation}\label{eq:4.31}
\begin{split}
\frac{1}{X}\bigintssss\limits_{ X}^{2X}\bigg\{\Phi^{\sin{}}_{H}\Big(x;\theta\Big)\bigg\}^{2}\textit{d}x\quad,\quad\frac{1}{X}\bigintssss\limits_{ X}^{2X}\bigg\{\Phi^{\cos{}}_{H}\Big(x;\theta\Big)\bigg\}^{2}\textit{d}x\ll\,X^{2(2q-1)-2}\log^{2}{X}\,.
\end{split}
\end{equation}
\end{lem}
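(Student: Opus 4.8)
The plan is to follow the template of Lemma 4.1 and Lemma 4.2, the essential difference being that here the diagonal contribution turns out to be of \emph{lower} order rather than a genuine main term. Expanding the square and inserting the integral evaluations $\mathcal{I}_{1},\mathcal{I}_{2}$ from \eqref{eq:4.4}--\eqref{eq:4.5} (with $r=h_{1}/d_{1}$, $s=h_{2}/d_{2}$), I would obtain
\[
\frac{1}{X}\bigintssss\limits_{X}^{2X}\big\{\Phi^{\cos{}}_{H}(x;\theta)\big\}^{2}\textit{d}x=\frac{c_{q}}{2}X^{2(2q-1)}\underset{h_{1}d_{2}=h_{2}d_{1}}{\sum}\theta(h_{1},d_{1})\theta(h_{2},d_{2})+X^{2(2q-1)-2}\mathcal{E}+O\big(X^{2(2q-1)-2}\log^{2}{X}\big),
\]
and identically for $\Phi^{\sin{}}_{H}$, since $\mathcal{I}_{1}$ and $\mathcal{I}_{2}$ share the same diagonal term $\tfrac{c_{q}}{2}\mathds{1}_{r=s}X^{2(2q-1)}$. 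Here $\mathcal{E}$ collects the off-diagonal terms produced by one integration by parts, and the final $O$-term absorbs the remainders $O(X^{2(2q-1)-2}/\sqrt{rs})$, which by \textit{(C.3)} sum to $\big(\sum_{d,h}|\theta(h,d)|\sqrt{d/h}\big)^{2}\ll\big(\sum_{\sqrt{H}<d\leq H}d^{-(q-2)}\big)^{2}\ll\log^{2}{X}$.

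For the diagonal sum I would parametrize the common value $h/d$ by a reduced fraction $a/b$, so that the contributing pairs are $(h,d)=(ka,kb)$ with $k\geq1$. By \textit{(C.3)} one has $|\theta(ka,kb)|\ll k^{-q}a^{-3/2}b^{-(q-3/2)}$, while the constraints $\sqrt{H}<kb\leq H$ and $k^{2}ab\leq H$ confine $k$ to an interval and force $a<b$. Summing the resulting geometric-type series over $k$, and then over all reduced $a/b$ (splitting according to whether $b\leq\sqrt{H}$ or $b>\sqrt{H}$), one finds $\sum_{h_{1}d_{2}=h_{2}d_{1}}|\theta(h_{1},d_{1})\theta(h_{2},d_{2})|\ll H^{2-q}$. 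Since $H\asymp X^{2}$, the diagonal term is therefore $\ll X^{2(2q-1)}H^{2-q}\ll X^{2(2q-1)-2}$ for every $q\geq3$, with equality of exponents exactly at $q=3$.

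For the off-diagonal term $\mathcal{E}$ I would localize the denominator $d$ to dyadic blocks $d\sim D$, $\sqrt{H}<D\leq H$, at the cost of a factor $\log{X}$ from the Cauchy--Schwarz inequality, exactly as in \eqref{eq:4.25}--\eqref{eq:4.26}. Within each block the frequencies $h/d$ are rationals with denominators of size $\asymp D$, so any two distinct ones are separated by at least $1/(d_{1}d_{2})\gg D^{-2}$; hence $\delta_{\lambda}^{-1}\ll D^{2}$ and Hilbert's inequality bounds the corresponding sum by $\ll D^{2}\sum_{d\sim D,\,h}|\theta(h,d)|^{2}$. By \textit{(C.3)} this is $\ll D^{2}\sum_{d\sim D}d^{-(2q-3)}\ll D^{6-2q}$, and summing over the dyadic blocks gives $\ll\log{X}$ when $q=3$ and $\ll1$ when $q\geq4$. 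Together with the Cauchy--Schwarz factor this yields $|\mathcal{E}|\ll\log^{2}{X}$, so the off-diagonal contribution is $\ll X^{2(2q-1)-2}\log^{2}{X}$.

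The one genuinely new point --- and the main obstacle --- is the diagonal estimate: in Lemmas 4.1 and 4.2 the diagonal produced the leading term, whereas here the restriction $\sqrt{H}<d\leq H$ (a ``large denominator'' range) together with $dh\leq H$ forces the frequencies $h/d$ to be sparse and the weights to decay fast enough that, for $q\geq3$, the diagonal is pushed down to the same order $X^{2(2q-1)-2}$ as the off-diagonal remainder. Once this is established, adding the diagonal and off-diagonal bounds --- and observing that the sine sum is handled verbatim --- completes the proof.
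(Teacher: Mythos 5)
Your proposal is correct and follows essentially the same route as the paper: the paper likewise groups pairs $(h,d)$ by the reduced value of $h/d$, localizes $d$ to dyadic blocks via Cauchy--Schwarz (costing a factor $\log{X}$), applies Hilbert's inequality within each block with separation $\gg D^{-2}$, and observes that the diagonal sum is $\ll H^{2-q}\ll X^{-2}$ --- exactly the point you identify as the crux. The only cosmetic difference is one of ordering: the paper performs the reduction to coprime pairs and the dyadic decomposition \emph{before} expanding the square, which cleanly avoids cross-block off-diagonal terms, so your steps should be rearranged in that order, but the resulting estimates are identical.
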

\begin{proof}
We have:
\begin{equation}\label{eq:4.32}
\Big|\Phi^{\sin{}}_{H}\big(x;\theta\big)\big|\quad,\quad\Big|\Phi^{\cos{}}_{H}\big(x;\theta\big)\big|\leq\sum_{D\in\mathcal{D}}\Big|\Phi_{H}\big(x;\theta;D\big)\Big|
\end{equation}
where $\mathcal{D}=\Big\{2^{k}:k\in\mathbb{N}_{0}\,,\,2^{k}\leq H\Big\}$, and
\begin{equation}\label{eq:4.33}
\Phi_{H}\big(x;\theta;D\big)=x^{2q-1}\underset{d\,\sim\,D}{\sum_{(d,h)=1}}\wp\big(h,d\big)\textit{\large{e}}\bigg(\frac{h}{d}x^{2}\bigg)\quad;\quad\wp\big(h,d\big)=\sum_{r}\theta\big(rh,rd\big)
\end{equation}
with the notation $d\sim D\Longleftrightarrow\,D\leq d<2D$. Note that
\begin{equation}\label{eq:4.34}
|\wp\big(h,d\big)|\ll\frac{1}{d^{q-\frac{3}{2}}h^{3/2}}\underset{r^{2}dh\,\leq\, H}{\underset{\sqrt{H}\,<\,rd\,\leq\, H}{\sum_{r\in\mathbb{N}}}}\frac{1}{r^{q}}
\end{equation}
and so in particular:
\begin{equation}\label{eq:4.35}
|\wp\big(h,d\big)|\ll\frac{1}{d^{q-\frac{3}{2}}h^{3/2}}\mathds{1}_{dh\leq H}\,.
\end{equation}
Applying Cauchy–Schwarz inequality in order to localize the variable $d$ to a dyadic segment $\sim D$, we have:
\begin{equation}\label{eq:4.36}
\begin{split}
&\frac{1}{X}\bigintssss\limits_{ X}^{2X}\bigg\{\sum_{D\in\mathcal{D}}\Big|\Phi_{H}\Big(x;\theta;D\Big)\Big|\bigg\}^{2}\textit{d}x\ll\log{X}\sum_{D\in\mathcal{D}}\,\,\frac{1}{X}\bigintssss\limits_{ X}^{2X}\Big|\Phi_{H}\Big(x;\theta;D\Big)\Big|^{2}\textit{d}x
\end{split}
\end{equation}
Fix $D\in\mathcal{D}$. Then by the same arguments as in the proof of Lemma 4.2 $\Big(\eqref{eq:4.27}\text{ through } \eqref{eq:4.29}\Big)$, we have:
\begin{equation}\label{eq:4.37}
\begin{split}
&\frac{1}{X}\bigintssss\limits_{ X}^{2X}\Big|\Phi_{H}\Big(x;\theta;D\Big)\Big|^{2}\textit{d}x=c_{q}X^{2(2q-1)}\underset{d\,\sim\, D}{\sum_{(d,h)=1}}\wp^{2}\big(h,d\big)+O\bigg(X^{2(2q-1)-2}\sum_{d\,\sim\,D}\frac{1}{d^{2q-5}}\bigg)\,.
\end{split}
\end{equation}
Thus, by \eqref{eq:4.36} :
\begin{equation}\label{eq:4.38}
\begin{split}
&\frac{1}{X}\bigintssss\limits_{ X}^{2X}\bigg\{\sum_{D\in\mathcal{D}}\Big|\Phi_{H}\Big(x;\theta;D\Big)\Big|\bigg\}^{2}\textit{d}x\ll\bigg\{\sum_{(d,h)=1}\wp^{2}\big(h,d\big)\bigg\}X^{2(2q-1)}\log{X}+X^{2(2q-1)-2}\log^{2}{X}\,.
\end{split}
\end{equation}
Using \eqref{eq:4.34}, the sum over the diagonal terms satisfies the bound:
\begin{equation}\label{eq:4.39}
\begin{split}
0\leq\sum_{(d,h)=1}\wp^{2}\big(h,d\big)&\ll\underset{\sqrt{H}\,<\,r_{2}d\,\leq\,H}{\sum_{\sqrt{H}\,<\,r_{1}d\,\leq\,H}}\frac{1}{r_{1}^{q}r_{2}^{q}}\frac{1}{d^{2q-3}}\,\underset{1\,\leq\,r_{2}^{2}dh\,\leq\,H}{\sum_{1\,\leq\,r_{1}^{2}dh\,\leq\,H}}\frac{1}{h^{3}}\ll\\
&\ll\sum_{1\,\leq\,r_{1},r_{2}\,\leq\,\sqrt{H}}\frac{1}{r_{1}^{q}r_{2}^{q}}\,\sum_{d\,>\sqrt{H/r_{1}r_{2}}}\,\,\frac{1}{d^{2q-3}}
\ll H^{2-q}\ll\frac{1}{X^{2}}
\end{split}
\end{equation}
By \eqref{eq:4.32} this concludes the proof.
\end{proof}
\subsection{Bounding the remainder terms}
We have everything we need in order to treat the remainder terms in the approximate expression for $\mathcal{E}_{q}(x)$. Before proceeding to the proof of the main results of this subsection, we need the following simple lemma.   
\begin{lem}
Let $H\geq1$. Suppose $r,d,m\in\mathbb{N}$, and that $1\leq m\leq Y$ for some $Y$. Then:
\begin{equation}\label{eq:4.40}
\begin{split}
|\mathfrak{a}^{\ast}_{H}\big(r^{2}m,rd\big)|\,\,,\,\,|\mathfrak{d}^{\ast}_{H}\big(r^{2}m,rd\big)|&\ll\frac{Y^{1/2}}{r^{1/2}m^{3/4}H}\bigg\{\underset{a\neq0\,,\,b\equiv0(d)}{\sum_{a^{2}+b^{2}=m}}1\bigg\}\mathds{1}_{r^{2}m\leq2H^{2}}\,.
\end{split}
\end{equation}
In particular, $\mathfrak{a}^{\ast}_{H}$ and $\mathfrak{d}^{\ast}_{H}$ satisfy condition \textit{(C.2)} in Lemma 4.1. 
\end{lem}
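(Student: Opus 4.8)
The plan is to bound $\mathfrak{a}^{\ast}_{H}$ and $\mathfrak{b}^{\ast}_{H}$ termwise, estimating the three ingredients of each summand separately and then counting the admissible representations. Since $q$ is fixed and the arguments lie in $[0,1]$, the smooth weights are harmless: $0\leq\mathfrak{g}(t)\leq1$, $0\leq\hat{\mathfrak{g}}(t)\leq1$ and $|\lambda(h)|\leq1$, so each contributes only $O(1)$. For the Vaaler coefficients I would use $\tau^{\ast}(t)=t(1-t)\leq t$ on $(0,1)$. After the substitution $(m,d)\mapsto(r^{2}m,rd)$ the two moduli occurring in the definition of $\mathfrak{a}^{\ast}_{H}$ are $[H]+1>H$ and $rd\,[H/(rd)]+rd$; since $[x]>x-1$, the latter exceeds $H$, so in every summand $\tau^{\ast}(h/(\cdot))\ll h/H$, and the arguments do lie in $(0,1)$ because $1\leq h\leq H$.

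Next I would exploit the geometry of the lattice condition $n^{2}+h^{2}=r^{2}m$ with $0\leq n\leq h$. This forces $r^{2}m/2\leq h^{2}\leq r^{2}m$, i.e. $h\asymp r\sqrt{m}$; combined with $h\leq H$ it also forces $r\sqrt{m/2}\leq H$, which is precisely the support condition $\mathds{1}_{r^{2}m\leq2H^{2}}$ (outside it the sums are empty). Hence each summand is $\ll(r^{2}m)^{-3/4}\cdot h/H\ll(r^{2}m)^{-3/4}r\sqrt{m}/H=r^{-1/2}m^{-1/4}H^{-1}$. To count the summands I rescale: $rd\mid n$ together with $n^{2}+h^{2}=r^{2}m$ gives $n=ra$, $h=rb$ with $a^{2}+b^{2}=m$ and $d\mid a$ (first sum of $\mathfrak{a}^{\ast}_{H}$), while $rd\mid h$ gives $a^{2}+b^{2}=m$ with $d\mid b$ (second sum). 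By the symmetry $a\leftrightarrow b$ the number of representations with $d\mid a$ equals that with $d\mid b$, so in both cases the number of admissible $(n,h)$ is $\ll\sum_{a^{2}+b^{2}=m,\,b\equiv0(d)}1$.

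Collecting the per-term bound with the representation count yields $|\mathfrak{a}^{\ast}_{H}(r^{2}m,rd)|\ll r^{-1/2}m^{-1/4}H^{-1}\sum_{a^{2}+b^{2}=m,\,b\equiv0(d)}1$. Since $1\leq m\leq Y$ we have $m^{-1/4}=m^{1/2}\cdot m^{-3/4}\leq Y^{1/2}m^{-3/4}$, which upgrades the prefactor to the claimed $Y^{1/2}r^{-1/2}m^{-3/4}H^{-1}$ and accounts for the (deliberately lossy) factor $Y^{1/2}$. The bound for $\mathfrak{d}^{\ast}_{H}=2^{q-1}\mathfrak{a}^{\ast}_{H}+\mathds{1}_{d\equiv0(4)}4^{q-1}\mathfrak{b}^{\ast}_{H}$ follows verbatim: the constants $2^{q-1},4^{q-1}$ are fixed, and $\mathfrak{b}^{\ast}_{H}$ has exactly the same shape, with $\lambda(h)$ and the extra congruence $n\equiv0(4)$ only improving matters. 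Finally, the ``in particular'' is immediate: taking $r=1$ and $Y=2H^{2}$ (legitimate on the support $\mathds{1}_{m\leq2H^{2}}$) gives $Y^{1/2}/H=\sqrt{2}$, so the prefactor collapses to a constant and \eqref{eq:4.40} reduces to condition \textit{(C.2)}.

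The step I expect to be most delicate is reconciling the restriction $a\neq0$ in the target sum with the boundary summands carrying the weight $\tfrac12$ (the double-dash convention). A term with $a=0$, i.e. $n=0$, survives only in the $\mathfrak{g}$-sums (since $\hat{\mathfrak{g}}(0)=0$) and only when $m$ is a perfect square, in which case $h=r\sqrt{m}$. But precisely when $m=\square$ the representations $(\pm\sqrt{m},0)$ satisfy $a\neq0$ and $b=0\equiv0(d)$, so $\sum_{a^{2}+b^{2}=m,\,b\equiv0(d),\,a\neq0}1\geq2$; consequently $\sum_{b\equiv0(d)}1\leq\sum_{b\equiv0(d),\,a\neq0}1+2\ll\sum_{b\equiv0(d),\,a\neq0}1$, and the $a=0$ contribution is harmlessly absorbed. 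With this observation the counting step is legitimate and the estimate goes through as outlined.
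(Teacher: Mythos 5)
Your proposal is correct and follows essentially the same route as the paper: bound $\tau^{\ast}(t)\leq t$ so that each Vaaler weight contributes $\ll h/H\ll r\sqrt{m}/H$, bound $\mathfrak{g},\hat{\mathfrak{g}},\lambda$ by $1$, rescale $n=ra$, $h=rb$ (using $r\mid n\Leftrightarrow r\mid h$ on the sphere $n^{2}+h^{2}=r^{2}m$) to reduce the count to representations of $m$, and then trade $m^{-1/4}\leq Y^{1/2}m^{-3/4}$. Your explicit treatment of the $a=0$ boundary terms via the representations $(\pm\sqrt{m},0)$ is a correct (and slightly more careful) version of what the paper does implicitly through the identity $\mathfrak{g}(n/\sqrt{m})=\hat{\mathfrak{g}}(h/\sqrt{m})$ and the vanishing $\hat{\mathfrak{g}}(0)=0$.
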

\begin{proof}
Let $r,d,m\in\mathbb{N}$ be as above. Recalling the definition of $\tau^{\ast}$ and noting that $\hat{\mathfrak{g}}(0)=0$, we have:
\begin{equation*}
\begin{split}
&\underset{n\,\equiv\,0\,(rd)}{\underset{0\,\leq\,n\,\leq\,h}{\underset{1\,\leq\, h\,\leq\, H}{\sideset{}{''}\sum_{n^{2}+h^{2}=r^{2}m}}}}\tau^{\ast}\bigg(\frac{h}{[H]+1}\bigg)\mathfrak{g}\bigg(\frac{n}{r\sqrt{m}}\bigg)+\underset{h\,\equiv\,0\,(rd)}{\underset{0\,\leq\,n\,\leq\,h}{\underset{1\,\leq\, h\,\leq\, H}{\sideset{}{''}\sum_{n^{2}+h^{2}=r^{2}m}}}}\tau^{\ast}\bigg(\frac{h}{rd[H/rd]+rd}\bigg)\hat{\mathfrak{g}}\bigg(\frac{n}{r\sqrt{m}}\bigg)=\\
&=\underset{n\,\equiv\,0\,(d)}{\underset{0\,\leq\,n\,\leq\,h}{\underset{1\,\leq\, rh\,\leq\, H}{\sideset{}{''}\sum_{n^{2}+h^{2}=m}}}}\tau^{\ast}\bigg(\frac{rh}{[H]+1}\bigg)\mathfrak{g}\bigg(\frac{n}{\sqrt{m}}\bigg)+\underset{h\,\equiv\,0\,(d)}{\underset{0\,\leq\,n\,\leq\,h}{\underset{1\,\leq\, rh\,\leq\, H}{\sideset{}{''}\sum_{n^{2}+h^{2}=m}}}}\tau^{\ast}\bigg(\frac{h}{d[H/rd]+d}\bigg)\hat{\mathfrak{g}}\bigg(\frac{n}{\sqrt{m}}\bigg)\leq\\
&\leq\frac{rY^{1/2}}{H}\bigg\{\underset{n\,\equiv\,0\,(d)}{\underset{0\,\leq\,n\,\leq\,h}{\underset{1\,\leq\, rh\,\leq\, H}{\sideset{}{''}\sum_{n^{2}+h^{2}=m}}}}\mathfrak{g}\bigg(\frac{n}{\sqrt{m}}\bigg)+\underset{h\,\equiv\,0\,(d)}{\underset{0\,\leq\,n\,\leq\,h}{\underset{1\,\leq\, rh\,\leq\, H}{\sideset{}{''}\sum_{n^{2}+h^{2}=m}}}}\hat{\mathfrak{g}}\bigg(\frac{n}{\sqrt{m}}\bigg)\bigg\}\leq\frac{rY^{1/2}}{H}\bigg\{\,\,\,\underset{a\neq0\,,\,b\equiv0(d)}{\sum_{a^{2}+b^{2}=m}}1\,\,\bigg\}\,.
\end{split}
\end{equation*}
By the construction of $\mathfrak{a}^{\ast}$ and $\mathfrak{d}^{\ast}_{H}$ we deduce \eqref{eq:4.40}.
\end{proof}
\text{ }\\
Now we can state the main results.
\begin{prop}
Let $X>0$ large, and set $H=X^{2}/2$. Then:\\\\
\textit{(1)} For $q\equiv0\,(2)$
\begin{equation}\label{eq:4.41}
\frac{1}{X}\bigintssss\limits_{ X}^{2X}\Bigg\{x^{2q-1}\sum_{1\,\leq\,d\,\leq\,\sqrt{H}}\frac{|\xi(d)|}{d^{q-3/2}}\sum_{m}\mathfrak{a}^{\ast}_{H}\big(m,d\big)\cos{\Big(-2\pi \frac{\sqrt{m}}{d}x^{2}+\frac{\pi}{4}\Big)}\Bigg\}^{2}\textit{d}x\ll\,X^{2(2q-1)-2}\log^{2}{X}\,. 
\end{equation}
\textit{(2)} For $q\equiv1\,(2)$
\begin{equation}\label{eq:4.42}
\frac{1}{X}\bigintssss\limits_{ X}^{2X}\Bigg\{x^{2q-1}\sum_{1\,\leq\,d\,\leq\,\sqrt{H}}\frac{1}{d^{q-3/2}}\sum_{m}\mathfrak{d}^{\ast}_{H}\big(m,d\big)\cos{\Big(-2\pi \frac{\sqrt{m}}{d}x^{2}+\frac{\pi}{4}\Big)}\Bigg\}^{2}\textit{d}x\ll\,X^{2(2q-1)-2}\log^{2}{X}\,. 
\end{equation}
\end{prop}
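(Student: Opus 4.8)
The plan is to read each integrand as the square of a single cosine-sum of the form treated in Lemma 4.2, and then to show that the associated diagonal quantities $\Upsilon_{1}$ and $\Upsilon_{2}$ carry an extra power of $H^{-1}$, which is exactly the saving that renders the whole expression of lower order than the main term. First, put $\eta(d)=\frac{|\xi(d)|}{d^{q-3/2}}\mathds{1}_{d\le\sqrt{H}}$ and $\beta=\mathfrak{a}^{\ast}_{H}$ in the case $q\equiv0\,(2)$, and $\eta(d)=\frac{1}{d^{q-3/2}}\mathds{1}_{d\le\sqrt{H}}$, $\beta=\mathfrak{d}^{\ast}_{H}$ in the case $q\equiv1\,(2)$; then the quantity inside the braces is precisely $\Psi^{\cos{}}_{H}\big(x;\eta,\beta\big)$. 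Since $q$ is fixed we have $|\xi(d)|\ll1$, so $\eta$ satisfies condition (C.1), while Lemma 4.4 shows that both $\mathfrak{a}^{\ast}_{H}$ and $\mathfrak{d}^{\ast}_{H}$ satisfy condition (C.2). Lemma 4.2 therefore applies and, by \eqref{eq:4.17}, reduces the two claims to the single estimate
\[
\Upsilon_{1}\big(\eta,\beta\big)+\Upsilon_{2}\big(\eta,\beta\big)\ll X^{-2}\log{X},
\]
after which the desired bound $X^{2(2q-1)-2}\log^{2}{X}$ follows at once.

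The heart of the matter is that one must use the full strength of Lemma 4.4 rather than merely the uniform form (C.2). Writing $R_{d}(m)=\#\{(a,b):a^{2}+b^{2}=m,\,a\neq0,\,b\equiv0\,(d)\}$ and choosing the free parameter $Y$ in Lemma 4.4 equal to the local size of $m$, one gets the sharpened pointwise bound $|\beta(m,d)|\ll\frac{1}{m^{1/4}H}R_{d}(m)$; the factor $H^{-1}$ reflects the genuine smallness of the error functions $\mathfrak{a}^{\ast}_{H},\mathfrak{d}^{\ast}_{H}$. Substituting this into $\Upsilon_{1}=\sum_{d,m}|\eta(d)|\beta^{2}(m,d)$ gives
\[
\Upsilon_{1}\ll\frac{1}{H^{2}}\sum_{1\le d\le\sqrt{H}}\frac{1}{d^{q-3/2}}\sum_{1\le m\le2H^{2}}\frac{R_{d}(m)^{2}}{m^{1/2}}.
\]
The inner $m$-sum is evaluated by a dyadic partial summation built on the mean-square count \eqref{eq:4.13}: because the weight $m^{-1/2}$ concentrates the mass at the top scale $m\asymp H^{2}$, one keeps only a single logarithm and obtains $\sum_{m}R_{d}(m)^{2}m^{-1/2}\ll H\big(\tfrac{1}{d}+(\varpi(d)/d)^{2}\log{X}\big)$. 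The remaining sum over $d$ converges for every $q\ge3$, since the exponents $q-1/2$ and $q+1/2$ exceed $1$, and hence $\Upsilon_{1}\ll H^{-1}\log{X}\ll X^{-2}\log{X}$.

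The term $\Upsilon_{2}=\sum_{(d,m)=1}\gamma^{2}(m,d)$, with $\gamma(m,d)=\sum_{r}\eta(rd)\beta\big((rm)^{2},rd\big)$, is even smaller: the perfect-square argument $(rm)^{2}$ together with the same refinement of Lemma 4.4 yields $|\gamma(m,d)|\ll\frac{R_{d}(m^{2})}{d^{q-3/2}m^{1/2}H}$ after summing the convergent series $\sum_{r}r^{-(q-1)}$, so that squaring and summing produces a second factor $H^{-1}$ and $\Upsilon_{2}\ll H^{-2}(\log{X})^{O(1)}$, which is negligible.

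The main obstacle is thus the diagonal estimate for $\Upsilon_{1}$: one must simultaneously exploit the $H^{-1}$ smallness supplied by the local choice of $Y$ in Lemma 4.4 and the arithmetic mean-square behaviour of the restricted representation function $R_{d}(m)$ encoded in \eqref{eq:4.13}, all while keeping the $d$-summation convergent. It is precisely the hypothesis $q\ge3$ that secures this last point, and a cavalier use of the crude bound $m^{-1/2}\le\sqrt{2}\,H\,m^{-1}$ (rather than the scale-localised version of \eqref{eq:4.13}) would cost an additional logarithm; the dyadic concentration of the $m^{-1/2}$ weight is what recovers the clean power $\log^{2}{X}$ asserted in the proposition.
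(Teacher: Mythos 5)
Your proposal is correct and rests on the same two pillars as the paper's proof: reduce the mean square to diagonal sums via the Hilbert-inequality lemmas of \S4.1, and then beat the trivial diagonal bound by invoking Lemma 4.4 with the parameter $Y$ chosen at the local size of $m$, which supplies the crucial factor $H^{-1}$ (and $H^{-2}$ after squaring). The one structural difference is that you run Lemma 4.2 uniformly for all $q\geq3$, whereas the paper applies Lemma 4.1 for $q>3$ and reserves Lemma 4.2 for $q=3$ only; the reason for the paper's split is that the error bound \eqref{eq:4.2} of Lemma 4.1 contains $\big(\sum_{d\leq\sqrt{H}}d^{-(q-5/2)}\big)^{2}\log X$, which is unacceptably large precisely at $q=3$, while Lemma 4.2 carries no such obstruction — so your unified route is legitimate and arguably cleaner, at the cost of an extra $\log X$ on the diagonal contribution that is absorbed because $\Upsilon_{1}+\Upsilon_{2}\ll X^{-2}\log X$. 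On that last point your bookkeeping through \eqref{eq:4.13} is more elaborate than necessary and, if the per-dyadic-block version of \eqref{eq:4.13} is not sharpened, risks an extra logarithm; the paper simply drops to $R_{d}(m)\leq r_{2}(m)$, uses the convergence of $\sum_{d}|\eta(d)|$ for $q\geq3$, and concludes from $\sum_{m\leq2H^{2}}r_{2}^{2}(m)m^{-1/2}\ll H\log X$ that $\Upsilon_{1}\ll H^{-1}\log X$ directly, which is the cleaner way to land exactly on $\log^{2}X$.
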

\begin{proof}
We shall appeal to Lemma 4.1 for $q>3$, and to Lemma 4.2 for $q=3$, where $\eta$ and $\beta$ are given by:
\begin{equation*}
\begin{split}
&\eta(d)=\frac{|\xi(d)|}{d^{q-3/2}}\mathds{1}_{\text{}_{d\leq\sqrt{H}}}\quad\,\,\,,\quad\beta\big(m,d\big)=\mathfrak{a}^{\ast}_{H}\big(m,d\big)\qquad;\qquad q\equiv0\,(2)\\
&\eta(d)=\frac{1}{d^{q-3/2}}\mathds{1}_{\text{}_{d\leq\sqrt{H}}}\quad\,\,\,,\quad\beta\big(m,d\big)=\mathfrak{d}^{\ast}_{H}\big(m,d\big)\qquad;\qquad q\equiv1\,(2)\,.
\end{split}
\end{equation*}
\textit{(I)} Suppose $q>3$. We appeal to Lemma 4.1 with $\nu(d),\alpha\big(m,d\big)\equiv0$. First we consider the error term in \eqref{eq:4.1}. Since for $q>3$:
\begin{equation*}
\begin{split}
&\Bigg(\sum_{1\,\leq\,d\,\leq\,\sqrt{H}}\frac{1}{d^{q-\frac{5}{2}}}\Bigg)^{2}+\Bigg(\sum_{1\,\leq\,d\,\leq\,\sqrt{H}}\frac{\varpi(d)}{d^{q-2}}\Bigg)^{2}+\Bigg(\sum_{1\,\leq\,d\,\leq\,\sqrt{H}}\frac{1}{d^{q-\frac{5}{2}}}\Bigg)\Bigg(\sum_{1\,\leq\,d\,\leq\,\sqrt{H}}\frac{\varpi(d)}{d^{q-2}}\Bigg)\ll1
\end{split}
\end{equation*}
we have by \eqref{eq:4.2} that the error term is $\ll X^{2(2q-1)-2}\log^{2}{X}$ for $q\equiv0,1\,(2)$. It remains to evaluate the sum over the diagonal terms. Let $r,d,\ell,m\in\mathbb{N}$ be integers such that $1\leq m^{2}\ell\leq2H^{2}$. Then taking $Y=m^{2}\ell$ in Lemma 4.4, we have by \eqref{eq:4.40} :
\begin{equation}\label{eq:4.43}
\begin{split}
|\mathfrak{a}^{\ast}_{H}\big((rm)^{2}\ell,rd\big)|\,\,,\,\,|\mathfrak{d}^{\ast}_{H}\big((rm)^{2}\ell,rd\big)|&\ll\frac{1}{r^{1/2}(m^{2}\ell)^{1/4}H}\bigg\{\underset{a\neq0\,,\,b\equiv0(d)}{\sum_{a^{2}+b^{2}=m^{2}\ell}}1\bigg\}\mathds{1}_{(rm)^{2}\ell\leq2H^{2}}\leq\\
&\leq\frac{r_{2}\big(m^{2}l\big)}{r^{1/2}(m^{2}\ell)^{1/4}H}\mathds{1}_{m^{2}\ell\leq2H^{2}}\,.
\end{split}
\end{equation}
Thus, the sum over the diagonal terms $\Xi_{H}\big(\eta,\beta\big)$ for $q\equiv0,1\,(2)$ satisfies the bound:
\begin{equation}\label{eq:4.44}
\begin{split}
0\leq\Xi_{H}\big(\eta,\beta\big)&=\underset{1\,\leq\,m^{2}\ell\,\leq\,2H^{2}}{\underset{(d,m)=1}{\sum_{\ell,d,m}}}|\mu(\ell)|\bigg(\sum_{r}\eta(rd)\beta\Big((rm)^{2}\ell,rd\Big)\bigg)^{2}\ll\frac{1}{H^{2}}\sum_{1\,\leq\,m^{2}\ell\,\leq\,2H^{2}}|\mu(\ell)|\frac{r^{2}_{2}\big(m^{2}\ell\big)}{(m^{2}\ell)^{1/2}}=\\
&=\frac{1}{H^{2}}\sum_{1\,\leq\,m\,\leq\,2H^{2}}\frac{r^{2}_{2}(m)}{m^{1/2}}\ll\,\frac{\log{X}}{X^{2}}.
\end{split}
\end{equation}
This settles the proof for $q>3$.
\text{ }\\\\
\textit{(II)} Suppose $q=3$. We appeal to Lemma 4.2. We only need to evaluate the sum over the diagonal terms $\Upsilon_{1}\big(\eta,\beta\big)$ and $\Upsilon_{2}\big(\eta,\beta\big)$. Appealing to Lemma 4.4 again, we have:
\begin{equation}\label{eq:4.45}
\begin{split}
&0\leq\Upsilon_{1}\big(\eta,\beta\big)=\underset{1\,\leq\,m\,\leq\,2H^{2}}{\sum_{d,m}}|\eta(d)|\beta^{2}\big(m,d\big)\ll\frac{1}{H^{2}}\sum_{1\,\leq\,m\,\leq\,2H^{2}}\frac{r^{2}_{2}(m)}{
m^{1/2}}\ll\,\frac{\log{X}}{X^{2}}
\end{split}
\end{equation}
and
\begin{equation}\label{eq:4.46}
\begin{split}
&0\leq\Upsilon_{2}\big(\eta,\beta\big)=\underset{1\,\leq\,m\,\leq\,\sqrt{2}H}{\sum_{(d,m)=1}}\bigg(\sum_{r}\eta(rd)\beta\big((rm)^{2},rd\big)\bigg)^{2}\ll\frac{1}{H^{2}}\sum_{1\,\leq\,m\,\leq\,\sqrt{2}H}\frac{r^{2}_{2}\big(m^{2}\big)}{
m}\ll\,\frac{\log{X}}{X^{2}}\,.
\end{split}
\end{equation}
This completes the proof.
\end{proof}
\begin{prop}
Let $X>0$ large, and set $H=X^{2}/2$. Then:\\\\
\begin{equation}\label{eq:4.47}
\frac{1}{X}\bigintssss\limits_{ X}^{2X}\big|\Theta^{H}_{q,\chi}(x)\big|^{2}\textit{d}x\ll\,X^{2(2q-1)-2}\log^{2}{X}
\end{equation}
\begin{equation}\label{eq:4.48}
\frac{1}{X}\bigintssss\limits_{ X}^{2X}\big|\Theta^{H,\chi}_{q}(x)\big|^{2}\textit{d}x\ll\,X^{2(2q-1)-2}\log^{2}{X}
\end{equation}
\begin{equation}\label{eq:4.49}
\frac{1}{X}\bigintssss\limits_{ X}^{2X}\big|\Theta^{H}_{q}(x)\big|^{2}\textit{d}x\ll\,X^{2(2q-1)-2}\log^{2}{X}\,.
\end{equation}
\end{prop}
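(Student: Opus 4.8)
The plan is to recognize that each of the three terms $\Theta^{H}_{q,\chi}(x)$, $\Theta^{H,\chi}_{q}(x)$ and $\Theta^{H}_{q}(x)$ is exactly of the shape $\Phi^{\sin{}}_{H}(x;\theta)$ or $\Phi^{\cos{}}_{H}(x;\theta)$ introduced in Lemma 4.3, for a suitably chosen arithmetical weight $\theta$. Consequently the whole argument reduces to exhibiting the correct $\theta$ in each case, verifying that it satisfies condition (C.3), and then quoting the conclusion \eqref{eq:4.31}. No genuinely new analysis is needed: the entire harmonic input — the appeal to Hilbert's inequality through the dyadic decomposition in the variable $d$ — has already been performed in the proof of Lemma 4.3.

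Concretely, for \eqref{eq:4.47} I would take
\[
\theta\big(h,d\big)=-2\varrho_{\chi,q}\,2^{q-2}\,\frac{\chi(d)}{d^{q-3/2}}\,\frac{1}{h^{3/2}}\,\tau\Big(\frac{h}{[H/d]+1}\Big)\,\mathds{1}_{\sqrt{H}<d\leq H}\,\mathds{1}_{1\leq h\leq H/d},
\]
so that $\Theta^{H}_{q,\chi}(x)=\Phi^{\sin{}}_{H}(x;\theta)$. For \eqref{eq:4.48} I would take
\[
\theta\big(h,d\big)=-2\varrho_{\chi,q}\,(-1)^{\frac{q+1}{2}}4^{q-1}\,\frac{1}{d^{q-3/2}}\,\frac{\chi(-h)}{h^{3/2}}\,\tau\Big(\frac{h}{[H/d]+1}\Big)\,\mathds{1}_{d\equiv0(4)}\,\mathds{1}_{\sqrt{H}<d\leq H}\,\mathds{1}_{1\leq h\leq H/d},
\]
whence $\Theta^{H,\chi}_{q}(x)=\Phi^{\cos{}}_{H}(x;\theta)$; and for \eqref{eq:4.49}
\[
\theta\big(h,d\big)=2\varrho_{\chi,q}\,\frac{\lambda^{\ast}(h,d)}{d^{q-3/2}}\,\frac{1}{h^{3/2}}\,\tau^{\ast}\Big(\frac{h}{[H/d]+1}\Big)\,\mathds{1}_{\sqrt{H}<d\leq H}\,\mathds{1}_{1\leq h\leq H/d},
\]
which gives $\Theta^{H}_{q}(x)=\Phi^{\cos{}}_{H}(x;\theta)$.

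In each case the verification of (C.3) is immediate, since all the arithmetical and trigonometric weights are uniformly bounded: $|\chi(\pm d)|\leq1$, the Vaaler weights satisfy $|\tau(t)|\ll1$ and $0\leq\tau^{\ast}(t)=t(1-t)\leq\frac{1}{4}$ on $[0,1]$, and $|\lambda^{\ast}(h,d)|\leq 2^{q-2}+4^{q-1}\ll_{q}1$ because $|\lambda(h)|\leq1$. The prefactors $\varrho_{\chi,q}$, $2^{q-2}$, $4^{q-1}$ depend only on the fixed $q$. Hence $|\theta(h,d)|\ll d^{-(q-3/2)}h^{-3/2}$ on the support $\{\sqrt{H}<d\leq H,\ 1\leq h\leq H/d\}$, and the constraint $h\leq H/d$ forces $dh\leq H$, which is precisely the second indicator in (C.3). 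Applying \eqref{eq:4.31} of Lemma 4.3 to each of these three weights then yields \eqref{eq:4.47}, \eqref{eq:4.48} and \eqref{eq:4.49} respectively.

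The only point requiring any care — and the closest thing to an obstacle — is confirming that $\tau$ is bounded on all of $(0,1)$, including near the endpoints where $\cot(\pi t)$ is singular. This follows from the limits $t\cot(\pi t)\to\pi^{-1}$ as $t\to0^{+}$ and $(1-t)\cot(\pi t)\to-\pi^{-1}$ as $t\to1^{-}$, so that $\tau(t)=t(1-t)\cot(\pi t)+\pi^{-1}t$ extends continuously to $[0,1]$ and is therefore bounded. With this single observation in place, the three estimates of Proposition 4.2 follow mechanically from Lemma 4.3.
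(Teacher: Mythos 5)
Your proposal coincides with the paper's own proof: Proposition 4.2 is obtained there by applying Lemma 4.3 with exactly the three weights $\theta(h,d)$ you list (your sign in the third case is in fact the one matching the definition of $\Theta^{H}_{q}(x)$, and is harmless either way since the quantity is squared). The verification of condition (C.3), including the boundedness of $\tau$ on $[0,1]$, is correct, so the argument goes through as stated.
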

\begin{proof}
Apply Lemma 4.3 to \eqref{eq:4.47} with:
\begin{equation*}
\theta\big(h,d\big)=-2\varrho_{\chi,q}\frac{2^{q-2}\chi(d)}{d^{q-3/2}}\frac{1}{h^{3/2}}\tau\bigg(\frac{h}{[H/d]+1}\bigg)\mathds{1}_{\sqrt{H}<d\leq H}\mathds{1}_{dh\leq H}
\end{equation*}
to \eqref{eq:4.48} with:
\begin{equation*}
\quad\qquad\qquad\theta\big(h,d\big)=-2\varrho_{\chi,q}\frac{(-1)^{\frac{q+1}{2}}4^{q-1}}{d^{q-3/2}}\frac{\chi(-h)}{h^{3/2}}\tau\bigg(\frac{h}{[H/d]+1}\bigg)\mathds{1}_{d\equiv0(4)}\mathds{1}_{\sqrt{H}<d\leq H}\mathds{1}_{dh\leq H}
\end{equation*}
and to \eqref{eq:4.49} with:
\begin{equation*}
\qquad\theta\big(h,d\big)=-2\varrho_{\chi,q}\frac{1}{d^{q-3/2}}\lambda^{\ast}(h,d)\frac{1}{h^{3/2}}\tau^{\ast}\bigg(\frac{h}{[H/d]+1}\bigg)\mathds{1}_{\sqrt{H}<d\leq H}\mathds{1}_{dh\leq H}\,.
\end{equation*}
\end{proof}
\subsection{Estimating in mean-square the leading terms in the approximate expression for $\mathcal{E}_{q}(x)$}
Before turning to the asymptotic estimate, we need to remove the dependency of $\mathfrak{a}_{H}$ and $\mathfrak{a}_{H,\chi}$ with respect to the parameter $H$. The following lemma shows that for integers $m\in\mathbb{N}$ which are of moderate size with respect to the parameter $H$, $m^{-3/4}\mathfrak{a}_{H}\big(m,d\big)$ and $m^{-3/4}\mathfrak{a}_{H,\chi}\big(m,d\big)$ can be approximated by $r_{2}\big(m,d;q\big)$ and $r_{2,\chi}\big(m,d;q\big)$ with a reasonable error.    
\begin{lem}
Let $H\geq1$, $r,d,m\in\mathbb{N}$. Then:\\\\
\textit{(I)}
\begin{equation}\label{eq:4.50}
\big|\mathfrak{a}_{H}\big(r^{2}m,rd\big)\big|\,\,,\,\,\big|\mathfrak{a}_{H,\chi}\big(r^{2}m,rd\big)\big|\ll\frac{r_{2}\big(m,d;q\big)}{r^{3/2}m^{3/4}}\mathds{1}_{r^{2}m\leq2H^{2}}\,.
\end{equation}
\textit{(II)} Suppose $1\leq rd\leq H$ , $1\leq r^{2}m\leq H^{2}$ and that $1\leq m\leq Y$ for some Y. Then:\\
\begin{equation}\label{eq:4.51}
\mathfrak{a}_{H}\big(r^{2}m,rd\big)=\frac{r_{2}\big(m,d;q\big)}{4\pi r^{3/2}m^{3/4}}+O\bigg(\frac{r_{2}\big(m,d;q\big)r^{1/2}Y}{m^{3/4}H^{2}}\bigg)
\end{equation}
\begin{equation}\label{eq:4.52}
\qquad\quad\,\,\mathfrak{a}_{H,\chi}\big(r^{2}m,rd\big)=-\chi(r)\frac{r_{2,\chi}\big(m,d;q\big)}{2\pi r^{3/2} m^{3/4}}+O\bigg(\frac{r_{2}\big(m,d;q\big)r^{1/2}Y}{m^{3/4}H^{2}}\bigg)\,.
\end{equation}
In particular, $\mathfrak{a}_{H}$ and $\mathfrak{a}_{H,\chi}$ satisfy condition \textit{(C.2)} in Lemma 4.1. 
\end{lem}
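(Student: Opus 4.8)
The plan is to first reduce $\mathfrak{a}_H(r^2m,rd)$ to sums over the \emph{primitive} representations $n'^2+h'^2=m$. In the first inner sum defining $\mathfrak{a}_H(r^2m,rd)$ one has $n\equiv 0\,(rd)$ and $n^2+h^2=r^2m$; writing $n=rn'$ forces $r\mid h$ (since $h^2=r^2(m-n'^2)$), so $h=rh'$ with $n'^2+h'^2=m$ and $n'\equiv0\,(d)$. The same substitution handles the second sum, where $h\equiv0\,(rd)$ gives $h=rh'$, $n=rn'$, $h'\equiv0\,(d)$. Since $(r^2m)^{3/4}=r^{3/2}m^{3/4}$ and, crucially, $n'^2+h'^2=m$ gives $\mathfrak{g}(n'/\sqrt m)=(1-n'^2/m)^{(q-1)/2}=(h'/\sqrt m)^{q-1}=\hat{\mathfrak{g}}(h'/\sqrt m)$, both inner sums become sums of the form summed in $r_2(m,d;q)$, with $\tau$-arguments $rh'/([H]+1)$ and $h'/(d[H/rd]+d)$ respectively. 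I would also record that the cutoff $1\le rh'\le H$ is vacuous when $r^2m\le H^2$ (as $h'\le\sqrt m$), and that the sum is empty unless $r^2m\le 2H^2$, which supplies the indicator in Part~(I).

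The next step is a purely combinatorial identity: summing the weights over the fundamental domain $0\le n'\le h'$, with the divisibility split across the two sums, reconstructs $r_2(m,d;q)$ up to a constant. Concretely, I would show
\[
\underset{n'\equiv0(d)}{\sideset{}{''}\sum_{n'^2+h'^2=m}}\Big(\tfrac{h'}{\sqrt m}\Big)^{q-1}+\underset{h'\equiv0(d)}{\sideset{}{''}\sum_{n'^2+h'^2=m}}\Big(\tfrac{n'}{\sqrt m}\Big)^{q-1}=\tfrac14\,r_2(m,d;q),
\]
by matching each primitive pair to the $4$ signed representations $(\pm a,\pm b)$ counted by $r_2(m,d;q)$ (the divisible coordinate playing the role of $b$, the other that of $|a|$), and checking that the double-dash factor $\tfrac12$ on the pairs $n'=0$ and $n'=h'$ exactly compensates the reduced count ($2$ instead of $4$) of their signed representations. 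For the $\chi$-weighted version the identical identity holds with $r_{2,\chi}$ in place of $r_2$, after pulling out $\chi(-rh')=\chi(-1)\chi(r)\chi(h')=-\chi(r)\chi(h')$ using complete multiplicativity of $\chi$ and $\chi(-1)=-1$; combined with the prefactor $2/m^{3/4}$ this produces the constant $\tfrac1{2\pi}$ and the factor $-\chi(r)$ in \eqref{eq:4.52}.

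It remains to replace $\tau$ by its value at the origin. The analytic input is the Laurent expansion $\cot(\pi t)=\tfrac1{\pi t}-\tfrac{\pi t}{3}+\cdots$, which gives $\tau(t)=\tfrac1\pi-\tfrac{\pi}{3}t^2+O(t^3)$; since $\tau$ is continuous on $[0,1]$, the quotient $(\tau(t)-\tfrac1\pi)/t^2$ extends continuously there, so $|\tau(t)-\tfrac1\pi|\ll t^2$ \emph{uniformly} on $[0,1]$. Both $\tau$-arguments above are $\ll r\sqrt m/H$ — for the second I use $d([H/rd]+1)>H/r$ — so each error $|\tau-\tfrac1\pi|$ is $\ll r^2m/H^2\le r^2Y/H^2$. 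Multiplying this per-term bound by the weights (whose total is $\tfrac14 r_2(m,d;q)$) and by the prefactor $r^{-3/2}m^{-3/4}$ yields the claimed $O\big(r_2(m,d;q)\,r^{1/2}Y/(m^{3/4}H^2)\big)$. Part~(I) needs only the crude bounds $|\tau|\ll1$ and $|\chi|\le1$ together with nonnegativity of $\mathfrak{g},\hat{\mathfrak{g}}$ (so that restricting the summation range can only decrease the sum); condition (C.2) then follows on setting $r=1$ and discarding the vanishing $a=0$ terms.

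The main obstacle is the combinatorial bookkeeping in the second step: obtaining the constant $\tfrac14$ exactly requires careful treatment of the double-dash convention at the diagonal pair $n'=h'$ (which occurs precisely when $m=2\square$) and at the axis pair $n'=0$ (which occurs when $m=\square$), along with the correct sign extraction for $\chi$. By contrast, the reduction via $n=rn',\,h=rh'$ and the Taylor estimate for $\tau$ are routine once the uniform bound $|\tau(t)-\tfrac1\pi|\ll t^2$ on $[0,1]$ has been noted.
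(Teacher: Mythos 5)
Your proposal is correct and follows essentially the same route as the paper: the substitution $n=rn'$, $h=rh'$ (forced by $r\mid h$ once $rd\mid n$ and $n^2+h^2=r^2m$), the identity $\mathfrak{g}(n'/\sqrt m)=\hat{\mathfrak{g}}(h'/\sqrt m)$ reducing both inner sums to $\tfrac14 r_2(m,d;q)$ (resp.\ $\tfrac14 r_{2,\chi}(m,d;q)$ after extracting $-\chi(r)$), and the uniform bound $\tau(t)=\pi^{-1}+O(t^2)$ on $[0,1]$ to control the error. Your treatment of the double-dash convention and of the removal of the cutoff $rh'\leq H$ under $r^2m\leq H^2$ matches the paper's argument, and you actually justify the uniform Taylor bound for $\tau$ in slightly more detail than the paper does.
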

\begin{proof}
Let $r,d,m\in\mathbb{N}$ be integers. Clearly we may assume that $r^{2}m\leq 2H^{2}$, since otherwise $\mathfrak{a}_{H}\big(r^{2}m,rd\big),\mathfrak{a}_{H,\chi}\big(r^{2}m,rd\big)=0$. By the same arguments as in the proof of Lemma 4.4 we have:
\begin{equation*}
\begin{split}
&\underset{n\,\equiv\,0\,(rd)}{\underset{0\,\leq\,n\,\leq\,h}{\underset{1\,\leq\, h\,\leq\, H}{\sideset{}{''}\sum_{n^{2}+h^{2}=r^{2}m}}}}\tau\bigg(\frac{h}{[H]+1}\bigg)\mathfrak{g}\Big(\frac{n}{r\sqrt{m}}\bigg)+\underset{h\,\equiv\,0\,(rd)}{\underset{0\,\leq\,n\,\leq\,h}{\underset{1\,\leq\, h\,\leq\, H}{\sideset{}{''}\sum_{n^{2}+h^{2}=r^{2}m}}}}\tau\bigg(\frac{h}{rd[H/rd]+rd}\bigg)\hat{\mathfrak{g}}\bigg(\frac{n}{r\sqrt{m}}\bigg)=\\
&=\underset{n\,\equiv\,0\,(d)}{\underset{0\,\leq\,n\,\leq\,h}{\underset{1\,\leq\, rh\,\leq\, H}{\sideset{}{''}\sum_{n^{2}+h^{2}=m}}}}\tau\bigg(\frac{rh}{[H]+1}\bigg)\mathfrak{g}\bigg(\frac{n}{\sqrt{m}}\bigg)+\underset{h\,\equiv\,0\,(d)}{\underset{0\,\leq\,n\,\leq\,h}{\underset{1\,\leq\,rh\,\leq\, H}{\sideset{}{''}\sum_{n^{2}+h^{2}=m}}}}\tau\bigg(\frac{h}{d[H/rd]+d}\bigg)\hat{\mathfrak{g}}\bigg(\frac{n}{\sqrt{m}}\bigg)\,.
\end{split}
\end{equation*}
and:
\begin{equation*}
\begin{split}
&\underset{n\,\equiv\,0\,(rd)}{\underset{0\,\leq\,n\,\leq\,h}{\underset{1\,\leq\, h\,\leq\, H}{\sideset{}{''}\sum_{n^{2}+h^{2}=r^{2}m}}}}\chi(-h)\tau\bigg(\frac{h}{[H]+1}\bigg)\mathfrak{g}\bigg(\frac{n}{r\sqrt{m}}\bigg)+\underset{h\,\equiv\,0\,(rd)}{\underset{0\,\leq\,n\,\leq\,h}{\underset{1\,\leq\, h\,\leq\, H}{\sideset{}{''}\sum_{n^{2}+h^{2}=r^{2}m}}}}\chi(-n)\tau\bigg(\frac{h}{rd[H/rd]+rd}\bigg)\hat{\mathfrak{g}}\bigg(\frac{n}{r\sqrt{m}}\bigg)=\\
&=-\chi(r)\Bigg\{\,\,\underset{n\,\equiv\,0\,(d)}{\underset{0\,\leq\,n\,\leq\,h}{\underset{1\,\leq\, rh\,\leq\, H}{\sideset{}{''}\sum_{n^{2}+h^{2}=m}}}}\chi(h)\tau\bigg(\frac{rh}{[H]+1}\bigg)\mathfrak{g}\bigg(\frac{n}{\sqrt{m}}\bigg)+\underset{h\,\equiv\,0\,(d)}{\underset{0\,\leq\,n\,\leq\,h}{\underset{1\,\leq\, rh\,\leq\, H}{\sideset{}{''}\sum_{n^{2}+h^{2}=m}}}}\chi(n)\tau\bigg(\frac{h}{d[H/rd]+d}\bigg)\hat{\mathfrak{g}}\bigg(\frac{n}{\sqrt{m}}\bigg)\Bigg\}\,.
\end{split}
\end{equation*}
Note that if $m=n^{2}+h^{2}$ for some integers $n\in\mathbb{N}_{0}$ and $h\in\mathbb{N}$, then by using the simple fact that $\mathfrak{g}\Big(\frac{n}{\sqrt{m}}\Big)=\hat{\mathfrak{g}}\Big(\frac{h}{\sqrt{m}}\Big)$ we have:
\begin{equation*}
\begin{split}
&\underset{h\,\geq\, n\,;\,n\equiv0(d)}{\underset{n\,\in\mathbb{N}_{0}\,,\,h\,\in\mathbb{N}}{\sideset{}{''}\sum_{n^{2}+h^{2}=m}}}\mathfrak{g}\bigg(\frac{n}{\sqrt{m}}\bigg)+\underset{h\,\geq\, n\,;\,h\equiv0(d)}{\underset{n\,\in\mathbb{N}_{0}\,,\,h\,\in\mathbb{N}}{\sideset{}{''}\sum_{n^{2}+h^{2}=m}}}\hat{\mathfrak{g}}\bigg(\frac{n}{\sqrt{m}}\bigg)=
\frac{1}{4}\underset{b\equiv0(d)}{\sum_{a^{2}+b^{2}=m}}\hat{\mathfrak{g}}\bigg(\frac{|a|}{\sqrt{m}}\bigg)=\frac{1}{4}r_{2}\big(m,d;q\big)\\
&\underset{h\,\geq\, n\,;\,n\equiv0(d)}{\underset{n\,\in\mathbb{N}_{0}\,,\,h\,\in\mathbb{N}}{\sideset{}{''}\sum_{n^{2}+h^{2}=m}}}\chi(h)\mathfrak{g}\bigg(\frac{n}{\sqrt{m}}\bigg)+\underset{h\,\geq\, n\,;\,h\equiv0(d)}{\underset{n\,\in\mathbb{N}_{0}\,,\,h\,\in\mathbb{N}}{\sideset{}{''}\sum_{n^{2}+h^{2}=m}}}\chi(n)\hat{\mathfrak{g}}\bigg(\frac{n}{\sqrt{m}}\bigg)=\frac{1}{4}\underset{b\equiv0(d)}{\sum_{a^{2}+b^{2}=m}}\chi\big(|a|\big)\hat{\mathfrak{g}}\bigg(\frac{|a|}{\sqrt{m}}\bigg)=\frac{1}{4}r_{2,\chi}\big(m,d;q\big)\,.
\end{split}
\end{equation*}
Since  $\tau(t)=\pi^{-1}+O\big(t^{2}\big)$ uniformly on the segment $[0,1]$, we obtain using the above results:
\begin{equation}\label{eq:4.53}
\begin{split}
&\mathfrak{a}_{H}\big(r^{2}m,rd\big)=\frac{1}{4\pi r^{3/2}m^{3/4}}\Bigg\{\,\,\underset{n\,\equiv\,0\,(d)}{\underset{0\,\leq\,n\,\leq\,h}{\underset{1\,\leq\, rh\,\leq\, H}{\sideset{}{''}\sum_{n^{2}+h^{2}=m}}}}\mathfrak{g}\bigg(\frac{n}{\sqrt{m}}\bigg)+\underset{h\,\equiv\,0\,(d)}{\underset{0\,\leq\,n\,\leq\,h}{\underset{1\,\leq\,rh\,\leq\, H}{\sideset{}{''}\sum_{n^{2}+h^{2}=m}}}}\hat{\mathfrak{g}}\bigg(\frac{n}{\sqrt{m}}\bigg)+O\bigg(\frac{r_{2}\big(m,d;q\big)r^{2}m}{H^{2}}\bigg)\Bigg\}\\
&\mathfrak{a}_{H,\chi}\big(r^{2}m,rd\big)=-\frac{\chi(r)}{2\pi r^{3/2} m^{3/4}}\Bigg\{\,\,\underset{n\,\equiv\,0\,(d)}{\underset{0\,\leq\,n\,\leq\,h}{\underset{1\,\leq\, rh\,\leq\, H}{\sideset{}{''}\sum_{n^{2}+h^{2}=m}}}}\chi(h)\mathfrak{g}\bigg(\frac{n}{\sqrt{m}}\bigg)+\underset{h\,\equiv\,0\,(d)}{\underset{0\,\leq\,n\,\leq\,h}{\underset{1\,\leq\,rh\,\leq\, H}{\sideset{}{''}\sum_{n^{2}+h^{2}=m}}}}\chi(n)\hat{\mathfrak{g}}\bigg(\frac{n}{\sqrt{m}}\bigg)+O\bigg(\frac{r_{2}\big(m,d;q\big)r^{2}m}{H^{2}}\bigg)\Bigg\}\,.
\end{split}
\end{equation}
Dropping the restriction $rh\leq H$ in \eqref{eq:4.53} we obtain \eqref{eq:4.50}. Now if $r,d,m$ satisfy the assumptions in \textit{(II)} then the restriction $rh\leq H$ in\eqref{eq:4.53} is redundant, hence we obtain \eqref{eq:4.51} and \eqref{eq:4.52}.
\end{proof}
\text{ }\\
With Lemma 4.5 at our disposal, we now proceed to the evaluate the leading terms in the mean square estimate of $\mathcal{E}_{q}(x)$.
\begin{prop}
Let $X>0$ be large, and set $H=X^{2}/2$. Define:
\begin{equation*}
\begin{split}
\left.\begin{aligned}
&\nu(d)=-2\varrho_{q}\frac{\xi(d)}{d^{q-3/2}}\mathds{1}_{d\leq\sqrt{H}}\,\,\quad\qquad\quad\quad\qquad\,\,\alpha\big(m,d\big)=\mathfrak{a}_{H}\big(m,d\big)
\end{aligned}\right\}\,q\equiv0\,(2)\\
\left.\begin{aligned}
&\nu(d)=-2\varrho_{\chi,q}\frac{2^{q-1}\chi(d)}{d^{q-3/2}}\mathds{1}_{d\leq\sqrt{H}}\,\,\,\qquad\qquad\qquad\alpha\big(m,d\big)=\mathfrak{a}_{H}\big(m,d\big)\\
&\eta(d)=-2\varrho_{\chi,q}\frac{(-1)^{\frac{q+1}{2}}4^{q-1}}{d^{q-3/2}}\mathds{1}_{d\equiv0(4)}\mathds{1}_{d\leq\sqrt{H}}\quad\quad\beta\big(m,d\big)=\mathfrak{a}_{H,\chi}\big(m,d\big)
\end{aligned}\right\}\,q\equiv1\,(2)
\end{split}
\end{equation*}
Then, with the notation as in Lemma 4.1, we have:\\\\
\textit{(1)} For $q\equiv0\,(2)$
\begin{equation}\label{eq:4.54}
\begin{split}
\frac{1}{X}\bigintssss\limits_{ X}^{2X}\bigg\{\Psi^{\sin{}}_{H}\Big(x;\nu,\alpha\Big)\bigg\}^{2}\textit{d}x&=\gamma_{q}\Bigg\{\underset{\,\,(d,2m)=1}{\sum_{d,m=1}^{\infty}}\frac{r^{2}_{2}\big(m,d;q\big)}{m^{3/2}d^{2q-3}}+2^{2q}\underset{d\equiv0(4)}{\underset{\,\,(d,m)=1}{\sum_{d,m=1}^{\infty}}}\frac{r^{2}_{2}\big(m,d;q\big)}{m^{3/2}d^{2q-3}}\Bigg\}X^{2(2q-1)}+O\Big( X^{2(2q-1)-1}\log{X}\Big)\,.
\end{split}
\end{equation}
\textit{(2)} For $q\equiv1\,(2)$
\begin{equation}\label{eq:4.55}
\begin{split}
&\frac{1}{X}\bigintssss\limits_{ X}^{2X}\bigg\{\Psi^{\sin{}}_{H}\Big(x;\nu,\alpha\Big)+\Psi^{\cos{}}_{H}\Big(x;\eta,\beta\Big)\bigg\}^{2}\textit{d}x=\\
&=\gamma_{q}\Bigg\{\underset{\,\,(d,2m)=1}{\sum_{d,m=1}^{\infty}}\frac{r^{2}_{2}\big(m,d;q\big)}{m^{3/2}d^{2q-3}}+2^{2q}\underset{d\equiv0(4)}{\underset{\,\,(d,m)=1}{\sum_{d,m=1}^{\infty}}}\frac{r^{2}_{2,\chi}\big(m,d;q\big)}{m^{3/2}d^{2q-3}}\Bigg\}X^{2(2q-1)}+O\Big( X^{2(2q-1)-1}\log{X}\Big)\,.
\end{split}
\end{equation}
\end{prop}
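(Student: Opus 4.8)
The plan is to feed the functions $\nu,\alpha$ (and, for odd $q$, also $\eta,\beta$) directly into Lemma 4.1. For $q\equiv0\,(2)$ I would apply it with $\eta,\beta\equiv0$, so that only the $\sin$-term survives, while for $q\equiv1\,(2)$ I keep both pairs. The hypotheses are immediate: $\nu,\eta$ satisfy \textit{(C.1)} because $|\xi(d)|\ll2^{q}$ and the power $d^{-(q-3/2)}$ is built into their definition, whereas $\alpha=\mathfrak{a}_{H}$ and $\beta=\mathfrak{a}_{H,\chi}$ satisfy \textit{(C.2)} by the closing assertion of Lemma 4.5. Lemma 4.1 then gives
\[
\frac{1}{X}\int_{X}^{2X}\big(\Psi^{\sin{}}_{H}(x;\nu,\alpha)+\Psi^{\cos{}}_{H}(x;\eta,\beta)\big)^{2}\,dx=\frac{c_{q}}{2}\big\{\Xi_{H}(\nu,\alpha)+\Xi_{H}(\eta,\beta)\big\}X^{2(2q-1)}+X^{2(2q-1)-2}\mathscr{E}^{H},
\]
so the proof reduces to two tasks: bounding the off-diagonal remainder $\mathscr{E}^{H}$, and evaluating the diagonal sums $\Xi_{H}$.

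For the remainder I would simply insert the truncation $d\leq\sqrt{H}=X/\sqrt{2}$ into the three sums on the right of \eqref{eq:4.2}. When $q\geq4$ the exponents $q-\tfrac52\geq\tfrac32>1$ and $q-2\geq2$ render all three sums $O(1)$, so $\mathscr{E}^{H}\ll\log^{2}{X}$. The critical case is $q=3$, where $\sum_{d\leq\sqrt H}d^{-1/2}\ll H^{1/4}\ll X^{1/2}$ and $\sum_{d\leq\sqrt H}\varpi(d)/d\ll\log^{2}{X}$; the first summand of \eqref{eq:4.2} then dominates and yields $\mathscr{E}^{H}\ll X\log{X}$. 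In either case $X^{2(2q-1)-2}\mathscr{E}^{H}\ll X^{2(2q-1)-1}\log{X}$, which is exactly the error recorded in \eqref{eq:4.54}--\eqref{eq:4.55}.

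The heart of the matter is the evaluation of $\Xi_{H}$. Here I would use the identity \eqref{eq:4.16}, which presents $\Xi_{H}(\nu,\alpha)$ as the diagonal sum $\sum_{d_{2}\sqrt{m_{1}}=d_{1}\sqrt{m_{2}}}\nu(d_{1})\nu(d_{2})\alpha(m_{1},d_{1})\alpha(m_{2},d_{2})$, and replace $\mathfrak{a}_{H}$ by its main term $\tfrac{1}{4\pi}r_{2}(m,d;q)\,m^{-3/4}$ from \eqref{eq:4.51} (resp.\ $\mathfrak{a}_{H,\chi}$ by $-\chi(r)\tfrac{1}{2\pi}r_{2,\chi}(m,d;q)\,m^{-3/4}$ from \eqref{eq:4.52}), the large-$m$ range being discarded through the crude bound \eqref{eq:4.50}. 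I would parametrise the constraint $d_{2}^{2}m_{1}=d_{1}^{2}m_{2}$ by extracting $\delta=(d_{1},d_{2})$, writing $d_{i}=\delta e_{i}$ with $(e_{1},e_{2})=1$ and hence $m_{i}=e_{i}^{2}k$, so the common frequency is $\sqrt{k}/\delta$. Inserting $\nu(d)=-2\varrho_{q}\xi(d)d^{-(q-3/2)}$ and using the scaling relation of $r_{2}(e^{2}k,\delta e;q)$ to $r_{2}(k,\delta;q)$, the sums over $e_{1},e_{2}$ and the square-part variable collapse into Euler factors supplying $(1-2^{-q})^{2}\zeta(q)^{2}$, which cancel precisely the corresponding denominators inside $\varrho_{q}$ (respectively $2^{2q-2}L(q,\chi)^{2}$ against $\varrho_{\chi,q}$). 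The bare prefactor $\tfrac{c_{q}}{2}\big(\tfrac{1}{4\pi}\big)^{2}(2\varrho_{q})^{2}$, once multiplied by these Euler factors, equals $\gamma_{q}=\tfrac{c_{q}}{2}\big(\tfrac{\pi^{q-1}}{2\Gamma(q)}\big)^{2}$, and the weight $\xi(d)^{2}$ splits the resulting Dirichlet series into the $(d,2m)=1$ part and the $2^{2q}$-weighted $d\equiv0\,(4)$ part, reproducing the braces of \eqref{eq:4.54}--\eqref{eq:4.55}. Finally I would extend the truncated ranges $d\leq\sqrt H$, $m\leq2H^{2}$ to the full convergent series; since $2q-3\geq3$ the tails are $O(X^{-2(q-2)})$ and the errors from \eqref{eq:4.51}--\eqref{eq:4.52} contribute $O(X^{-1}\log{X})$, both absorbed into the stated remainder.

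The main obstacle is the arithmetic bookkeeping in this last step: isolating the scaling identity relating $r_{2}(e^{2}k,\delta e;q)$ to $r_{2}(k,\delta;q)$ under the coprimality constraints, and then carrying out the Euler-factor computation carefully enough to confirm that $\varrho_{q}$ (or $\varrho_{\chi,q}$), together with the $\delta,e_{1},e_{2}$ and squarefree sums, assemble into exactly $\gamma_{q}$, and that the $\xi$-weighting produces precisely the $(d,2m)=1$ term and the $2^{2q}$-weighted $d\equiv0\,(4)$ term (with $r_{2}^{2}$ in the even case and $r_{2}^{2}$, $r_{2,\chi}^{2}$ respectively in the odd case). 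Everything else---the off-diagonal bound and the tail estimates---is routine once the convergence of the limiting series, guaranteed by $2q-3\geq3$ for $q\geq3$, is secured.
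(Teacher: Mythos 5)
Your proposal is correct and follows essentially the same route as the paper: apply Lemma 4.1 with the stated $\nu,\alpha$ (and $\eta,\beta$ for odd $q$), bound $\mathscr{E}^{H}$ via \eqref{eq:4.2} (getting $\ll\log^{2}X$ for $q\geq4$ and $\ll X\log X$ for $q=3$), and evaluate the diagonal by substituting the main terms \eqref{eq:4.51}--\eqref{eq:4.52} of Lemma 4.5 so that the $r$-sum produces the Euler factor $(1-2^{-q})\zeta(q)$ (resp. $2^{q-1}L(q,\chi)$) cancelling the denominator of $\varrho_{q}$ (resp. $\varrho_{\chi,q}$) and assembling $\gamma_{q}$ with the $2^{2q}$-weighted $d\equiv0\,(4)$ part. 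Your gcd-first parametrisation of the diagonal $d_{2}^{2}m_{1}=d_{1}^{2}m_{2}$ is just a reorganisation of the squarefree-kernel decomposition already built into the definition of $\Xi_{H}$ via \eqref{eq:4.16}, and the scaling identity $r_{2}(e^{2}k,\delta e;q)=r_{2}(k,\delta;q)$ you flag is exactly what Lemma 4.5 supplies.
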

\begin{proof}
We shall appeal to Lemma 4.1.\\\\
\textit{(1)} Suppose $q\equiv0\,(2)$. Then by \eqref{eq:4.1} with $\eta,\beta\equiv0$, we have
\begin{equation}\label{eq:4.56}
\begin{split}
\frac{1}{X}\bigintssss\limits_{ X}^{2X}\bigg\{\Psi^{\sin{}}_{H}\Big(x;\nu,\alpha\Big)\bigg\}^{2}\textit{d}x&=\frac{c_{q}}{2}\,\Xi_{H}\big(\nu,\alpha\big)X^{2(2q-1)}+X^{2(2q-1)-2}\mathscr{E}^{H}\big(\nu,\alpha;\eta,\beta\big)
\end{split}
\end{equation}
and since $q\geq4$, by \eqref{eq:4.2} we find that:
\begin{equation}\label{eq:4.57}
\begin{split}
\big|\mathscr{E}^{H}\big(\nu,\alpha;\eta,\beta\big)\big|&\ll \log^{2}{X}\,.
\end{split}
\end{equation}
We now proceed to evaluate the sum over the diagonal terms. If either $m^{2}\ell>H$, $d>H^{1/4}$ or $r>H^{1/4}$ then we appeal to \eqref{eq:4.50} in Lemma 4.5 obtaining:
\begin{equation}\label{eq:4.58}
\begin{split}
\Xi_{H}\big(\nu,\alpha\big)&=4\varrho^{2}_{q}\underset{1\,\leq\,d\,\leq\,H^{1/4}}{\underset{1\,\leq\,m^{2}\ell\,\leq\,H}{\underset{(d,m)=1}{\sum_{\ell,d,m}}}}\frac{|\mu(\ell)|}{d^{2q-3}}\Bigg\{\,\,\underset{1\,\leq\,r\,\leq\,H^{1/4}}{\sum_{1\,\leq\,rd\,\leq\,H^{1/2}}}\,\,\frac{\xi(rd)}{r^{q-3/2}}\mathfrak{a}_{H}\big((rm)^{2}\ell,rd\big)\Bigg\}^{2}+O\Big(X^{-1}\log{X}\Big)=\\
&=4\varrho^{2}_{q}\underset{1\,\leq\,d\,\leq\,H^{1/4}}{\underset{1\,\leq\,m^{2}\ell\,\leq\,H}{\underset{(d,m)=1}{\sum_{\ell,d,m}}}}\frac{|\mu(\ell)|}{d^{2q-3}}\Bigg\{\,\,\sum_{1\,\leq\,r\,\leq\,H^{1/4}}\,\,\frac{\xi(rd)}{r^{q-3/2}}\mathfrak{a}_{H}\big((rm)^{2}\ell,rd\big)\Bigg\}^{2}+O\Big(X^{-1}\log{X}\Big)
\end{split}
\end{equation}
where we have dropped the restriction $1\leq rd\leq\ H^{1/2}$ since $1\leq r,d\leq H^{1/4}$. Fix integers $d,m,\ell\in\mathbb{N}$ in the above ranges. Then for $1\leq r\leq H^{1/4}$ we may appeal to \eqref{eq:4.51} in Lemma 4.5 with $Y=H$ obtaining:
\begin{equation}\label{eq:4.59}
\begin{split}
&\sum_{1\,\leq\,r\,\leq\,H^{1/4}}\,\,\frac{\xi(rd)}{r^{q-3/2}}\mathfrak{a}_{H}\big((rm)^{2}\ell,rd\big)=\frac{r_{2}\big(m^{2}\ell,d;q\big)}{4\pi (m^{2}\ell)^{3/4}}\sum_{1\,\leq\,r\,\leq\,H^{1/4}}\frac{\xi(rd)}{r^{q}}+O\bigg(\frac{r_{2}\big(m^{2}\ell,d;q\big)\log{X}}{H(m^{2}\ell)^{3/4}}\bigg)=\\\\
&=\frac{r_{2}\big(m^{2}l,d;q\big)}{4\pi (m^{2}l)^{3/4}}\sum_{r=1}^{\infty}\frac{\xi(rd)}{r^{q}}+O\bigg(\frac{r_{2}\big(m^{2}l\big)}{X(m^{2}l)^{3/4}}\bigg)=\Big(1-2^{-q}\Big)\zeta(q)\hat{\xi}(d)\frac{r_{2}\big(m^{2}l,d;q\big)}{4\pi (m^{2}l)^{3/4}}+O\bigg(\frac{r_{2}\big(m^{2}l\big)}{X(m^{2}l)^{3/4}}\bigg)
\end{split}
\end{equation}
where $\hat{\xi}(d)=\mathds{1}_{d\equiv1(2)}+(-1)^{\frac{q}{2}}2^{q}\mathds{1}_{d\equiv0(4)}$. Extending the summation over $m^{2}l$ and $d$ all the way to infinity, we derive:
\begin{equation}\label{eq:4.60}
\begin{split}
\Xi_{H}\big(\nu,\alpha\big)&=\Bigg(\frac{\varrho_{q}(1-2^{-q})\zeta(q)}{2\pi}\Bigg)^{2}\underset{(d,m)=1}{\sum_{\ell,d,m=1}^{\infty}}\frac{|\mu(\ell)|r^{2}_{2}\big(m^{2}\ell,d;q\big)\Hat{\xi}^{2}(d)}{(m^{2}\ell)^{3/2}d^{2q-3}}+O\Big(X^{-1}\log{X}\Big)=\\
&=\bigg(\frac{\pi^{q-1}}{2\Gamma(q)}\bigg)^{2}\underset{(d,m)=1}{\sum_{\ell,d,m=1}^{\infty}}\frac{|\mu(\ell)|r^{2}_{2}\big(m^{2}\ell,d;q\big)\Hat{\xi}^{2}(d)}{(m^{2}\ell)^{3/2}d^{2q-3}}+O\Big(X^{-1}\log{X}\Big)\,.
\end{split}
\end{equation}
Finally, if $\ell,d,m\in\mathbb{N}$ are integers such that $(d,m)=1$, $|\mu(\ell)|=1$ and $m^{2}\ell=a^{2}+b^{2}$ with $b\equiv0(d)$, then necessarily $(d,\ell)=1$. Thus, in \eqref{eq:4.60} the condition $(d,m)=1$ is equivalent to $(d,m^{2}\ell)=1$. Hence:
\begin{equation}\label{eq:4.61}
\begin{split}
\Xi_{H}\big(\nu,\alpha\big)&=\bigg(\frac{\pi^{q-1}}{2\Gamma(q)}\bigg)^{2}\underset{(d,m)=1}{\sum_{d,m=1}^{\infty}}\frac{r^{2}_{2}\big(m,d;q\big)\Hat{\xi}^{2}(d)}{m^{3/2}d^{2q-3}}+O\Big(X^{-1}\log{X}\Big)=\\
&\bigg(\frac{\pi^{q-1}}{2\Gamma(q)}\bigg)^{2}\Bigg\{\underset{\,\,(d,2m)=1}{\sum_{d,m=1}^{\infty}}\frac{r^{2}_{2}\big(m,d;q\big)}{m^{3/2}d^{2q-3}}+2^{2q}\underset{d\equiv0(4)}{\underset{\,\,(d,m)=1}{\sum_{d,m=1}^{\infty}}}\frac{r^{2}_{2}\big(m,d;q\big)}{m^{3/2}d^{2q-3}}\Bigg\}+O\Big(X^{-1}\log{X}\Big)\,.
\end{split}
\end{equation}
Inserting \eqref{eq:4.61} into \eqref{eq:4.56} and using \eqref{eq:4.57} we obtain \eqref{eq:4.54}. 
\text{ }\\\\
\textit{(2)} Suppose $q\equiv1\,(2)$. Then by \eqref{eq:4.1} we have
\begin{equation}\label{eq:4.62}
\begin{split}
&\frac{1}{X}\bigintssss\limits_{ X}^{2X}\bigg\{\Psi^{\sin{}}_{H}\Big(x;\nu,\alpha\Big)+\Psi^{\cos{}}_{H}\Big(x;\eta,\beta\Big)\bigg\}^{2}\textit{d}x=\frac{c_{q}}{2}\bigg\{\Xi_{H}\big(\nu,\alpha\big)+\Xi_{H}\big(\eta,\beta\big)\bigg\}X^{2(2q-1)}+X^{2(2q-1)-2}\mathscr{E}^{H}\big(\nu,\alpha;\eta,\beta\big)
\end{split}
\end{equation}
and since $q\geq3$ we find that:
\begin{equation}\label{eq:4.63}
\begin{split}
\big|\mathscr{E}^{H}\big(\nu,\alpha;\eta,\beta\big)\big|
\ll X\log{X}
\end{split}
\end{equation}
The sum over the diagonal terms is evaluated along the same lines as in the case $q\equiv0\,(2)$, where now we also make use of \eqref{eq:4.52} in Lemma 4.5. This concludes the proof.
\end{proof}
\subsection{Proof of Theorem 2}
We have everything in place for the proof of Theorem 2.
\begin{proof}(Theorem 2). Let $X>0$ be large, and set $H=X^{2}/2$. We keep the definition of $\nu,\,\eta$ and $\alpha,\,\beta$ as in Proposition 4.3. Then with the notations as in Lemma 4.1, we have by \eqref{eq:3.36} and \eqref{eq:3.38} in Proposition 3.1 :
\begin{equation}\label{eq:4.64}
\begin{split}
&q\equiv0\,(2)\,:\quad\mathcal{E}_{q}(x)=\Psi^{\sin{}}_{H}\big(x;\nu,\alpha\big)+\mathcal{E}^{H}_{q}(x)+O\Big(x^{2q-2}\log^{2}{x}\Big)\\
&q\equiv1\,(2)\,:\quad\mathcal{E}_{q}(x)=\Psi^{\sin{}}_{H}\big(x;\nu,\alpha\big)+\Psi^{\cos{}}_{H}\big(x;\eta,\beta\big)+\mathds{1}_{\text{}_{q=3}}\bigg\{\Theta^{H}_{q,\chi}(x)+\Theta^{H,\chi}_{q}(x)\bigg\}+
\mathcal{E}^{H}_{q}(x)+O\Big(x^{2q-2}\log^{2}{x}\Big)\,.
\end{split}
\end{equation}
First we deal with the remainder terms. By \eqref{eq:3.37} in Proposition 3.1 and \eqref{eq:4.41} in Proposition 4.1 in the case where $q\equiv0\,(2)$, and by \eqref{eq:3.39} in proposition 3.1, \eqref{eq:4.42} in Proposition 4.1 and \eqref{eq:4.47}, \eqref{eq:4.48} and \eqref{eq:4.49} in Proposition 4.2 in the case where $q\equiv1\,(2)$, we obtain after applying Cauchy–Schwarz inequality to handle the cross-terms:
\begin{equation}\label{eq:4.65}
\begin{split}
&q\equiv0\,(2)\,:\quad\frac{1}{X}\bigintssss\limits_{ X}^{2X}\Bigg\{\mathcal{E}^{H}_{q}(x)+O\Big(x^{2q-2}\log^{2}{x}\Big)\Bigg\}^{2}\textit{d}x\ll\,X^{2(2q-1)-2}\log^{4}{X}\\
&q\equiv1\,(2)\,:\quad\frac{1}{X}\bigintssss\limits_{ X}^{2X}\Bigg\{\mathds{1}_{\text{}_{q=3}}\bigg\{\Theta^{H}_{q,\chi}(x)+\Theta^{H,\chi}_{q}(x)\bigg\}+
\mathcal{E}^{H}_{q}(x)+O\Big(x^{2q-2}\log^{2}{x}\Big)\Bigg\}^{2}\textit{d}x\ll\,X^{2(2q-1)-2}\log^{4}{X}
\end{split}
\end{equation}
Squaring out and applying Cauchy–Schwarz inequality once more, we derive by Proposition 4.3:\\\\
\textit{(1)} For $q\equiv0\,(2)$
\begin{equation}\label{eq:4.66}
\begin{split}
\frac{1}{X}\bigintssss\limits_{ X}^{2X}\mathcal{E}^{2}_{q}(x)\textit{d}x&=\frac{1}{X}\bigintssss\limits_{ X}^{2X}\bigg\{\Psi^{\sin{}}_{H}\Big(x;\nu,\alpha\Big)\bigg\}^{2}\textit{d}x+O\Big(X^{2(2q-1)-1}\log^{2}{X}\Big)=\\
&=\gamma_{q}\Bigg\{\underset{\,\,(d,2m)=1}{\sum_{d,m=1}^{\infty}}\frac{r^{2}_{2}\big(m,d;q\big)}{m^{3/2}d^{2q-3}}+2^{2q}\underset{d\equiv0(4)}{\underset{\,\,(d,m)=1}{\sum_{d,m=1}^{\infty}}}\frac{r^{2}_{2}\big(m,d;q\big)}{m^{3/2}d^{2q-3}}\Bigg\}X^{2(2q-1)}+O\Big( X^{2(2q-1)-1}\log^{2}{X}\Big)\,.
\end{split}
\end{equation}
\textit{(2)} For $q\equiv1\,(2)$
\begin{equation}\label{eq:4.67}
\begin{split}
\frac{1}{X}\bigintssss\limits_{ X}^{2X}\mathcal{E}^{2}_{q}(x)\textit{d}x&=\frac{1}{X}\bigintssss\limits_{ X}^{2X}\bigg\{\Psi^{\sin{}}_{H}\Big(x;\nu,\alpha\Big)+\Psi^{\cos{}}_{H}\Big(x;\eta,\beta\Big)\bigg\}^{2}\textit{d}x+O\Big(X^{2(2q-1)-1}\log^{2}{X}\Big)=\\
&=\gamma_{q}\Bigg\{\underset{\,\,(d,2m)=1}{\sum_{d,m=1}^{\infty}}\frac{r^{2}_{2}\big(m,d;q\big)}{m^{3/2}d^{2q-3}}+2^{2q}\underset{d\equiv0(4)}{\underset{\,\,(d,m)=1}{\sum_{d,m=1}^{\infty}}}\frac{r^{2}_{2,\chi}\big(m,d;q\big)}{m^{3/2}d^{2q-3}}\Bigg\}X^{2(2q-1)}+O\Big( X^{2(2q-1)-1}\log^{2}{X}\Big)\,.
\end{split}
\end{equation}
This concludes the proof.
\end{proof}
\section{Lower bound estimates and proof of Theorem 3}
We have arrived at the final section of this paper, where we shall give the proof of Theorem 3. As we shall make use Proposition 3.2, our first and most important objective is to truncate the trigonometric sums appearing in the approximate expression for $\Delta_{q}(x)$ both in the leading and remainder terms. Once this will be done, we will be able to dispose with the remaining part of $\Delta^{H}_{q}(x)$, and remove the dependency with respect to the parameter $H$ from the truncated part of the leading terms. We shall devote the first 3 subsections for this. In the final subsection we shall give the proof of Theorem 3.
\subsection{The key lemma: A convolution argument using the Fe{j\'e}r kernel}
\begin{lem}
For a real parameter $P\geq1$, Let $F_{P}$ denote the Fe{j\'e}r kernel:
\begin{equation*}
F_{P}(w)=P\bigg(\frac{\sin{\big(\pi Pw\big)}}{\pi Pw}\bigg)^{2}\,.
\end{equation*}
Then for arbitrary real $\vartheta\neq0,\gamma\in\mathbb{R}$ it holds:
\begin{equation*}
\bigintssss\limits_{-1}^{1}F_{P}(w)\cos{\big(2\pi \vartheta w+\gamma\big)}\textit{d}w=\upphi\bigg(\frac{|\vartheta|}{P}\bigg)\cos{\gamma}+O\bigg(\frac{1}{|\vartheta|}\bigg)
\end{equation*}
where
\begin{equation*}
\upphi(y)=\left\{
        \begin{array}{ll}
            1-y& ; \,0\leq y\leq\, 1\\\\
            0 & ;\, \text{otherwise}
        \end{array}
    \right.
\end{equation*}
and the implied constant is absolute.
\end{lem}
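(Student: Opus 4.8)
The plan is to reduce the finite integral to an integral over the whole line, where the Fej\'er kernel has an explicit transform, and to treat the range $|w|>1$ as an error. First I would expand the cosine as $\cos(2\pi\vartheta w+\gamma)=\cos\gamma\,\cos(2\pi\vartheta w)-\sin\gamma\,\sin(2\pi\vartheta w)$. Since $F_P$ is even and $w\mapsto\sin(2\pi\vartheta w)$ is odd, the sine term integrates to zero over the symmetric interval $[-1,1]$, so that
\begin{equation*}
\bigintssss\limits_{-1}^{1}F_P(w)\cos(2\pi\vartheta w+\gamma)\,\textit{d}w=\cos\gamma\bigintssss\limits_{-1}^{1}F_P(w)\cos(2\pi\vartheta w)\,\textit{d}w.
\end{equation*}
As $|\cos\gamma|\le1$, it then suffices to prove $\int_{-1}^{1}F_P(w)\cos(2\pi\vartheta w)\,\textit{d}w=\upphi(|\vartheta|/P)+O(1/|\vartheta|)$, after which multiplying by the bounded factor $\cos\gamma$ yields the assertion.

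Next I would split $\int_{-1}^{1}=\int_{-\infty}^{\infty}-\int_{|w|>1}$. For the full-line integral, $F_P$ is, up to the normalisation $1/(\pi^{2}P)$, the square of a sinc, whose Fourier transform is a triangle; concretely, the standard evaluation $\int_{-\infty}^{\infty}w^{-2}\sin^{2}(\pi Pw)\cos(2\pi\vartheta w)\,\textit{d}w=\pi^{2}\max(0,P-|\vartheta|)$ gives, after division by $\pi^{2}P$, exactly $\int_{-\infty}^{\infty}F_P(w)\cos(2\pi\vartheta w)\,\textit{d}w=\upphi(|\vartheta|/P)$. This supplies the main term, and it remains to bound the tail $T:=\int_{|w|>1}F_P(w)\cos(2\pi\vartheta w)\,\textit{d}w$ by $O(1/|\vartheta|)$, which is the heart of the matter.

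Assuming $\vartheta>0$ by the evenness of the integrand in $\vartheta$, I would use $\sin^{2}(\pi Pw)=\tfrac12(1-\cos(2\pi Pw))$ together with the product-to-sum identity to express $T$, up to the factor $2/(\pi^{2}P)$ and the symmetry $\int_{|w|>1}=2\int_{1}^{\infty}$, as a fixed linear combination of integrals $\int_{1}^{\infty}w^{-2}\cos(2\pi\mu w)\,\textit{d}w$ with $\mu\in\{\vartheta,\;P+\vartheta,\;P-\vartheta\}$. Each such integral is at most $\int_{1}^{\infty}w^{-2}\,\textit{d}w=1$ trivially, while a single integration by parts gives $\ll1/|\mu|$ whenever $|\mu|\ge1$; hence each is $\ll1/\max(1,|\mu|)$, and its contribution to $T$ is $\ll\bigl(P\max(1,|\mu|)\bigr)^{-1}$.

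The only delicate frequency is the near-resonant $\mu=P-\vartheta$, where no oscillatory saving is available, and I would argue by cases. For $\mu=\vartheta$ and $\mu=P+\vartheta$ one has $\max(1,|\mu|)\ge\vartheta$ and $P\ge1$, giving $\ll1/\vartheta$ immediately. For $\mu=P-\vartheta$: if $\vartheta\le2P$ then $1/P\le2/\vartheta$, so even the trivial bound $\bigl(P\max(1,|\mu|)\bigr)^{-1}\le1/P$ is $\ll1/\vartheta$; and if $\vartheta>2P$ then $|P-\vartheta|>\vartheta/2$, whence $\bigl(P|P-\vartheta|\bigr)^{-1}<2/(P\vartheta)\le2/\vartheta$. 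Thus $T\ll1/\vartheta$ in every case, and combining this with the main term and reinstating $\cos\gamma$ completes the argument; for bounded $\vartheta$ the estimate is anyway trivial since $0\le\int_{-1}^{1}F_P\le1$, so no uniformity is lost. The main obstacle is precisely this $\vartheta\approx P$ regime of the tail, resolved above by balancing the trivial bound against $\vartheta\asymp P$.
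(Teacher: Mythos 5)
Your argument is correct. Note that the paper does not actually prove this lemma at all --- it simply states it and refers the reader to Nowak's paper on the fractional part of $\sqrt{n}$ for a proof --- so there is no internal proof to compare against; what you have written is a complete, self-contained substitute. The structure is the natural one: the parity reduction killing the $\sin\gamma$ term, the exact evaluation of the full-line integral via the Fourier transform of the Fej\'er kernel (the triangle function), and the tail estimate over $|w|>1$. The one genuinely delicate point, the near-resonant frequency $\mu=P-\vartheta$ where integration by parts gives no saving, is handled correctly by your case split: for $\vartheta\leq 2P$ the prefactor $1/P$ already beats $1/\vartheta$, and for $\vartheta>2P$ one has $|P-\vartheta|>\vartheta/2$ so the oscillatory bound suffices. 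Together with the observation that the claim is trivial for bounded $\vartheta$ (since $0\leq F_{P}$ and $\int_{\mathbb{R}}F_{P}=1$), this covers all cases with an absolute implied constant, which is what the applications in Section 5 require. No gaps.
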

\text{ }\\
For a proof see \cite{nowak1998fractional}. With the aid of the above lemma, we obtain the following estimate: 
\begin{lem}
Let $X>0$ be large, and set $H=X/2$. Suppose the functions $\nu,\eta:\mathbb{N}\longrightarrow\mathbb{R}$ and  $\alpha,\beta:\mathbb{N}^{2}\longrightarrow\mathbb{R}$ satisfy conditions \textit{(C.1)} and \textit{(C.2)} in Lemma 4.1. Then for any $P\geq1$ we have:
\begin{equation}\label{eq:5.1}
\begin{split}
&\bigintssss\limits_{X-1}^{X+1}F_{P}\big(x-X\big)\Bigg\{\underset{m\neq\square}{\sum_{d,m}}\nu(d)\alpha\big(m,d\big)\sin{\Big(-2\pi \frac{\sqrt{m}}{d}x+\frac{\pi}{4}\Big)}+\underset{m\neq\square}{\sum_{d,m}}\eta(d)\beta\big(m,d\big)\cos{\Big(-2\pi \frac{\sqrt{m}}{d}x+\frac{\pi}{4}\Big)}\Bigg\}\textit{d}x=\\
&=\underset{m\neq\square}{\sum_{d,m}}\nu(d)\alpha\big(m,d\big)\upphi\bigg(\frac{\sqrt{m}}{dP}\bigg)\cos{\Big(2\pi \frac{\sqrt{m}}{d}X+\frac{\pi}{4}\Big)}+\underset{m\neq\square}{\sum_{d,m}}\eta(d)\beta\big(m,d\big)\upphi\bigg(\frac{\sqrt{m}}{dP}\bigg)\cos{\Big(-2\pi \frac{\sqrt{m}}{d}X+\frac{\pi}{4}\Big)}+O\big(1\big).
\end{split}
\end{equation}
\end{lem}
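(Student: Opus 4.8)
The plan is to reduce the whole statement to a termwise application of Lemma 5.1. The starting observation is that, by conditions (C.1) and (C.2), the coefficients vanish unless $d\le\sqrt H$ and $m\le 2H^2$, so both sums on the left of \eqref{eq:5.1} are \emph{finite}; this lets me interchange the integral $\int_{X-1}^{X+1}F_P(x-X)(\cdots)\,dx$ with the sums over $d,m$ without any convergence worries. After the substitution $w=x-X$, so that $w$ ranges over $[-1,1]$, each phase splits as
\[
-2\pi\tfrac{\sqrt m}{d}x+\tfrac{\pi}{4}=2\pi\vartheta w+\gamma,\qquad \vartheta=-\tfrac{\sqrt m}{d},\quad \gamma=-2\pi\tfrac{\sqrt m}{d}X+\tfrac{\pi}{4},
\]
with $|\vartheta|=\sqrt m/d\neq0$, which is exactly the form treated by Lemma 5.1.

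For the $\cos$-sum I would apply Lemma 5.1 directly, producing the main term $\upphi(\tfrac{\sqrt m}{dP})\cos(-2\pi\tfrac{\sqrt m}{d}X+\tfrac{\pi}{4})$, precisely the cosine-term phase on the right of \eqref{eq:5.1}. For the $\sin$-sum I would first write $\sin(2\pi\vartheta w+\gamma)=\cos(2\pi\vartheta w+\gamma-\tfrac{\pi}{2})$ and apply the lemma with $\gamma$ replaced by $\gamma-\tfrac{\pi}{2}$, so that the main term becomes $\upphi(\tfrac{\sqrt m}{dP})\cos(\gamma-\tfrac{\pi}{2})=\upphi(\tfrac{\sqrt m}{dP})\sin\gamma$; the elementary identity $\sin(-A+\tfrac{\pi}{4})=\cos(A+\tfrac{\pi}{4})$ with $A=2\pi\tfrac{\sqrt m}{d}X$ then rewrites $\sin\gamma$ as $\cos(2\pi\tfrac{\sqrt m}{d}X+\tfrac{\pi}{4})$, matching the sine-term phase. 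Since $\upphi(|\vartheta|/P)=\upphi(\sqrt m/(dP))$, collecting these main terms reproduces the right-hand side of \eqref{eq:5.1}.

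The real work is to show that the accumulated error is $O(1)$. Each application of Lemma 5.1 contributes $O(1/|\vartheta|)=O(d/\sqrt m)$, so after inserting the coefficients and summing, the total error is
\[
\ll\sum_{d,\,m\neq\square}\big(|\nu(d)||\alpha(m,d)|+|\eta(d)||\beta(m,d)|\big)\frac{d}{\sqrt m}
\ll\sum_{d\le\sqrt H}\frac{1}{d^{q-5/2}}\sum_{m\neq\square}\frac{1}{m^{5/4}}\sum_{\substack{a^2+b^2=m\\ a\neq0,\,b\equiv0(d)}}1,
\]
using (C.1) and (C.2). Here the restriction $m\neq\square$ is decisive: if $m$ is not a perfect square, then any representation $m=a^2+b^2$ with $b\equiv0(d)$ must have $b\neq0$ (else $m=a^2$ would be a square), so writing $b=dc$ with $c\neq0$ gives
\[
\sum_{m\neq\square}\frac{1}{m^{5/4}}\sum_{\substack{a^2+b^2=m\\ a\neq0,\,b\equiv0(d)}}1\le\sum_{a\neq0}\sum_{c\neq0}\frac{1}{(a^2+d^2c^2)^{5/4}}\ll d^{-3/2},
\]
where the inner $a$-sum is $\ll (d|c|)^{-3/2}$ and the $c$-sum then converges.

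Combining the two displays, the error is $\ll\sum_{d\le\sqrt H}d^{-(q-1)}\ll1$ since $q\ge3$, which gives the claimed $O(1)$ and finishes the proof. I expect the last estimate to be the only delicate point: the saving $d^{-3/2}$ comes entirely from the non-square condition, which forces $b\neq0$. Had the perfect-square terms not been discarded (as they were at the end of Proposition 3.2), the diagonal contribution $c=0$, $m=a^2$, would make the inner sum of size $O(1)$ rather than $O(d^{-3/2})$, and the $d$-sum would grow like $H^{1/4}$ for $q=3$ instead of converging. Thus excluding squares is exactly what renders the convolution error admissible.
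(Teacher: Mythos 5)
Your proposal is correct and follows essentially the same route as the paper: a termwise application of Lemma 5.1 after the substitution $w=x-X$, with the error controlled by $O(d/\sqrt{m})$ per term and the non-square condition forcing $b\neq0$ in the representation count, so that the accumulated error converges. Your inner bound $\ll d^{-3/2}$ is in fact slightly sharper than the $\ll d^{-1}$ the paper uses, but both suffice for $q\geq3$.
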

\begin{proof}
Denote by $\mathcal{I}(X)$ the integral appearing on the LHS of \eqref{eq:5.1}. Making the change of variables $x\rightarrow x-X$, we have y Lemma 5.1 :
\begin{equation}\label{eq:5.2}
\begin{split}
\mathcal{I}(X)=&\underset{m\neq\square}{\sum_{d,m}}\nu(d)\alpha\big(m,d\big)\upphi\bigg(\frac{\sqrt{m}}{dP}\bigg)\cos{\Big(2\pi \frac{\sqrt{m}}{d}X+\frac{\pi}{4}\Big)}+\underset{m\neq\square}{\sum_{d,m}}\eta(d)\beta\big(m,d\big)\upphi\bigg(\frac{\sqrt{m}}{dP}\bigg)\cos{\Big(-2\pi \frac{\sqrt{m}}{d}X+\frac{\pi}{4}\Big)}+\\
&+E_{H}\big(\nu,\alpha;\eta,\beta\big)
\end{split}
\end{equation}
where:
\begin{equation}\label{eq:5.3}
\begin{split}
\big|E_{H}\big(\nu,\alpha;\eta,\beta\big)\big|&\ll\underset{m\neq\square}{\sum_{d,m}}d|\nu(d)|\frac{|\alpha\big(m,d\big)|}{m^{1/2}}+\underset{m\neq\square}{\sum_{d,m}}d|\eta(d)|\frac{|\beta\big(m,d\big)|}{m^{1/2}}\ll\underset{m\neq\square}{\underset{1\,\leq\,m\,\leq\,2H^{2}}{\sum_{1\,\leq\,d\,\leq\,\sqrt{H}}}}\frac{1}{d^{q-5/2}m^{5/4}}\bigg\{\underset{a\neq0\,,\,b\equiv0(d)}{\sum_{a^{2}+b^{2}=m}}1\bigg\}=\\
&=\underset{m\neq\square}{\underset{1\,\leq\,m\,\leq\,2H^{2}}{\sum_{1\,\leq\,d\,\leq\,\sqrt{H}}}}\frac{1}{d^{q-5/2}m^{5/4}}
\bigg\{\underset{a,b\neq0\,,\,b\equiv0(d)}{\sum_{a^{2}+b^{2}=m}}1\bigg\}\ll\sum_{1\,\leq\,d\,\leq\,\sqrt{H}}\frac{1}{d^{q-3/2}}\ll1.
\end{split}
\end{equation}
\end{proof}
\subsection{Bounding the remainder terms}
\begin{lem}
Let $X>0$ be large, and set $H=X/2$. Suppose $1\leq P\leq \sqrt{X}$. Then:\\\\
\textit{(1)} For $q\equiv0\,(2)$
\begin{equation}\label{eq:5.4}
\begin{split}
&\Bigg|\bigintssss\limits_{X-1}^{X+1}F_{P}\big(x-X\big)\Bigg\{\sum_{1\,\leq\,d\,\leq\,\sqrt{H}}\frac{|\xi(d)|}{d^{q-3/2}}\sum_{m\neq\square}\mathfrak{a}^{\ast}_{H}\big(m,d\big)\cos{\Big(-2\pi \frac{\sqrt{m}}{d}x+\frac{\pi}{4}\Big)}\Bigg\}\textit{d}x\Bigg|\ll1\,.
\end{split}
\end{equation}
\text{ }\\
\textit{(2)} For $q\equiv1\,(2)$
\begin{equation}\label{eq:5.5}
\begin{split}
&\Bigg|\bigintssss\limits_{X-1}^{X+1}F_{P}\big(x-X\big)\Bigg\{\sum_{1\,\leq\,d\,\leq\,\sqrt{H}}\frac{1}{d^{q-3/2}}\sum_{m\neq\square}\mathfrak{d}^{\ast}_{H}\big(m,d\big)\cos{\Big(-2\pi \frac{\sqrt{m}}{d}x+\frac{\pi}{4}\Big)}\Bigg\}\textit{d}x\Bigg|\ll1\,.
\end{split}
\end{equation}
\end{lem}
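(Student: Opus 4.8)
The plan is to reduce each of the two integrals to a $\upphi$-weighted arithmetic sum by means of Lemma 5.2, and then to bound that sum by $O(1)$ using the refined pointwise estimate for $\mathfrak{a}^{\ast}_{H}$ and $\mathfrak{d}^{\ast}_{H}$ supplied by Lemma 4.4. For part \textit{(1)} I would apply Lemma 5.2 with $\nu\equiv\alpha\equiv0$, $\eta(d)=\frac{|\xi(d)|}{d^{q-3/2}}\mathds{1}_{d\leq\sqrt{H}}$ and $\beta(m,d)=\mathfrak{a}^{\ast}_{H}(m,d)$; since $|\xi(d)|\ll1$ the function $\eta$ satisfies condition \textit{(C.1)}, and by Lemma 4.4 the coefficients $\mathfrak{a}^{\ast}_{H}$ satisfy condition \textit{(C.2)}. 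This identifies the integral, up to an admissible $O(1)$, with the sum $\sum_{d,\,m\neq\square}\eta(d)\,\mathfrak{a}^{\ast}_{H}(m,d)\,\upphi\!\big(\tfrac{\sqrt m}{dP}\big)\cos\!\big(-2\pi\tfrac{\sqrt m}{d}X+\tfrac{\pi}{4}\big)$.

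Estimating trivially with $|\upphi|,|\cos|\leq1$, and using that $\upphi(\sqrt m/(dP))$ vanishes once $\sqrt m>dP$, the task becomes bounding $\sum_{d\leq\sqrt H}|\eta(d)|\sum_{\sqrt m\leq dP,\,m\neq\square}|\mathfrak{a}^{\ast}_{H}(m,d)|$. The decisive step---and the one I expect to carry the whole argument---is to feed in the sharp bound \eqref{eq:4.40} of Lemma 4.4 \emph{with the choice $Y=d^{2}P^{2}$}, which is legitimate precisely because the truncation by $\upphi$ confines the surviving $m$ to $m\leq d^{2}P^{2}$. This yields $|\mathfrak{a}^{\ast}_{H}(m,d)|\ll \frac{dP}{H\,m^{3/4}}\big\{\sum_{a^{2}+b^{2}=m,\,a\neq0,\,b\equiv0(d)}1\big\}$. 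I want to stress that the gain comes entirely from the factor $H^{-1}$: using only the crude bound in \textit{(C.2)} would give $\sum_{\sqrt m\leq dP}|\mathfrak{a}^{\ast}_{H}(m,d)|\ll P^{1/2}d^{-1/2}$ and hence a total of size $P^{1/2}\leq X^{1/4}$, which is far from $O(1)$.

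It then remains to evaluate the arithmetic sum. For $m\neq\square$ every representation $m=a^{2}+b^{2}$ has $a,b\neq0$, so writing $b=db'$ gives $m=a^{2}+d^{2}b'^{2}$ with $a,b'\neq0$ and $a^{2}+d^{2}b'^{2}\leq d^{2}P^{2}$. Summing over $a$ for fixed $b'$ (splitting the range at $|a|=d|b'|$) contributes $\asymp(d|b'|)^{-1/2}$, and summing over $1\leq|b'|\leq P$ gives $\sum_{\sqrt m\leq dP,\,m\neq\square}m^{-3/4}\{\cdots\}\ll P^{1/2}d^{-1/2}$. Combining these bounds and summing over $d$ produces
\[
\sum_{d\leq\sqrt H}\frac{1}{d^{q-3/2}}\cdot\frac{dP}{H}\cdot\frac{P^{1/2}}{d^{1/2}}\ll\frac{P^{3/2}}{H}\sum_{d\leq\sqrt H}\frac{1}{d^{q-2}}\ll\frac{P^{3/2}}{H}\log X,
\]
and since $P\leq\sqrt X$ and $H=X/2$ this is $\ll X^{-1/4}\log X\ll1$, as required. (The harmless $\log X$ appears only for $q=3$; for $q\geq4$ the $d$-sum already converges.)

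Finally, part \textit{(2)} is handled identically, now taking $\eta(d)=\frac{1}{d^{q-3/2}}\mathds{1}_{d\leq\sqrt H}$ and $\beta(m,d)=\mathfrak{d}^{\ast}_{H}(m,d)$, which also satisfy \textit{(C.1)} and \textit{(C.2)} and obey \eqref{eq:4.40}; the entire chain of estimates goes through verbatim.
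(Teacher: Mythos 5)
Your proposal is correct and follows essentially the same route as the paper: reduce via Lemma 5.2 with $\nu,\alpha\equiv0$ to the $\upphi$-truncated sum, then exploit the $H^{-1}$ decay from \eqref{eq:4.40} of Lemma 4.4 with $Y=d^{2}P^{2}$ to arrive at the bound $\frac{P^{3/2}}{H}\sum_{d\leq\sqrt{H}}d^{-(q-2)}\ll X^{-1/4}\log{X}\ll1$. Your explicit computation of the inner arithmetic sum $\ll P^{1/2}d^{-1/2}$ matches what the paper uses implicitly, and your identification of the $H^{-1}$ factor as the decisive gain is exactly the point of the argument.
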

\begin{proof}
Set:
\begin{equation*}
\begin{split}
&\mathcal{I}_{1}(X)=\bigintssss\limits_{X-1}^{X+1}F_{P}\big(x-X\big)\Bigg\{\sum_{1\,\leq\,d\,\leq\,\sqrt{H}}\frac{|\xi(d)|}{d^{q-\frac{3}{2}}}\sum_{m\neq\square}\mathfrak{a}^{\ast}_{H}\big(m,d\big)\cos{\Big(-2\pi \frac{\sqrt{m}}{d}x+\frac{\pi}{4}\Big)}\Bigg\}\textit{d}x\\\\
&\mathcal{I}_{2}(X)=\bigintssss\limits_{X-1}^{X+1}F_{P}\big(x-X\big)\Bigg\{\sum_{1\,\leq\,d\,\leq\,\sqrt{H}}\frac{1}{d^{q-\frac{3}{2}}}\sum_{m\neq\square}\mathfrak{d}^{\ast}_{H}\big(m,d\big)\cos{\Big(-2\pi \frac{\sqrt{m}}{d}x+\frac{\pi}{4}\Big)}\Bigg\}\textit{d}x\,.
\end{split}
\end{equation*}
Appealing to Lemma 5.2 with $\nu,\alpha\equiv0$, and
\begin{equation*}
\begin{split}
&\eta(d)=\frac{|\xi(d)|}{d^{q-\frac{3}{2}}}\mathds{1}_{\text{}_{d\leq\sqrt{H}}}\quad\,\,,\quad\beta\big(m,d\big)=\mathfrak{a}^{\ast}_{H}\big(m,d\big)\qquad;\qquad q\equiv0\,(2)\\
&\eta(d)=\frac{1}{d^{q-\frac{3}{2}}}\mathds{1}_{\text{}_{d\leq\sqrt{H}}}\quad\,\,\,,\quad\beta\big(m,d\big)=\mathfrak{d}^{\ast}_{H}\big(m,d\big)\qquad;\qquad q\equiv1\,(2)\
\end{split}
\end{equation*}
we have:
\begin{equation}\label{eq:5.6}
\begin{split}
&q\equiv0\,(2)\,:\quad\big|\mathcal{I}_{1}(X)\big|\ll\sum_{1\,\leq\,d\,\leq\,\sqrt{H}}\frac{1}{d^{q-\frac{3}{2}}}\,\,\underset{m\neq\square}{\sum_{1\,\leq\,\sqrt{m}\,\leq\,dP}}\big|\mathfrak{a}^{\ast}_{H}\big(m,d\big)\big|+1\\
&q\equiv0\,(2)\,:\quad\big|\mathcal{I}_{2}(X)\big|\ll\sum_{1\,\leq\,d\,\leq\,\sqrt{H}}\frac{1}{d^{q-\frac{3}{2}}}\,\,\underset{m\neq\square}{\sum_{1\,\leq\,\sqrt{m}\,\leq\,dP}}\big|\mathfrak{d}^{\ast}_{H}\big(m,d\big)\big|+1\,.
\end{split}
\end{equation}
Fix integers $d,m\in\mathbb{N}$ in the ranges $1\leq d\leq \sqrt{H}$ and $1\leq\sqrt{m}\leq dP$. Appealing to Lemma 4.4 with $Y=d^{2}P^{2}$ we obtain by \eqref{eq:4.40} :
\begin{equation}
\begin{split}
&\big|\mathfrak{a}^{\ast}_{H}\big(m,d\big)\big|,\,\big|\mathfrak{d}^{\ast}_{H}\big(m,d\big)\big|\ll\frac{dP}{Hm^{3/4}}\bigg\{\underset{a\neq0\,,\,b\equiv0(d)}{\sum_{a^{2}+b^{2}=m}}1\bigg\}\,.
\end{split}
\end{equation}
Using this bound we find that:
\begin{equation}
\begin{split}
\big|\mathcal{I}_{1}(X)\big|\,\,,\,\,\big|\mathcal{I}_{2}(X)\big|&\ll\frac{P}{H}\sum_{1\,\leq\,d\,\leq\,\sqrt{H}}\frac{1}{d^{q-5/2}}\,\,\underset{m\neq\square}{\sum_{1\,\leq\,\sqrt{m}\,\leq\,dP}}\frac{1}{m^{3/4}}\bigg\{\underset{a\neq0\,,\,b\equiv0(d)}{\sum_{a^{2}+b^{2}=m}}1\bigg\}+1=\\
&=\frac{P}{H}\sum_{1\,\leq\,d\,\leq\,\sqrt{H}}\frac{1}{d^{q-5/2}}\,\,\underset{m\neq\square}{\sum_{1\,\leq\,\sqrt{m}\,\leq\,dP}}\frac{1}{m^{3/4}}\bigg\{\underset{a,b\neq0\,,\,b\equiv0(d)}{\sum_{a^{2}+b^{2}=m}}1\bigg\}+1\ll\frac{P^{3/2}}{H}\sum_{1\,\leq\,d\,\leq\,\sqrt{H}}\frac{1}{d^{q-2}}+1\ll\\
&\ll\frac{\log{X}}{X^{1/4}}+1\ll1\,.
\end{split}
\end{equation}
\end{proof}
\subsection{Estimating $\widehat{\Delta}_{q}(X)$ from below by an extremely short trigonometric sum}
\begin{prop}
Let $X>0$ be large. Define:
\begin{equation*}
\begin{split}
& q\equiv0\,(2)\,:\quad\widehat{\Delta}_{q}(X)=4\pi\underset{x\in[X-1,X+1]}{\sup}\,\,\big|\Delta_{q}(x)\big|\\
& q\equiv1\,(2)\,:\quad\widehat{\Delta}_{q}(X)=2^{3-q}\pi\underset{x\in[X-1,X+1]}{\sup}\,\,\big|\Delta_{q}(x)\big|\,.
\end{split}
\end{equation*}
Suppose that $1\leq P\leq \sqrt{X}$. Then:\\\\
\textit{(1)} For $q\equiv0\,(2)$
\begin{equation}\label{eq:5.9}
\begin{split}
\widehat{\Delta}_{q}(X)\geq\sum_{1\,\leq\,d\,\leq\,\sqrt{P}}\frac{\xi(d)}{d^{q-3/2}}\sum_{m}\frac{r_{2}\big(m,d;q\big)}{m^{3/4}}\upphi\bigg(\frac{\sqrt{m}}{dP}\bigg)\cos{\Big(2\pi \frac{\sqrt{m}}{d}X+\frac{\pi}{4}\Big)}+O\big(1\big)\,.
\end{split}
\end{equation}
\textit{(2)} For $q\equiv1\,(2)$
\begin{equation}\label{eq:5.10}
\begin{split}
\,\,\,\,\,\qquad\widehat{\Delta}_{q}(X)&\geq\sum_{1\,\leq\,d\,\leq\,\sqrt{P}}\frac{\chi(d)}{d^{q-3/2}}\sum_{m}\frac{r_{2}\big(m,d;q\big)}{m^{3/4}}\upphi\bigg(\frac{\sqrt{m}}{dP}\bigg)\cos{\Big(2\pi \frac{\sqrt{m}}{d}X+\frac{\pi}{4}\Big)}+\\
&+(-1)^{\frac{q-1}{2}}2^{q}\underset{d\equiv0(4)}{\sum_{1\,\leq\,d\,\leq\,\sqrt{P}}}\frac{1}{d^{q-3/2}}\sum_{m}\frac{r_{2,\chi}\big(m,d;q\big)}{m^{3/4}}\upphi\bigg(\frac{\sqrt{m}}{dP}\bigg)\cos{\Big(-2\pi \frac{\sqrt{m}}{d}X+\frac{\pi}{4}\Big)}+O\big(1\big)\,.
\end{split}
\end{equation}
\end{prop}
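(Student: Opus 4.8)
The plan is to use the nonnegativity of the Fej\'er kernel to replace the supremum defining $\widehat{\Delta}_q(X)$ by a weighted integral, and then to evaluate that integral through the approximate expression for $\Delta_q(x)$ supplied by Proposition 3.2. Since $F_P\geq 0$ and $\int_{-1}^{1}F_P(w)\,\textit{d}w\leq\int_{-\infty}^{\infty}F_P(w)\,\textit{d}w=1$, the trivial bound $\Delta_q(x)\leq\sup_{x\in[X-1,X+1]}|\Delta_q(x)|$ yields
\[
\sup_{x\in[X-1,X+1]}|\Delta_q(x)|\;\geq\;\int_{X-1}^{X+1}F_P(x-X)\,\Delta_q(x)\,\textit{d}x .
\]
Thus it suffices to show that the right-hand side, after multiplication by the normalising constant $4\pi$ (for $q$ even) or $2^{3-q}\pi$ (for $q$ odd) built into $\widehat{\Delta}_q(X)$, reproduces the trigonometric sum of \eqref{eq:5.9} (resp. \eqref{eq:5.10}) up to $O(1)$.

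First I would insert \eqref{eq:3.45} (for $q$ even) or \eqref{eq:3.47} (for $q$ odd) into this integral, separating a leading trigonometric part from the remainder $\Delta^{H}_{q}(x)$. The remainder disappears immediately: the bound \eqref{eq:3.46} (resp. \eqref{eq:3.48}) has the shape $|\Delta^{H}_{q}(x)|\leq C(x)+O(1)$, where $C(x)$ is the cosine sum weighted by $\mathfrak{a}^{\ast}_{H}$ (resp. $\mathfrak{d}^{\ast}_{H}$); integrating against $F_P\geq 0$ and applying Lemma 5.3, i.e. \eqref{eq:5.4}/\eqref{eq:5.5}, gives $\big|\int F_P\,\Delta^{H}_{q}\big|\leq\int F_P\,C+O(1)=O(1)$. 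For the leading part I would invoke Lemma 5.2 with $\nu(d)=\xi(d)\,d^{-(q-3/2)}\mathds{1}_{d\leq\sqrt{H}}$ and $\alpha=\mathfrak{a}_{H}$ when $q$ is even, together with the companion pair $\eta,\beta$ coming from the $\mathfrak{a}_{H,\chi}$-sum when $q$ is odd; conditions (C.1), (C.2) hold by the final assertion of Lemma 4.5. This turns the leading part into the $\upphi$-weighted sum over $d\leq\sqrt{H}$, with $\mathfrak{a}_{H}(m,d)$ (and, in the odd case, $\mathfrak{a}_{H,\chi}(m,d)$) as coefficients, plus $O(1)$.

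It remains to convert this into the sum of \eqref{eq:5.9}/\eqref{eq:5.10}, which requires two reductions. Because $\upphi(\sqrt{m}/(dP))$ is supported on $m\leq d^{2}P^{2}\leq P^{3}\ll H^{2}$ and $d\leq\sqrt{P}\leq H$, for the range $d\leq\sqrt{P}$ I may replace $\mathfrak{a}_{H}$ and $\mathfrak{a}_{H,\chi}$ by their closed forms \eqref{eq:4.51}, \eqref{eq:4.52} (with $r=1$), namely $\mathfrak{a}_{H}(m,d)=\tfrac{r_{2}(m,d;q)}{4\pi m^{3/4}}+O\big(\tfrac{r_{2}(m,d;q)m^{1/4}}{H^{2}}\big)$ and $\mathfrak{a}_{H,\chi}(m,d)=-\tfrac{r_{2,\chi}(m,d;q)}{2\pi m^{3/4}}+O(\cdots)$. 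Here the chosen constants $4\pi$, $2^{3-q}\pi$ are exactly what normalises the main terms to the coefficients $1$ and $(-1)^{(q-1)/2}2^{q}$ appearing in \eqref{eq:5.9}/\eqref{eq:5.10}, while the replacement error sums to $\ll H^{-2}P^{5/2}\sum_{d\leq\sqrt{P}}d^{4-q}\ll X^{-1/4}=O(1)$ for $q\geq 3$. The second reduction is the truncation of the outer sum from $d\leq\sqrt{H}$ down to $d\leq\sqrt{P}$: using the crude bound \eqref{eq:4.50}, the discarded tail is dominated by
\[
\sum_{\sqrt{P}<d\leq\sqrt{H}}\frac{|\xi(d)|}{d^{q-3/2}}\sum_{\substack{\sqrt{m}\leq dP\\ m\neq\square}}\frac{r_{2}(m,d;q)}{m^{3/4}} .
\]

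The inner sum is the crux of the argument. Every representation contributing to it has $m=a^{2}+d^{2}c^{2}$ with $c\neq 0$ (the squares $m=a^{2}$ being excluded), which is precisely the sparseness forced by the congruence $b\equiv 0\,(d)$; carrying out the summation over $a$ and $c$ gives $\sum_{\sqrt{m}\leq dP,\,m\neq\square}r_{2}(m,d;q)m^{-3/4}\ll(P/d)^{1/2}$, so the tail is $\ll P^{1/2}\sum_{d>\sqrt{P}}d^{-(q-1)}\ll P^{(3-q)/2}\ll 1$ for $q\geq 3$. This truncation is the main obstacle, and it is genuinely borderline when $q=3$, where the exponent is exactly $0$: one must extract the full saving $d^{-1/2}$ from the arithmetic of the Gaussian integers in order to offset the slow decay $d^{-(q-3/2)}$ of the coefficients, and only then does the tail collapse to $O(1)$. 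The odd case is handled identically, the factors $\chi$ being harmless for these absolute-value estimates, and everything else is bookkeeping of the three $O(1)$ contributions.
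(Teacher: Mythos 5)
Your proposal is correct and follows essentially the same route as the paper: Fej\'er-kernel positivity to pass from the supremum to the weighted integral, Lemma 5.2/5.3 for the leading and remainder parts of Proposition 3.2, truncation of the $d$-sum to $d\leq\sqrt{P}$ via the $(P/d)^{1/2}$ bound on the non-square inner sum, and Lemma 4.5 to replace $\mathfrak{a}_{H},\mathfrak{a}_{H,\chi}$ by $r_{2},r_{2,\chi}$ with the stated normalising constants. The only step you leave implicit is reinserting the $m=\square$ terms (the sums in \eqref{eq:5.9} and \eqref{eq:5.10} run over all $m$), which the paper does at a cost of $O(1)$, exactly as your closing bookkeeping remark suggests.
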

\begin{proof}
Define
\begin{equation*}
\begin{split}
\left.\begin{aligned}
&\nu(d)=\frac{\xi(d)}{d^{q-3/2}}\mathds{1}_{d\leq\sqrt{H}}\,\,\,\quad\qquad\quad\quad\qquad\alpha\big(m,d\big)=\mathfrak{a}_{H}\big(m,d\big)
\end{aligned}\right\}\,q\equiv0\,(2)\\
\left.\begin{aligned}
&\nu(d)=\frac{2^{q-1}\chi(d)}{d^{q-3/2}}\mathds{1}_{d\leq\sqrt{H}}\,\,\,\qquad\qquad\qquad\alpha\big(m,d\big)=\mathfrak{a}_{H}\big(m,d\big)\\
&\eta(d)=\frac{(-1)^{\frac{q+1}{2}}4^{q-1}}{d^{q-3/2}}\mathds{1}_{d\equiv0(4)}\mathds{1}_{d\leq\sqrt{H}}\quad\quad\beta\big(m,d\big)=\mathfrak{a}_{H,\chi}\big(m,d\big)
\end{aligned}\right\}\,q\equiv1\,(2)
\end{split}
\end{equation*}
\text{ }\\
and for $x\in[X-1,X+1]$ set:
\begin{equation*}
\begin{split}
& q\equiv0\,(2)\,:\quad\mathfrak{D}(x)=\underset{m\neq\square}{\sum_{d,m}}\nu(d)\alpha\big(m,d\big)\sin{\Big(-2\pi \frac{\sqrt{m}}{d}x+\frac{\pi}{4}\Big)}\\
& q\equiv1\,(2)\,:\quad\mathfrak{D}(x)=\underset{m\neq\square}{\sum_{d,m}}\nu(d)\alpha\big(m,d\big)\sin{\Big(-2\pi \frac{\sqrt{m}}{d}x+\frac{\pi}{4}\Big)}+\underset{m\neq\square}{\sum_{d,m}}\eta(d)\beta\big(m,d\big)\cos{\Big(-2\pi \frac{\sqrt{m}}{d}x+\frac{\pi}{4}\Big)}\,.
\end{split}
\end{equation*}
By Proposition 3.2 we have
\begin{equation}\label{eq:5.11}
\Delta_{q}(x)=\mathfrak{D}(x)+
\Delta^{H}_{q}(x)+O\big(1\big)
\end{equation}
where $\Delta^{H}_{q}(x)$ satisfies the bound:
\begin{equation}\label{eq:5.12}
\begin{split}
& q\equiv0\,(2)\,:\quad\big|\Delta^{H}_{q}(x)\big|\leq\sum_{1\,\leq\,d\,\leq\,\sqrt{H}}\frac{|\xi(d)|}{d^{q-3/2}}\sum_{m\neq\square}\mathfrak{a}^{\ast}_{H}\big(m,d\big)\cos{\Big(-2\pi \frac{\sqrt{m}}{d}x+\frac{\pi}{4}\Big)}+O\big(1\big)\\
& q\equiv1\,(2)\,:\quad\big|\Delta^{H}_{q}(x)\big|\leq\sum_{1\,\leq\,d\,\leq\,\sqrt{H}}\frac{1}{d^{q-3/2}}\sum_{m\neq\square}\mathfrak{d}^{\ast}_{H}\big(m,d\big)\cos{\Big(-2\pi \frac{\sqrt{m}}{d}x+\frac{\pi}{4}\Big)}+O\big(1\big)\,.
\end{split}
\end{equation}
By Lemma 5.2 and Lemma 5.3 we have for $q\equiv0\,(2)$ :
\begin{equation}\label{eq:5.13}
\begin{split}
\bigintssss\limits_{X-1}^{X+1}F_{P}\big(x-X\big)\Delta_{q}(x)\textit{d}x&=\bigintssss\limits_{X-1}^{X+1}F_{P}\big(x-X\big)\Big\{\mathfrak{D}(x)+
\Delta^{H}_{q}(x)+O\big(1\big)\Big\}\textit{d}x
=\bigintssss\limits_{X-1}^{X+1}F_{P}\big(x-X\big)\mathfrak{D}(x)\textit{d}x+O\big(1\big)=\\
&=\underset{m\neq\square}{\sum_{d,m}}\nu(d)\alpha\big(m,d\big)\upphi\bigg(\frac{\sqrt{m}}{dP}\bigg)\cos{\Big(2\pi \frac{\sqrt{m}}{d}X+\frac{\pi}{4}\Big)}+O\big(1\big)=\\
&=\sum_{1\,\leq\,d\,\leq\,\sqrt{P}}\nu(d)\underset{m\neq\square}{\sum_{m}}\alpha\big(m,d\big)\upphi\bigg(\frac{\sqrt{m}}{dP}\bigg)\cos{\Big(2\pi \frac{\sqrt{m}}{d}X+\frac{\pi}{4}\Big)}+O\big(1\big)\,.
\end{split}
\end{equation}
Here we have used the bound:
\begin{equation*}
\begin{split}
&\bigg|\sum_{\sqrt{P}\,<\,d\,\leq\,\sqrt{H}}\nu(d)\underset{m\neq\square}{\sum_{m}}\alpha\big(m,d\big)\upphi\bigg(\frac{\sqrt{m}}{dP}\bigg)\bigg|\ll\sum_{\sqrt{P}\,<\,d\,\leq\,\sqrt{H}}\frac{1}{d^{q-3/2}}\,\,\underset{m\neq\square}{\sum_{1\,\leq\,\sqrt{m}\,\leq\,dP}}\frac{1}{m^{3/4}}\bigg\{\underset{a\neq0\,,\,b\equiv0(d)}{\sum_{a^{2}+b^{2}=m}}1\bigg\}=\\
&=\sum_{\sqrt{P}\,<\,d\,\leq\,\sqrt{H}}\frac{1}{d^{q-3/2}}\,\,\underset{m\neq\square}{\sum_{1\,\leq\,\sqrt{m}\,\leq\,dP}}\frac{1}{m^{3/4}}\bigg\{\underset{a,b\neq0\,,\,b\equiv0(d)}{\sum_{a^{2}+b^{2}=m}}1\bigg\}\ll\sqrt{P}\sum_{\sqrt{P}\,<\,d\,\leq\,\sqrt{H}}\frac{1}{d^{q-1}}\ll1\,.
\end{split}
\end{equation*}
The same arguments give for $q\equiv1\,(2)$ :
\begin{equation}\label{eq:5.14}
\begin{split}
\bigintssss\limits_{X-1}^{X+1}F_{P}\big(x-X\big)\Delta_{q}(x)\textit{d}x=&\sum_{1\,\leq\,d\,\leq\,\sqrt{P}}\nu(d)\underset{m\neq\square}{\sum_{m}}\alpha\big(m,d\big)\upphi\bigg(\frac{\sqrt{m}}{dP}\bigg)\cos{\bigg(2\pi \frac{\sqrt{m}}{d}X+\frac{\pi}{4}\bigg)}+\\
&+\sum_{1\,\leq\,d\,\leq\,\sqrt{P}}\eta(d)\underset{m\neq\square}{\sum_{m}}\beta\big(m,d\big)\upphi\bigg(\frac{\sqrt{m}}{dP}\bigg)\cos{\bigg(-2\pi \frac{\sqrt{m}}{d}X+\frac{\pi}{4}\bigg)}+O\big(1\big)\,.
\end{split}
\end{equation}
Note that by the support of $\upphi$, for every integer $1\leq d\leq\sqrt{P}$ the sum over $m$ in \eqref{eq:5.13} and \eqref{eq:5.14} is restricted to the range $1\leq\sqrt{m}\leq dP$. Fix integers $d,m\in\mathbb{N}$ in the above ranges. Appealing to Lemma 4.5 with $Y=d^{2}P^{2}$ we have by \eqref{eq:4.51} and \eqref{eq:4.52} :
\begin{equation}\label{eq:5.15}
\begin{split}
&\mathfrak{a}_{H}\big(m,d\big)=\frac{r_{2}\big(m,d;q\big)}{4\pi m^{3/4}}+O\Bigg(\frac{r_{2}\big(m,d;q\big)d^{2}P^{2}}{m^{3/4}H^{2}}\Bigg)\\
&\mathfrak{a}_{H,\chi}\big(m,d\big)=-\frac{r_{2,\chi}\big(m,d;q\big)}{2\pi m^{3/4}}+O\Bigg(\frac{r_{2}\big(m,d;q\big)d^{2}P^{2}}{m^{3/4}H^{2}}\Bigg)\,.
\end{split}
\end{equation}
Using the bound
\begin{equation*}
\begin{split}
&\bigg(\frac{P}{H}\bigg)^{2}\sum_{1\,\leq\,d\,\leq\,\sqrt{P}}\frac{1}{d^{q-7/2}}\,\,\underset{m\neq\square}{\sum_{1\,\leq\,\sqrt{m}\,\leq\,dP}}\frac{r_{2}\big(m,d;q\big)}{m^{3/4}}\ll\bigg(\frac{P}{H}\bigg)^{2}\sum_{1\,\leq\,d\,\leq\,\sqrt{P}}\frac{1}{d^{q-7/2}}\,\,\underset{m\neq\square}{\sum_{1\,\leq\,\sqrt{m}\,\leq\,dP}}\frac{1}{m^{3/4}}\bigg\{\underset{a\neq0\,,\,b\equiv0(d)}{\sum_{a^{2}+b^{2}=m}}1\bigg\}=\\
&=\bigg(\frac{P}{H}\bigg)^{2}\sum_{1\,\leq\,d\,\leq\,\sqrt{P}}\frac{1}{d^{q-7/2}}\,\,\underset{m\neq\square}{\sum_{1\,\leq\,\sqrt{m}\,\leq\,dP}}\frac{1}{m^{3/4}}\bigg\{\underset{a,b\neq0\,,\,b\equiv0(d)}{\sum_{a^{2}+b^{2}=m}}1\bigg\}\ll\frac{P^{5/2}}{H^{2}}\sum_{1\,\leq\,d\,\leq\,\sqrt{P}}\frac{1}{d^{q-3}}\leq\frac{P^{3}}{H^{2}}\leq1
\end{split}
\end{equation*}
we derive for $q\equiv0\,(2)\,:$
\begin{equation}\label{eq:5.16}
\begin{split}
\quad&\sum_{1\,\leq\,d\,\leq\,\sqrt{P}}\nu(d)\underset{m\neq\square}{\sum_{m}}\alpha\big(m,d\big)\upphi\bigg(\frac{\sqrt{m}}{dP}\bigg)\cos{\Big(2\pi \frac{\sqrt{m}}{d}X+\frac{\pi}{4}\Big)}=\\
&=\frac{1}{4\pi}\sum_{1\,\leq\,d\,\leq\,\sqrt{P}}\frac{\xi(d)}{d^{q-3/2}}\sum_{m}\frac{r_{2}\big(m,d;q\big)}{m^{3/4}}\upphi\bigg(\frac{\sqrt{m}}{dP}\bigg)\cos{\Big(2\pi \frac{\sqrt{m}}{d}X+\frac{\pi}{4}\Big)}+O\big(1\big)
\end{split}
\end{equation}
and for $q\equiv1\,(2)\,:$
\begin{equation}\label{eq:5.17}
\begin{split}
&\left.\textit{(I)}\qquad\qquad\quad\begin{split}
&\sum_{1\,\leq\,d\,\leq\,\sqrt{P}}\nu(d)\underset{m\neq\square}{\sum_{m}}\alpha\big(m,d\big)\upphi\bigg(\frac{\sqrt{m}}{dP}\bigg)\cos{\Big(2\pi \frac{\sqrt{m}}{d}X+\frac{\pi}{4}\Big)}=\\
&=\frac{2^{q-3}}{\pi}\sum_{1\,\leq\,d\,\leq\,\sqrt{P}}\frac{\chi(d)}{d^{q-3/2}}\sum_{m}\frac{r_{2}\big(m,d;q\big)}{m^{3/4}}\upphi\bigg(\frac{\sqrt{m}}{dP}\bigg)\cos{\Big(2\pi \frac{\sqrt{m}}{d}X+\frac{\pi}{4}\Big)}+O\big(1\big)   
\end{split}
    \right.\\\\
    &\left.\textit{(II)}\qquad\qquad\quad\begin{split}
&\sum_{1\,\leq\,d\,\leq\,\sqrt{P}}\eta(d)\underset{m\neq\square}{\sum_{m}}\beta\big(m,d\big)\upphi\bigg(\frac{\sqrt{m}}{dP}\bigg)\cos{\Big(-2\pi \frac{\sqrt{m}}{d}X+\frac{\pi}{4}\Big)}=\\
&=\frac{(-1)^{\frac{q-1}{2}}2^{2q-3}}{\pi}\underset{d\equiv0(4)}{\sum_{1\,\leq\,d\,\leq\,\sqrt{P}}}\frac{1}{d^{q-3/2}}\sum_{m}\frac{r_{2,\chi}\big(m,d;q\big)}{m^{3/4}}\upphi\bigg(\frac{\sqrt{m}}{dP}\bigg)\cos{\Big(-2\pi \frac{\sqrt{m}}{d}X+\frac{\pi}{4}\Big)}+O\big(1\big)\,.
\end{split}
    \right.
\end{split}
\end{equation}
Here, in the second lines of \eqref{eq:5.16}, \eqref{eq:5.17}-(I) and \eqref{eq:5.17}-(II), we have reinserted the sum over $m=\square$ at the cost of:
\begin{equation*}
\begin{split}
\underset{m=\square}{\underset{1\,\leq\,\sqrt{m}\,\leq\,dP}{\sum_{1\,\leq\,d\,\leq\,\sqrt{P}}}}\frac{\big|r_{2,\chi}\big(m,d;q\big)\big|}{d^{q-3/2}m^{3/4}}\leq\underset{m=\square}{\underset{1\,\leq\,\sqrt{m}\,\leq\,dP}{\sum_{1\,\leq\,d\,\leq\,\sqrt{P}}}}\frac{r_{2}\big(m,d;q\big)}{d^{q-3/2}m^{3/4}}\leq\sum_{d=1}^{\infty}\frac{1}{d^{q-3/2}}\sum_{m=1}^{\infty}\frac{r_{2}\big(m^{2}\big)}{m^{3/2}}=O\big(1\big)\,.
\end{split}
\end{equation*}
By \eqref{eq:5.13} and \eqref{eq:5.14}
we obtain:\\\\
\textit{(1)} For $q\equiv0\,(2)$
\begin{equation}\label{eq:5.18}
\begin{split}
\bigintssss\limits_{X-1}^{X+1}F_{P}\big(x-X\big)\Delta_{q}(x)\textit{d}x=\frac{1}{4\pi}\sum_{1\,\leq\,d\,\leq\,\sqrt{P}}\frac{\xi(d)}{d^{q-3/2}}\sum_{m}\frac{r_{2}\big(m,d;q\big)}{m^{3/4}}\upphi\bigg(\frac{\sqrt{m}}{dP}\bigg)\cos{\Big(2\pi \frac{\sqrt{m}}{d}X+\frac{\pi}{4}\Big)}+O\big(1\big)
\end{split}
\end{equation}
\textit{(2)} For $q\equiv1\,(2)$
\begin{equation}\label{eq:5.19}
\begin{split}
\bigintssss\limits_{X-1}^{X+1}F_{P}\big(x-X\big)\Delta_{q}(x)\textit{d}x&=\frac{2^{q-3}}{\pi}\sum_{1\,\leq\,d\,\leq\,\sqrt{P}}\frac{\chi(d)}{d^{q-3/2}}\sum_{m}\frac{r_{2}\big(m,d;q\big)}{m^{3/4}}\upphi\bigg(\frac{\sqrt{m}}{dP}\bigg)\cos{\Big(2\pi \frac{\sqrt{m}}{d}X+\frac{\pi}{4}\Big)}+\\
&+\frac{(-1)^{\frac{q-1}{2}}2^{2q-3}}{\pi}\underset{d\equiv0(4)}{\sum_{1\,\leq\,d\,\leq\,\sqrt{P}}}\frac{1}{d^{q-3/2}}\sum_{m}\frac{r_{2,\chi}\big(m,d;q\big)}{m^{3/4}}\upphi\bigg(\frac{\sqrt{m}}{dP}\bigg)\cos{\Big(-2\pi \frac{\sqrt{m}}{d}X+\frac{\pi}{4}\Big)}+O\big(1\big)\,.
\end{split}
\end{equation}
Finally, multiplying \eqref{eq:5.18} by $4\pi$ and \eqref{eq:5.19} by $2^{3-q}\pi$, and using the simple fact that:
\begin{equation*}
\underset{x\in[X-1,X+1]}{\sup}\,\,\big|\Delta_{q}(x)\big|\geq\bigg|\bigintssss\limits_{X-1}^{X+1}F_{P}\big(x-X\big)\Delta_{q}(x)\textit{d}x\bigg|
\end{equation*}
concludes the proof.
\end{proof}
\subsection{Proof of Theorem 3}
\begin{proof}
(Theorem 3). For $D\in\mathbb{N}$, define $\mathscr{L}(D)$ according to the parity of $q$:
\begin{equation*}
\begin{split}
&q\equiv0\,(2)\,:\quad\mathscr{L}(D)=\frac{1}{\sqrt{2}}\sum_{1\,\leq\,d\,\leq\,D}\frac{\xi(d)}{d^{q-1}}-\frac{2\pi}{D}\sum_{1\,\leq\,d\,\leq\,D}\frac{|\xi(d)|}{d^{q-1}}-\sum_{d\,>\,D}\frac{|\xi(d)|}{d^{q-1}}\\
&q\equiv1\,(2)\,:\quad\mathscr{L}(D)=\frac{1}{\sqrt{2}}\sum_{1\,\leq\,d\,\leq\,D}\frac{\chi(d)}{d^{q-1}}-\frac{2\pi}{D}\sum_{1\,\leq\,d\,\leq\,D}\frac{|\chi(d)|}{d^{q-1}}-\sum_{d\,>\,D}\frac{|\chi(d)|}{d^{q-1}}-\frac{2\pi}{D}\underset{d\equiv0(4)}{\sum_{1\,\leq\,d\,\leq\,D}}\frac{2^{q}}{d^{q-1}}-\underset{d\equiv0(4)}{\sum_{d\,>\,D}}\frac{2^{q}}{d^{q-1}}\,.
\end{split}
\end{equation*}
Since:
\begin{equation*}
\begin{split}
&q\equiv0\,(2)\,:\quad\lim_{D\to\infty}\mathscr{L}(D)=\frac{1}{\sqrt{2}}\sum_{d=1}^{\infty}\frac{\xi(d)}{d^{q-1}}=\Big\{1-2^{1-q}\big(1+(-1)^{\frac{q}{2}+1}\big)\Big\}\zeta(q-1)\geq\frac{3}{2^{5/2}}\zeta(q-1)>0\\
&q\equiv1\,(2)\,:\quad\lim_{D\to\infty}\mathscr{L}(D)=\frac{1}{\sqrt{2}}\sum_{d=1}^{\infty}\frac{\chi(d)}{d^{q-1}}>0
\end{split}
\end{equation*}
there exists an integer $D_{0}=D_{0,q}\geq2$ which depends on $q$ such that:
\begin{equation}\label{eq:5.20}
\mathscr{L}(D_{0})>0\,.
\end{equation}
Choose such $D_{0}$ \textbf{once and for all}.\\\\
We begin by fixing an integer $P$. At the end of the proof we are going to let $P\to\infty$, so in particular we may and we shall assume that $P$ is sufficiently large in terms of $q$ and $D_{0}$. Define:
\begin{equation*}
\begin{split}
\mathscr{A}\big(P\big)=\bigg\{\frac{\sqrt{m}}{d}\leq P\,:\,1\leq d\leq D_{0}\,,\,\exists\, a,b\in\mathbb{Z}\text{ such that }m=a^{2}+b^{2}\text{ with }b\equiv0\,(d)\bigg\}\,.
\end{split}
\end{equation*}
Note that
\begin{equation}\label{eq:5.21}
\big|\mathscr{A}\big(P\big)\big|\leq D_{0}\underset{m=\square+\square}{\sum_{1\,\leq\,m\,\leq\,D^{2}_{0}P^{2}}}1\ll_{\text{}_{D_{0}}}\frac{P^{2}}{\sqrt{\log{P}}}\,.
\end{equation}
By Dirichlet's approximation theorem, there exists an integer $X\in\mathbb{N}$ satisfying the following two conditions:
\begin{equation}\label{eq:5.22}
P^{2}\leq X\leq P^{2}D_{0}^{|\mathcal{A}(P)|}
\end{equation}
and
\begin{equation}\label{eq:5.23}
\quad\qquad\qquad\frac{\sqrt{m}}{d}\in\mathscr{A}\big(P\big)\Longrightarrow\Big|\Big|\frac{\sqrt{m}}{d}X\Big|\Big|\leq\frac{1}{D_{0}}\,.
\end{equation}
We shall appeal to Proposition 5.1, but before we do so we shall need some simple estimates. Let
\begin{equation*}
\omega_{q}=\frac{16(q+1)}{3}\bigintssss\limits_{0}^{1}t^{q-1}\big(1-t^{2}\big)^{1/2}dt\,.
\end{equation*}
Note that $\omega_{q}>0$. Applying partial summation, we have the following estimates:
\begin{equation}\label{eq:5.24}
\begin{split}
&(I)\qquad\sum_{m}\frac{r_{2}\big(m,d;q\big)}{m^{3/4}}\upphi\bigg(\frac{\sqrt{m}}{dP}\bigg)=\omega_{q}\frac{P^{1/2}}{d^{1/2}}+O(1)\\\\
&(II)\qquad\bigg|\sum_{m}\frac{r_{2,\chi}\big(m,d;q\big)}{m^{3/4}}\upphi\bigg(\frac{\sqrt{m}}{dP}\bigg)\bigg|=O(1)\\\\
&(III)\qquad\sum_{m}\frac{\big|r_{2,\chi}\big(m,d;q\big)\big|}{m^{3/4}}\upphi\bigg(\frac{\sqrt{m}}{dP}\bigg)\leq \omega_{q}\frac{P^{1/2}}{d^{1/2}}+O(1)
\end{split}
\end{equation}
valid for any integer $d\in\mathbb{N}$, where the implied constant depends only on $q$. The derivation of (III) follows from the fact that $|r_{2,\chi}\big(m,d;q\big)\big|\leq r_{2}\big(m,d;q\big)$\,.\\\\
Now we appeal to Proposition 5.1 for $\widehat{\Delta}_{q}(X)$ . Consider the sum over $d,m\in\mathbb{N}$ appearing on the RHS of \eqref{eq:5.9} and \eqref{eq:5.10} in which the variable $d$ is restricted to the range $1\leq d\leq D_{0}$. For each such $d$, the sum over $m$ is restricted by the support of $\upphi$ to the range $1\leq\sqrt{m}\leq dP$, and if $\sqrt{m}/d\not\in\mathscr{A}\big(P\big)$ then  $r_{2}\big(m,d;q\big),\,r_{2,\chi}\big(m,d;q\big)\equiv0$. Thus, by \eqref{eq:5.23}, \eqref{eq:5.24} and \eqref{eq:5.20} we obtain:\\\\
\textit{(1)} For $q\equiv0\,(2)$
\begin{equation}\label{eq:5.25}
\begin{split}
&\widehat{\Delta}_{q}(X)\geq\sum_{1\,\leq\,d\,\leq\,\sqrt{P}}\frac{\xi(d)}{d^{q-3/2}}\sum_{m}\frac{r_{2}\big(m,d;q\big)}{m^{3/4}}\upphi\bigg(\frac{\sqrt{m}}{dP}\bigg)\cos{\Big(2\pi \frac{\sqrt{m}}{d}X+\frac{\pi}{4}\Big)}+O\big(1\big)\geq\\
&\omega_{q}\bigg\{\frac{1}{\sqrt{2}}\sum_{1\,\leq\,d\,\leq\,D_{0}}\frac{\xi(d)}{d^{q-1}}-\frac{2\pi}{D_{0}}\sum_{1\,\leq\,d\,\leq\,D_{0}}\frac{|\xi(d)|}{d^{q-1}}-\sum_{d\,>\,D_{0}}\frac{|\xi(d)|}{d^{q-1}}\bigg\}P^{1/2}+O\big(1\big)=\omega_{q}\mathscr{L}(D_{0})P^{1/2}+O\big(1\big)\geq\\
&\geq\frac{1}{2}\omega_{q}\mathscr{L}(D_{0})P^{1/2}\,.
\end{split}
\end{equation}
\textit{(2)} For $q\equiv1\,(2)$
\begin{equation}\label{eq:5.26}
\begin{split}
\widehat{\Delta}_{q}(X)&\geq\sum_{1\,\leq\,d\,\leq\,\sqrt{P}}\frac{\chi(d)}{d^{q-\frac{3}{2}}}\sum_{m}\frac{r_{2}\big(m,d;q\big)}{m^{3/4}}\upphi\bigg(\frac{\sqrt{m}}{dP}\bigg)\cos{\Big(2\pi \frac{\sqrt{m}}{d}X+\frac{\pi}{4}\Big)}+\\
&+(-1)^{\frac{q-1}{2}}2^{q}\underset{d\equiv0(4)}{\sum_{1\,\leq\,d\,\leq\,\sqrt{P}}}\frac{1}{d^{q-\frac{3}{2}}}\sum_{m}\frac{r_{2,\chi}\big(m,d;q\big)}{m^{3/4}}\upphi\bigg(\frac{\sqrt{m}}{dP}\bigg)\cos{\Big(-2\pi \frac{\sqrt{m}}{d}X+\frac{\pi}{4}\Big)}+O\big(1\big)\geq\\
&\geq\omega_{q}\bigg\{\frac{1}{\sqrt{2}}\sum_{1\,\leq\,d\,\leq\,D_{0}}\frac{\chi(d)}{d^{q-1}}-\frac{2\pi}{D_{0}}\sum_{1\,\leq\,d\,\leq\,D_{0}}\frac{|\chi(d)|}{d^{q-1}}-\sum_{d\,>\,D_{0}}\frac{|\chi(d)|}{d^{q-1}}-\frac{2\pi}{D_{0}}\underset{d\equiv0(4)}{\sum_{1\,\leq\,d\,\leq\,D_{0}}}\frac{2^{q}}{d^{q-1}}-\underset{d\equiv0(4)}{\sum_{d\,>\,D_{0}}}\frac{2^{q}}{d^{q-1}}\bigg\}P^{1/2}+O\big(1\big)=\\
&=\omega_{q}\mathscr{L}(D_{0})P^{1/2}+O\big(1\big)\geq\frac{1}{2}\omega_{q}\mathscr{L}(D_{0})P^{1/2}\,.
\end{split}
\end{equation}
Note that by \eqref{eq:5.21} and \eqref{eq:5.22} we obtain:
\begin{equation}\label{eq:5.27}
P^{1/2}\geq\tilde{\omega}_{\text{}_{D_{0}}}\Big(\log{x}\Big)^{1/4}\Big(\log{\log{x}}\Big)^{1/8}\quad;\quad x\in[X-1,X+1]
\end{equation}
for some constant $\tilde{\omega}_{\text{}_{D_{0}}}>0$ which depends $D_{0}$.\\\\
Recalling the definition of $\widehat{\Delta}_{q}(X)$, we deduce by \eqref{eq:5.25} in the case where $q\equiv0\,(2)$, and by \eqref{eq:5.26} in the case where $q\equiv1(2)$, that there exists $x\in[X-1,X+1]$ such that:
\begin{equation}\label{eq:5.28}
\big|\mathcal{E}_{q}\big(\sqrt{x}\,\big)\big|\geq\alpha_{q}x^{q-\frac{1}{2}}\Big(\log{x}\Big)^{1/4}\Big(\log{\log{x}}\Big)^{1/8}
\end{equation}
for some constant $\alpha_{q}=\alpha_{q,D_{0}}>0$.\\\\
Now, as $P\to\infty$ by \eqref{eq:5.22} so does $X$, and thus there are are arbitrarily large values of $x$ for which \eqref{eq:5.28} holds. Replacing $x$ by $x^{2}$ in \eqref{eq:5.28} and adjusting the constant concludes the proof. 
\end{proof}
\phantomsection 
\addcontentsline{toc}{section}{References}
\bibliographystyle{plain}
\bibliography{refs.bib}

\begin{thebibliography}{10}

\bibitem{bourgain2017mean}
Jean Bourgain and Nigel Watt.
\newblock Mean square of zeta function, circle problem and divisor problem
  revisited.
\newblock {\em arXiv preprint arXiv:1709.04340}, 2017.

\bibitem{cowling2010unitary}
Michael Cowling.
\newblock Unitary and uniformly bounded representations of some simple lie
  groups.
\newblock In {\em Harmonic Analysis and Group Representation}, pages 50--128.
  Springer, 2010.

\bibitem{cygan1978wiener}
Jacek Cygan.
\newblock Wiener's test for the brownian motion on the heisenberg group.
\newblock In {\em Colloquium Mathematicae}, volume~39, pages 367--373.
  Institute of Mathematics Polish Academy of Sciences, 1978.

\bibitem{cygan1981subadditivity}
Jacek Cygan.
\newblock Subadditivity of homogeneous norms on certain nilpotent lie groups.
\newblock {\em Proceedings of the American Mathematical Society}, 83(1):69--70,
  1981.

\bibitem{deligne1974conjecture}
Pierre Deligne.
\newblock La conjecture de weil. i.
\newblock {\em Publications Math{\'e}matiques de l'Institut des Hautes
  {\'E}tudes Scientifiques}, 43(1):273--307, 1974.

\bibitem{garg2015lattice}
Rahul Garg, Amos Nevo, and Krystal Taylor.
\newblock The lattice point counting problem on the heisenberg groups [le
  probl{\`e}me d{\'e}nombrement des points d’un r{\'e}seau dans les groupes
  de heisenberg].
\newblock In {\em Annales de l'institut Fourier}, volume~65, pages 2199--2233,
  2015.

\bibitem{gath2017best}
Yoav~A Gath.
\newblock On the best possible exponent for the error term in the lattice point
  counting problem on the first heisenberg group.
\newblock {\em arXiv preprint arXiv:1710.02995}, 2017.

\bibitem{graham1991van}
Sidney~W Graham, Grigori Kolesnik, and G~Kolesnik.
\newblock {\em Van der Corput's method of exponential sums}, volume 126.
\newblock Cambridge University Press, 1991.

\bibitem{heath1999lattice}
Roger Heath-{B}rown.
\newblock Lattice points in the sphere.
\newblock In {\em In-Number theory in progress}. Citeseer, 1999.

\bibitem{huxley2003exponential}
Martin~N Huxley.
\newblock Exponential sums and lattice points.
\newblock {\em III. Proceedings of the London Mathematical Society},
  87(3):591--609, 2003.

\bibitem{iwaniec2002spectral}
Henryk Iwaniec.
\newblock {\em Spectral methods of automorphic forms}, volume~53.
\newblock American Mathematical Soc., 2002.

\bibitem{iwaniec2004analytic}
Henryk Iwaniec and Emmanuel Kowalski.
\newblock {\em Analytic number theory}, volume~53.
\newblock American Mathematical Soc., 2004.

\bibitem{iwaniec1988divisor}
Henryk Iwaniec and CJ~Mozzochi.
\newblock On the divisor and circle problems.
\newblock {\em Journal of Number theory}, 29(1):60--93, 1988.

\bibitem{karatsuba2007theorem}
Anatolii~Alekseevich Karatsuba and Maxim~Aleksandrovich Korolev.
\newblock A theorem on the approximation of a trigonometric sum by a shorter
  one.
\newblock {\em Izvestiya: Mathematics}, 71(2):341, 2007.

\bibitem{kolesnik1985method}
G~Kolesnik.
\newblock On the method of exponent pairs.
\newblock {\em Acta Arithmetica}, 45:115--143, 1985.

\bibitem{koranyi1985geometric}
Adam Kor{\'a}nyi.
\newblock Geometric properties of {H}eisenberg-type groups.
\newblock {\em Advances in Mathematics}, 56(1):28--38, 1985.

\bibitem{lau2009mean}
Yuk-Kam Lau and Kai-Man Tsang.
\newblock On the mean square formula of the error term in the dirichlet divisor
  problem.
\newblock In {\em Mathematical Proceedings of the Cambridge Philosophical
  Society}, volume 146, pages 277--287. Cambridge University Press, 2009.

\bibitem{montgomery1974hilbert}
Hugh~L Montgomery and Robert~C Vaughan.
\newblock Hilbert's inequality.
\newblock {\em Journal of the London Mathematical Society}, 2(1):73--82, 1974.

\bibitem{nowak1998fractional}
Werner~Georg Nowak.
\newblock Fractional part sums and lattice points.
\newblock {\em Proceedings of the Edinburgh Mathematical Society},
  41(3):497--515, 1998.

\bibitem{popa2016heisenberg}
Dumitru Popa.
\newblock The heisenberg group associated to a hilbert space.
\newblock {\em Linear Algebra and its Applications}, 508:234--245, 2016.

\bibitem{rankin_1977}
Robert~A. Rankin.
\newblock {\em Modular Forms and Functions}.
\newblock Cambridge University Press, 1977.

\bibitem{sierpinski1906pewnem}
Wac{\l}aw Sierpi{\'n}ski.
\newblock O pewnem zagadnieniu z rachunku funkcyj asymptotycznych.
\newblock {\em Prace matematyczno-fizyczne}, 1(17):77--118, 1906.

\bibitem{soundararajan2003omega}
K~Soundararajan.
\newblock Omega results for the divisor and circle problems.
\newblock {\em International Mathematics Research Notices},
  2003(36):1987--1998, 2003.

\bibitem{stein2016harmonic}
Elias~M Stein.
\newblock {\em Harmonic Analysis (PMS-43), Volume 43: Real-Variable Methods,
  Orthogonality, and Oscillatory Integrals.(PMS-43)}.
\newblock Princeton University Press, 2016.

\bibitem{vaaler1985some}
Jeffrey~D Vaaler.
\newblock Some extremal functions in fourier analysis.
\newblock {\em Bulletin of the American Mathematical Society}, 12(2):183--216,
  1985.

\bibitem{van1923neue}
JG~Van~der Corput.
\newblock Neue zahlentheoretische absch{\"a}tzungen.
\newblock {\em Mathematische Annalen}, 89(3-4):215--254, 1923.

\bibitem{voronoi1904fonction}
Georges Vorono{\"\i}.
\newblock Sur une fonction transcendante et ses applications {\`a} la sommation
  de quelques s{\'e}ries.
\newblock In {\em Annales scientifiques de l'{\'E}cole Normale Sup{\'e}rieure},
  volume~21, pages 207--267, 1904.

\end{thebibliography}
Department of Mathematics, Technion - Israel Institute of Technology, Haifa 3200003, Israel\\
\textit{E-mail address:} \href{mailto:yoavgat@campus.tachnion.ac.il}{yoavgat@campus.tachnion.ac.il}

\end{document}